\newcommand{\beq}{\begin{equation}}
\newcommand{\eeq}{\end{equation}}
\newcommand{\beqs}{\begin{equation*}}
\newcommand{\eeqs}{\end{equation*}}
\newcommand{\beal}{\begin{align}}
\newcommand{\eeal}{\end{align}}
\newcommand{\beals}{\begin{align*}}
\newcommand{\eeals}{\end{align*}}
\newcommand{\ben}{\begin{eqnarray}}
\newcommand{\een}{\end{eqnarray}}
\newcommand{\beno}{\begin{eqnarray*}}
\newcommand{\eeno}{\end{eqnarray*}}
\renewcommand{\Re}{{\rm Re}\,}
\renewcommand{\div}{{\rm div}}
\newcommand{\Lip}{{\rm Lip}\,}
\newcommand{\Id}{{\rm Id}\,}
\newcommand{\Rmnum}[1]{\uppercase\expandafter{\romannumeral #1} }
 \numberwithin{equation}{section}
\newtheorem{thm}{Theorem}[section]
\newtheorem{lem}[thm]{Lemma}
\newtheorem{prop}[thm]{Proposition}
\newtheorem{rmk}[thm]{Remark}
\def\curl{\mathop{\rm curl}\nolimits}
\def \d {\mathrm {d}}
\def\cA{{\mathcal A}}
\def\cB{{\mathcal B}}
\def\cC{{\mathcal C}}
\def\cE{{\mathcal E}}
\def\cF{{\mathcal F}}
\def\cG{{\mathcal G}}
\def\cH{{\mathcal H}}
\def\cJ{{\mathcal J}}
\def\cL{{\mathcal L}}
\def\cM{{\mathcal M}}
\def\cN{{\mathcal N}}
\def\cO{{\mathcal O}}
\def\cS{{\mathcal S}}
\def\cT{{\mathcal T}}
\let\f=\frac
\def \p {\partial}
\def\mR {\mathbb{R}}
\def\pa {\partial^{\alpha}}
\def\ep{\epsilon}
\def \INS {\text{INS}}
\def \diag {\text{diag}}
\def \pt {\partial_{t}}
\def\si {\sigma}
\def\na{\nabla}
\def \oH {\overline{H}}
\def \bn {\textbf{n}}
\def \sym {\text{sym}}
\title{Incompressible and vanishing vertical viscosity limit for the compressible Navier-Stokes system with Dirichlet boundary conditions}
\author{Nader Masmoudi, Changzhen Sun, Chao Wang, Zhifei Zhang} 
\address{NYUAD Research Institute, New York University Abu Dhabi, PO Box 129188, Abu Dhabi,   United Arab Emirates.
 Courant Institute of Mathematical Sciences, New York University, 251 Mercer Street, New York, NY 10012, USA.}
\email{masmoudi@cims.nyu.edu}
\address{Université Marie et Louis Pasteur, CNRS, Laboratoire de Math\'ematiques de Besançon (UMR 6623), F-25000 Besançon, France.}
\email{changzhen.sun@univ-fcomte.fr}
\address{School of Mathematical Sciences, Beijing (Peking) University,
Beijing 100871, People's Republic of China.}
\email{wangchao@math.pku.edu.cn, zfzhang@math.pku.edu.cn }
\date{\today}
\begin{document}
\maketitle
\begin{abstract}
    In this paper, we show the incompressible and vanishing vertical viscosity limits for the strong solutions to the isentropic compressible Navier-Stokes system with anistropic dissipation, in a domain with \textit{Dirichlet} boundary conditions in the general setting of \textit{ill-prepared} initial data. We establish the uniform regularity estimates with respect to the Mach number $\ep$ and the vertical viscosity  $\nu$  so that the solution exists on a uniform time interval $[0,T_0]$ independent of these parameters. 
    The key steps toward this goal are the careful construction of the approximate solution in the presence of both fast oscillations and two kinds of  boundary layers together with 
the stability analysis of the remainder. In the process, it is also shown that the solutions of the compressible systems converge to those of the incompressible system with only horizontal dissipation, after removing the fast waves whose horizontal derivative is bounded in $L_{T_0}^2L_x^2$ by $\min\{1, (\ep/\nu)^{\f14}\}.$ 

\end{abstract}
\section{Introduction and backgrounds}
  \quad We are interested  here in the justification of the  incompressible inviscid limit for the (scaled) isentropic compressible Navier-Stokes system $(\text{CNS})_{\ep,\nu}:$ 
  \beq \label{CNS-O}
\left\{
\begin{array}{l}
 \displaystyle\pt \rho^{\epsilon,\nu} +\div( \rho^{\epsilon, \nu} u^{\epsilon,\nu})=0,\\
 \displaystyle\pt ( \rho^{\epsilon,\nu} u^{\epsilon,\nu})+\div(\rho^{\epsilon,\nu}u^{\epsilon,\nu}\otimes u^{\epsilon,\nu})+
\f{\nabla P^{\ep,\nu}}{\ep^2}-(\mu_1\Delta_{\nu} + \mu_2
\na\div )u^{\epsilon,\nu} 
=0,  \\[5pt]
\rho^{\ep,\nu}|_{t=0}=\rho_0^{\ep,\nu}, \quad u^{\ep,\nu}|_{t=0}=u_0^{\ep,\nu}|_{t=0},
\end{array}
\right. 
\eeq
where $(t,x)\in \mathbb{R}_{+}\times \Omega,$ 
 $\Omega\subset \mathbb{R}^3$
being a smooth bounded domain. The system is supplemented 
with  the Dirichlet boundary condition:
\beq \label{bdyconditions}
u^{\ep,\nu}|_{\p\Omega}=0 \,. 
\eeq
In the system \eqref{CNS-O}, 
$\rho^{\ep,\nu}(t,x)$ and $u^{\ep,\nu}(t,x)$ represent respectively the density and the velocity of the fluid, $P^{\ep,\nu}$ is the  pressure 
which is  a given smooth function of the density. The last term in the second equation is the viscous term where
 $\mu_1 , \mu_2$ are two fixed viscosity constants satisfying $$\mu_1>0, \qquad \mu_1\nu+\mu_2>0$$ and $\nu\in (0,1]$ could be considered as the inverse of the Reynolds number which is assumed to be small. Moreover, here we consider the anistropic dissipation where the vertical dissipation is much smaller than the horizontal one: 
\beqs 
\mu_1\Delta_{\nu}=\mu_1(\Delta_{h}+\nu\p_z^2),\, \quad (\Delta_{h}=\p_{x_1}^2+\p_{x_2}^2)\,.
\eeqs
Finally, the parameter $\ep$ is the scaled Mach number which  is  assumed  small, that is $\ep\in(0,1].$
The inverse of the Reynolds number $\nu$ is related to the Mach number $\ep$ in the sense that 
$\nu=\nu(\ep)\rightarrow 0$ as $\ep\rightarrow 0.$ 

 Taking the smallness of the Mach number into account, the scaled system \eqref{CNS-O} can be derived from the original compressible Navier-Stokes system by introducing suitable change of variable. Indeed, one finds \eqref{CNS-O} by performing the following scaling:
   $$\rho(t,x)=\rho^{\ep,\nu}(\ep t,x),\, u(t,x)=\ep u^{\ep,\nu}(\ep t,x),\,\,\tilde{\mu}_h=\ep \mu_1,\, \,\tilde{\mu}_v= \mu_1\ep \nu, \,\,\tilde{\mu}_2=\mu_2\ep\nu,$$
   where $(\rho, u, \cT)$ solve the following Navier-Stokes system with anistropic dissipation:
   \beq \label{NCNS-O}
 \left\{
\begin{array}{l}
 \displaystyle\pt \rho +\div( \rho u)=0,\\
 \displaystyle\pt ( \rho u)+\div(\rho u\otimes u)+
\nabla P-(\tilde{\mu}_h\Delta_{x,y}+\tilde{\mu}_v\p_z^2+\tilde{\mu}_2\na\div)u=0. \\  
\end{array}
\right.\\
\eeq
We remark that the anistropic dissipation considered here is a common occurrence in geophysical fluids. As a matter of fact, instead of incorporating the conventional dissipation $-\tilde{\mu}\Delta$ of the fluid into the equations, meteorologists frequently depict turbulent diffusion using a viscosity represented as $-\tilde{\mu}_h\Delta_{z,y}-\tilde{\mu}_v\p_z^2$. Here, $\tilde{\mu}_h$ and $\tilde{\mu}_v$ are empirical constants, with $\tilde{\mu}_v$ being significantly smaller than $\tilde{\mu}_h$. For example, in the ocean,  $\tilde{\mu}_v$ ranges from 1 to $10^3 \text{cm}^2/\text{sec} $, while $\tilde{\mu}_h$ varies from $10^5$ to $10^8 \text{cm}^2/\text{sec}$. 
More extensive discussions can be found in 
the book [Chapter 4, \cite{pedlosky2012geophysical}].

In the following, for the clarity of the presentation, we shall consider the fluid domain to be the strip:
$\Omega=\mathbb{T}_{a_1,a_2}^2\times [0,a_3],$ where $\mathbb{T}_{a_1,a_2}^2=\mathbb{T}_{a_1}\times \mathbb{T}_{a_2}$
are two dimensional torus with length $a_1, a_2,$ and $a_1,a_2,a_3>0.$ 
Moreover, we allow the initial data to be ill-prepared in the sense that 
\beq\label{generaldata}
(\div u^{\ep,\nu}, \na P^{\ep,\nu}/\ep)=\cO(1)
\eeq
in some suitable space. In  other words, we do not 
impose any smallness assumption on the velocity and assume only $\rho=\bar{\rho}+\cO(\ep).$

  The incompressible limit and the inviscid limit are two important singular limits in fluid mechanics. When the 
 Mach number is small, the compressible fluid system
can be approximated by the corresponding incompressible ones and this limit process is referred to as the incompressible limit. When the Reynolds number is very high (that is $\nu$ is very small), the viscous fluid could be characterized  approximately by the inviscid system, this limit process is called the inviscid limit. 
\textit{Our aim is to show that as $\ep, \nu=\nu(\ep)$ tends to 0, the solution to the system $(\text{CNS})_{\ep,\nu}$ converge (in some suitable space) 
to the solution of the incompressible Navier-Stokes system with only horizontal dissipation.}

    Let us review first some investigations concerning the incompressible limit (that is $\ep$ tends to $0$ while $\nu$ being fixed) for compressible Navier-Stokes system (CNS)$_{\ep},$ which has been a vast project so far.
   The initial works, credited to Ebin \cite{MR431261}, Klainerman-Majda \cite{MR615627,MR668409}, focus on the study of the local \textit{strong} solution of (CNS)$_{\ep}$ occupied in a domain \textit{without} boundaries ($\mR^3$ or $\mathbb{T}^3$), with well-prepared initial data ($(\div\, u_0^{\ep},\nabla P_0^{\ep}/\ep)=\cO(\ep)$). Later, the same problem is considered by Ukai \cite{MR849223} in the whole space and Gallagher \cite{MR1794519} on 
   the torus for ill-prepared initial data ($(\div u_0^{\ep},\nabla P_0^{\ep}/\ep)=\cO(1)$). 
  The incompressbile limit of \textit{weak} solutions for (CNS)$_{\ep}$ was first investigated by Lions and and the first author \cite{MR1628173,MR1710123}. 
  In general, for ill-prepared data, one can only obtain weak convergence in time. Nevertheless, when the Dirichlet boundary condition \eqref{bdyconditions} is imposed, the authors in \cite{MR1697038} are able to show the local strong convergence by employing the damping effects of acoustic waves due to the strong boundary layers. 
There are also many other related works, one can see for instance \cite{MR1917042,chen-hao-gui-jiang, MR1886005,MR1702718,MR2575476,MR918838,MR3916820,MR3240080}. We refer to the introduction of \cite{sun2022uniform}  for a more thorough review of the literature.  
 Additionally, for comprehensive information, we recommend the reader consulting the well-written survey papers by 
 Alazard \cite{MR2425022}, Danchin \cite{MR2157145}, Feireisl \cite{MR3916821}, Gallagher \cite{MR2167201},
 Jiang-Masmoudi \cite{MR3916820}. 

 The current work is partly motivated by the verification of the incompressible  limit  for \textit{strong} solutions to $\text{(CNS)}_{\epsilon}$ with \textit{Dirichlet} boundary conditions \eqref{bdyconditions} and general data \eqref{generaldata}.
The usual studies on the strong solutions consists in first ensuring that for any $\ep\in (0,1]$, the system  
   $\text{(CNS)}_{\epsilon}$ admits a strong solution
 on a time interval that is independent of the Mach number $\ep,$ and then proving the convergence result in this fixed interval.  
The first step is usually based on some uniform regularity estimates independent of  $\ep.$ The previous studies on the incompressible limit for the strong solutions focus either on the ill-prepared data in absence of boundaries \cite{MR1794519,MR2211706}  or the well-prepared data with boundaries \cite{MR2812710,MR3240080}.
Nevertheless, when a boundary is present, the question of uniform regularity for ill-prepared data is more subtle due to the appearance of boundary layers. Recently, the first two authors together with Rousset \cite{MFS-JMPA} proved uniform 
regularity estimates for (CNS)$_{\ep}$ with the Navier-slip boundary condition:
\beq\label{navier-slipbc}
u^{\ep}\cdot \bn=0, \quad
 \Pi(\mathbb{S} u^{\ep} \bn)+a\Pi u^{\ep}=0\quad
  \text{on }  {\partial{\Omega}},
  \eeq
where $\bn$ is the unit outward normal vector and  $a$ is a constant related to the slip length. In the case of 
Navier-slip boundary condition, the boundary layer effects is weaker due to the absence of the first boundary layer profile and thus the Lipschitz norm of the velocity is expected to be uniformly bounded. Consequently, uniform regularity estimates could be established in \cite{MFS-JMPA} 
 in the framework of conormal spaces which take into accounts the boundary layer effects, without providing detailed descriptions of the solutions.  However, due to the \textit{strong} boundary layers, the case with Dirichlet boundary conditions and general initial data is more subtle and has not been addressed. 

 We now switch to the study of the the vanishing viscosity limit and first put ourselves in a more general setting where both the horizontal and vertical viscosity tend to $0.$ In the domain without boundary or with boundary and Navier-slip boundary condition, one can either start from a global weak solution for the Navier-Stokes system and prove the convergence of this solution to the strong solutions of the limiting inviscid system-- as in \cite{iftimie-planas,Iftimie-sueur,sueur}; or establish uniform high regularity estimates in the conormal spaces and justify the limit process by using the strong compactness. The latter is manageable mainly due to the fact that the boundary layer effect is weaker when the Navier-slip boundary condition is considered.
However, in the domain with boundaries and with the Dirichlet boundary condition, there is a strong (Prandtl) boundary layer whose governed equation is only solvable in the analytic spaces (or slightly more general Gevery space) without the monotonicity 
 assumption \cite{MR3925144,MR2601044,MR4465902}.  The corresponding inviscid limit could thus only be achieved by imposing the spatial analyticity \cite{ MR4097327, MR3851056, MR1617542, MR1617538, MR3614755}. All of the above works concern the incompressible system. However, when dealing with compressible fluids, working in the real analytic space requires controlling infinitely many time derivatives. This, in turn, necessitates imposing an infinite number of compatibility conditions on the data, which is quite restrictive. 
 Nevertheless, for the case with anisotropic dissipation considered in the present work, the situation is better, since the extra horizontal dissipation in the Prandtl equation allows one to prove the well-posedness in the usual Sobolev spaces.  One could thus expect to construct the approximate solution to sufficiently high order and then to prove the stability of the remainder (that is, the remainder is small in $L^{\infty}([0,T],L^2(\Omega))$ for some $T>0$ uniform in the viscosity parameter). This has been justified in \cite{MR3843301} for the incompressible fluid. Nevertheless, as will be explained later, there are some extra obstacles for the \textit{compressible} fluids when considering also the low Mach number limit with general initial data.

For the study of the incompressible and vanishing viscosity limit, 
there are only a few works, we quote \cite{MR1808029} for the investigations in the whole space and the torus without boundaries. 
In \cite{MR3070031}, the inviscid incompressible limit problem is studied for the weak solution of the non-isentropic system and  a convergence result is obtained as long as the viscosity and the Mach number are related $(\nu=\ep^{\alpha},\, 0<\alpha<{10}/{3}).$
All the above-mentioned results are concerned with the weak solution, whose global existence is known thanks to the work of Lions \cite{MR1637634}.
 For the study of the strong solution, Alazard \cite{MR2211706} proved the uniform regularity estimates in the Mach number and the Reynolds number for  the non-isentropic Navier-Stokes system in the whole space $\mR^3,$ which ensures the existence of the solution on a time interval independent of the Mach number and Reynolds number. This result was extended recently to the case with boundaries \cite{sun2022uniform} incoporated with the Navier-slip boundary condition.  However,
 to our best knowledge, there is no corresponding result in the case of the Dirichlet boundary condition.  
 
    The most nontrivial part for the case of Dirichlet boundary condition lies in the uniform (in the Mach number and the Reynolds number) well-posedness in some suitable functional framework. 
    To ensure the propagation of high regularity estimates, it is necessary to have uniform control over the $L_x^{\infty}$ 
  norm, which is not easily attained.  
  Indeed, due to the vanishing vertical viscosity, one could not expect to use the energy estimates and the Sobolev embedding to get a uniform control of $L_t^2L_x^{\infty}$ norm. It is thus unlikely to obtain such uniform regularity estimates by directly working on the original system.
    We will henceforth adopt another methodology, which consists in the construction of the approximate solution and then study the stability of the remainder. More precisely, we aim to construct an approximate solution so that the error between the real solution and the approximate solution is small in some suitable space and its $L_x^{\infty}$ norm could be controlled uniformly, which enables one to show that the remainder exists on a time interval independent on $\ep$ and $\nu.$ Although this strategy is quite classical, it offers the advantage of providing  a more precise description of the solution to $\text{(CNS)}_{\epsilon,\nu}$ in the low Mach number and vanishing vertical viscosity limits. 
     By doing so, we have to deal with the following
    two questions: 
    
    (1) A careful construction of the approximate solution, 

    (2) Stability analysis of the remainder. 

Let us remark that both of these two questions are non-trivial.
   To construct an approximate solution, it is crucial to provide precise descriptions of both interior profiles and boundary layers. As mentioned earlier, previous studies \cite{MR1303036,MR1794519,MR1465607} primarily focused on scenarios where physical boundaries were absent. In those cases, the authors employed a group associated with the linear singular operator $\ep^{-1}\left( \begin{array}{cc}
0 & \div \\
\na & 0
\end{array}\right)$ to filter rapid oscillations in time. This approach could give a very precise description of the main order of the solution (i.e., the first interior profile). However, in our current context, we need to extend this `group method' to address cases with boundaries where the Dirichlet boundary conditions are imposed.
The presence of boundaries introduces complexities in the structure of the expression for the approximate solution and gives rise to new terms in the equation of the first interior profile. More precisely, the horizontal component of the velocity in the first interior profile does not usually vanish on the boundaries, which differs from the Dirichlet boundary condition for the real solution. To make up this  discrepancy in the boundary conditions, a boundary layer has to be introduced in the vicinity of the boundaries.  
As we will see in the next subsection, there are mainly two kinds of boundary layers. The first layer, with a width of $\sqrt{\ep\nu\,}$, needs to be identified in the fast scale $\tau=t/\ep$ to address the mismatch in the horizontal boundary conditions between the compressible (highly oscillatory) parts of the Navier-Stokes equations with and without vertical dissipation. 
The other one--a Prandtl type boundary layer with width $\sqrt{\nu\,}$-- 
    is relatively hidden and must be introduced to compensate  for the mismatch of the boundary conditions of the incompressible Navier-Stokes with and without vertical dissipation. 
    Due to the appearance of multiple scales both in time $(t, t/\ep)$ and space $(z, {z}/{\sqrt{\ep\nu}}, {z}/{\sqrt{\nu}}),$ it is not practical to construct the expansion to the very high order. This makes the stability analysis of the remainder tougher since its normal derivatives cannot be bounded uniformly in the usual Sobolev spaces (say in $L_t^{\infty}L^2$). This differs significantly from previous works \cite{MFS-JMPA, sun2022uniform}, where the Lipschitz norm of the velocity is proven to be uniformly bounded. Here, we must carefully track the weights of each quantity involved in the energy functionals to propagate the uniform regularity of the remainder with respect to both $\ep$ and $\nu.$




 \subsection{Approximate solution} 

 For simplicity, we assume that the pressure $P$ satisfies a polytropic $\gamma$ law: $P(\rho^{\ep,\nu})=\f{1}{\gamma}(\rho^{\ep,\nu})^{\gamma}, \gamma>1.$ Define  
 \beqs 
\sigma^{\ep,\nu}= \f{2}{(\gamma-1)}\bigg((\rho^{\ep,\nu})^{\f{\gamma-1}{2}}-1\bigg)\big/\ep\,,
 \eeqs
 the system \eqref{CNS-O} reduces then to the following system: 
\beq \label{CNS-N}
\left\{
\begin{array}{l}
 (\displaystyle\pt+ u^{\epsilon,\nu}\cdot\na) \sigma^{\ep,\nu}+\f{\gamma-1}{2}\sigma^{\ep,\nu}\div u^{\ep,\nu}+\f{\div u^{\epsilon,\nu}}{\ep}=0,\\[8pt]
 (\displaystyle\pt+ u^{\epsilon,\nu}\cdot\na) u^{\ep,\nu}+\f{\gamma-1}{2}\sigma^{\ep,\nu}\na \sigma^{\ep,\nu}+
\f{\nabla \sigma^{\ep,\nu}}{\ep}-\,\div_{\nu }\mathcal{S}u^{\epsilon,\nu}=\ep f^{\ep,\nu},  
\end{array}
\right.
\eeq
with 
$$\div_{\nu}\,\cS=\mu_1\big(\Delta_{h}+\nu\p_z^2\big)+\mu_2\na\div ,\quad  f^{\ep,\nu}\colon=\f{1}{\ep}\big[(\f{\gamma-1}{2}\ep\sigma^{\ep,\nu}+1)^{-\f{2}{\gamma-1}}-1\big]\div_{\nu}\cS u^{\ep,\nu}.$$
Denote $U^{\ep,\nu}=(\sigma^{\ep,\nu}, u^{\ep,\nu})^t,$ 
the equation \eqref{CNS-N} can be written further as
\beq\label{cns-abs} 
(\pt -{L}/{\ep})U^{\ep,\nu}-\left(\begin{array}{cc}
   0   & 0 \\
    0 & \div_{\nu}\,\cS
\end{array}\right) U^{\ep,\nu}+Q(U^{\ep,\nu}, U^{\ep,\nu})=\left( \begin{array}{c}
   0   \\
   \ep\nu f^{{\ep,\nu}}
\end{array}\right) ,
\eeq
where the penalized linear operator $L$ and the quadratic nonlinear form $Q(\cdot, \cdot)$ are defined as
\beqs 
L=-\left( \begin{array}{cc}
    0 &  \div \\
    \na  & 0 
\end{array}\right),\qquad Q(U^{\ep,\nu}, U^{\ep,\nu})= (u^{\ep,\nu}\cdot\na )U^{\ep,\nu}-\f{\gamma-1}{2}\sigma^{\ep,\nu} LU^{\ep,\nu}\, .
\eeqs
To obtain an approximate solution, it is crucial to possess a conceptual understanding of its intended structure. The complexity arises from the emergence of boundary layers near the boundaries.
Let us first keep $\nu$ fixed and send $\ep$ to $0.$ 
In the fast scale $\tau=t/\ep,$ the linearized system of 
\eqref{cns-abs} reads 
\beqs 
(\p_{\tau}-L)V^{\ep}-\left(\begin{array}{cc}
   0   & 0 \\
    0 & \ep\, \div_{\nu}\,\cS
\end{array}\right)V^{\ep}=0\,, \quad V^{\ep}|_{\p\Omega}=0\, .
\eeqs
When $\ep$ tends to $0,$ the above equation formally tends to $(\p_{\tau}-L)V^0=0$ whose solvability only requires the boundary condition $V^0_3|_{\p\Omega}=0.$
In order to make up this  discrepancy in the boundary conditions, a boundary layer with a width $\sqrt{\ep\nu},$ under the form $U^{B}(\tau, t, x_h, \f{z}{\sqrt{\ep\nu}}),$
needs to be introduced. 

Moreover, when the solution belongs to the kernel of the singular operator $L,$  meaning $\sigma^{\ep,\nu}$ is a constant and $u^{\ep}$ is divergence-free, the system degenerates to the following incompressible Navier-Stokes system with anistropic dissipation (we neglect initially the interactions between the compressible and incompressible components): 
\beqs 
\pt u^{\nu}+ u^{\nu}\cdot\na u^{\nu}-\div_{\nu}\cS u^{\nu}+\na P^{\nu}=0, \quad u^{\nu}|_{\p\Omega}=0\,.
\eeqs
The limit system (i.e. $\nu=0$) requires only the boundary condition $u_3^{\nu}|_{\p\Omega}=0$ and thus a boundary layer of width $\sqrt{\nu}$ of the form $v^p(t,x_h, \f{z}{\sqrt{\nu}})$ has to be introduced during the limit process. 

Based on the above analysis, we could expect that an admissible 
 approximate solution has the following form which consists of three kinds of interior profiles and the boundary layers profiles:
\beq\label{expansion}
\begin{aligned}
U^{a,M}&\sim \sum_{j=0}^{M} \ep^j\bigg[ U_{osc}^{I,j}(\f{t}{\ep},t,x)+ U_{osc}^{B,j}\big(\f{t}{\ep},t,x_h,\f{z}{\sqrt{\ep\nu}}\big)+\tilde{U}_{osc}^{B,j}\big(\f{t}{\ep},t,x_h,\f{a_3-z}{\sqrt{\ep\nu}}\big)\bigg]\\
& + \sum_{k=1}^{M} (\sqrt{\ep\nu})^k \bigg[ S_{osc}^{I,k}(\f{t}{\ep},t,x)+ S_{osc}^{B,k}\big(\f{t}{\ep},t,x_h,\f{z}{\sqrt{\ep\nu}}\big)+\tilde{S}_{osc}^{B,k}\big(\f{t}{\ep},t,x_h,\f{a_3-z}{\sqrt{\ep\nu}}\big)\bigg] \\
& + \sum_{\ell=0}^{M} (\sqrt{\nu})^{\ell}\bigg[
V^{I,\ell}(t,x)
+V^{p,\ell}(t,x_h,\f{z}{\sqrt{\nu}})+
\tilde{V}^{p,\ell}\big(t,x_h,\f{a_3-z}{\sqrt{\nu}}\big)\bigg],
\end{aligned}
\eeq
where $M\geq 1$ is an integer. In the above expansion, $U_{osc}^{I,j}, S_{osc}^{I,k}$ represent the interior oscillating compressible parts and in particular $U_{osc}^{I,0}$  has the form $\cL\big(\f{t}{\ep}\big)(W(t,x)),$ $\cL$ being the semigroup generated by the linear operator $L$ in the space $V_{\sym}^m$ (see \eqref{def-vsym} for the definition).
Moreover,
the profile
$V^{I,0}=\left( \begin{array}{c}
   0   \\
   v^{\INS}\end{array}\right)
$ lies in the kernel of the operator $L$ and $ v^{\INS}$ can be taken as the solution to the incompressible Navier-Stokes system \textit{with horizontal dissipation}. 
Finally, the oscillating boundary layers emanating from the bottom and top have the form
\beqs
U_{osc}^{B,j}=\left( \begin{array}{c} 
0\\
\mathcal{B}_h^j\\
\sqrt{\ep\nu}\mathcal{B}_3^{j+1}\end{array}\right); \quad 
\tilde{U}_{osc}^{B,j}=\left( \begin{array}{c} 
0\\
\mathcal{T}_h^j\\
\sqrt{\ep\nu}\mathcal{T}_3^{j+1}\end{array}\right); \quad 
S_{osc}^{B,j}=\left( \begin{array}{c} 
0\\
\tilde{\mathcal{B}}_h^j\\[2pt]
\sqrt{\ep\nu}\tilde{\mathcal{B}}_3^{j+1}\end{array}\right); \quad 
\tilde{S}_{osc}^{B,j}=\left( \begin{array}{c} 
0\\
\tilde{\mathcal{T}}_h^j\\[2pt]
\sqrt{\ep\nu}\tilde{\mathcal{T}}_3^{j+1}\end{array}\right)
\eeqs
and the (compressible) Prandtl type boundary layers 
$V^{p,\ell}=\left( \begin{array}{c} 0\\v_{h}^{p,\ell}\\[3pt]\sqrt{\nu}{v}_{3}^{p,\ell+1}\end{array}\right), \tilde{V}^{p,\ell}=\left( \begin{array}{c} 0\\\tilde{v}_{h}^{p,\ell}\\[3pt]\sqrt{\nu}{\tilde{v}}_{3}^{p,\ell+1}\end{array}\right).$
Let us remark that in \eqref{expansion}, we need do expansion also in terms of $\ep$ due to the appearance of the fast time variable $t/\ep.$ 
Finally, as we will see in Section 2, the boundary layers $U_{osc}^{B,j}$ do not in general vanish initially. Consequently, for the 
general initial data that do not depend on the fast variable $\f{z}{\sqrt{\ep\nu}},\, \f{z}{\sqrt{\nu}}$, we will  just impose $U^{I,0}\colon= U_{osc}^{I,0}+\left( \begin{array}{c}
   0   \\
   v^{\INS}
\end{array}\right)\big|_{t=0}=U^{\ep,\nu}|_{t=0},$ so that 
\beq\label{GIC}U_0^R=U^{\ep,\nu}|_{t=0}-U^a|_{t=0}=-(U^a|_{t=0}-U^{I,0}|_{t=0})\, ,
\eeq
and henceforth
\beqs 
\|U_0^R\|_{L^2(\Omega)}+\sqrt{\ep\nu}
\|\p_z U_0^R\|_{L^2(\Omega)}\lesssim ({\ep\nu\,})^{1/4}.
\eeqs
As a result, it is indeed not very useful to construct $U_{osc}^{B,j},\, V_{osc}^{B,j} $ for $j\geq 1,$ since the above estimate prevents the remainder $U^R=U^{\ep}-U^a$ from behaving better than $({\ep\nu\,})^{1/4}$ in $L_t^{\infty}L_x^2\, .$
\subsection{Main results}
Before stating the main results, we first introduce the conormal Sobolev space. 
Define the vector fields
$$Z_0=\ep\pt, \quad Z_1=\p_{x_1}, \quad  Z_2=\p_{x_2}, \quad Z_3=\phi(z)\p_z$$
where $\phi(z)=z(a_3-z)$ which vanishes on the boundaries. 
We introduce the following Sobolev conormal spaces: for $p\in [1,+\infty], q\in [2,+\infty]$
\beq\label{def-conormal}
L_t^p W_{co}^{m,q}=\big\{f\,
\big|\,Z^{\alpha}f\in L^{p}\big([0,t],L^q(\Omega)\big), \forall \,\alpha\in\mathbb{N}^4,\,|\alpha|\leq m\big\},
\eeq
equipped with the norm
\beqs
\|f\|_{L_t^p W_{co}^{m,q}}=\sum_{|\alpha|\leq m}\|Z^{\alpha} f\|_{L^{p}([0,t],L^q(\Omega))}\,.
\eeqs
It is also useful to define also the space with only tangential regularity
\beq\label{def-tanconormal}
L_t^p W_{tan}^{m,q}=\big\{f\,
\big|\,Z^{\alpha}f\in L^{p}\big([0,t],L^q(\Omega)\big), \forall \,\alpha\in\mathbb{N}^4,\,|\alpha|\leq m,\alpha_3=0\big\}
\eeq
equipped with the norm 
\beqs
\|f\|_{L_t^p W_{tan}^{m,q}}=\sum_{|\alpha|\leq m,\alpha_3=0}\|Z^{\alpha} f\|_{L^{p}([0,t],L^q(\Omega))}\,.
\eeqs
We denote for simplicity $\|f\|_{L_t^p H_{co}^{m}}=\|f\|_{L_t^p W_{co}^{m,2}}.$ Moreover, we use the following notations 
\begin{align*}
L_t^p \underline{H}_{co}^{m}=\big\{f\,
\big|\,Z^{\alpha}f\in L^{p}\big([0,t],L^2(\Omega)\big), \forall \,\alpha\in\mathbb{N}^4,\,|\alpha|\leq m, \alpha_0\leq m-1\big\},\, \\
L_t^p \underline{H}_{tan}^{m}=\big\{f\,
\big|\,Z^{\alpha}f\in L^{p}\big([0,t],L^2(\Omega)\big), \forall \,\alpha\in\mathbb{N}^4,\,|\alpha|\leq m, \alpha_0\leq m-1, \alpha_3=0\big\}.\, 
\end{align*}

To measure pointwise regularity at a given time $t$ (in particular also with $t=0$), we shall use the semi-norms
 \beq
 \label{normfixt}\|f(t)\|_{H_{co}^m}=
 \sum_{|\alpha|\leq m}\|(Z^{\alpha} f)(t)\|_{L^2(\Omega)}.
 \eeq

The main results of the current work is summarized in the following two theorems. The first one concerns the existence of the approximate solution.

\begin{thm}[Existence of the approximate solution]\label{thm-appsol}
Suppose that $a_1, a_2,a_3$ is such that the small divisor estimate \eqref{no-resonantasp} holds true for some $r_0.$
There exist  ${T}_1>0$ which is independent of $\ep,\nu,$ and 
   an approximate solution $U^a$ (defined by \eqref{app sol}) such that $
   (\ep\pa_t)^iU^a\in L_{T_1}^{\infty}H_{co}^{6-i}$ with $i\in[0, 2]$, 
and satisfies the approximate equation
    \beq\label{eq-ua} 
\big(\pt-L^{\ep}/\ep\big) U^a -\left(\begin{array}{cc}
   0   & 0 \\
    0 & \div_{\nu}\,\cS
\end{array}\right) U^{a}+ Q(U^a, U^a)=R^a, \quad u^a|_{\p\Omega}=0,
\eeq
where the error satisfies 
\beq\label{es-error}
\|R^a\|_{L_{T_1}^1\underline{H}_{co}^3}
+\sqrt{\ep\nu}
\|\p_z R^a\|_{L_{T_1}^1H_{co}^2}
\lesssim \ep+({\ep \nu\,})^{\f14}+\nu^{\f34}:= \eta\,.
\eeq
Moreover, we have for any $2\leq p<+\infty$
\begin{align*}
    \|U^a-\left(\begin{array}{c}
   0    \\
    v^{\INS} 
\end{array}\right)-U_{osc}^{I,0} \|_{L^2([0,T_1],L^p(\Omega))}\lesssim \ep+\nu^{\f{1}{2p}}\xlongrightarrow{\ep,\nu\rightarrow 0} 0.
\end{align*}
Here $v^{INS}$ solves the incompressible Naiver-Stokes system with only horizontal dissipation \eqref{eq-vINS}, $U_{osc}^{I,0}$ is the first interior oscillating part defined in \eqref{form-UI0} and satisfies
\beqs 
\|\nabla_h U_{osc}^{I,0}\|_{L^2([0,T_1]\times \Omega)}\lesssim  \min\{1,(\ep/\nu)^{1/4} \}\xlongrightarrow{\ep,\nu\rightarrow 0} 0, \quad \text{ if } (\ep/\nu)\xlongrightarrow{\ep,\nu\rightarrow 0}  0.
\eeqs
\end{thm}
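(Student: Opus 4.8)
The plan is to construct $U^a$ hierarchically, solving a cascade of equations obtained by plugging the ansatz \eqref{expansion} into \eqref{cns-abs} and collecting terms according to their scale in $\ep$, $\sqrt{\ep\nu}$, $\sqrt{\nu}$, as well as their dependence on the fast time $\tau=t/\ep$ and the boundary-layer variables $z/\sqrt{\ep\nu}$, $z/\sqrt{\nu}$. First I would treat the leading interior oscillating part: writing $U_{osc}^{I,0}=\cL(\tau)(W(t,x))$, the fast-time equation $(\p_\tau-L)U_{osc}^{I,0}=0$ is solved automatically by the semigroup, and the slow modulation $W$ is determined (together with $v^{\INS}$) by projecting the $\cO(1)$ interior equation onto the kernel of $L$ and onto its orthogonal complement; the kernel part gives the incompressible Navier–Stokes system with horizontal dissipation \eqref{eq-vINS} for $v^{\INS}$, and the non-kernel part gives a transport-type equation for the Fourier coefficients of $W$ after the usual filtering, using the small-divisor/non-resonance assumption \eqref{no-resonantasp} to handle the quadratic self-interactions of oscillations. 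Well-posedness of both on a common time interval $[0,T_1]$ independent of $\ep,\nu$ follows from standard energy estimates in the symmetric space $V_{\sym}^m$.

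Next I would resolve the boundary layers in order of scale. The $\sqrt{\ep\nu}$-layer $U_{osc}^{B,0}$ is built to cancel the horizontal trace of $U_{osc}^{I,0}$ on $z=0$ (and $\tilde U_{osc}^{B,0}$ on $z=a_3$): at fixed $\tau$ this is a constant-coefficient parabolic-type profile equation in the stretched variable, solved explicitly (heat/transport kernels), and one checks the horizontal components decay and are bounded in the relevant conormal norms, while the normal component picks up the extra $\sqrt{\ep\nu}$ factor from the divergence-free correction. The Prandtl-type layer $V^{p,0}$ is determined by the nonlinear boundary-layer equation with horizontal dissipation — here I rely on the remark in the introduction that the extra horizontal viscosity renders this \emph{well-posed in Sobolev spaces} (no analyticity needed), on a time interval that one then intersects with $[0,T_1]$. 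The higher-order profiles $U_{osc}^{I,j}$, $S_{osc}^{I,k}$, $V^{I,\ell}$, $V^{p,\ell}$, $U_{osc}^{B,j}$, etc. for $j,k,\ell\ge 1$ solve \emph{linear} equations with source terms built from lower-order profiles, hence are solved by the same machinery; as noted, because of \eqref{GIC} one only needs to go to a finite order $M$ (in fact the $j\ge1$ oscillating layers are essentially irrelevant beyond matching), so the construction terminates.

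Once all profiles are in hand, $U^a$ is defined by \eqref{app sol} (the truncated sum), and the error $R^a$ is exactly the collection of leftover terms: the highest-order terms in each expansion that are not cancelled, plus the mismatch terms coming from the fact that interior profiles restricted to the boundary are cancelled only up to the order kept, plus commutator terms between the different scales (e.g. applying $\div_\nu\cS$ to a Prandtl layer produces a $\nu^{1/2}$-type remainder, applying it to a $\sqrt{\ep\nu}$-layer an $\cO(1)$-in-scale but spatially-localized remainder, and the fast-slow interaction $\ep\p_t$ hitting slow profiles produces $\cO(\ep)$ terms). Counting scales and using that each boundary-layer profile contributes an $L^2$-in-space gain of $(\ep\nu)^{1/4}$ or $\nu^{1/4}$ from the change of variables in the stretched normal variable, one gets exactly $\|R^a\|_{L^1_{T_1}\underline H^3_{co}}+\sqrt{\ep\nu}\|\p_z R^a\|_{L^1_{T_1}H^2_{co}}\lesssim \ep+(\ep\nu)^{1/4}+\nu^{3/4}=\eta$. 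The regularity claim $(\ep\p_t)^i U^a\in L^\infty_{T_1}H^{6-i}_{co}$, $i\in[0,2]$, follows by propagating the corresponding bounds through each profile equation (time derivatives cost a power of $\ep$ on oscillating parts, consistent with the $Z_0=\ep\p_t$ normalization).

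The convergence statements are then read off from the structure of $U^a$: subtracting $v^{\INS}$ and $U_{osc}^{I,0}$ leaves only the terms with a positive power of $\ep$ or of $\sqrt{\nu}$ (including the boundary layers), and the $L^2([0,T_1];L^p(\Omega))$ norm of a boundary layer of width $\delta$ is $O(\delta^{1/p})$ by Hölder in the stretched variable, giving $\lesssim \ep+\nu^{1/(2p)}$. For the oscillating part, $\|\nabla_h U_{osc}^{I,0}\|_{L^2_{t,x}}$: since $U_{osc}^{I,0}=\cL(t/\ep)W$, its horizontal gradient is controlled by $\|\nabla_h W\|$, and the smallness in the regime $\ep/\nu\to0$ comes from a dispersive/averaging gain — the oscillations average out over time at rate governed by $\ep$, while the dissipation acts at rate $\nu$, producing the factor $\min\{1,(\ep/\nu)^{1/4}\}$ (this is the place where the interplay of the two small parameters is genuinely used). \textbf{The main obstacle} I anticipate is the careful bookkeeping of which scale each remainder term lives at and ensuring no term is worse than $\eta$ — in particular controlling the interaction terms between the $\sqrt{\ep\nu}$-oscillating layers and the $\sqrt{\nu}$-Prandtl layers, and the action of the slow/fast time derivative across profiles, all while keeping the weights $\sqrt{\ep\nu}\,\p_z$ exactly matched to what the stability analysis of the remainder (the second theorem) can absorb; getting the Prandtl layer's Sobolev well-posedness on a time interval uniform in the parameters, and compatible with the oscillatory forcing it inherits from the lower-order profiles, is the other delicate point.
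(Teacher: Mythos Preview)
Your overall architecture is right, but there is a genuine gap in how you obtain the decay $\|\nabla_h U_{osc}^{I,0}\|_{L^2_{t,x}}\lesssim (\ep/\nu)^{1/4}$. You attribute this to a ``dispersive/averaging gain'' from the fast oscillations. But $\cL(\tau)$ is an \emph{isometry} on $V_{\sym}^s$, so no $L^2$ smallness can come from the oscillations: $\|U_{osc}^{I,0}(\tau,t,\cdot)\|_{L^2}=\|W_{osc}(t,\cdot)\|_{L^2}$ for every $\tau$. The actual mechanism is a boundary-layer-induced damping that your hierarchy does not see. The vertical component $\cB_3^1$ of the first oscillating layer (recovered from $\cB_h^0$ by the divergence-free relation) does not vanish on $\p\Omega$, so one must introduce an additional interior profile $S_{osc}^{I,1}$ of size $\sqrt{\ep\nu}$ to lift it. The paper chooses $S_{osc}^{I,1}$ so that $(\p_\tau-L)S_{osc}^{I,1}=-\cL(\tau)\overline S$; this forces an extra term $+\sqrt{\nu/\ep}\,\overline S$ in the modulation equation for $W$. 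Computing $\overline S$ via a spectral perturbation of $L$ (the eigenvalue correction $\lambda_{k,1}^{\pm}$ in \eqref{deflambdak1} has $\Re\lambda_{k,1}^{\pm}<0$) shows it acts as a genuine damping on each mode $b_m^\alpha$ with rate $\sqrt{\nu/\ep}\,|m_h|^2/|m|^{3/2}$; the energy identity then gives exactly $(\ep/\nu)^{1/4}$. Your proposal omits $S_{osc}^{I,1}$ entirely and thus has no such term.

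Two related corrections. First, for $\cB_h^0$ you propose solving the heat equation with zero initial data via heat kernels; the paper explicitly rejects this (it yields only $\|\cB_h^0\|_{L^\infty_tL^2_x}\lesssim\nu^{1/4}$, the layer degenerating to width $\sqrt\nu$, and produces no damping feedback). Instead $\cB_h^0$ is written mode-by-mode as an explicit exponential in $z/\sqrt{\ep\nu}$ (formula \eqref{def-cB_h^0}), satisfying the heat equation only up to a small residual; it is this form that makes the computation of $\lambda_{k,1}^{\pm}$ and hence of $\overline S$ possible. Second, the small-divisor hypothesis \eqref{no-resonantasp} is not used to derive the equation for $W$ (that comes from a sublinear-growth/resonance argument); it is used afterwards to bound the second interior profile $U_{osc}^{I,1}$ (equivalently $V$ in \eqref{def-V}) uniformly, so that $\ep U_{osc}^{I,1}=\cO(\ep)$ in the conormal norms entering $R^a$.
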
 


\medskip

The next result concerns the stability of the remainder $$U^R=U^{\ep,\nu}-U^a:=(\sigma^R, u^R).$$ 
Before stating the main result, we first derive the equation satisfied by $U^R.$
Substituting the equation 
\eqref{eq-ua} from \eqref{cns-abs}, we find that $U^R$ solves
\beq\label{eq-UR}
\left\{\begin{array}{l}
(\pt-L/\ep)U^R-\left(\begin{array}{c}
   0    \\
    \f{1}{\rho}\div_{\nu}\cS u^R
\end{array}\right)+Q(U^a+U^R, U^R)+Q(U^R,U^a)= -R^a,\\[10pt]
u^R|_{z=0}=u^R|_{z=a_3}=0;\quad U^R|_{t=0}=U_0^R,
\end{array}\right.
\eeq
where $R^a$ is defined in \eqref{eq-ua} and 
\begin{align*}
\f{1}{\rho}=\f{1}{\rho}\big(\ep(\sigma^R+\sigma^a)\big)=\bigg[\f{\gamma-1}{2}\ep(\sigma^{a}+\sigma^R)+1\bigg]^{-\f{2}{\gamma-1}}
\end{align*}
and the quadratic form
\beq \label{def-quadratic}
Q(U,V)=(u\cdot \na) V+\f{\gamma-1}{2}\sigma L V, \quad ( U=(\sigma, u)^t).
\eeq
\begin{thm}[Stability of the remainder]\label{thm-remainder}
 
Suppose that the initial data $U^R(0):=U^{\ep,\nu}(0)-U^a(0)$
belongs to the space $\cN_0$  defined in \eqref{def-cNT} and satisfies the 
compatibility condition  
 \beq\label{comp-cond}
  u^R(0)|_{\p\Omega}= \pt u^R(0)|_{\p\Omega}=0. 
\eeq
    Let also $\nu=\ep^{\kappa},$ with $\kappa\in (0,3).$
Then there exists a uniform time $T_0>0,$ such that
there exists a unique solution $U^R$ to the system \eqref{eq-UR} in $C([0,{T}_0],H_{co}^{2})$ and 
\beqs 
\|U^R\|_{L_{T_0}^{\infty}L^2}+\nu^{\f12}\|\na U^R \|_{L_{T_0}^{2}L^2}\lesssim \eta \xlongrightarrow{\ep,\nu\rightarrow 0} 0
\eeqs
    where $\eta= \ep+(\ep\nu)^{\f14}+\nu^{\f34}.$ 
\end{thm}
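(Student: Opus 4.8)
The plan is to run a weighted energy estimate on the remainder system \eqref{eq-UR}, carefully assigning powers of $\ep$ and $\nu$ to each conormal derivative and to each normal derivative, and then to close the estimate by a continuity (bootstrap) argument on a time interval $[0,T_0]$ that does not shrink as $\ep,\nu\to0$. First I would fix the functional setting: work in $C([0,T_0],H_{co}^2)$ as stated, but control a stronger weighted norm $\cA_{m,t}$ (presumably the one hidden in the definitions $\lat$, $\lae$ and the space $\cN_t$) that tracks, for each multi-index $\alpha$ with $|\alpha|\le 2$, the quantities $\|Z^\alpha U^R\|_{L^2}$, together with $\nu^{1/2}\|\na Z^\alpha u^R\|_{L^2}$ (the dissipation gained from the horizontal viscosity plus the small vertical one), and the normal-derivative quantities weighted by $\sqrt{\ep\nu}$, so as to mirror the weights appearing in \eqref{es-error}. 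The target inequality is a Gronwall-type bound $\|U^R\|_{L_{T_0}^\infty L^2}+\nu^{1/2}\|\na U^R\|_{L_{T_0}^2L^2}\lesssim \eta + \Lambda(1/c_0,\cA_{m,t})(\text{higher powers})$, which closes for $T_0$ small but independent of $\ep,\nu$ because the data are $O(\eta)$ by \eqref{GIC} and because $\nu=\ep^\kappa$ with $\kappa\in(0,3)$ keeps all the error exponents $\ep,(\ep\nu)^{1/4},\nu^{3/4}$ going to zero.

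The key steps, in order, are: (i) \emph{Basic $L^2$ estimate.} Pair \eqref{eq-UR} with $U^R$ (with the natural symmetrizer $\mathrm{diag}(\tfrac{\gamma-1}{2},\rho\,\Id)$ or similar), using that $L$ is skew-adjoint so the singular term $L/\ep$ drops, that $\div_\nu\cS u^R$ gives the good term $-\mu_1(\|\na_h u^R\|^2+\nu\|\p_z u^R\|^2)-\mu_2\|\div u^R\|^2$, and that the transport part $Q(U^a+U^R,U^R)$ is handled by integration by parts and the bound $(\ep\pt)^iU^a\in L^\infty_{T_1}H_{co}^{6-i}$; the forcing contributes $\|R^a\|_{L^1_{T_1}L^2}\lesssim\eta$ by \eqref{es-error}. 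The semilinear term $Q(U^R,U^a)$ is the dangerous one since it is linear in $U^R$ with a coefficient $\na U^a$ whose $L^\infty$ norm is only borderline; here one uses that $U^a-V^{I,0}-U_{osc}^{I,0}$ is small and that the singular pieces have controlled Lipschitz-type norms built into Theorem \ref{thm-appsol}. (ii) \emph{Conormal estimates.} Apply $Z^\alpha$, $|\alpha|\le 2$, commute with the system; the commutators $[Z^\alpha, L/\ep]$ vanish for tangential $Z$ and for $Z_3=\phi(z)\p_z$ produce lower-order terms times $1/\ep$ that must be absorbed — this is exactly where the $\ep$-weights in the norm are essential, and where one needs the structure $U^I_{osc,0}=\cL(t/\ep)W$ to see the oscillation is harmless after testing. (iii) \emph{Normal-derivative / pressure estimates.} Recover $\p_z U^R$ (not available from conormal fields alone) from the equation itself: the horizontal components of the momentum equation give $\mu_1\nu\p_z^2 u_h^R$ in terms of $\pt u^R$, $\na\sigma^R/\ep$, transport and forcing, so $\|\sqrt{\ep\nu}\,\p_z u_h^R\|$ is controlled; the $\sigma^R$ normal derivative and $\div u^R$ are coupled through the acoustic part and estimated with the $1/\ep$ weight, using the Dirichlet condition $u^R|_{\p\Omega}=0$ and the compatibility condition \eqref{comp-cond} to kill boundary terms in the integrations by parts for $\pt u^R(0)$. (iv) \emph{Collect and Gronwall.} Sum (i)--(iii), absorb all good dissipation terms, and invoke the smallness of $\eta$ together with the short-time factor to close the bootstrap.

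The main obstacle I expect is step (iii) combined with the semilinear term in step (i): because the vertical viscosity is $O(\nu)$ and vanishing, one cannot obtain a uniform bound on $\p_z U^R$ in $L^\infty_t L^2$ — only the \emph{weighted} quantity $\sqrt{\ep\nu}\,\p_z U^R$ is controlled — so every place where a genuine $L^\infty_x$ bound on $U^R$ or on $\na U^R$ would normally be used (Sobolev embedding via one normal derivative) is unavailable, and one must instead route through anisotropic Gagliardo–Nirenberg inequalities that trade the missing normal regularity for extra tangential regularity at the cost of powers of $(\ep\nu)^{-1/4}$; reconciling those negative powers against the positive powers in $\eta$ is precisely what forces the restriction $\kappa\in(0,3)$ and is the delicate bookkeeping the theorem is really about. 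A secondary difficulty is handling the variable coefficient $1/\rho=\rho(\ep(\sigma^a+\sigma^R))^{-1}$ uniformly: one needs $\ep\|\sigma^a+\sigma^R\|_{L^\infty}\le 1/c_0$ to stay away from vacuum, which itself is part of the bootstrap and relies on the a priori control of $\cA_{m,t}$.
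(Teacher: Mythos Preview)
Your overall architecture---weighted energy estimates plus a bootstrap/continuity argument, with the restriction $\kappa\in(0,3)$ emerging from balancing negative powers of $\ep,\nu$ in embeddings against the positive powers in $\eta$---is exactly the paper's strategy, and your identification of the main obstacle (no uniform $L^\infty_x$ control via normal derivatives) is on point. But two concrete ingredients are missing from your plan, and without them the scheme you describe will not close.

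First, your step (iii) as written does not work. You propose to read off $\mu_1\nu\p_z^2 u_h^R$ from the horizontal momentum equation and thereby control $\sqrt{\ep\nu}\,\p_z u_h^R$; but that equation contains $\rho^{(\gamma-1)/2}\na_h\sigma^R/\ep$, which is singular and cannot be absorbed into an $\eta$-sized bound. The paper's cure is to pass to the \emph{vorticity} $\omega^R=\curl u^R$, which kills the gradient term $\na\sigma^R/\ep$ in the interior: $\omega^R$ solves a transport--diffusion equation with no $1/\ep$ terms in the bulk. The price is that the singular term reappears in the Neumann boundary condition $\mu_1\nu\,\p_z(\omega_h^R)^{\perp}|_{\p\Omega}=\rho^{(1-\gamma)/2}\na_h\sigma^R/\ep+\cdots$. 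The paper then gets (a) a \emph{uniform} estimate for $\phi\,\omega^R$ (weight $\phi(z)=z(a_3-z)$ kills the boundary), and (b) a \emph{non-uniform} estimate for $\omega_h^R$ carrying the weight $\min\{(\ep^2\nu)^{1/3},(\ep\nu)^{1/2}\}$ coming from the boundary trace inequality applied to that singular boundary term. This is the source of the specific weights in the norm $\cE_T$, and it is what allows you to control $Z_3 u_h^R$ (via $\phi\,\omega_h^R$) without ever facing $[Z_3,L/\ep]$.

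Second, and relatedly, your proposal to apply $Z_3$ directly to the full system and ``absorb'' the commutator $[Z_3,L/\ep]$ with $\ep$-weights is not what the paper does and is dangerous: that commutator produces $\ep^{-1}\phi'(\p_z u_3^R,\p_z\sigma^R)$, and putting an $\ep$-weight on the estimate only yields $\ep\|Z_3 U^R\|$, which is too weak for the embeddings you need later. The paper instead (a) restricts the direct energy estimate to \emph{purely tangential} fields $Z^{\alpha}$, $\alpha_3=0$; (b) derives a separate \emph{damped} transport equation for $\p_z\sigma^R$, of the form $\ep^2\tilde\mu(\pt+u\cdot\na)\p_z\sigma^R+\rho^{\gamma}\p_z\sigma^R=\cdots$, whose zeroth-order damping $\rho^{\gamma}$ gives $\|\p_z\sigma^R\|_{L^2_tH^2_{\tan}}$ uniformly and (via characteristics) $\|\p_z\sigma^R\|_{L^2_tL^\infty_x}$; and (c) recovers $Z_3$-derivatives of $(\na\sigma^R,\div u^R)$ \emph{algebraically} from the equations, writing $\div u^R=-\ep\pt\sigma^R+\cdots$ and $\p_z\sigma^R=-\ep\pt u_3^R+\cdots$ and iterating, so that $Z_3^2(\na\sigma^R,\div u^R)$ is expressed through $(\ep\pt)^2$ of tangential quantities already controlled. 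Your plan has the right instincts about acoustic coupling but not this mechanism.
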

\begin{rmk}
We remark that the constraint we imposed between $\nu$ and $\epsilon,$ specifically $\nu=\epsilon^{\kappa}$, where $\kappa\in(0,3)$, serves to establish uniform estimates for the remainder, directly related to the error estimate for $R^a-$the source term comprising the approximate solution. As we progress to higher orders in constructing the approximate solution, this correlation becomes progressively less constraining. It's worth noting that the methodology outlined in Theorem \ref{thm-appsol} for constructing the approximate solution is applicable across any values of $\nu$ and $\epsilon$. However, in cases where $\kappa \in [3,+\infty)$, where the vanishing viscosity limit prevails, an alternative approach may be more feasible. In this scenario, one could expand the original system \eqref{CNS-O} in terms of $\nu$ and analyze the stability of the remainder. Finally, one justifies the incompressible limit for the compressible Navier-Stokes system with only horizontal dissipation. We will give more details in Section \ref{sec-remark-othercase}.
\end{rmk}
\begin{rmk}
In addition to addressing other technical issues, we have made significant efforts to minimize the regularity requirements imposed on 
$U^R$ in order to establish the uniform well-posedness of the remainder equation \eqref{eq-UR}. Specifically, our arguments allow us to involve only two time derivatives, necessitating just two compatibility conditions \eqref{comp-cond}, which is quite less restrictive than those in previous works \cite{MFS-JMPA, MFS-free, sun2022uniform} where five compatibility conditions are assumed. 
\end{rmk}

 Let us observe that, due to the presence of the source term $R^a$ and the linear terms $u^a \cdot \nabla U^R$ and $U^R\cdot\nabla U^a,$ the time derivative of the remainder $\partial_t U^R$ is not uniformly bounded initially, so that we could not in general expect the uniform control of $L_t^{\infty}L^2$ norm of $\pt u^R$ and thus the $L_t^{\infty}H^2$ norm of $U^R.$
   Consequently,  to establish uniform well-posedness of the remainder, one still needs to work in conormal spaces \eqref{def-conormal} that account for the effects of boundary layers.

\textbf{Organization of the paper:} 
We construct the approximate solution in Section 2 and prove
Theorem \ref{thm-appsol}, based on the estimates 
for the approximate solutions that are presented in the appendix. 
We study the stability of the remainder in Sections 3-5 and prove Theorem \ref{thm-remainder}. Finally,
in the appendix, we compute the filtered interior profile and obtain various estimates on the profiles included in the the approximate solutions, which are needed in the proof of Theorem \ref{thm-appsol}.

\section{Construction of the approximate solution}
  In this section, we aim to construct the approximate solution in the form of \eqref{expansion} by
combining the filtering method \cite{MR1303036} and multi-scale analysis. To deal with the fast oscillations in time, we use the group associated to the singular linear operator $L$ 
 with suitable boundary conditions
 to 
 filter out the high-oscillating acoustic waves, which corresponds to the first interior profile. 
We then construct (an infinite collection of) oscillating boundary layers to address the inconsistency in the boundary conditions for the horizontal components of velocities between the real solution and the initial interior profile. The kernel of the penalized  operator 
$L$ corresponds to the solution to the incompressible Navier-Stokes equation with horizontal dissipation. However, we have to use 
the slightly compressible Prandtl layer (see \eqref{prandtl-bo}, \eqref{prandtl-to}) to compensate for the horizontal boundary conditions of the  incompressible component of the velocity, in order to take into account the interactions of internal compressible part and the Prandtl type layers. 

Here, we introduce a new function space which involves only two-order time derivative:
\begin{align}\label{def-2td}
    L_t^p {\overline{H}}_{co}^{m}=\big\{f\,
\big|\,Z^{\alpha}f\in L^{p}\big([0,t],L^2(\Omega)\big), \forall \,\alpha\in\mathbb{N}^4,\,|\alpha|\leq m, \alpha_0\leq 2 \big\} , (\forall\, m\geq 2).
\end{align}

We note that this space is introduced to involve as few time derivatives as possible. While this is not particularly important in this section, it becomes crucial for ensuring the stability of the remainder in the next section. By minimizing the number of time derivatives required, we can reduce the compatibility conditions imposed on the initial data of the remainder.


\subsection{First two interior equations}
    In this subsection, we aim to recover the first interior equation $U^{I,0}=\colon U_{osc}^{I,0}+\left( \begin{array}{c}
   0   \\
   v^{\INS}
\end{array}\right).$  

Plugging the ansatz \eqref{expansion} into \eqref{cns-abs} and looking at the $\ep^{-1}$ 
order of the interior equation, we find
\beq\label{eq-int0} 
(\p_\tau-L) U^{I,0}=0, \quad U^{I,0}|_{\tau=0}= W (t,x), \quad u_3^{I,0}|_{\p\Omega}=0,
\eeq
where we denote $\tau=t/\ep.$ To solve \eqref{eq-int0}, it suffices to find the initial condition $W(t,x)$ 
which could be achieved by imposing the sublinear condition on the second interior equation $U_{osc}^{I,1}.$ 

Before detailing this, we need to find the equation satisfied by the $U_{osc}^{I,1}.$ The $\ep^0$ order of the interior equation reads
\beq\label{second-inter}
(\p_\tau-L) U_{osc}^{I,1}+ \big(\pt U^{I,0}+ Q(U^{I,0}, U^{I,0})-\mu_1\Delta_h U^{I,0}\big)=0. \quad 
\eeq
Moreover, one interior profile $S_{osc}^{I,1}$ needs to be introduced to make up the contributions of the boundary layer 
$\cB_3^1$ and $\cT_3^1$ (introduced later) on the boundaries.  
More precisely, we will find $S_{osc}^{I,1}$ in  a way such that \footnote{The simplest way would be choosing $(\p_\tau-L) S_{osc}^{I,1}=0,$ however, by Laplace-Fourier transform, $S_{osc}^{I,1}$ cannot be bounded uniformly in $L_{t,x}^2$ and $L_{t,x}^{\infty}$ which gives some trouble for the analysis of the remainder. Moreover, by constructing $S_{osc}^{I,1}$ in the way of \eqref{artificial-bd}, we make one damping term appear in the filtered oscillating part which is the key to prove the strong convergence when $(\ep/\nu)\xrightarrow{\ep\rightarrow 0} 0.$  }
\beq\label{artificial-bd}
(\p_\tau-L) S_{osc}^{I,1}=-\mathcal{L}(\tau)\overline{S}, \quad S_{osc,3}^{I,1}|_{z=0}=-\cB_3^1|_{z=0}, \quad S_{osc,3}^{I,1}|_{z=a_3}=-\cT_3^1|_{z=a_3}.
\eeq
Hereafter, we denote $\cL(\tau)$ as the semigroup generated by $L$ in the space $V_{sym}^{s+3}(\Omega)$(see \eqref{def-vsym}),  incorporating the boundary condition that the last component of the acted element vanishes on the boundary.
By the explicit construction performed later,  one can ensure that $S_{osc}^{I,1}$ is uniformly bounded in $L^{\infty}_{\tau}\big(L^{\infty}([0,T],{H}^{s})\big)$ as long as $W\in L^{\infty}([0,T],H^{s})$ for any $s\geq 0.$

Let $V=U_{osc}^{I,1}+\sqrt{\f{\nu}{\ep}}S_{osc}^{I,1}.$ 
By \eqref{second-inter} and \eqref{artificial-bd}, $V$ can be solved as
\beqs 
V=
\cL(\tau)V|_{\tau=0}-\cL(\tau)\int_0^{\tau} \cL(-\tau')\bigg(\pt U^{I,0}+ Q(U^{I,0}, U^{I,0})-\left(\begin{array}{cc}
   0   & 0 \\
    0 & \mu_1\Delta_h+\mu_2\na\div 
\end{array}\right)U^{I,0}\bigg)+\sqrt{\f{\nu}{\ep}}\overline{S} \,\,\d \tau'.
\eeqs
To ensure that the expansion \eqref{expansion} makes sense, we need $\ep 
V $ (and thus $\ep \cL(-\tau) V$) tends to 0 as $\ep$ tends to 0 in $L^2(\Omega)$  for any $t>0,$ this is equivalent to imposing the following sublinear condition
\beq
\lim_{\tau\rightarrow +\infty}\f{1}{\tau} \int_0^{\tau} \cL(-\tau')\bigg(\pt U^{I,0}+ Q(U^{I,0}, U^{I,0})-\left(\begin{array}{cc}
   0   & 0 \\
    0 & \mu_1\Delta_h +\mu_2\na\div 
\end{array}\right) U^{I,0}\bigg)(\tau')+\sqrt{\f{\nu}{\ep}}\overline{S} \,\,\d \tau'=0.
\eeq
Recalling the formula $U^{I,0}=\cL(\tau)W,$ we find that the filtered profile $W$ solves 
\beq\label{eq-W-1}
 \pt W + \overline{Q}(W,W) -\overline{\Delta} W 
+ \sqrt{\f{\nu}{\ep}}\overline{S}=0
\eeq
where 
 \beq\label{res-dissipation}
\overline{\Delta} W=\lim_{\tau\rightarrow +\infty}\f{1}{\tau} \int_0^{\tau} \cL(-\tau') \left(\begin{array}{cc}
   0   & 0 \\
    0 & {\mu_1}\Delta_h +\mu_2\na\div 
\end{array}\right) \cL(\tau')W \,\d \tau' \, ,
\eeq
and the quadratic term
\beq \label{res-nonlinear}
\overline{Q}(W,W)= \lim_{\tau\rightarrow +\infty}\f{1}{\tau} \int_0^{\tau} \cL(-\tau') Q\big(\cL(\tau')W, \cL(\tau')W\big) \,\d \tau'
\eeq
is the mean value of the almost periodic (in $\tau$) function $\cL(-\tau')Q(U^{I,0}, U^{I,0})(\tau, t, x)$ valued in $L^{\infty}([0,T],H^s(\Omega)).$ We refer to \big[Section 2, \cite{MR1733696}\big] for some basic properties of almost periodic functions. 
The derivations of $\overline{\Delta} W$ and 
$\overline{Q}(W,W)$ will be presented the Appendix \ref{app-B}.

Inserting \eqref{artificial-bd}, \eqref{eq-W-1} into \eqref{second-inter}, 
we find that 
\beqs 
(\p_\tau-L)V+ Q(U^{I,0},U^{I,0})-\cL(\tau)\overline{Q}(W,W)-\left(\begin{array}{cc}
   0   & 0 \\
    0 & \mu_1\Delta_h 
\end{array}\right) U^{I,0} + \f{\mu_1}{2}\cL(\tau)\Delta_h W=0.
\eeqs
Assuming for simplicity $V|_{\tau=0}=0,$ then 
\beq\label{def-V}
\begin{aligned}
V(\tau,t,x)=\cL(\tau)\int_0^{\tau}&\bigg(-\cL(-\tau') Q\big(
U^{I,0},U^{I,0}\big)
+\overline{Q}(W,W)\bigg)\\
& +\bigg(\cL(-\tau')\left(\begin{array}{cc}
   0   & 0 \\
    0 &  {\mu_1}\Delta_h +\mu_2\na\div
\end{array}\right) U^{I,0} -\overline{\Delta} W 
\bigg)\, \d \tau'. 
\end{aligned}
\eeq
As will be shown afterwards $U^{I,0}$ belongs to $L^{\infty}_{\tau}\big(L^{\infty}([0,T],H^{s})\big)$ 
\beqs 
\cL(-\tau) Q\big(U^{I,0},U^{I,0}\big) \in L^{\infty}_{\tau}\big(L^{\infty}([0,T],H^{s-2})\big). 
\eeqs
It then follows from \big[Lemma 2.4, \cite{MR1733696}\big] that
$\big\|\ep V\big(\f{t}{\ep}, t,\cdot\big)\big\|_{H_{co}^{s-2}}\rightarrow 0.$
Moreover, if some suitable small divisor estimates hold, the decay rates in the above convergence could be made quantitative.
To be precise, if the domain is non-resonant, that is, there exists $r_0,$ such that for any $\alpha,\beta,\gamma\in\{+,-\},$  and eigenvalues $\lambda_{k}^{\pm}=\pm i|k|=\pm i|(\f{2\pi}{a_1}k'_1, \f{2\pi}{a_2}k'_2,  \f{\pi}{a_3}k_3')|,\, (k'\in \mathbb{Z}^3),$ it holds that 
\beq\label{no-resonantasp} 
\begin{aligned}
 \text{ either }  \quad \omega_{klm}^{\alpha,\beta,\gamma}\colon=\lambda_{m}^{\alpha}-  \lambda_{k}^{\beta}-\lambda_{l}^{\gamma}= 0, \\
  \text{ or }  \quad |\omega_{klm}^{\alpha,\beta,\gamma}|\gtrsim (1+|k|)^{-r_0}(1+|l|)^{-r_0},
\end{aligned}
\eeq
then one could have $\big\| \ep V\big(\f{t}{\ep}, t,\cdot\big)\big\|_{H_{co}^{s-2-r_0}}\lesssim \ep.$  This is proven in Lemma \ref{lem-V}. To be precise,  Lemma \ref{lem-V} gives that $\big\| \ep V\big(\f{t}{\ep}, t,\cdot\big)\big\|_{H_{co}^{s-2-r_0}}\lesssim \ep.$

Thanks to the simple geometry of $\Omega,$ the semigroup $\cL(\tau)$ can be expressed explicitly, by using the eigenfunctions 
$(N_{k}^{\alpha})_{k\in \mathbb{Z}^3 
,\alpha\in \{+,-\}}$ of $L$ 
which form a family of complete sets in $L^2(\Omega)\times \{\na q\big| q\in H^1, \p_z q|_{\p\Omega}=0\}.$ 
We leave these elementary computations to the Appendix \ref{app-A}.
Denote 
$\mathbb{Q}=\left( \begin{array}{cc}
 \Id  & 0 \\
  0 & -\na (-\Delta_N)^{-1}\div \end{array}\right)$ the projection of a four-elements vector field to its compressible part. 
Then by letting $$W(t)=\mathbb{Q}W+ (\Id - \mathbb{Q})W=\sum_{k'\in\mathbb{Z}^3}
\sum_{\alpha\in\{+,-\}} b_k^{\alpha}(t)N_{k}^{\alpha}+\left( \begin{array}{c}
   0   \\
   v^{\INS} \end{array}\right),$$
  (we refer to \eqref{rela-k'-k} for the relation between $k'$ and $k$), 
the equation of $W$ is thus equivalent to infinite collections of coupled equations for $b_k^{\alpha}(t),$ which turn out to be damped Burgers equations (see $(\text{DB})_m^{\alpha}$ in Appendix B.2).
Moreover, the first oscillating interior profile 
\beq\label{form-UI0}
U_{osc}^{I,0}:=\cL\bigg(\f{t}{\ep}\bigg) \mathbb{Q}W=\sum_{k'\in\mathbb{Z}^3}
\sum_{\alpha\in\{+,-\}} b_k^{\alpha}(t)N_{k}^{\alpha} e^{i\alpha|k|\tau}.
\eeq
In the following, we will assume that $W(0,\cdot)\in H^{s}$ with $s\geq 11+r_0,$ that is, 
$$\sum_{k'\in \mathbb{Z}^3}\sum_{\alpha\in\{+,-\}}|b_{k}^{\alpha}(0)|^2 (1+|k|^2)^{s}<+\infty.$$ 
Then by Lemma \ref{well-posedness-W} and Lemma \ref{lem-UI0}, there exists $T_2>0$ which is independent of $\ep,\nu$ such that 
\begin{align*}
   \sup_{0\leq t\leq T_2} \|U^{I,0}\big(\f{t}{\ep},t,\cdot\big)\|_{H^s}\lesssim \|W(0)\|_{H^s}, \quad \sup_{0\leq t\leq T_2} \|\ep V\big(\f{t}{\ep},t,\cdot\big)\|_{H^{s-r_0-2}}\lesssim \ep (1+T_2) \big(\|W(0)\|_{H^s}^2+\|W(0)\|_{H^s}^3\big).
\end{align*}

\subsection{Corrections for the fast variable}
In this subsection, we find boundary layer corrections for the interior part with fast oscillations specifically
$U_{osc}^{I,0}.$ 
In the process, the profile $\sqrt{\ep\nu}S_{osc}^{I,1}$ will also be determined. 
\subsubsection{The first boundary layers}
Let us focus only on the boundary layer corrections around the bottom, those for the upper boundary are almost identical, just replacing $z$ by $a_3-z.$

Plugging the ansatz \eqref{expansion} into \eqref{cns-abs} and match the $\ep^{-1}$ order of the boundary layer profile with the size $\sqrt{\ep\nu},$ we find that $\cB_h^0$ solves the heat equation
\beq\label{eq-bl0} 
\big(\pt -\nu\mu_1 \p_z^2\big)\cB_h^0 = 0.
\eeq
To make up the boundary condition of the horizontal velocity component of the first oscillating interior profile  $U_{osc}^{I,0},$ we need to impose
\beq\label{bl0-bd}
\cB_h^0|_{z=0}=-U_{osc,h}^{I,0}|_{z=0}.
\eeq
  For the choice of the initial condition of $\cB_h^0,$  we have some freedom. The simplest choice would be  the zero initial condition. However, if we do so, $\cB_h^0$ could be solved explicitly by using the Green function for the heat equation:
\begin{align*}
  \cB_h^0&= -2\nu\mu_1 \int_0^{t}\f{1}{(4\pi\nu\mu_1(t-t'))^{\f{1}{2}}}\p_{z}\big(e^{-\f{z^2}{4\pi\nu\mu_1(t-s)}}\big)U_{osc}^{I,0}|_{z=0}\big(\f{t'}{\ep},t', x_h\big)\, \d t'\,.
\end{align*}
It is direct to verify that
 \beqs 
\|\cB_h^0\|_{L_{T_2}^{\infty}L_{x}^2}\lesssim \nu^{\f{1}{4}}\, ,
  \eeqs
which means more or less that the size of the boundary layer degenerates to $\sqrt{\nu},$ and would not be sufficient for us to find some damping mechanism of the interior profile $U_{osc}^{I,0}$ when $\nu \geq \ep\,.$
Nevertheless, thanks to the explicit formula \eqref{form-UI0}, we can  construct $\cB_h^0$ in the following way 
\beq \label{def-cB_h^0}
\cB_h^0 : = -\sum_{k'\in \mathbb{Z}^3} \sum_{\alpha\in\{+,-\}} \f{c_{*}}{2}\f{k_h}{|k|}e^{ik_h\cdot x_h} b_k^{\alpha}(t)  e^{i\alpha |k|\tau}e^{-\f{z}{\sqrt{\ep\nu}}\sqrt{\f{|k|}{2\mu_1}}(1+\alpha i)}
\eeq
where $\tau=\f{t}{\ep}, c_{*}=\f{1}{\sqrt{a_1a_2a_3}}.$
It satisfies the heat equation \eqref{eq-bl0} up to a small remainder
\beq\label{Bh0-exact}
\big(\pt -\nu\mu_1\p_z^2\big)\cB_h^0={\cB'}_h^0
\eeq
where ${\cB'}_h^0$ is defined by replacing $b_k^{\alpha}$ in the definition of $\cB_h^0$ by $\pt b_k^{\alpha}:$
\beq \label{defcB'h}
{\cB'}_h^0:= -\sum_{k'\in \mathbb{Z}^3} \sum_{\alpha\in\{+,-\}} \f{c_{*}}{2} \f{k_h}{|k|} \pt b_k^{\alpha}(t) e^{i\alpha |k|\tau}e^{-\f{z}{\sqrt{\ep\nu}}\sqrt{\f{|k|}{2\mu_1}}(1+\alpha i)}.
\eeq
  Finally, $\cB_{3}^1$ is searched by the divergence-free condition owing to the fact $\sigma^{B,0}=0:$
  \beq\label{bo-bl-3}
\div_h \cB_h^0+ \p_{\zeta} \cB_3^1 =0, \quad \cB_3^1 \xlongrightarrow{\zeta\rightarrow+\infty} 0 , \quad \big(\zeta=\f{z}{\sqrt{\ep\nu}}\big).
  \eeq
  Consequently, 
\beqs
\cB_{3}^1|_{\zeta=0}=\int_{0}^{+\infty} \div_h \cB_h^0 \,\d\zeta=-\sqrt{\f{\mu_1}{2}}\f{c_{*}}{2}\sum_{k'\in \mathbb{Z}^3}\sum_{\alpha\in\{+,-\}}   \f{|k_h|^2}{|k|^{\f{3}{2}}} b_k^{\alpha}(t) \alpha(\alpha i+1) e^{ik_h\cdot x_h} e^{i\alpha |k|\tau}. 
\eeqs
It follows from \eqref{prop-bhp0}  and \eqref{cBh-Linfty}  that 
\beq\label{es-bh'0}
\|{\cB'}_h^0\|_{L_{T_2}^1\oH_{co}^{s-5}}+ (\ep\nu)^{\f{1}{2}} \|\p_z{\cB'}_h^0\|_{L_{T_2}^1\oH_{co}^{s-6}}\lesssim (\ep\nu)^{\f{1}{4}}(\|W(0)\|_{H^s}+\|W(0)\|_{H^s}^2),
\eeq
\beq\label{prop-bh0}
\begin{aligned}
\big\|\cB_h^0
&\|_{L_{T_2}^{\infty}\oH_{co}^{s-9/4}}+\big\|\cB_3^1
\|_{L_{T_2}^{\infty}\oH_{co}^{s-11/4}}+(\ep\nu)^{\f{1}{2}}\big(\big\|\p_z \cB_h^0
\|_{L_{T_2}^{\infty}H_{co}^{s-13/4}}+\big\|\p_z \cB_3^1
\|_{L_{T_2}^{\infty}H_{co}^{s-15/4}}\big)\\
&\lesssim (\ep\nu)^{\f{1}{4}}\|W(0)\|_{H^s}.
\end{aligned}
\eeq
In a similar manner, we can construct the boundary layer associated to the upper boundary as
\begin{align*}
\cT_h^0= -\sum_{k'\in \mathbb{Z}^3} \sum_{\alpha\in\{+,-\}} (-1)^{|k'_3|} \f{c_{*}}{2} \f{k_h}{|k|}e^{ik_h\cdot x_h}b_k^{\alpha}(t)e^{i\alpha |k|\tau}e^{-\f{a_3-z}{\sqrt{\ep\nu}}\sqrt{\f{|k|}{2\mu_1}}(1+\alpha i)}
\end{align*}
and 
\beqs
\cT_{3}^1|_{\zeta'=0}=
-\sqrt{\f{\mu_1}{2}}\f{c_{*}}{2} \sum_{k'\in \mathbb{Z}^3}\sum_{\alpha\in\{+,-\}} (-1)^{|k'_3|+1} \f{|k_h|^2}{|k|^{\f{3}{2}}} b_k^{\alpha}(t) \alpha(\alpha i+1) e^{ik_h\cdot x_h} e^{i\alpha |k|\tau},
\eeqs
which satisfies the relation
\beq\label{up-bl-3}
\div_h \cT_h^0- \p_{\zeta'} \cT_3^1 =0, \quad \cT_3^1 \xlongrightarrow{\zeta'\rightarrow+\infty} 0 , \quad \big(\zeta'=\f{a_3-z}{\sqrt{\ep\nu}}\big).
\eeq
Moreover, they satisfy the same estimates as in \eqref{prop-bh0} and \eqref{es-bh'0}. 

\subsubsection{Determine the damping term $\overline{S}$}
As $\cB_{3}^1$ and $\cT_{3}^1$ do not vanish on the boundaries, we need to use an interior profile  of size $\sqrt{\ep\nu}$ to compensate them. To be precise, we would like to find $S_{osc}^{I,1}$ and  $\overline{S}$ such that \eqref{artificial-bd} holds. It turns out to be equivalent to studying the spectrum of the linearized operator 
\beq\label{defLepnu}
\tilde{L}^{\ep,\nu}=-\left( \begin{array}{cc}
    0 &  \div \\
    \na  & 0 
    \end{array} 
    \right)+\left(\begin{array}{cc}
        0 & 0 \\
        0 & 
      \ep \nu  \mu_1  \p_z^2  
    \end{array}\right) 
\eeq
in the space $L^2(\Omega)\times \{\na q\big| q\in H^1, \p_z q|_{\p\Omega}=0\}.$ 
Let $$\lambda_{k}^{\ep,\nu,\pm}=\pm i |k|+\sqrt{\ep\nu} \lambda_{k,1}^{\pm}+\ep\nu \lambda_{k,2}^{\pm}+\cdots $$
and 
\beqs 
\Psi_{k}^{\ep,\nu,\pm}= (\Psi_{k,0}^{I,\pm}+\Psi_{k,0}^{B,\pm})+ \sqrt{\ep\nu}  (\Psi_{k,1}^{I,\pm}+\Psi_{k,1}^{B,\pm})+\cdots
\eeqs
be the eigenpairs of operator $\tilde{L}^{\ep,\nu},$ where 
$(\pm i |k|, \Psi_{k,0}^{I,\pm}=N_k^{\pm})$ are the eigenpairs of $L.$ The first boundary layer correction $\Psi_{k,0}^{B,\pm}$ can be found in the same way as done in the previous subsection  
\begin{align*}
\Psi_{k,0}^{B,\pm}&= \left( \begin{array}{c}
     0  \\
   \f{c_{*}}{2} \f{k_h}{|k|} e^{ik_h\cdot x_h}e^{-\f{z}{\sqrt{\ep\nu}}\sqrt{\f{|k|}{2\mu_1}}(1\pm i)}   \\
     0 
\end{array} \right)+ \left( \begin{array}{c}
     0  \\
    (-1)^{|k'_3|} \f{c_{*}}{2}\f{k_h}{|k|} e^{ik_h\cdot x_h}e^{-\f{a_3-z}{\sqrt{\ep\nu}}\sqrt{\f{|k|}{2\mu_1}}(1\pm i)}   \\
     0 
\end{array} \right).  
\end{align*} 
Moreover, the last component of $\Psi_{k,1}^{B,\pm},$ denoted as $\underline{\Psi}_{k,1,3}^{B,\pm}+\overline{\Psi}_{k,1,3}^{B,\pm}$ is recovered by the divergence free condition similar to \eqref{bo-bl-3}, \eqref{up-bl-3}. We only need the boundary conditions
\beqs 
\underline{\Psi}_{k,1,3}^{B,\pm}|_{z=0}= -\sqrt{\f{\mu_1}{2}}\f{c_{*}}{2}  \f{|k_h|^2}{|k|^{\f{3}{2}}}  (i\pm 1) e^{ik_h\cdot x_h}, \quad \overline{\Psi}_{k,1,3}^{B,\pm}|_{z=a_3}=(-1)^{|k'_3|+1}\underline{\Psi}_{k,1,3}^{B,\pm}|_{z=0}\, .
\eeqs
By matching the $\sqrt{\ep\nu}$ order of the interior part of the identity 
\beqs 
L^{\ep,\nu} \Psi_{k}^{\ep,\nu,\pm}=\lambda_{k}^{\ep,\nu,\pm} \Psi_{k}^{\ep,\nu,\pm}, 
\eeqs
one finds that $\Psi^{I,\pm}_{k,j}=\big(\sigma^{I,\pm}_{k,j}, u^{I,\pm}_{k,j}=\na\phi^{I,\pm}_{k,j}\big)$ solves
\beq\label{sec-int-spec-0}  
\left\{ \begin{array}{c}
  -\div  u^{I,\pm}_{k,1}= \pm i|k| \sigma_{k,1}^{I,\pm} + \lambda_{k,1}^{\pm} \sigma_{k,0}^{I,\pm},\\[3pt]
   -\na \sigma_{k,1}^{I,\pm} = \pm i|k|  u^{I,\pm}_{k,1}  + \lambda_{k,1}^{\pm} u_{k,0}^{I,\pm},
\end{array} \right.
\eeq
or equivalently, 
\beq\label{sec-int-spec} 
\left\{ \begin{array}{c}
  -\Delta\phi_{k,1}^{I,\pm} = |k|^2  \phi^{I,\pm}_{k,1}  \mp 2i|k|\lambda_{k,1}^{\pm} \phi_{k,0}^{I,\pm},  \\[3pt]
  -\Delta \sigma^{I,\pm}_{k,1}=|k|^2 \sigma_{k,1}^{I,\pm} \mp 2i|k|\lambda_{k,1}^{\pm} \sigma_{k,0}^{I,\pm}.
\end{array} \right.
\eeq
The boundary conditions of $\p_z\phi_{k,1}^{I,\pm}$ need to be imposed to cancel $\underline{\Psi}_{k,1,3}^{B,\pm}|_{z=0}$ and 
$\overline{\Psi}_{k,1,3}^{B,\pm}|_{z=a_3}:$
\beq \label{bdc}
\p_z\phi_{k,1}^{I,\pm}|_{z=0}=-\underline{\Psi}_{k,1,3}^{B,\pm}|_{z=0}, \quad  \p_z\phi_{k,1}^{I,\pm}|_{z=a_3}=-\overline{\Psi}_{k,1,3}^{B,\pm}|_{z=a_3}=(-1)^{|k'_3|+1} \p_z\phi_{k,1}^{I,\pm}|_{z=0}\, . 
\eeq 
Taking the $L^2(\Omega)$ 
inner product of \eqref{sec-int-spec} with $\phi_{k,0}^{I,\pm}=-\f{c_{*}i}{ 2
|k|}\cos (k_3 z)e^{ik_h\cdot x_h},$ we obtain after a few computations that
\beq\label{deflambdak1}
\lambda_{k,1}^{\pm}=-\f{2(1\pm i)}{a_3}\sqrt{\f{\mu_1}{2}}  \f{|k_h|^2}{|k|^{{3}/{2}}}.  %
\eeq
Note that the real part of $\lambda_{k,1}^{\pm}$ is negative once $|k_h|\neq 0,$ this damping mechanism is essential in order to show the strong convergence of the compressible part of the velocity.  
Inserting the expression of  $\lambda_{k,1}^{\pm}$ into  \eqref{sec-int-spec}, we find that $f_k^{\pm}=e^{-ik_h\cdot x_h} \phi_{k,1}^{\pm}$ solves the ODE
\beqs 
\left\{ \begin{array}{l}
  (\p_z^2+k_3^2)f_{k}^{\pm}=\pm  c_{*}\lambda_{k,1}^{\pm}\cos(k_3 z), \\[3pt]
  \p_z f_{k}^{\pm}|_{z=0} = \mp\f{a_3}{2} \f{c_{*}}{2}\lambda_{k,1}^{\pm} ,\\ 
  \p_z f_{k}^{\pm}|_{z=a_3}=(-1)^{|k'_3|+1} \p_z f_{k}^{\pm}|_{z=0} ,
\end{array} \right.
\eeqs
and is given by
\beqs 
f_{k}^{\pm}=\f{\pm c_{*}\lambda_{k,1}^{\pm}}{2k_3} \sin(k_3 z) \big(z-\f{a_3}{2}\big). 
\eeqs
It is straightforward to verify that 
\beq\label{psi1-I}
\begin{aligned}
\Psi_{k,1}^{I,\pm}&= \left( \begin{array}{c}
     \mp i|k|\phi_{k,1}^{I,\pm}- \lambda_{k,0}^{I,\pm}\phi_{k,1}^{I,\pm} \\
    \na \phi_{k,1}^{I,\pm}
\end{array} \right)
\\
&= \left( \begin{array}{c}
  \f{c_{*} }{|k|}i\lambda_{k,1}^{\pm}  e^{ik_h\cdot x_h}\big(\cos(k_3 z)-\f{|k|^2}{k_3}\sin(k_3 z)\big(z-\f{a_3}{2}\big)\big)\\
     i k_h f_k^{\pm} e^{ik_h\cdot x_h}  \\
      \p_z f_k^{\pm} e^{ik_h\cdot x_h} 
\end{array} \right).
\end{aligned} 
\eeq
Define now 
\beq \label{defS-barS}
S_{osc}^{I,1}=\sum_{k'\in \mathbb{Z}^3} \sum_{\alpha\in\{+,-\}}  b_k^{\alpha}(t) \Psi_{k,1}^{I,\pm}e^{i\alpha|k|\tau} , \quad \overline{S}= \sum_{k'\in \mathbb{Z}^3}\sum_{\alpha\in\{+,-\}}  (-\lambda_{k,1}^{\alpha})b_k^{\alpha}(t)
 N_k^{\alpha} , 
\eeq
it then stems from \eqref{sec-int-spec-0} that 
\beqs 
(\p_{\tau} -L)  S_{osc}^{I,1}= \sum_{k'\in \mathbb{Z}^3}\sum_{\alpha\in\{+,-\}}  \lambda_{k,1}^{\alpha}b_k^{\alpha}(t) 
 N_k^{\alpha} e^{i\alpha|k|\tau} =-\cL(\tau) \overline{S}. 
\eeqs
By \eqref{SoscI1}, it holds that 
 \beq\label{SoscI1-main}
\|S_{osc}^{I,1}\|_{L_{T_2}^{2}\oH_{co}^{s-\f52}}\lesssim   \|S_{osc}^{I,1}\|_{L_{T_2}^{2}H^{s-\f12}}\lesssim \|U_{osc}^{I,0}\|_{L_{T_2}^{2}H^{s}}\lesssim \|W(0)\|_{H^s}.
\eeq
Moreover, it follows from \eqref{bdc} and the expression \eqref{psi1-I} that
\beqs 
S_{osc,3}^{I,1}|_{z=0}=-\cB_3^1|_{z=0}, \quad S_{osc,3}^{I,1}|_{z=a_3}=-\cT_3^1|_{z=a_3}, \quad S_{osc,1,2}^{I,1}|_{z=0, z=a_3}=0 ,
\eeqs
and thus in particular \eqref{artificial-bd} holds true.
Let us remark that since the tangential part of the velocity $S_{osc,1,2}^{I,1}$ vanish on the boundaries, it is not necessary to introduce the boundary layer of  $S_{osc}^{B,1}$ to correct.

\subsection{Corrections for the mean flow--slow variable}
\subsubsection{Prandtl layer correction}
As studied in Subsection 2.1, the incompressible part of the first interior equation $U^{I,0}$ is the solution to the incompressible Navier-Stokes equations with only tangential dissipation: 
\beqs 
\pt v^{\INS}+ \mathbb{P}\big(v^{\INS}\cdot\na v^{\INS}-\mu_1\Delta_h v^{\INS}\big) =0, \quad \div \,v^{\INS}=0, \quad v_3^{\INS}|_{\p\Omega}=0\,. 
\eeqs
Note that the local well-posedness of the above system in the usual Sobolev spaces $H^s(\Omega)$
is standard. 

To compensate the non-vanishing boundary condition for the horizontal variable, it is natural to introduce the  Prandtl layers $v_h^p(t,x_h,\theta)=v_h^p(t,x_h,\f{z}{\sqrt{\nu}})$ and 
$\tilde{v}_h^p(t,x_h,\theta')=\tilde{v}_h^p(t,x_h,\f{a_3-z}{\sqrt{\nu}})$ which solve
\beq\label{prandtl-bo}
\left\{
\begin{array}{l}
    \pt v_h^{p}+(\underline{u_h^{I,0}}+v_h^p)\na_h v_h^p+v_h^p\cdot\na_h \underline{u_h^{I,0}}+\bigg( -
     v_3^{p,1}|_{\theta=0}+v_3^{p,1}+\theta \underline{\p_{z} u_3^{I,0}}\bigg)\p_{\theta}v_h^p-\mu_1(\Delta_h+\p_{\theta}^2)v_h^p=0,\\
   v_h^{p,1}|_{\theta=0}=- \underline{v_h^{\INS}}, \quad  -\big(\frac{\gamma-1}{2} \ep \underline{\sigma_{osc}^{I,0}}+1\big)\p_\theta v_3^{p,1}=\ep v_h^p\cdot\na_h \underline{\sigma^{I,0}_{osc}}+\big(\frac{\gamma-1}{2} \ep \underline{\sigma_{osc}^{I,0}}+1\big)\div_h v_h^p, 
\end{array}
\right.
\eeq
and 
\beq\label{prandtl-to}
\left\{
\begin{array}{l}
    \pt \tilde{v}_h^{p}+(\overline{{u}_h^{I,0}}+v_h^p)\cdot\na_h \tilde{v}_h^p+\tilde{v}_h^p\cdot\na_h \overline{{u}_h^{I,0}}+\bigg( 
    -\tilde{v}_3^{p,1}|_{\theta'=0}+\tilde{v}_3^{p,1}+\theta' \overline{
    \p_z u_3^{I,0}}\bigg)\p_{\theta}\tilde{v}_h^p-\mu_1(\Delta_h+\p_{\theta}^2)\tilde{v}_h^p=0,\\
 \tilde{v}_h^{p,1}|_{\theta'=0}= -\overline{\tilde{v}_h^{\INS}}, \quad
\big(\frac{\gamma-1}{2} \ep \overline{\sigma_{osc}^{I,0}}+1\big)\p_{\theta'} \tilde{v}_3^{p,1}=\ep \tilde{v}^p\cdot\na_h \overline{\sigma^{I,0}_{osc}}+\big(\frac{\gamma-1}{2} \ep \overline{\sigma_{osc}^{I,0}}+1\big)\div_h \tilde{v}_h^p,
\end{array}
\right.
\eeq
where we denote $\underline{f}$ and $\overline{f}$ the trace of a function $f$ on $\{z=0\}$ and  $\{z=a_3\}.$ 
Note also we write $v^p=v^{p,0}$ sometimes for notational convenience. 

Note that in the above equations, we have taken into account the interactions between the first order interior oscillating part and the boundary layer $v_h^p,$ through the term 
$\underline{u_h^{I,0}}\cdot \na v_h^p=(\underline{u_{osc}^{I,0}}+\underline{v^{\INS,0}})_h\cdot\na_h v^p$ in the equation as well as the relation
 $$\big(\frac{\gamma-1}{2} \ep \underline{\sigma_{osc}^{I,0}}+1\big)^{\f{\gamma-1}{2}}\p_\theta v_3^{p,1}=-\ep v^p\p_h \underline{\sigma^{I,0}_{osc}}-\big(\frac{\gamma-1}{2} \ep \underline{\sigma_{osc}^{I,0}}+1\big)^{\f{\gamma-1}{2}}\div_h v_h^p.$$ 
 Let us notice that the above is an  `almost divergence-free' relation in the sense that $$v^{p,1}_3|_{\theta=0}=\int_0^{+\infty}\div_h v_h^p \,\d \theta+\cO(\ep).$$ 
Moreover, due to the presence of both horizontal and vertical dissipations, the local well-posedness of the above systems in the  
weighted (in $\theta$) Sobolev conormal spaces is not an issue.  See Proposition \ref{prop-well-vhp}.


\subsubsection{Second interior profile.}
In general, the third component of the boundary layer $\sqrt{\nu} v_3^{p,1}$ and 
$\sqrt{\nu}\tilde{v}_3^{p,1}$ does not vanish on the boundaries, we need to lift them up. 
We thus introduce the unknown $\sqrt{\nu}V^{I,1}=\sqrt{\nu}(\chi^{I,1}, v^{I,1})^t$ 
which satisfies the following equation:
\beq\label{eq-ve1} 
\left\{
\begin{array}{l}
(\pt-L/\ep) V^{I,1}+
Q\big(
V^{I,1}, U^{I,0}  \big)+Q\big( U^{I,0}, V^{I,1})-\left(\begin{array}{cc}
   0   & 0 \\
    0 & \mu_1\Delta_h 
\end{array}\right)V^{I,1}=0, \\ 
V^{I,1}|_{t=0}=0, \\[5pt]
 v_3^{I,1}|_{\p\Omega}=-(v_3^{p,1}+\tilde{v}_3^{p,1})|_{\p\Omega}, 
\end{array}
\right.
\eeq
 where $U^{I,0}=U_{osc}^{I,0}+(0, v^{\INS})^t$ and 
 \beqs 
Q(V^{I,1}, U^{I,0})+Q\big( U^{I,0}, V^{I,1}) =\left( \begin{array}{c}
v^{e,1}\cdot\na\sigma^{I,0}+u^{I,0}\cdot\na\chi^{I,1}+\f{\gamma-1}{2}(\chi^{I,1}\div u^{I,0}+\sigma^{I,0}\div v^{I,1})\\[4pt]
     v^{I,1}\cdot\na u^{I,0}+ u^{I,0}\cdot\na v^{I,1} +\f{\gamma-1}{2}(\chi^{I,1}\cdot\na\sigma^{I,0}+\sigma^{I,0}\cdot\na\chi^{I,1})
\end{array}\right).
 \eeqs
 We refer to Proposition \ref{prop-well-vI1} for 
 a statement of the uniform well-posedness for \eqref{eq-ve1}. Henceforth, there exists $T_3>0$ such that
 $v^{\INS}, V^{p,0}, V^{I,1}$ all exist on the interval $[0,T_3].$ We will denote $T_1=\min\{T_2,T_3\}$ in the following so that all the profiles constructed in section 2.1-2.3 exist  on 
 $[0,T_1].$


\subsection{Further corrections for the boundary conditions} 
We expect that the quantity
\beqs 
\tilde{U}^a=U_{osc}^{I,0}+\ep U_{osc}^{I,1}+ \sqrt{\ep\nu} S_{osc}^{I,1}+
{U}_{osc}^{B,0}+\tilde{U}_{osc}^{B,0}\\
+\bigg[ \left( \begin{array}{c} 0 \\ v^{\INS} 
  \end{array}\right)
+\left( \begin{array}{c} 
0\\
v^{p,0}+\tilde{v}^{p,0}
\end{array}
\right)+ \sqrt{\nu} V^{I,1}\bigg]
\eeqs
 will serve as the approximate solution. However, 
 the velocity does not vanish on the boundaries:
 
 $\bullet$ For the mean flow,  the horizontal velocity component  
 $\sqrt{\nu}v_h^{I,1}$
 does not vanish on the boundaries. 


$\bullet$ For the oscillating part, the horizontal velocity component $\ep U_{osc,h}^{I,1}$
does not vanish on the boundaries. 

 $\bullet$ The introduction of $\cB_h^{0}$ (the horizontal part of $U^{B,0}$) to compensate the lower boundary condition of $U_{osc,h}^{I,0}$ has non-trivial contribution on the upper boundary $\cB_h^0|_{z=a_3}.$  Similar fact holds for ${\cT}_h^{0}$ on the lower boundary ${\cT}_h^0|_{z=0}.$

  $\bullet$  The vertical component $\sqrt{\ep\nu}\big(S_3+U_{osc,3}^{B,0}+\tilde{U}_{osc,3}^{B,0}\big):=  \sqrt{\ep\nu} F_3 $ vanish on the boundaries up to an error with exponential decay
  \beqs 
  \begin{aligned}
   F_3|_{z=a_3}&=\cB_{3}^{1}|_{z=a_3}
   =\int_{\f{a_3}{\sqrt{\ep\nu}}}^{+\infty} \div_h \cB_h^0 \,\d\zeta\\
   &=-\sqrt{\f{\mu_1}{2}}c_{*}\sum_{k'\in \mathbb{Z}^3}\sum_{\alpha\in\{+,-\}}   \f{|k_h|^2}{|k|^{\f{3}{2}}} b_k^{\alpha}(t) \alpha(\alpha i+1) e^{ik_h\cdot x_h} e^{i\alpha |k|\tau}e^{-\f{a_3}{\sqrt{\ep\nu}}\sqrt{\f{|k|}{2\mu_1}}(1+\alpha i)}.
  \end{aligned}
  \eeqs
Similarly, 
\beqs 
  \begin{aligned}
   F_3|_{z=0}&=\cT_{3}^{1}|_{z=0}
   =-\int_{\f{a_3}{\sqrt{\ep\nu}}}^{+\infty} \div_h \cT_h^0 \,\d\zeta'\\
  &=-\sqrt{\f{\mu_1}{2}}c_{*}\sum_{k'\in \mathbb{Z}^3}\sum_{\alpha\in\{+,-\}} (-1)^{|k_3'|+1}  \f{|k_h|^2}{|k|^{\f{3}{2}}} b_k^{\alpha}(t) \alpha(\alpha i+1) e^{ik_h\cdot x_h} e^{i\alpha |k|\tau}e^{-\f{a_3}{\sqrt{\ep\nu}}\sqrt{\f{|k|}{2\mu_1}}(1+\alpha i)}.
  \end{aligned}
  \eeqs 
It is thus necessary to introduce several further corrections.
 Let $(0, \mathfrak{R}_j)^t, (\div \mathfrak{R}_j=0, j=1,\cdots 4)$ 
 be the corresponding correctors of the above four terms. We begin with the construction of $\mathfrak{R}_1.$
 
\underline{Construction of $\mathfrak{R}_1.$}
Up to an error with exponential decay  $e^{-\f{a_3}{\sqrt{\nu}}},$ we can choose simply the following boundary layer correction $\sqrt{\nu} v^{p,1}=\sqrt{\nu}(v_h^{p,1}, \sqrt{\nu}v_3^{p,2})^t$ to make up the contribution  of $v_h^{I,1}|_{\p\Omega}:$ 
\beqs 
v_h^{p,1}=-\bigg(e^{-\f{z}{\sqrt{\nu}}} v_h^{I,1}|_{z=0}+e^{-\f{a_3-z}{\sqrt{\nu}}} v_h^{I,1}|_{z=a_3}\bigg), \, v_3^{p,2}= \bigg(e^{-\f{a_3-z}{\sqrt{\nu}}} \div_h v_h^{I,1}|_{z=a_3}-e^{-\f{z}{\sqrt{\nu}}} \div_h v_h^{I,1}|_{z=0}\bigg).
\eeqs
It is direct to see that $\div v_h^{p,1}=0$ and
\beq\label{trace-lowvar}
(v_h^{p,1}+v_h^{I,1})|_{z=0}=- e^{-\f{a_3}{\sqrt{\nu}}} v_h^{I,1}|_{z=a_3}, \quad (v_h^{p,1}+v_h^{I,1})|_{z=a_3}=-
e^{-\f{a_3}{\sqrt{\nu}}} v_h^{I,1}|_{z=0}.
\eeq
Now, it remains to find two divergence free vectors $\mathfrak{R}_{11}$ and $\mathfrak{R}_{12}$ to compensate the boundary effects of $\sqrt{\nu}( v_h^{p,1}+v_h^{I,1}, 0)^t$ and $\nu (0, v_3^{p,2}).$

Let $g(z)={z^2}(z-a_3)/{a_3^2}, h(z)=z^2(3a_3-2z)/a_3^3$ two smooth functions on $[0,a_3]$ with properties
 \begin{align*}
     g(0)=g(a_3)=g'(0)=0,\quad g'(a_3)=1, \\
     h(0)=h'(a_3)=h'(0)=0,\quad h(a_3)=1.
 \end{align*}
The corrector $\mathfrak{R}_{11}$ and $\mathfrak{R}_{12}$ can be chosen as
 \beqs 
\mathfrak{R}_{11}=\sqrt{\nu}e^{-\f{a_3}{\sqrt{\nu}}} \left( \begin{array}{c} 
g'(z) v_h^{I,1}|_{z=0}+g'(a_3-z)v_h^{I,1}|_{z=a_3}\\[3pt] 
-g(z)\div_h v_h^{I,1}|_{z=0}+ g(a_3-z)\div_h v_h^{I,1}|_{z=a_3}
 \end{array}\right)\, ,
 \eeqs
 \beqs 
\mathfrak{R}_{12}=\nu\left( \begin{array}{c} 
h'(a_3-z)\tilde{v}_h^{p,2}|_{z=0} +h'(z)\tilde{v}_h^{p,2}|_{z=a_3}\\[5pt] 
h(a_3-z) v_3^{p,2}|_{z=0}+h(z)v_h^{p,2}|_{z=a_3}
 \end{array}\right),   
 \eeqs
 where we denote for convenience $v_3^{p,2}=\div_h \tilde{v}_h^{p,2}.$ 
 
Define $\mathfrak{R}_1=v^{p,1}+\mathfrak{R}_{11}+\mathfrak{R}_{12},$ then by construction,  $\mathfrak{R}_1+(v_h^{I,1},0)^t=0 \text{ on } \p\Omega$ and 
\beq\label{es-R1}   
 \|\mathfrak{R}_1\|_{L_{T_1}^{2}\oH_{co}^{s-6}}+ \|(\pt ,\nu^{\f{1}{2}}\p_z)\mathfrak{R}_1\|_{L_{T_1}^{2}\oH_{co}^{s-8}}\lesssim \nu^{\f{3}{4}}.
\eeq
$\bullet$ Construction of $\mathfrak{R}_2.$ This one could be constructed similarly as $\mathfrak{R}_1.$ 
Define $u_{osc}^{B,1}=\ep(\cB_h^{1}, \sqrt{\ep\nu}\cB_3^{2})^t,$ where 
\beqs 
\cB_h^{1}=-\bigg(e^{-\f{z}{\sqrt{\ep\nu}}} U_{osc,h}^{I,1}|_{z=0}+e^{-\f{a_3-z}{\sqrt{\ep\nu}}} U_{osc,h}^{I,1}|_{z=a_3}\bigg), \, \cB_3^{2}= \bigg(e^{-\f{a_3-z}{\sqrt{\ep\nu}}} \div_h U_{osc,h}^{I,1}|_{z=a_3}-e^{-\f{z}{\sqrt{\ep\nu}}} \div_h U_{osc,h}^{I,1}|_{z=0}\bigg).
\eeqs
To lift up  the boundary condition of $\ep (\cB_h^{1}+U_{osc,h}^{I,1},0)^t$ and $\ep\sqrt{\ep\nu} (0,\cB_3^2),$
we define $\mathfrak{R}_{21}, \mathfrak{R}_{22}$ in the same way as that of $\mathfrak{R}_{11}, \mathfrak{R}_{12}.$  Then $\mathfrak{R}_2= u_{osc}^{B,1}+\mathfrak{R}_{21}+\mathfrak{R}_{22}$ compensates the boundary effects of $U_{osc}^{I,1}$ and 
\beq\label{es-R2}
 \|\mathfrak{R}_2\|_{L_{T_1}^{\infty}\oH_{co}^{s-5-r_0}}+ \|(\pt , (\ep\nu)^{\f{1}{2}}\p_z) \mathfrak{R}_2\|_{L_{T_1}^{2}\oH_{co}^{s-6-r_0}}\lesssim (\ep\nu)^{\f{1}{4}}
 \|W(0)\|_{H^s}.
\eeq
$\bullet$  The construction of $\mathfrak{R}_3, \mathfrak{R}_4$ are easier and can be defined in the same way as that of $\mathfrak{R}_{11}, \mathfrak{R}_{12}.$ 
For instance, 
$\mathfrak{R}_4$ can be taken as
\beqs 
\mathfrak{R}_4=\sqrt{\ep\nu}\left( \begin{array}{c} 
h'(a_3-z)\int_{\f{a_3}{\sqrt{\ep\nu}}}^{+\infty} \cB_h^0 \d \zeta +h'(z)\int_{\f{a_3}{\sqrt{\ep\nu}}}^{+\infty} \cT_h^0 \d \zeta'\\[5pt] 
h(a_3-z) F_3|_{z=0}+h(z)F_3|_{z=a_3}
 \end{array}\right).
 \eeqs
It can be verified that
\beq\label{es-R34}   
 \|(\mathfrak{R}_3, \mathfrak{R}_4)\|_{L_{T_1}^{\infty}\oH_{co}^{s-3}}+ \|(\pt ,(\ep\nu)^{\f{1}{2}}\p_z)(\mathfrak{R}_3, \mathfrak{R}_4)\|_{L_{T_1}^{2}\oH_{co}^{s-5}}
\lesssim  e^{-\f{a_3}{2\sqrt{\ep\nu}}}.
\eeq

\subsection{The approximate solution}
 In this subsection, we derive the equation satisfied by the approximate solution 
 \beq\label{app sol}
 U^a=U^{I,0}+\ep U_{osc}^{I,1}+\sqrt{\ep\nu}S_{osc}^{I,1}+ {U}_{osc}^{B,0}+\tilde{U}_{osc}^{B,0}\
 +V^a
+\left( \begin{array}{c} 0 \\
\sum_{j=1}^4\mathfrak{R}_j \end{array}\right)
\eeq
where $U^{I,0}=U_{osc}^{I,1}+\left(
\begin{array}{c} 0\\
v^{\INS}\end{array}\right)$ and 
\beq\label{defva}
V^a= \left( \begin{array}{c} 
0\\
v^{p,0}+\tilde{v}^{p,0}
\end{array}
\right)+ \sqrt{\nu}\, V^{I,1}\, .
\eeq
Since it is assumed that $s\geq 11+r_0,$ it holds  by the constructions in the previous subsections that $U^a\in L_{T_1}^{\infty}\oH_{co}^6.$

Let us now compute the equation satisfied by $U^a.$
First, it follows by construction \eqref{eq-int0}-
\eqref{artificial-bd} that
\beq\label{eq-oscint}
\begin{aligned}
&\bigg(\pt- L^{\ep,\nu}/\ep
\bigg)\big(U^{I,0}+\ep U^{I,1}_{osc}+\sqrt{\ep\nu}S_{osc}^{I,1}\big)+Q(U^{I,0},U^{I,0}) \\
&=-\nu\left(\begin{array}{cc}
   0   & 0 \\
    0 & \mu_1\p_z^2
\end{array}\right)U^{I,0} +\bigg[\pt\big(\ep U_{osc}^{I,1}+\sqrt{\ep\nu}S_{osc}^{I,1}\big)\bigg]\big(\f{t}{\ep}, t\big)-\left(\begin{array}{cc}0   & 0 \\ 0 & \div_{\nu}\cS \end{array}\right)\big(\ep U_{osc}^{I,1}+\sqrt{\ep\nu}S_{osc}^{I,1}\big)\\
&:= R_0^a +R_1^a+R_2^a\, ,
\end{aligned}
\eeq
where 
$L^{\ep,\nu}$ is the linear operator defined in \eqref{defLepnu}.
The remainder $R_1^a$ is bounded by
\beq\label{esR2a}
\|R_0^a\|_{L_{T_1}^{\infty}
\oH_{co}^{s-4}}+\|\p_z R_0^a\|_{L_{T_1}^{\infty}\oH_{co}^{s-5}} \lesssim \nu\,,
\eeq
while $R_1^a, R_2^a$ being bounded under the `non-resonant' assumption as
\beq\label{esR1a}
\|(R_1^a, R_2^a)\|_{L^1_{T_1} \oH_{co}^{s-5-r_0}}+\|\p_z (R_1^a, R_2^a)\|_{L_{T_1}^{1}\oH_{co}^{s-6-r_0}} \lesssim \ep +\sqrt{\ep\nu}. 
\eeq
Second, by \eqref{Bh0-exact}, we derive that
\beq\label{eq-oscbd1}
(\pt-L^{\ep,\nu}/\ep)U_{osc}^{B,0}
= \left(\begin{array}{c} 0   \\
    {\cB'}_h^0-\mu_1\Delta_h \cB_h^0\\[3pt]
   \nu\mu_1\int_{\f{z}{\sqrt{\ep\nu}}}^{+\infty}\div_h {\cB'}_h^0 \d\zeta -\mu_1\Delta_h \cB_3^1
   \end{array}\right) := R_{31}^a.
\eeq
Notice that by  \eqref{Bh0-exact}, \eqref{bo-bl-3}, it holds that
\beqs 
\big(\pt-\nu\mu_1\p_z^2\big)\cB_{3}^1=\nu\mu_1\int_{\f{z}{\sqrt{\ep\nu}}}^{+\infty}\div_h {\cB'}_h^0 \d\zeta.
\eeqs
We get in a similar manner that 
\beq\label{eq-oscbd2}
(\pt-L^{\ep,\nu}/\ep)\tilde{U}_{osc}^{B,0}= \left(\begin{array}{c}
   0   \\
   {\cT'}_h^0-\nu\mu_1\Delta_h \cT_h^0\\[3pt]
   -\nu\mu_1\int_{\f{a_3-z}{\sqrt{\ep\nu}}}^{+\infty}\div_h {\cT'}_h^0 \d\zeta -\nu\mu_1\Delta_h \cT_3^1
   \end{array}\right) := R_{32}^a.
\eeq
Define  $R_3^a:= R_{31}^a+R_{32}^a.$ It follows from the estimates \eqref{prop-bh0}, \eqref{prop-bhp0} 
that
\beq\label{esR3a} 
\|R_3^a\|_{L^1_{T_1}\oH_{co}^{s-4}}+(\ep\nu)^{\f{1}{2}} \|\p_z R_3^a\|_{L_{T_1}^1\oH_{co}^{s-5}}\lesssim (\ep\nu)^{\f{1}{4}}.
\eeq
Next, from \eqref{prandtl-bo} and \eqref{prandtl-to} one deduces 
that 
\beqs 
\big(\pt-L^{\ep,\nu}) \left( \begin{array}{c} 
0\\
v^{p,0}+\tilde{v}^{p,0}
\end{array}
\right)
+\cN^p=\left(\begin{array}{c}
      0    \\
0\\
\sqrt{\nu}(\pt-\Delta_{\nu})(v_3^{p,1}+\tilde{v}_3^{p,1})
\end{array}\right)+\left(\begin{array}{c} 0\\
\ep\nu\mu_2\nabla \cN_0^p\end{array}\right):=R_{41}^a
\eeqs
where 
\beq \label{defnp}
\cN^p= \left( \begin{array}{c}
     \cN_0^p  \\
     \cN_h(v_h^p)+\cN_h(\tilde{v}_h^p)\\
     0
\end{array} \right)
\eeq
with $ \cN_h(v_h^p)$
and $\cN_h(\tilde{v}_h^p)$  the nonlinear terms appearing in \eqref{prandtl-bo}, \eqref{prandtl-to} and 
\beqs 
 \cN_0^p=\f{\gamma-1}{2}\bigg(\underline{\sigma^{I,0}}\,\div v^{p,0} + \overline{\sigma^{I,0}}\,\div \tilde{v}^{p,0}\bigg)+{v}_h^p\cdot\na_h \underline{\sigma^{I,0}}+\tilde{{v}}_h^p\cdot\na_h \overline{\sigma^{I,0}}.
\eeqs
This, together with \eqref{eq-ve1}, yields 
\beq\label{eq-vp} 
(\pt-L^{\ep,\nu})
V^{a}+
\cN^p+\cN^I
= {R_4^a},
\eeq
where 
\beqs 
\cN^I=Q\big(\sqrt{\nu}
V^{I,1}, U^{I,0}  \big)+Q\big( U^{I,0}, \sqrt{\nu}V^{I,1} 
\big)
\eeqs
and 
$$ {R_4^a}:=R_{41}^a-\left(\begin{array}{c}
      0    \\
    \nu (\mu_1\p_z^2+\mu_2\na\div) v^{I,1}
\end{array}\right).$$
Similar to the estimate of $R_3^a,$ it holds that
\beq\label{esR4a} 
\|{R_4^a}\|_{L_{T_1}^{\infty}\oH_{co}^{s-7}}+\nu^{\f{1}{2}}\|  \p_z{R_4^a}\|_{L_{T_1}^{\infty}H_{co}^{s-8}}\lesssim \nu^{\f{3}{4}}.
\eeq

Now, collecting \eqref{eq-oscint}-\eqref{eq-vp}, 
we find that the approximate solution $U^a$ is governed by
\beq\label{eq-ua-1}
(\pt-L/\ep)U^a+Q(U^a,U^a)-\left(\begin{array}{cc}
   0   & 0 \\
    0 & \div_{\nu}\cS
\end{array}\right)U^a= R^a:=\sum_{j=1}^6 R_j^a,
\eeq
where $R_1^a, R_2^a$ are defined in \eqref{eq-int0},
$R_4^a=(0,{R_4^a}')^t,$
\beqs 
R_5^a= \left(\begin{array}{c}
0\\
\big(\pt-\nu\mu_1\Delta\big)
\sum_{j=1}^4 \mathfrak{R_j}
\end{array}\right), \quad R_6^a=Q(U^a,U^a)-Q(U^{I,0}, U^{I,0})-\cN^p-\cN^e . 
\eeqs

In view of the estimates \eqref{es-R1}-\eqref{es-R34}, 
it holds that
\beq\label{esR5a} 
\|{R_5^a}\|_{L_{T_1}^{2}\oH_{co}^{s-8-r_0}}+(\ep\nu)^{\f{1}{2}}\|  \p_z{R_5^a}\|_{L_{T_1}^{2}\oH_{co}^{s-9-r_0}}\lesssim \nu^{\f{3}{4}}+(\ep\nu)^{\f{1}{4}}. 
\eeq
Moreover, we find after some algebraic computations (see the next subsection) that 
\beq\label{esR6a}  
\|{R_6^a}\|_{L_{T_1}^{1}\oH_{co}^{s-6-r_0}}+(\ep\nu)^{\f{1}{2}}\|  \p_z{R_6^a}\|_{L_{T_1}^{1}\oH_{co}^{s-7-r_0}}\lesssim \nu^{\f{3}{4}}+(\ep\nu)^{\f{1}{4}}+\ep.
\eeq
Collecting \eqref{esR2a}, \eqref{esR1a}, \eqref{esR3a}, \eqref{esR4a}-\eqref{esR6a} we have that the error satisfies 
\beqs 
\|{R^a}\|_{L_{T_1}^{1}\oH_{co}^{s-8-r_0}}+(\ep\nu)^{\f{1}{2}}\|  \p_z{R^a}\|_{L_{T_1}^{1}\oH_{co}^{s-9-r_0}}\lesssim \nu^{\f{3}{4}}+(\ep\nu)^{\f{1}{4}}+\ep,
\eeqs
which gives \eqref{es-error} since $s\geq 11+r_0.$
\subsubsection{Verification of \eqref{esR6a}}
It is just a matter to expand the nonlinear terms. 
Define
\beqs 
\mathscr{R} := U^a-U^{I,0}- V^a
=\ep U_{osc}^{I,1}+\sqrt{\ep\nu}S_{osc}^{I,1}+\left( \begin{array}{c}
    0 \\
\cB^0+\cT^0+\sum_{j=1}^4 \mathfrak{R}_j
\end{array}\right)
\eeqs
which is bounded by
\beqs 
\|
\mathscr{R}\|_{L_{T_1}^1\oH_{co}^{s-5-r_0}}+(\ep\nu)^{\f{1}{2}}\| \p_z\mathscr{R}\|_{L_{T_1}^1\oH_{co}^{s-6-r_0}}\lesssim \ep+(\ep\nu)^{\f{1}{4}}+\nu^{\f{3}{4}}.
\eeqs
Define also $V^B=(0, v^{p,0}+\tilde{v}^{p,0})^t,$
after some  computations, we find that $R_6^a=R_{6,1}^a+R_{6,2}^a$ with
\beqs 
\begin{aligned}
   R_{6,1}^a &= Q(U^{I,0}+V^B, V^B)+Q(V^B, U^{I,0}+V^B )-\cN^p+\left( \begin{array}{c}
        0  \\
        \sqrt{\nu}v_3^{I,1}\p_z v_h^{p,0} \\
        0
   \end{array}\right)   ,\\
   R_{6,2}^a&= Q(\mathscr{R}, U^a-\mathscr{R})+Q(U^a, \mathscr{R})+Q(V^B,\sqrt{\nu}V^{I,1})+Q(\sqrt{\nu}V^{I,1}, V^B+\sqrt{\nu}V^{I,1})-\left( \begin{array}{c}
        0  \\
        \sqrt{\nu}v_3^{I,1}\p_z v_h^{p,0} \\
        0
   \end{array}\right).   
\end{aligned}
\eeqs
It is direct to verify  that 
\beqs 
\|
R_{6,2}^a\|_{L_{T_1}^1\oH_{co}^{s-6-r_0}}+(\ep\nu)^{\f{1}{2}}\| \p_z R_{6,2}^a\|_{L_{T_1}^1\oH_{co}^{s-7-r_0}}\lesssim \ep+(\ep\nu)^{\f{1}{4}}+\nu^{\f{3}{4}}.
\eeqs
For $R_{6,1}^a,$ it is convenient to consider separately the contributions from the bottom and upper boundary layers. Therefore, we write $R_{6,1}^a=R_{6,1}^{a,bot}+ R_{6,1}^{a,up}.$ Let us only give the exact form of $R_{6,1}^{a,bot},$ those of $R_{6,1}^{a,up}$ being almost identical, just replacing $v^p$ by $\tilde{v}^p$ and $\underline{U^{I,0}}$ by $\overline{U^{I,0}}.$ 

Denote $R_{6,1}^{a}=R_{6,1}^{a,bot}+ R_{6,1}^{a,up}, \quad R_{6,1}^{a,bot}=((R_{6,1}^{a,bot})_0, (R_{6,1}^{a,bot})_h, (R_{6,1}^{a,bot})_3).$
By recalling the 
the definition  of $\cN^p$ in \eqref{defnp} and 
of the quadratic form
\beqs 
Q(U,V)=(u\cdot \na) V+\f{\gamma-1}{2}\sigma L V, \quad  (U=(\sigma, u)^t),
\eeqs
that
\beqs
\begin{aligned}
(R_{6,1}^{a,bot})_0&=
v_h^{p,0}\cdot\na_h(\sigma^{I,0}-\underline{\sigma^{I,0}})
+\sqrt{\nu}v_3^{p,1}
\p_z\sigma^{I,0}+\f{\gamma-1}{2}(\sigma^{I,0}-\underline{\sigma^{I,0}})\,\div v^{p,0}, 
\end{aligned}
\eeqs
\beqs
(R_{6,1}^{a,up})_h=v_h^p\cdot \na_h(u^{I,0}-\underline{u^{I,0}})_h+\sqrt{\nu}v_3^{p,1}\p_z u^{I,0}+\big(u^{I,0}-\underline{u^{I,0}}\big)_h \cdot\na_h {v}^p+(u_3^{I,0}-z\underline{\p_z u_3^{I,0}}-\sqrt{\nu}(v_3^{e,1}-\underline{v_3^{e,1}}))\p_z u_h^p,
\eeqs
and 
\beqs
(R_{6,1}^{a,up})_3=v_h^p\cdot \na_h(u_3^{I,0}-\underline{u_3^{I,0}})+\sqrt{\nu}v_3^{p,1}\p_z u_3^{I,0}+\sqrt{\nu} (u^{I,0}+v^p)_h\cdot\na_h v_3^{P,1}+
\big(u_3^{I,0}-\underline{u_3^{I,0}}+\sqrt{\nu}v_3^{p,1})\p_z v_3^{p,1}.
\eeqs
 It can be verified that
\beqs 
\|R_{6,1}^a\|_{L_{T_1}^{\infty}H_{co}^{s-1}}+ \nu^{\f{1}{2}}\|\p_z R_{6,1}^a\|_{L_{T_1}^{\infty}H_{co}^{s-2}}\lesssim \nu^{\f{3}{4}}.
\eeqs

\section{Energy functions and main strategy}

From this section to Section 5, we focus on the stability analysis of the remainder $U^R$ 
and provide the proof of Theorem \ref{thm-remainder}. In the current section, we introduce the definitions of key functionals and outline the main strategies that will be employed in the proof of Theorem \ref{thm-remainder}.
\subsection{Energy functionals}
%
%
%
%


For any $T\geq 0,$  define
\beq \label{def-cNT}
\mathcal{N}_{T}=\mathcal{N}_{T}(\sigma^{R},u^{R}):=\{
(\sigma^{R},u^{R})
\in L^{\infty}([0,T],L^2(\Omega))\big|\, \mathcal{N}_{T}(\sigma^{R},u^{R})<+\infty\},
\eeq
where 
$$\mathcal{N}_{T}(\sigma^{R},u^{R})=\mathcal{E}_T(\sigma^{R},u^{R})+\mathcal{A}_{T}(\sigma^{R},u^{R}),$$ 
with the energy norm $\mathcal{E}_{T}$
and $L_{x}^{p}\,(p=3,6,+\infty)$ type norm $\mathcal{A}_{T}$ defined respectively by
\begin{equation}\label{def-cET}
\begin{aligned}
&\qquad \mathcal{E}_{T}(\sigma^{R},u^{R})\\
&:=\eta^{-1} \Big(\|(\sigma^R, u^R)\|_{L_{T}^{\infty}\underline{H}_{tan}^3}+ \|(\nu^{\f12}\p_z u^R, \nabla_h u^R,\div u^R)\|_{L_T^{2}\underline{H}_{tan}^3} +
\|(\nabla\sigma^{R},\div u^R)\|_{L_T^2H_{co}^2\cap L_T^{\infty}\underline{H}_{co}^1}\\
&\qquad\qquad  + \|\phi\, \omega_h^R\|_{L_T^{\infty}H_{co}^2}+\|\phi(\na_h,\nu^{\f12}\p_z)\omega_h^R\|_{L_T^{2}H_{co}^2}\Big)\\
&+ \eta^{-1}\Big(\ep\|(\na \sigma^R,\div u^R)\|_{L_T^{\infty}\underline{H}_{co}^{2}}+ \ep\|\na\div u^R\|_{L_T^{2}\underline{H}_{co}^{2}\cap L_T^{\infty}\underline{H}_{co}^{1}} +\ep^{\f12}\|\na\div u^R\|_{L_T^2H_{tan}^2}+\tau^2\|\na^2\sigma^R\|_{L_T^2H_{co}^1}\\
&\qquad \qquad +\min\{(\ep^2\nu)^{\f13}, (\ep\nu)^{\f12}\}\|\na u^R\|_{L_T^{\infty}H_{co}^{1}}+\ep\nu \|\na^2 u^R\|_{L_T^{\infty}H_{co}^{1}\cap L_T^{2}\underline{H}_{co}^{2}}\Big),
\end{aligned}
\eeq
and
\beq\label{def-cAT}
\begin{aligned}
& \mathcal{A}_{T}(\sigma^{R},u^{R})\\
&:=\eta^{-1}\Big(
\|
\div u^R\|_{ L_T^2L_x^{\infty}\cap L^2_TW_{co}^{1,6}
}+ \min\{1, (\nu^3/\ep^7)^{\f14} \} \|\na\sigma^R\|_{L_T^2L_x^{\infty}}
+\tau \|\na\sigma^R\|_{L^2_TW_{co}^{1,3}}
\\
&\qquad +\|(\sigma^R, u_3^R)\|_{L_T^{\infty}L_x^{\infty}\cap L_T^2W_{co}^{1,\infty}\cap L_T^2W_{co}^{2,6}}+\|(\ep^{\f14}Z_{tan},\ep^{\f12+\delta}Z_3)\div u^R\|_{L_T^2L_x^{\infty}}+\ep\|(Z\sigma^R, \div u^R)\|_{L_{T}^{\infty}L_x^{\infty}}  \Big)\\
 &\,+\eta^{-1}\nu^{\frac{1}{4}}\Big(\|(Z_{tan}, \min\{\ep^{\f23}\nu^{\f{5}{6}}, \ep^{\f12}\nu\}^{\f12}\p_z)u^R\|_{L_T^2W_{tan}^{1,\infty}}+
\|\ep\nu\p_z^2 u^R\|_{L_T^2L_x^{\infty}}
+\|Z_3 u_h^R\|_{L_T^2(W_{tan}^{1,\infty}\cap W_{tan}^{2,3})}\Big)
 \\
 &\, +\eta^{-1}\Big(\|(\min\{(\ep^2\nu)^{\f16},(\ep\nu)^{\f14}\} u_h^R, (\ep\nu)^{\f12}Z u_h^R, 
 (\ep^{\f{3+4\delta}{2}}+(\ep\nu)^{\f34+\delta})\na u_h^R)\|_{L^{\infty}_{T}L^{\infty}_x\cap L_T^{\infty}W_{co}^{1,3}}\Big), 
\end{aligned}
\end{equation}
where $\omega_h$ is the horizontal component of the vorticity $\omega^R=\curl u^R$ and 
$\tau=\min\{1,(\nu^5/\ep^7)^{\f{1}{12}}, (\nu^2/\ep^3)^{\f14}\}$ and $\delta>0$ is an arbitrary small number that will be chosen later. 

\medskip

Under the above definitions, Theorem \ref{thm-remainder} will be the consequence of the following uniform regularity estimates: 
    \begin{thm}[Uniform estimates]\label{thm1}
Assume that the initial data $(u_0^R,\sigma_0^R)$ satisfies the compatibility condition \eqref{comp-cond} and is such that 
$\cN_0(u^R,\sigma^R)<+\infty.$
Assume also that $\sigma_0^R$ is such that 
$$\rho_0(x)=\bigg(\f{\gamma-1}{2}\ep(\sigma_0^{a}+\sigma_0^R)+1\bigg)^{\f{2}{\gamma-1}}\in [1/2, 2],\quad  \forall \, x\in \Omega.$$
There exist $\ep_0\in (0, 1]$ and $T_{0}>0,$ such that, for any $$(\ep,\nu)\in A_0:=\big\{0<\ep\leq \ep_0, \nu=\ep^{\kappa}, \text{ with } \kappa\in (0,3)\big\},$$
the system \eqref{eq-UR} admits a unique solution $(\si^{R},u^{R})$ 
which satisfies
 \beq\label{epsigmaLinfty}
 \rho(t,x) \in [1/4,4], \qquad \forall \, (t,x)\in [0,T_0]\times \Omega 
 \eeq
 and 
 \beq\label{bddcN}
 \sup_{(\ep,\nu)\in A_0 
 }\mathcal{N}_{T_0}(\si^{R},u^{R})< +\infty.
 \eeq
\end{thm}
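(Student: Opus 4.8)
The plan is to run a continuation (bootstrap) argument on the functional $\mathcal{N}_T$. For fixed $(\ep,\nu)$, local existence and uniqueness of a strong solution to the quasilinear system \eqref{eq-UR} in the conormal class underlying $\mathcal{N}_T$ is obtained by a standard iteration scheme: the horizontal dissipation $\mu_1\Delta_h$ together with the (small but positive) vertical dissipation $\mu_1\nu\p_z^2$ makes the velocity equation parabolic, the density equation is transported, and the two compatibility conditions \eqref{comp-cond} are exactly what makes $\pt u^R(0)$ and $\pt^2 u^R(0)$ well-defined traces --- consistent with the fact that $\mathcal{N}_T$ only involves $\alpha_0\le 2$ time derivatives. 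The core of the proof is then an a priori estimate, uniform in $(\ep,\nu)\in A_0$, of the form
\[
\mathcal{N}_T(\sigma^R,u^R)\ \le\ C\big(1+\mathcal{N}_0(\sigma_0^R,u_0^R)\big)\ +\ \big(T^{\theta}+\eta^{\theta'}\big)\,\Lambda\big(\mathcal{N}_T(\sigma^R,u^R)\big)
\]
for some $\theta,\theta'>0$ and an increasing function $\Lambda$. Fixing the size of the data, then decreasing $\ep_0$ (so that $\eta$ is small) and $T_0$, the last term is absorbed and \eqref{bddcN} follows. The pointwise bound \eqref{epsigmaLinfty} on $\rho$ is a byproduct: $\|\ep\sigma^R\|_{L_{T_0}^\infty L_x^\infty}$ is $O(\eta)$ by $\mathcal{A}_{T_0}$ and $\|\ep\sigma^a\|_{L_{T_0}^\infty L_x^\infty}=O(\ep)+O(\nu^{1/2})$ by Theorem \ref{thm-appsol}, which together with $\rho_0\in[1/2,2]$ keeps $\rho$ in $[1/4,4]$ on a short interval. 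Finally, Theorem \ref{thm-remainder} is immediate, since every summand of $\mathcal{N}_T$ carries the prefactor $\eta^{-1}$, so $\|U^R\|_{L_{T_0}^\infty L^2}+\nu^{1/2}\|\nabla U^R\|_{L_{T_0}^2L^2}\lesssim \eta\,\mathcal{N}_{T_0}\lesssim\eta\to 0$.

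For the a priori estimate I would proceed in several coupled blocks. \emph{(1) Tangential conormal energy.} Apply $Z^\alpha$ with $|\alpha|\le 3$, $\alpha_3=0$, $\alpha_0\le 2$ to \eqref{eq-UR} and run an $L^2$ energy estimate after applying a suitable symmetrizer: the singular term $L/\ep$ is essentially skew-adjoint (modulo commutators, using $u^R\cdot\bn=0$ on $\p\Omega$) and disappears, while $-\rho^{-1}\div_{\nu}\cS u^R$ produces the good dissipation $\|(\nu^{1/2}\p_z u^R,\nabla_h u^R,\div u^R)\|_{L_T^2\underline{H}_{tan}^3}$. \emph{(2) Acoustic/compressible part.} To control $(\nabla\sigma^R,\div u^R)$ at one extra conormal order with the weights $\ep,\ep^{1/2},\tau$ appearing in $\mathcal{E}_T$, use the first equation of \eqref{eq-UR} to trade a time derivative for $\ep\nabla$ (schematically $\div u^R=-\ep(\pt+u\cdot\na)\sigma^R+\cdots$) and the momentum equation for $\nabla\sigma^R/\ep$, closing the estimate by treating $\sigma^R$ and $\ep\,\div u^R$ as a damped wave system. \emph{(3) Normal derivatives via vorticity.} Because the vertical viscosity degenerates, $\p_z u^R$ is out of reach of the energy method directly; instead derive the equation for $\phi\,\omega_h^R$ (with $\phi(z)=z(a_3-z)$), which is transport plus horizontal (and weak vertical) diffusion with no boundary obstruction, and close $\|\phi\,\omega_h^R\|_{L_T^\infty H_{co}^2}+\|\phi(\na_h,\nu^{1/2}\p_z)\omega_h^R\|_{L_T^2 H_{co}^2}$; then the Hodge-type system $\curl u^R=\omega^R$, $\div u^R=g$, $u_3^R|_{\p\Omega}=0$ recovers the remaining normal derivatives of $u^R$ with the indicated $\ep,\nu$ weights (the minima such as $\min\{(\ep^2\nu)^{1/3},(\ep\nu)^{1/2}\}$). \emph{(4) $L^\infty$-type norms ($\mathcal{A}_T$).} These follow from anisotropic Sobolev embeddings in the conormal spaces, together with maximum-principle / Duhamel estimates for the scalar transport--diffusion equations satisfied by $\sigma^R$ and $u_3^R$; the various minima (e.g. $\min\{1,(\nu^3/\ep^7)^{1/4}\}$) simply record which competing parabolic-smoothing scale ($\sqrt{\ep\nu}$, $\sqrt\nu$, or none) wins in a given regime of $\kappa$.

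The main obstacle --- and the reason $\mathcal{N}_T$ is so elaborate --- is the bookkeeping of $\ep$ and $\nu$ weights so that every nonlinear term $Q(U^a+U^R,U^R)$, $Q(U^R,U^a)$ and every commutator $[Z^\alpha,\rho^{-1}\div_{\nu}\cS]u^R$ closes \emph{without losing a power of $\ep^{-1}$ or $\nu^{-1}$}. The singular operator $L/\ep$ and the degenerate vertical dissipation push in opposite directions, and the forcing $R^a$ together with the boundary-layer pieces of $U^a$ (of widths $\sqrt{\ep\nu}$ and $\sqrt\nu$) are only absorbable by precisely the weighted dissipation listed in $\mathcal{E}_T$; in particular terms such as $u_3^R\,\p_z U^a$ and $U^R\cdot\na(\text{boundary layer})$ must be handled with the sharp conormal interpolation inequalities of the Appendix. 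It is exactly in closing these interactions that the restriction $\kappa\in(0,3)$ is used: it guarantees $\eta=\ep+(\ep\nu)^{1/4}+\nu^{3/4}\to 0$ and that all the weight combinations entering the estimates (e.g. $\nu^3/\ep^7$, $\ep^2\nu$, $\nu^5/\ep^7$) occur only to nonnegative powers after balancing, so the bootstrap inequality above indeed closes on a uniform time interval $[0,T_0]$.
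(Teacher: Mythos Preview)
Your proposal is correct and follows essentially the same strategy as the paper: a bootstrap on $\mathcal{N}_T$ combining local existence (Theorem~\ref{classical-local}) with the a priori estimate of Proposition~\ref{prop-unifromes}, itself obtained via exactly your blocks (1)--(4), which correspond to Steps~1--5 of Section~\ref{secmainstrategy} and Proposition~\ref{prop:est-A-t}. The only cosmetic differences are that the paper's bootstrap inequality carries the approximate-solution norm $\mathcal{M}_T^a$ explicitly and the small factor is $(T+\ep)^{d(\kappa)}$ rather than $(T^\theta+\eta^{\theta'})$, and in block~(2) the paper isolates the damped transport equation \eqref{eq-pzsigma} for $\p_z\sigma^R$ (with damping coefficient $\rho^\gamma/(\ep^2\tilde\mu)$) rather than treating $(\sigma^R,\ep\,\div u^R)$ as a wave system.
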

Let $T\in(0,T_1],$ where $T_1$ is the existence time for the approximate solution $U^a$ constructed in the previous section. 
For short, we will denote $(\mathcal{N}_{T}, \cE_T, \cA_T)=(\mathcal{N}_{T}, \cE_T, \cA_T)(\sigma^R, u^R). $ For notational convenience, we also define the following quantities $\cM_T^a=\cM_T^a(U^a, R^a)$ related to the approximate solution $U^a$ and the source term $R^a:$
\beq\label{def-cM}
\begin{aligned}
\cM_T^a
&=\|U^a\|_{L_T^{\infty}H_{co}^5}+ \eta^{-1} \|\ep\nu \na^2 u^a\|_{L_T^2H_{co}^3}+\|(\div u^a,\na\sigma^a)\|_{L_T^{\infty}W_{co}^{3,\infty}}+\|U^a\|_{L_T^{\infty}W_{co}^{4,\infty}} \\
&+\eta^{-1}\big(\|R^a\|_{L_T^1\underline{H}_{co}^3}+\|(\ep\nu)^{\f12}\na R^a\|_{L_T^1H_{co}^2}+\|\ep R^a, (\ep\nu)^{\f12}\min\{1, \sqrt{\ep/\nu}\} \p_z R^a\|_{L_T^2H_{co}^2}\big).
\end{aligned}
\eeq
Note that by the construction in the previous section, it holds that
$\cM_T^a(U^a)<+\infty.$
In order to prove Theorem \ref{thm1}, it suffices to prove the following a-priori estimates 
\begin{prop}\label{prop-unifromes}
Let $T\in(0,T_1].$ Assume that $U^R=(\sigma^R, u^R)$ is a solution to the system  \eqref{eq-UR}
in $L^{\infty}([0,T],\underline{H}_{co}^3),$  and 
\eqref{epsigmaLinfty} holds true for any $(t,x)\in [0,T]\times \Omega.$ Then it holds that  for any $(\ep,\nu)\in A_0,$ ie, $\nu=\ep^{\kappa}$ with $\kappa\in(0,3),$ there is a constant $d(\kappa)>0$ and  polynomial $\Lambda_1, \Lambda_2$ such that
\begin{align*}
  \mathcal{N}_{T}^2
  \lesssim  \mathcal{N}_{0}^2+\Lambda_1(\cM_T^a)
  + (T+\ep)^{d({\kappa})}\Lambda_2\big(\cM_T^a,
  \cA_{T}
  \big) \big(\cN_T^2+\cN_T^4\big).
\end{align*}
\end{prop}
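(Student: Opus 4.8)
The plan is to close an energy estimate for the remainder system \eqref{eq-UR} by running, in a carefully chosen order, a hierarchy of weighted conormal estimates that together reconstitute the functional $\cN_T$. Since $\cN_T = \cE_T + \cA_T$ splits into an $L^2$-based energy part and an $L^p_x$-based part, I would first establish the estimates for $\cE_T$ by means of energy identities and then derive $\cA_T$ from $\cE_T$ by anisotropic Sobolev (Gagliardo–Nirenberg / trace) inequalities adapted to the conormal setting, paying attention that each factor of $\ep$, $\nu$, $\tau$ or $\min\{\cdots\}$ appearing as a weight in \eqref{def-cAT} is supplied by a corresponding weight already present in \eqref{def-cET}. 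The structure of the argument is: (i) tangential conormal estimates of order $\le 3$ on $(\sigma^R,u^R)$, producing $\|(\sigma^R,u^R)\|_{L^\infty_T\underline H^3_{tan}}$ together with the dissipative norms $\|(\nu^{1/2}\p_z u^R,\na_h u^R,\div u^R)\|_{L^2_T\underline H^3_{tan}}$; (ii) estimates for $(\na\sigma^R,\div u^R)$, which behave better by one power of $\ep$ because of the acoustic structure of $L/\ep$ (these give the $\ep$-weighted and $\tau$-weighted lines of \eqref{def-cET}); (iii) vorticity estimates for $\phi\,\omega_h^R$, where the cut-off $\phi(z)=z(a_3-z)$ removes the boundary contribution and the horizontal component of $\curl$ of the momentum equation is used; (iv) normal-derivative estimates, obtained from the equation itself (using that $\mu_1\Delta_\nu + \mu_2\na\div$ controls $\ep\nu\,\p_z^2 u^R$ only with the degenerate weight $\ep\nu$), which furnish the last lines of \eqref{def-cET}; and finally (v) the pointwise norms in $\cA_T$ by interpolation from (i)–(iv).

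At each step the equation \eqref{eq-UR} is differentiated by a conormal field $Z^\alpha$ (with the appropriate constraint $\alpha_0\le 2$ so as to respect the two-time-derivative space $\oH_{co}$ and hence the reduced compatibility condition \eqref{comp-cond}), tested against the natural multiplier, and the resulting identity integrated in time. The commutators $[Z^\alpha, L/\ep]$, $[Z^\alpha, \tfrac1\rho\div_\nu\cS]$ and $[Z^\alpha, u^a\cdot\na]$ generate lower-order terms; the point is that every such term is bounded by $\cM^a_T$ (for contributions from $U^a$ and $R^a$) times some power of $\cN_T$, or by $\cA_T$ times $\cN_T^2$, and that the small prefactor $(T+\ep)^{d(\kappa)}$ appears precisely because the genuinely quadratic (in $U^R$) terms and the interaction terms $Q(U^R,U^a)$, $Q(U^a,U^R)$, $u^a\cdot\na U^R$ either carry an explicit $\ep$ from the scaling of $f^{\ep,\nu}$ and $R^a$, or are integrated over $[0,T]$ and so pick up a factor $T$, or are handled by absorbing the dissipation after paying a small power of $\ep$ coming from the relation $\nu=\ep^\kappa$ with $\kappa<3$ (this is exactly where the constraint $\kappa\in(0,3)$ enters: it guarantees $\eta=\ep+(\ep\nu)^{1/4}+\nu^{3/4}$ is a genuine small parameter comparable across the three terms, and that the weighted norms in $\cM^a_T$ and in $\cE_T$ are mutually consistent). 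The nonlinear terms $\tfrac{\gamma-1}{2}\sigma^R L U^R$ and $(u^R\cdot\na)U^R$ are estimated by putting the low-regularity factor in $L^\infty_x$ (hence controlled by $\cA_T$) and the high-regularity factor in the conormal $L^2$ norm (controlled by $\cE_T$), which is why $\cN_T^2$ and $\cN_T^4$ — but no higher power — appear on the right-hand side.

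The main obstacle, and where most of the work lies, is step (iv) together with the bookkeeping of weights: because the vertical viscosity is only $\nu\mu_1\p_z^2$, no uniform control of $\na u^R$ or of $\p_z^2 u^R$ is available without a degenerate weight, so the normal derivatives must be recovered from the equation in a graded way — first $\div u^R$ and $\na\sigma^R$ (acoustic part, weight $\ep$), then $\omega_h^R$ away from the boundary (weight $\phi$), then $\p_z u^R$ with weight $\nu^{1/2}$ from the dissipation, then $\p_z^2 u^R$ with weight $\ep\nu$ — and every one of these weights must be shown to dominate exactly the weight demanded of the same quantity in the definition of $\cN_T$, with the various $\min\{\cdot,\cdot\}$ expressions (e.g. $\min\{(\ep^2\nu)^{1/3},(\ep\nu)^{1/2}\}$, $\min\{1,(\nu^3/\ep^7)^{1/4}\}$, $\tau=\min\{1,(\nu^5/\ep^7)^{1/12},(\nu^2/\ep^3)^{1/4}\}$) arising as the optimal outcome of interpolating between an $\ep$-weighted estimate and an un-weighted one. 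Verifying that this optimization is internally consistent for all $\kappa\in(0,3)$ — so that the cumulative loss is a fixed power $(T+\ep)^{d(\kappa)}$ — is the delicate computational heart of the proof; once it is in place, summing the estimates from steps (i)–(v) and absorbing the dissipative terms on the left yields the stated inequality
\[
\cN_T^2 \;\lesssim\; \cN_0^2 + \Lambda_1(\cM^a_T) + (T+\ep)^{d(\kappa)}\,\Lambda_2\big(\cM^a_T,\cA_T\big)\big(\cN_T^2+\cN_T^4\big),
\]
which is then closed by a continuity argument in $T$ (carried out in the subsequent sections).
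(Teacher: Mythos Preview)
Your plan follows essentially the same architecture as the paper: the proof is indeed obtained by combining an estimate for $\cE_T$ (Proposition \ref{prop:est-E-T}, carried out in the order tangential $\to$ $(\na\sigma^R,\div u^R)$ $\to$ weighted vorticity $\to$ $\ep$-weighted compressible part $\to$ recovery of $Z_3$ derivatives) with an estimate for $\cA_T$ (Proposition \ref{prop:est-A-t}, mostly Sobolev embedding). Two concrete devices you gloss over but will need when you flesh this out: (a) the uniform control of $\p_z\sigma^R$ is not a consequence of the acoustic structure alone but comes from rewriting the $\sigma^R$ and $u_3^R$ equations as the damped transport equation $\ep^2\tilde\mu(\pt+u\cdot\na)\p_z\sigma^R+\rho^\gamma\p_z\sigma^R=\cdots$, whose zeroth-order damping term is what produces the unweighted $L^2_tH^2_{tan}$ bound; and (b) the $L^\infty_x$ control of $\p_z\sigma^R$ in $\cA_T$ is not obtained by interpolation but by integrating that same damped equation along Lagrangian characteristics, since interpolation alone would cost too many normal derivatives.
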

%
%
%

\medskip

 We will establish the relevant estimates for the energy norms and the $L_x^p$ norms ($p= 3, 6, +\infty$) included in $\cE_T$ and $\cA_T$ in the following sections. Before diving into the details, we provide some useful remarks regarding the quantities in $\cE_T$ and $\cA_T$. The first two lines of \eqref{def-cET} consist of quantities that are uniform in $\ep$ and $\nu$, while the last two lines of \eqref{def-cET} contain non-uniform estimates for quantities with higher normal derivatives.
It is useful to note that by its definition, 
\begin{align}\label{energynorm-uh}
    \|u_h^R\|_{L_T^{\infty}H_{co}^2\cap L_T^2\underline{H}_{co}^3}+\|Z_3 u_h^R\|_{L_T^{\infty}H_{tan}^2}+\nu^{\f12}\|\na u_h^R\|_{L_T^2H_{co}^2}\lesssim \eta \,\cE_T.
\end{align}
The norm $\cA_T$ includes several $L_x^p$ type norms, which are essential for closing the estimates. Although it involves numerous quantities, most can be derived easily using Sobolev embedding and the energy norms within $\cE_T$. This also explains the weights appearing in each term. 
Let us remark that by the definition \eqref{def-cAT}, it holds that 
\begin{align}\label{pzuh-dinfty}
 \ep^2 \|\na u_h^R\|_{L_T^{\infty}(L_x^{\infty}\cap W_{co}^{1,3})}\lesssim \ep^2\big(\ep^{-\f{3+4\delta}{2}}+(\ep\nu)^{-(\f{3}{4}+\delta)}\big)\eta \cN_T\lesssim \big(\eta+\ep^{\min\{\f14,\f{3-\kappa}{4}\}}\big)\cN_T
\end{align}
if we choose $\delta=\min\{1/8, \f{3-\kappa}{4(\kappa+1)} \}.$ Moreover, it can be checked that
\beq\label{rewrite-nasigmaR}
\begin{aligned}
&\|\na\sigma^R\|_{L_T^2(L_x^{\infty}\cap W_{co}^{1,3})}+\|\div u^R\|_{L_T^2W_{co}^{1,6}}\\
&\lesssim \max\{1, (\ep^7/\nu^5)^{\f{1}{12}}, (\ep^3/\nu^2)^{\f14}, (\ep^7/\nu^3)^{\f14}\}\eta \cN_T\lesssim (\eta+ \ep^{\f14})\cN_T,
\end{aligned}
\eeq
\begin{align}\label{rewrite-pzu3R}
&\|\p_z u_3^R\|_{L_T^2L_x^{\infty}}\lesssim \|(\div u^R, \div_h u_h^R)\|_{L_T^2L_x^{\infty}}\lesssim \nu^{-\f14}\eta\cN_T\lesssim (\ep^{\f14}+\nu^{\f12})\cN_T 
\end{align}
for any $(\ep,\nu)\in A_0.$ These estimates will be frequently used in the following subsections.

\subsection{Main strategy}\label{secmainstrategy}
 To prove Proposition \ref{prop-unifromes},  the main step is to give estimate of $\mathcal{E}_{T}$. Before we state the main strategy, we rewrite the system \eqref{eq-UR} into the following form
    \beq \label{CNS-remainder}
\left\{
\begin{array}{l}
 \big(\displaystyle\pt+ (u^{a}+u^R)\cdot\na\big) \sigma^{R}+u^R\cdot\na\sigma^a+\f{\rho^{\f{\gamma-1}{2}}\div u^{R}}{\ep}+R^a_{\sigma}=0,\\[8pt]
 \big(\displaystyle\pt+ (u^{a}+u^R)\cdot\na\big) u^{R}+u^R\cdot\na u^a+
\f{\rho^{\f{\gamma-1}{2}}\nabla \sigma^{R}}{\ep}-\f{1}{\rho}\,\div_{\nu }\mathcal{S}u^{R}+R^a_{u}=\big(\f{1}{\rho}-1\big)\div_{\nu} S u^a, \\ 
u^R|_{\p\Omega}=0,
\end{array}
\right.
\eeq
where ${\rho}=\big(\f{\gamma-1}{2}\ep(\sigma^{a}+\sigma^R)+1\big)^{\f{2}{\gamma-1}}.$ 

 The main steps to prove the uniform regularity estimates are the following: 

\textbf{
Step 1. Uniform energy estimates involving only tangential derivatives.}  In this step, we prove the boundedness of $\eta^{-1}(\|U^R\|_{L_t^{\infty}\underline{H}_{tan}^3}+\|(\nu^{\f12}\p_z u^R,\div u^{R}, \na_h u^R)\|_{L_t^{2}\underline{H}_{tan}^3}).$ Since $\ep\pt, \p_{x_1}, \p_{x_2}$ commute with the operator $(\div, \na)/\ep,$ it could be obtained by direct energy estimates. Let us also remark that the uniform boundedness of $\|(u^R, \na_{h} u^R, \div u^R)\|_{L_t^2L_x^{\infty}}$ is needed here in order to propagate the regularity, which is available as long as 
 $\eta\leq \nu^{\f14}.$
 
\textbf{
Step 2. Uniform energy estimates for $(\na\sigma^R, \div u^R)$.} We prove in this step the boundedness of
$\|(\na\sigma^R, \div u^R)\|_{L^2H_{\tan}^2}.$ The estimates for $\div u^R$ is the consequence of the first step. To get the estimate for $\na\sigma^R,$ we rewrite the equations \eqref{eq-UR} to get a new equation for $\na \sigma^R$ \eqref{eq-pzsigma} and work on this equation to get the desired estimates.


\textbf{Step 3.  Estimates for the vorticity.} 

    \textbf{ Step 3.1. Control of the vorticity away from the boundaries.} We control uniformly in this step 
    $\eta^{-1}(\|\phi \,\omega^R\|_{L_t^{\infty}H_{co}^{2}}+\nu^{\f12}\|\phi \na \omega^R\|_{L_t^{2}H_{co}^{2}}),$ where $\phi(z)=z(a_3-z).$ This can be anticipated since there is no oscillation in the vorticity equation. The interaction between the vorticity and the compressible (oscillating) part occurs through the boundary condition 
    \beqs 
    \nu\p_z\omega^R|_{\p\Omega}={\na\sigma^R/\ep}|_{\p\Omega}+\cdots.
    \eeqs
  
 \textbf{ Step 3.2. Control of the vorticity with weight.} In this substep, we control $$ \min\{(\ep^2\nu)^{\f13}, (\ep\nu)^{\f12}\}\eta^{-1}\big(\|\omega^R\|_{L_t^{\infty}H_{co}^{1}}+\|\na_{\nu} \omega^R\|_{L_t^{2}H_{co}^{1}}\big)+\ep\nu \eta^{-1}\big(\|\omega^R\|_{L_t^{\infty}\underline{H}_{co}^{2}}+\|\na_{\nu} \omega^R\|_{L_t^2\underline{H}_{co}^{2}}\big).$$ 
 A weight like $(\ep^2\nu)^{\f13}$ need to be added due to the boundary condition \eqref{bdrycond-omegah}.

\textbf{Step 4. Some auxilary estimates.} In this step, we control $$\ep\eta^{-1} \big(\|(\na \sigma^R,\div u^R)\|_{L_t^{\infty}\underline{H}_{co}^{2}}+ \|\na\div u^R\|_{L_t^{2}\underline{H}_{co}^{2}}\big), \,\,  $$
which is useful in the next step. 

\textbf{Step 5.  Recovering high order $Z_3$ derivatives for $(\na \sigma^R, \div u ^R)$.} In this step, we use the equation to recover 
$\eta^{-1}\|(\na\sigma^R, \div u ^R)\|_{L_t^{\infty}\underline{H}_{co}^{1}\cap L_t^{2}H_{co}^{2} }.$


\section{estimate of $\cE_T$  }
In this section, we aim to prove the following a-priori estimates 
for the energy norm $\cE_T$ defined in \eqref{def-cET}. In the following, we will denote by $\Lambda$ a polynomial that may vary from line to line.
\begin{prop}\label{prop:est-E-T}
    Under the same assumption as in Proposition \ref{prop-unifromes}, there is some constant $d_1(\kappa)>0$ such that
\begin{align*}
  \mathcal{E}_{T}^2
  \lesssim  \mathcal{N}_{0}^2+\Lambda(\cM_T^a)
  + (T+\ep)^{d_1({\kappa})}\Lambda\big(\cM_T^a,
  \cA_{T}
  \big) \big(\cN_T^2+\cN_T^4\big).
\end{align*}
\end{prop}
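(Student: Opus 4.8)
The plan is to follow the five-step strategy of Subsection~\ref{secmainstrategy}, performing $L^2$-type energy estimates on the reformulated remainder system \eqref{CNS-remainder} and on its differentiated/div--curl consequences, while tracking the weights in $\ep$ and $\nu$ so that every contribution is absorbed into $\cN_0^2+\Lambda(\cM_T^a)$ or into $(T+\ep)^{d_1(\kappa)}\Lambda(\cM_T^a,\cA_T)(\cN_T^2+\cN_T^4)$. The recurring sources of difficulty are the penalized terms $\f{1}{\ep}\rho^{\f{\gamma-1}{2}}(\div u^R,\na\sigma^R)$, the linear forcing $u^R\cdot\na U^a$ and $U^R\cdot\na U^a$, and the source $R^a$; the first is neutralized by the skew-symmetry of $L$ after commuting with the vector fields in play, while the others are controlled by $\cM_T^a$ together with the $L^p_x$ bounds collected in $\cA_T$ and the consequences \eqref{energynorm-uh}--\eqref{rewrite-pzu3R}. \textbf{Step~1 (tangential energy estimates).} Applying $Z^\alpha$ with $|\alpha|\le 3$, $\alpha_3=0$ (so $Z^\alpha$ is built from $\ep\pt,\p_{x_1},\p_{x_2}$, which commute with $\div/\ep$ and $\na/\ep$) and pairing with $(Z^\alpha\sigma^R,Z^\alpha u^R)$, the penalized terms cancel up to commutators with the factor $\rho^{\f{\gamma-1}{2}}$, of size $\ep\,\|(\sigma^a,\sigma^R)\|_{W^{1,\infty}}\|(\na\sigma^R,\div u^R)\|$; integrating the dissipative term by parts produces no boundary term (since $Z^\alpha u^R$ is tangential and $u^R|_{\p\Omega}=0$) and yields the coercive quantity $\|(\na_h,\nu^{\f12}\p_z)Z^\alpha u^R\|_{L^2_tL^2}^2+\|\div Z^\alpha u^R\|_{L^2_tL^2}^2$; the commutators $[Z^\alpha,\f{1}{\rho}\div_\nu\cS]u^R$ are closed via Step~2 and the vorticity bounds. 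Propagation requires $\|(u^R,\na_h u^R,\div u^R)\|_{L^2_tL^\infty_x}\lesssim\eta\,\cA_T$, available since $\eta\lesssim\nu^{1/4}$, and this $L^2_t$-in-time factor is what produces the prefactor $(T+\ep)^{d_1(\kappa)}$.

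\textbf{Step~2 ($(\na\sigma^R,\div u^R)$).} From the first equation of \eqref{CNS-remainder} we read off $\f{1}{\ep}\rho^{\f{\gamma-1}{2}}\div u^R=-(\pt+(u^a+u^R)\cdot\na)\sigma^R-u^R\cdot\na\sigma^a-R^a_\sigma$, differentiate it, and combine with the momentum equation to obtain the transport--damped equation \eqref{eq-pzsigma} for $\na\sigma^R$, in which the stiff coupling to $u^R$ enters only through $\f{1}{\ep}\na\div u^R$, already controlled in $\cE_T$ with the weight $\ep$; energy estimates then give $\|(\na\sigma^R,\div u^R)\|_{L^2_tH_{co}^2\cap L^\infty_t\underline H_{co}^1}$. \textbf{Step~3 (vorticity).} Writing $\omega^R=\curl u^R$, its equation carries no singular term, and its only coupling to the oscillating part is the boundary identity $\nu\p_z\omega_h^R|_{\p\Omega}=\f{1}{\ep}\na\sigma^R|_{\p\Omega}+\cdots$. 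In Step~3.1 we multiply the $H_{co}^2$-differentiated vorticity equation by the weight $\phi^2$, $\phi=z(a_3-z)$, killing all boundary contributions, and the anisotropic parabolic structure yields $\|\phi\,\omega_h^R\|_{L^\infty_tH_{co}^2}+\|\phi(\na_h,\nu^{\f12}\p_z)\omega_h^R\|_{L^2_tH_{co}^2}$. In Step~3.2 we repeat the estimate without the cutoff but with the prefactor $\min\{(\ep^2\nu)^{1/3},(\ep\nu)^{1/2}\}$ (and $\ep\nu$ at one higher normal order): the boundary term $\int_{\p\Omega}\nu\p_z\omega_h^R\cdot\omega_h^R$ is bounded via the trace of $\f{1}{\ep}\na\sigma^R$, and the displayed power of $(\ep^2\nu)$ is exactly the one making this compatible with the $\nu^{1/2}$-weighted dissipation and with Step~2, cf.\ \eqref{bdrycond-omegah}.

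\textbf{Steps~4--5 (auxiliary and high normal derivatives).} We first establish the auxiliary bound $\ep(\|(\na\sigma^R,\div u^R)\|_{L^\infty_t\underline H_{co}^2}+\|\na\div u^R\|_{L^2_t\underline H_{co}^2})$ directly from \eqref{CNS-remainder}, the weight $\ep$ rendering the penalized terms harmless; this feeds Steps~1 and~3. Finally, from the momentum equation $\f{1}{\ep}\na\sigma^R=\f{1}{\rho}\div_\nu\cS u^R-(\pt+(u^a+u^R)\cdot\na)u^R-u^R\cdot\na u^a-\cdots$ and its divergence, we express the top-order $Z_3$-derivatives of $(\na\sigma^R,\div u^R)$ in terms of already-controlled quantities (the vorticity estimates of Step~3, tangential derivatives, and $\ep\pt$ of lower-order terms), which upgrades the $(\na\sigma^R,\div u^R)$-norm to the full $L^2_tH_{co}^2\cap L^\infty_t\underline H_{co}^1$ appearing in \eqref{def-cET}. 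Summing the five steps, absorbing the coercive quantities into the left-hand side, and recalling $\cN_T=\cE_T+\cA_T$ yields the stated inequality.

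\textbf{Main obstacle.} The delicate point is Step~3.2 and, more broadly, the bookkeeping of weights. Because the vertical viscosity is $O(\nu)$, normal derivatives of $u^R$ are coercive only with a $\nu^{1/2}$ weight, so the boundary coupling $\nu\p_z\omega_h^R|_{\p\Omega}\sim\ep^{-1}\na\sigma^R|_{\p\Omega}$ forces the weight $\min\{(\ep^2\nu)^{1/3},(\ep\nu)^{1/2}\}$ on $\|\na u^R\|_{L^\infty_tH_{co}^1}$, and one must verify that this single choice is simultaneously compatible with \emph{every} other estimate in $\cE_T$ and $\cA_T$ (in particular with \eqref{pzuh-dinfty}--\eqref{rewrite-pzu3R} and the constraint $\nu=\ep^\kappa$, $\kappa\in(0,3)$), so that all quadratic and quartic remainders carry the gain $(T+\ep)^{d_1(\kappa)}$. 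Equally, all cross terms coming from the interaction of the highly oscillating compressible part with the vorticity and with the (compressible) Prandtl layers are hidden inside $\cM_T^a$ and must be shown to obey the bounds of \eqref{def-cM}, which is where the precise construction of $U^a$ from Section~2 and its estimates are used.
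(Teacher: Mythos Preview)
Your proposal follows the same five-step strategy as the paper, and your identification of the principal difficulty (the boundary coupling in Step~3.2 and the associated weight bookkeeping) is on target. A few points need correction or completion.

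First, your Step~2 overreaches: in the paper this step yields only \emph{tangential} control $\|(\na\sigma^R,\div u^R)\|_{L^2_tH_{tan}^2}$ via the transport--damped equation \eqref{eq-pzsigma}; the upgrade to full conormal regularity $L^2_tH_{co}^2\cap L^\infty_t\underline{H}_{co}^1$ is the content of Step~5, achieved by rewriting the system as \eqref{rewrite-eq} and iterating (each application trades one $Z_3$ for one $\ep\pt$). Also, equation \eqref{eq-pzsigma} contains no singular $1/\ep$ term whatsoever---the substitution eliminates it entirely---so your remark about a ``stiff coupling through $\tfrac{1}{\ep}\na\div u^R$'' is incorrect; the right-hand side carries only $\ep\pt u_3^R$ and lower-order quantities $G_u,G_\sigma$.

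Second, two quantities in $\cE_T$ are absent from your outline: the bound $\ep^{1/2}\|\na\div u^R\|_{L^2_tH_{tan}^2}$ (obtained by testing the $u_3$-component of the momentum equation against $-\ep\p_z Z^\alpha\div u^R$; see the lemma giving \eqref{pzdiv-tangential}), and the second-normal-derivative estimate $\tau^2\|\na^2\sigma^R\|_{L^2_tH_{co}^1}$ with $\tau=\min\{1,(\nu^5/\ep^7)^{1/12},(\nu^2/\ep^3)^{1/4}\}$ (Section~4.6, obtained by applying $\p_z$ to \eqref{eq-pzsigma} and repeating the damped-transport estimate; the weight $\tau$ is forced by the size $(\ep\nu)^{-1+1/(2p)}$ of $\p_z^2 u^a$ appearing in $\p_zG_\sigma$). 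Both are needed to close the $L^p_x$ norms in $\cA_T$ via Sobolev embedding.

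Finally, Step~4 in the paper feeds \emph{forward} into Step~5 (the $\ep$-weighted bounds on $(\na\sigma^R,\div u^R,\na\div u^R)$ are inputs to the iteration recovering $Z_3$-derivatives), not backward into Steps~1 and~3 as you state.
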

The proof of this proposition is divided into several subsections, corresponding to Steps 1–5 listed in Section \ref{secmainstrategy}.

\subsection{Energy estimates for tangential derivatives}
\begin{lem}\label{lem-hightan}
    Let $(\sigma^R, u^R)$ be a solution to the system  \eqref{CNS-remainder}
in $L^{\infty}([0,T],\underline{H}_{co}^3)$  and 
\eqref{epsigmaLinfty} holds true for any $(t,x)\in [0,T]\times \Omega.$ Then it holds that  for any $(\ep,\nu)\in A_0,$ any $t\in [0,T]$
\beq\label{ES-hightan}
\begin{aligned}
  &  \|(\sigma^R, u^R)\|_{L_t^{\infty}\underline{H}_{tan}^3}^2+\|(\na_h u^R, \nu^{\f12}\p_z u^R, \div u^R)\|_{L_t^{2}\underline{H}_{tan}^3}^2\\
    &\lesssim 
\|(\sigma^R, u^R)(0)\|_{\underline{H}_{tan}^3}^2+ \eta^{2} \Lambda\big(\cM_T^a\big)
 +\eta^2\big(T^{\f12}+\ep^{\min\{\f{1}{20},\f{\kappa}{3}\}}\big)\Lambda\big(\cM_T^a,\cA_T\big)\big(\cN_T^2+\cN^4_T\big).
\end{aligned}
\eeq
\end{lem}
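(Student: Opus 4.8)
The plan is to apply the conormal tangential vector fields $Z^\alpha$ with $|\alpha|\le 3$, $\alpha_3=0$, to the system \eqref{CNS-remainder} and perform $L^2$ energy estimates, exploiting that $Z_0=\ep\pt, Z_1=\p_{x_1}, Z_2=\p_{x_2}$ all commute with the singular operator $(\div,\nabla)/\ep$, so the penalized terms produce no loss. First I would symmetrize: multiply the $\sigma^R$-equation by $\rho^{\frac{\gamma-1}{2}}Z^\alpha\sigma^R$ (or rather work with the natural symmetrizer making $\rho^{\frac{\gamma-1}{2}}\div u^R/\ep$ and $\rho^{\frac{\gamma-1}{2}}\nabla\sigma^R/\ep$ skew-adjoint up to commutators) and the $u^R$-equation by $Z^\alpha u^R$, integrate over $\Omega$, and sum. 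The viscous term $\frac1\rho\div_\nu\cS u^R$ integrated against $Z^\alpha u^R$ yields, after integration by parts, the positive contributions $\|\nabla_h u^R\|^2+\nu\|\p_z u^R\|^2+\|\div u^R\|^2$ at the level of $\underline{H}_{tan}^3$; since $\alpha_3=0$ the vector fields do not hit the normal direction, so the boundary condition $u^R|_{\p\Omega}=0$ is preserved by $Z^\alpha$ and all boundary terms from integration by parts vanish. The transport part $(u^a+u^R)\cdot\nabla$ is handled by the usual commutator estimates, and the convection-like terms $u^R\cdot\nabla\sigma^a$, $u^R\cdot\nabla u^a$, the source terms $R^a_\sigma, R^a_u$, and the right-hand side $(\frac1\rho-1)\div_\nu\cS u^a$ are all lower order and absorbed into $\eta^2\Lambda(\cM_T^a)$ plus small multiples of $\cN_T^2+\cN_T^4$.

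Next I would organize the error terms. The genuinely dangerous commutators are: (i) $[Z^\alpha, \frac1\rho]\div_\nu\cS u^R$ and $[Z^\alpha,\frac{\rho^{\frac{\gamma-1}2}}{\ep}]\nabla\sigma^R$-type terms, where the $1/\ep$ is compensated because $Z_0,Z_1,Z_2$ commute with the gradient, leaving only commutators with the smooth coefficient $\rho$ which gains a factor $\ep$ from $\nabla\rho=\frac{\gamma-1}{2}\ep\nabla(\sigma^a+\sigma^R)$, killing the singular factor; (ii) the top-order transport commutator $[Z^\alpha, u^R\cdot\nabla]$ which needs $\|u^R\|_{W^{1,\infty}}$- and $\|\nabla u^R\|$-type control, furnished by the $L_x^\infty$ norms packaged in $\cA_T$ (specifically the $\div u^R$, $Z_{tan}u^R$, and $u_3^R$ pointwise bounds, together with $\|\nabla_h u^R\|$ in $L_t^2\underline H_{tan}^3$); (iii) the commutator $[Z^\alpha,\frac1\rho\div_\nu\cS]u^R$ when a tangential field lands on $\rho$ and we still need to integrate by parts one horizontal derivative — here one uses that $\nabla_h$ derivatives are controlled in $L_t^2$ and distributes carefully, the key being that each commutator loses at most one conormal derivative which is recovered from the $\underline{H}_{tan}^3$ energy on the left. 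All of these produce either $\eta^2\Lambda(\cM_T^a)$ or a product $(T^{1/2}+\ep^{\text{something}})\Lambda(\cM_T^a,\cA_T)(\cN_T^2+\cN_T^4)$ after Young's inequality, with the small prefactor coming from Hölder in time ($T^{1/2}$) or from the explicit $\ep$/$\nu$ weights attached to the pointwise norms in $\cN_T$.

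The main obstacle, and where I would spend most of the effort, is controlling the quadratic terms $Q(U^R,U^R)$ and $u^R\cdot\nabla U^a$ at top conormal order \emph{without} having a uniform Lipschitz bound on $u^R$ — this is exactly the point the introduction flags as the departure from \cite{MFS-JMPA, sun2022uniform}. One must track the precise $\ep,\nu$-weights: for instance terms involving $\p_z u_h^R$ (not controlled uniformly) must be paired with the $\nu^{1/2}$-weighted dissipation or with a factor that makes the product bounded, and terms with $\nabla u_3^R$ must be rewritten via $\div u^R$ and $\div_h u_h^R$ (using \eqref{rewrite-pzu3R}) so that the genuinely singular normal derivative is replaced by the better-behaved divergence. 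The extraction of the power $\ep^{\min\{1/20,\kappa/3\}}$ is the bookkeeping heart of the proof: it emerges from the worst-behaved product after substituting $\nu=\ep^\kappa$ into weights such as $\min\{(\ep^2\nu)^{1/3},(\ep\nu)^{1/2}\}$, $\min\{1,(\nu^3/\ep^7)^{1/4}\}$, and the $\tau$-weight $\min\{1,(\nu^5/\ep^7)^{1/12},(\nu^2/\ep^3)^{1/4}\}$ appearing in $\cA_T$, combined with either $T^{1/2}$ or a leftover $\ep$ power; one then checks that over the admissible range $\kappa\in(0,3)$ there is always a strictly positive residual power, which after relabeling gives the stated exponent. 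I would carry this out by first writing the clean energy identity, then listing every error term in a table with its weight, and finally verifying term by term that each is dominated by the right-hand side of \eqref{ES-hightan}.
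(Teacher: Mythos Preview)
Your overall plan matches the paper's proof: apply $Z^\alpha$ with $\alpha_3=0$, exploit commutation with the singular operator, symmetrize with a $\rho$-power weight, and bound the commutator terms. Most of what you wrote is accurate.

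There is one genuine technical gap. In the transport commutator $[Z^\alpha,u^R]\cdot\nabla U^R$, the worst term is $[Z^\alpha,u_3^R]\,\p_z u_h^R$. Your proposed mechanism---pairing $\p_z u_h^R$ with the $\nu^{1/2}$-weighted dissipation---does not work here: this is an $L^1_tL^2$ commutator, not a bilinear form where you can integrate by parts, and absorbing $\nu^{-1/2}$ into the remaining factor (high tangential derivatives of $u_3^R$) does not close with the stated power. The paper instead uses the identity
\[
u_3^R\,\p_z = \frac{u_3^R}{\phi}\,Z_3,\qquad \phi(z)=z(a_3-z),
\]
which replaces the raw normal derivative by the conormal $Z_3 u_h^R$ (controlled in $L^2_tH_{tan}^2$ and in the weighted $W^{2,3}_{tan},W^{1,\infty}_{tan}$ norms of $\cA_T$), and controls $u_3^R/\phi$ via the Hardy inequality by $\p_z u_3^R=\div u^R-\div_h u_h^R$, which \emph{is} uniformly bounded. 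Taking the minimum of two such product estimates yields
\[
\min\{\nu^{-5/12},\,\ep^{-1/2}+\nu^{-1/4}\}\,\eta^2\,\cN_T^2\;\lesssim\;(\ep^{1/20}+\nu^{1/3})\,\eta\,\cN_T^2,
\]
and \emph{this} is where the exponent $\ep^{\min\{1/20,\kappa/3\}}$ actually originates---not from the $\min\{(\ep^2\nu)^{1/3},(\ep\nu)^{1/2}\}$ or $\tau$-type weights you listed, which belong to later steps (vorticity and $\nabla^2\sigma^R$ estimates). Without this Hardy/conormal rewriting the commutator estimate for $u_3^R\p_z u_h^R$ does not produce a positive residual power over the full range $\kappa\in(0,3)$.
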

\begin{proof} 
{\underline{ The basic energy estimate.}} Taking the scalar product of equation \eqref{CNS-remainder} and $\rho^{-\f{\gamma-1}{2}}(\sigma^R, u^R),$ integrating over $Q_t=[0,t]\times\Omega$  and using the boundary condition $u^R|_{\p\Omega}=u^a|_{\p\Omega}=0,$ we obtain from the standard integration by parts that 
\beq\label{EI-zero}
\begin{aligned}
    &\f{1}{2}\int_{\Omega} \rho^{-\f{\gamma-1}{2}}\big(|\sigma^R|^2+|u^R|^2\big)(t)\, \d x+\iint_{Q_t} \rho^{-\f{\gamma+1}{2}} \big(\mu_1\nu |\p_z u^R|^2+\mu_1|\na_h u^R|^2+\mu_2|\div u^R|^2\big)\,\d x \d s \\
&=\f{1}{2}\int_{\Omega} \rho^{-\f{\gamma-1}{2}}\big(|\sigma^R|^2+|u^R|^2\big)(0)\, \d x\\
&\quad+\f12\iint_{Q_t} \big(|\sigma^R|^2+|u^R|^2\big)\big(\pt \rho^{-\f{\gamma-1}{2}}+\div \big(\rho^{-\f{\gamma-1}{2}}(u^a+u^R)\big)\big) \, \d x\d s\\
&\quad -\iint_{Q_t}\mu_1 \big( \nu\p_z(\rho^{-\f{\gamma+1}{2}})\p_z u^R+\na_h(\rho^{-\f{\gamma+1}{2}})\cdot\na_h u^R \big)\cdot u^R+\mu_2\na(\rho^{-\f{\gamma+1}{2}})\cdot u^R\, \div u^R \, \d x\d s \\
& \quad -\iint_{Q_t}\sigma^R\, R_{\sigma}^a+u^R\cdot R_{u}^a+ \big(\f{1}{\rho}-1\big)\div_{\nu} S u^a \cdot u^R \, \d x\d s =\colon I^0_1+I^0_2+I^0_3+I^0_4.
\end{aligned}
\eeq
Thanks to the equation $\pt \rho+ \div(\rho (u^a+u^R))=0,$ it holds that
$$\pt \rho^{-\f{\gamma-1}{2}}+\div \big(\rho^{-\f{\gamma-1}{2}}(u^a+u^R)\big)=\f{\gamma+1}{2}\rho^{-\f{\gamma-1}{2}} \div (u^a+u^R).$$
The term $I_2^0$ can thus be bounded by 
\begin{align*}
    |I_2^0|\lesssim T^{\f12}\|(\sigma^R, u^R)\|_{L_t^{\infty}L^2}^2 (\|\div u^a\|_{L_t^2L^{\infty}}+\|\div u^R\|_{L_t^2L^{\infty}})\lesssim T^{\f12}\big( \Lambda\big(\cM_T^a\big)+\eta \cN_T\big)\eta^2 \cN^2_T.
\end{align*}
Moreover, we can control the third term $I_3^0$ simply by 
\begin{align*}
    |I_3^0|&\lesssim  \|\na \rho \|_{L_t^2L^{\infty}} \|u^R\|_{L_t^{\infty}L^2} \big(\nu^{\f12}\|\p_z u^R\|_{L_t^2L^2}+\|(\div u^R, \na_h u^R)\|_{L_t^2L^2}\big)\\
    &\lesssim \ep \big(\Lambda\big(\cM_T^a\big)+\eta\cN_T\big)\eta^2\cN_T^2,
\end{align*}
where we have used the fact $\|\na \rho \|_{L_t^2L^{\infty}} \lesssim \ep \|\na(\sigma^a,\sigma^R)\|_{L_t^2L^{\infty}}\lesssim \ep \big(
\cM_T^a+\eta\cN_T\big).$ Finally, for the last term $I_4^0,$ one has
\begin{align*}
    |I_4^0|\lesssim \|(\sigma^R, u^R)\|_{L_t^{\infty}L^2} \bigg(\|(R^a_{\sigma}, R_u^a)\|_{L_t^1L^2}+\|(\sigma^a, \sigma^R)\|_{L_{t,x}^{\infty}} \|\ep \div_{\nu} S u^a\|_{L_t^2L^2}\bigg).
\end{align*}
Plugging the previous three estimates into \eqref{EI-zero} and using Young's inequality, we readily get \eqref{ES-hightan} with $\underline{H}_{\tan}^3$ replaced by $L^2.$ 

{\underline{The high order energy estimates. }}Denote 
$\dot{U}^R=(\dot{\sigma}^R, \dot{u}^R)=Z^{\alpha} (\sigma^R, u^R),$ where $\alpha_0\leq 2,\alpha_3=0.$ Then $\dot{U}^R$ solves the equation
\begin{align*}
  &  \big(\displaystyle\pt+ (u^{a}+u^R)\cdot\na\big) \dot{U}^R+\dot{u}^R\cdot\na U^a+\ep^{-1}\rho^{\f{\gamma-1}{2}}L\dot{U}^R 
  -\left(\begin{array}{c}
       0  \\
       \f{1}{\rho} \div_{\nu} S \dot{u}^R 
  \end{array}\right)+Z^{\alpha}R^a\\
  &= Z^{\alpha}\left(\begin{array}{c}
       0  \\
       (\f{1}{\rho}-1)\div_{\nu} S {u}^a
  \end{array}\right)+\sum_{k=1}^4\cC_k^{\alpha} 
 + \left(\begin{array}{c}   0  \\\,[Z^{\alpha}, {1}/{\rho} ]\div_{\nu}S u^R \end{array}\right),
\end{align*}
where 
\begin{align*}
    \cC_1^{\alpha}=[Z^{\alpha}, u^a]\cdot\na U^R, \quad \cC_2^{\alpha}=[Z^{\alpha}, u^R]\cdot\na U^R,\quad \cC_3^{\alpha}=\sum_{j=1}^3[Z^{\alpha}, \p_j U^a] u_j^R, \,\quad  \cC_4^{\alpha}=\ep^{-1}[Z^{\alpha},\rho^{\f{\gamma-1}{2}}]L U^R .  
\end{align*}
Performing the similar energy estimates as we just did for $L_x^2$ estimate, we find the following energy inequality
\beq
\begin{aligned}\label{EI-hightan}
   & \|\dot{U}^R\|_{L_t^{\infty}L^2}^2+ \|\big(\nu^{\f12}\p_z\dot{u}^R, \na_h \dot{u}^R, \div \dot{u}^R \big)\|_{L_t^2L^2}^2\lesssim  \|\dot{U}^R(0)\|_{L^2}^2+\|(R^a, ({1}/{\rho}-1)\,\div_{\nu}S u^a)\|_{L_t^1\underline{H}_{tan}^3}^2\\
&+\sum_{j=1}^4\|\cC_j^{\alpha}\|_{L_t^1L^2}^2+\bigg|\iint_{Q_t} ([Z^{\alpha}, {1}/{\rho}]\div_{\nu}S  u^R)\cdot \rho^{-\f{\gamma-1}{2}} Z^{\alpha} u^R\bigg| 
+\eta^2(T^{\f12}+\ep)\big(\Lambda\big(\cM_T^a\big)\cN_T^2+\eta\cN_T^3\big).
\end{aligned}
\eeq
Next, we give the estimates of the right hand of \eqref{EI-hightan}.

$\bullet$ For $\|({1}/{\rho}-1)\div_{\nu}S u^a\|_{L_t^1\underline{H}_{tan}^3}$, we have
\begin{align*}
\|({1}/{\rho}-1)\div_{\nu}S u^a\|_{L_t^1\underline{H}_{tan}^3} &\leq \big(\Lambda(\|\sigma^a\|_{L_t^2W_{tan}^{3,\infty}})+\|\sigma^R\|_{L_t^2W_{tan}^{1,\infty}}\big)\|\ep \,\div_{\nu}S u^a\|_{L_t^2\underline{H}_{tan}^3}\\
&\qquad +T^{\f12}
\Lambda\big(\|\ep\sigma^a\|_{L^{\infty}_tW_{co}^{2,\infty}}+\|\ep Z_{tan}\sigma^R\|_{L_{t,x}^{\infty}}\big)
\|\sigma^R\|_{L_t^{\infty}\underline{H}_{tan}^3}\|\ep \,\div_{\nu}S u^a\|_{L_t^2{W}_{tan}^{1,\infty}}\\
 &\lesssim \eta \big(\Lambda(\cM_T^a) +(T^{\f12}+\eta)\Lambda\big(\cM_T^a,\cA_T)\cN_T\big),
 \end{align*}
 where we use $\Lambda(\cdot)$ to denote a polynomial function with degree no greater than $3$ that may change line to line. 
 
 \medskip
 
Let us now deal with the commutator terms $\cC_1^{\alpha}-\cC_4^{\alpha}.$

$\bullet$ For the term $\cC_1^{\alpha},$ we write 
\begin{align*}
\cC_1^{\alpha}=\sum_{\beta<\alpha} Z_{tan}^{\alpha-\beta} u_h^a \, Z_{tan}^{\beta}\na_h U^R+ Z_{tan}^{\alpha-\beta} (u_3^a/\phi) Z_3 Z_{tan}^{\beta}U^R,
\end{align*}
where the weight function $\phi=\phi(z)=z(a_3-z).$ Since $u_3^a|_{\p\Omega}=0,$ it follows from the Fundamental Theorem of Calculus  that 
$\|u_3^a(t)/\phi\|_{W_{co}^{3,\infty}}\lesssim \|\p_z u_3^a(t)\|_{W_{co}^{3,\infty}}.$ Therefore, we can control $\cC_1^{\alpha}$ roughly by
\begin{align}\label{es-cc1}
   \| \cC_1^{\alpha}\|_{L_t^1L^2}\lesssim T^{\f12} \|(u_h^a,\p_z u_3^a)\|_{L_t^{\infty}W_{co}^{3,\infty}} \|(\na_h , Z_3)U^R\|_{L_t^2H_{tan}^2}\lesssim T^{\f12}\eta\Lambda\big(\cM_T^a\big)\cN_T.
\end{align}

$\bullet$ For $C_2^{\alpha},$ we count the derivative acting on each element and control them in different ways. 
On the one hand, by utilizing the estimates \eqref{rewrite-nasigmaR}, \eqref{rewrite-pzu3R}, 
we can estimate $[Z^{\alpha}, u^R]\cdot \na (\sigma^R, u_3^R)$ as 
\beq\label{com-C2-1}
\begin{aligned}
\big\| [Z^{\alpha}, u^R]\cdot \na (\sigma^R, u_3^R)\big\|_{L_t^1L^2}&\lesssim \|\na (\sigma^R, u_3^R)\|_{L_t^2{H}_{tan}^2} \|u^R\|_{L_t^2W_{tan}^{2,\infty}}+\|\na (\sigma^R, u_3^R)\|_{L_t^2L^{\infty}}\|u^R\|_{L_t^2\underline{H}_{tan}^3}\\
&\lesssim  
(\ep^{\f14}+\nu^{\f12})\eta \cN^2_T.
\end{aligned}
\eeq
On the other hand, for the term $[Z^{\alpha}, u^R]\cdot \na u_j^R$ $(j=1,2),$ we can first find in the same way as the above estimate that 
\beq\label{com-C2-2}
\|[Z^{\alpha}, u_h^R]\cdot \na_h u_j^R\|_{L_t^1L^2}\lesssim  \|u_h^R\|_{L_t^2\underline{H}_{tan}^3}\|u_h^R\|_{L_t^2W_{tan}^{2,\infty}}
\lesssim (\ep^{\f14}+\nu^{\f12})\eta \cN^2_T.
\eeq
 We then use the identity $[Z^{\alpha}, u_3^R] \p_3 u_j^R= [Z^{\alpha}, u_3^R/\phi] Z_3 u_j^R$ to get that
 \beq\label{com-C2-3}
\begin{aligned}
 \| [Z^{\alpha}, u_3^R] \p_3 u_j^R\|_{L_t^1L^2}&\lesssim  \min\{ \|Z_3 u_j^R\|_{L_t^2{W}_{tan}^{2,3}}\|u_3^R/\phi\|_{L_t^2W_{tan}^{1,6}}, \|Z_3 u_j^R\|_{L_t^2{H}_{tan}^{2}}\|u_3^R/\phi\|_{L_t^2W_{tan}^{1,\infty}}\}\\
 &\qquad +\|Z_3 u_j^R\|_{L_t^2W_{tan}^{1,\infty}}\|u_3^R/\phi\|_{L_t^2\underline{H}_{tan}^3}.
\end{aligned}
\eeq
Let us estimate the quantities appearing in the right hand side. First,
it holds by the Hardy inequality that
\begin{align}
\|u_3^R/\phi\|_{L_t^2\underline{H}_{tan}^3}\lesssim \|\p_z u_3^R\|_{L_t^2\underline{H}_{tan}^3}\lesssim \|(\div u^R, \na_h u_h^R)\|_{L_t^2\underline{H}_{tan}^3}\lesssim \eta \cN_T, \notag\\
   \|u_3^R/\phi\|_{L_t^2W_{tan}^{1,6}} \lesssim \|\p_z u_3^R\|_{L_t^2W_{tan}^{1,6}}\lesssim \|(\div u^R, \div_h u_h^R)\|_{L_t^2W_{tan}^{1,6}}\lesssim \nu^{-\f16}\eta \cN_T, \label{ineq-319}\\
    \|u_3^R/\phi\|_{L_t^2W_{tan}^{1,\infty}} \lesssim  \|(\div u^R, \div_h u_h^R)\|_{L_t^2W_{tan}^{1,\infty}}\lesssim (\ep^{-\f12}+\nu^{-\f14})\eta\cN_T. \label{ineq-320}
\end{align}
Note that in the second inequality, we have used 
the interpolation 
\begin{align*}
    \|u_h^R\|_{L_t^2W_{tan}^{2,6}}\lesssim \|u_h^R\|_{L_t^2H_{tan}^2}^{\f13}\|u_h^R\|_{L_t^2W_{tan}^{2,\infty}}^{\f23}\lesssim \nu^{-\f16} \eta\cN_T.
\end{align*}
Plugging the previous  estimates into \eqref{com-C2-3} 
we find that for any $(\ep,\nu)\in A_0,$
\begin{align*}
     \| [Z^{\alpha}, u_3^R] \p_3 u_j^R\|_{L_t^1L^2}\lesssim 
     \min\{ \nu^{-\f{5}{12}}, \ep^{-\f12}+\nu^{-\f14}\} \eta^2 \cN_T^2\lesssim 
     (\ep^{\f{1}{20}}+\nu^{\f13})\eta \cN_T^2, 
\end{align*}
which, combined with \eqref{com-C2-1},\eqref{com-C2-2}, lead to that 
\begin{align}\label{es-cc2}
     \| \cC_2^{\alpha}\|_{L_t^1L^2}\lesssim \ep^{\min\{\f{1}{20},\f{\kappa}{3}\}}\eta \cN^2_T. 
\end{align}
We thus finish the estimate of $\cC_2^{\alpha}.$ 

$\bullet$ For $C_3^{\alpha},$
we have
\begin{align}
     \| \cC_3^{\alpha}\|_{L_t^1L^2}\lesssim T^{\f12}\|\na_h U^a, \div u^a\|_{L_t^{2}W_{tan}^{3,\infty}}\|u^R\|_{L_t^{\infty}\underline{H}_{tan}^3}\lesssim T^{\f12}\eta \Lambda(\cM_T^a)\cN_T.
\end{align}

$\bullet$ For $C_4^{\alpha},$ thanks to \eqref{rewrite-nasigmaR}, we get
\beq\label{es-cc4} 
\begin{aligned}
&\|\cC_4^{\alpha}\|_{L_t^1L^2}\lesssim T^{\f12}\|\sigma^a\|_{L_{t}^{\infty}W_{co}^{2,\infty}\cap L_t^{\infty}H_{co}^3}\|(\na\sigma^R,\div u^R)\|_{L_t^2H_{co}^2\cap L_t^2L^{\infty}}\\  &+\Lambda\big(\|\ep\sigma^a\|_{L^{\infty}_tW_{co}^{2,\infty}}+\|\ep Z\sigma^R\|_{L_{t,x}^{\infty}}\big) \\
&\qquad \cdot\big(\|\sigma^R\|_{L_t^2W_{co}^{1,\infty}\cap L_t^2W_{co}^{2,6}\cap L_t^2\underline{H}_{co}^3} \|(\na\sigma^R,\div u^R)\|_{L_t^2H_{co}^2\cap L_t^2W_{co}^{1,3}\cap L_t^2L^{\infty}}\big)\\
  &\lesssim \big(T^{\f12}+\eta
  +\ep^{\f14}\big)  \eta\Lambda\big(\cM_T^a,\cA_T\big)(\cN_T+\cN_T^2). 
\end{aligned}
\eeq

$\bullet$ For$\bigg|\iint_{Q_t} ([Z^{\alpha}, {1}/{\rho}]\div_{\nu}S  u^R)\cdot \rho^{-\f{\gamma-1}{2}} Z^{\alpha} u^R\bigg|$, we write
\beno
[Z^{\alpha}, {1}/{\rho}]\div_{\nu}S  u^R=Z^{\alpha}(1/\rho)\div_{\nu}S  u^R+[Z^{\alpha}, 1/\rho, \div_{\nu}S u^R].
\eeno
First, since 
\begin{align*}
\big\|Z^{\alpha}( {1}/{\rho})\div_{\nu}S  u^R\big\|_{L_t^1L^2}&\lesssim T^{\f12} \|\ep\, \div_{\nu} S u^R\|_{L_t^2L^{2}} \Lambda(\|\sigma^a\|_{L_t^{\infty}W_{co}^{3,\infty}})\\
&\quad + \|\ep\, \div_{\nu} S u^R\|_{L_t^2L^{\infty}}\|\sigma^R\|_{L_t^2\underline{H}_{tan}^3} \Lambda\big(\|\ep\sigma^a\|_{L^{\infty}_tW_{co}^{2,\infty}}+\|\ep Z_{tan}\sigma^R\|_{L_{t,x}^{\infty}}\big)\\
&\lesssim \big(T^{\f12}+ \nu^{-\f14}\eta\big)\eta \Lambda\big(\cM_T^a, \cA_T\big) \cN_T,
\end{align*}
\begin{align}\label{integ-1}
   \bigg|\iint_{Q_t} Z^{\alpha} ({1}/{\rho})\div_{\nu}S  u^R\cdot \, \rho^{-\f{\gamma-1}{2}}\,Z^{\alpha}u^R\bigg|\lesssim \big(T^{\f12}+ 
   \ep^{\f14}+\nu^{\f12}\big)\eta^2 \Lambda\big(\cM_T^a, \cA_T\big) \cN_T^2.
\end{align}
Second, denoting for short $\dot{v}^R=\rho^{-\f{\gamma-1}{2}}\,Z^{\alpha}u^R,$
we can integrate by parts in space to get that 
\begin{align*}
\bigg|\iint_{Q_t} &[Z^{\alpha}, {1}/{\rho},\div_{\nu}S  u^R]\cdot 
\,\dot{v}^R\bigg| \lesssim \nu^{\f12}
\|\p_z \dot{v}^R\|_{L_t^2L^2}
\nu^{\f12}\|[Z^{\alpha}, 1/\rho,\p_z u]\|_{L_t^2L^2}+
\|\na_h \dot{v}^R\|_{L_t^2L^2}\|[Z^{\alpha}, 1/\rho,\nabla_h u^R]\|_{L_t^2L^2}\\
&+\|\div \dot{v}^R\|_{L_t^2L^2}
\|[Z^{\alpha}, 1/\rho, \div u^R]\|_{L_t^2L^2}+\|\dot{v}^R\|_{L_t^{\infty}L^2}\|[Z^{\alpha}, \na(1/\rho), (\big(\na_{h},\nu\p_z) u^R;\div u^R)]
\|_{L_t^1L^2}.
\end{align*}
Denote $f=\nu^{\f12}\p_z u^R$ or $\na_h u^R$ or 
$\div u^R,$ it holds by counting the derivatives hitting on $1/\rho$ and $f$ that 
\begin{align*}
   \|[Z^{\alpha}, 1/\rho, f]\|_{L_t^2L^2}&\lesssim \ep \big(\|Z_{tan}\sigma^R\|_{L_{t,x}^{\infty}}+\Lambda(\|\sigma^a\|_{L_t^{\infty}W_{co}^{2,\infty}})\big)\|f\|_{L_t^2H_{tan}^2}\\
& +\|\ep f\|_{L_t^2W_{tan}^{1,\infty}}\|\sigma^R\|_{L_t^{\infty}\underline{H}_{tan}^2}\Lambda\big(
\|\ep Z_{tan}(\sigma^a, \sigma^R)\|_{L_{t,x}^{\infty}}
\big).
\end{align*}
In light of the definition of $\cE_T$  and
$\cA_T$ in \eqref{def-cET}, \eqref{def-cAT}, we find that  
\begin{align*}
   & \|[Z^{\alpha}, 1/\rho, f]\|_{L_t^2L^2}\lesssim 
   \eta\Lambda\big(\cM_T^a, \cA_T\big)(\ep^{\f12}   \cN_T+\ep^{\f12}\nu^{-\f14}\eta \cN_T^2)\lesssim \ep^{\f12}\eta \Lambda\big(\cM_T^a, \cA_T\big) 
   (\cN_T+\cN_T^2).
\end{align*}
Similarly, for $g=\na_{h}u^R,$ or $\nu\p_z u^R,$ or $\div u^R,$ if $\alpha_0\leq 1,$  
we use the following estimate
\beq
\begin{aligned}\label{commutator-g}
  & \|[Z^{\alpha}, \na(1/\rho), g]\|_{L_t^1L^2}\\
   &\lesssim \ep \big(\Lambda(\|\na\sigma^a\|_{L_t^{\infty}W_{co}^{2,\infty}})+\|\na\sigma^R\|_{L_t^2L^{\infty}}\big)\|g\|_{L_t^2H_{tan}^2}+  \|\na \sigma^R\|_{L_{t}^2W_{co}^{1,3}}\|\ep g\|_{L_t^2W_{tan}^{2,6}}\\
& \qquad+\|\ep g\|_{L_t^2W_{tan}^{1,\infty}}\big(\|\na \sigma^R \|_{L_t^{2}\underline{H}_{tan}^2}\Lambda\big(
\|\ep Z_{tan} (\sigma^a, \sigma^R)\|_{L_{t,x}^{\infty}}\big)
+ \|\sigma^R\|_{L_t^{\infty}\underline{H}_{tan}^2}\|\ep \na\sigma^R\|_{L_t^2L^{\infty}}\big)
\end{aligned}
\eeq
to get that
\begin{align*}
    \big\|[Z^{\alpha}, \na(1/\rho), \big(\na_{\nu} u^R,\div u^R)]\big\|_{L_t^1L^2}\lesssim (\ep^{\f13}+\eta)\eta \Lambda\big(\cM_T^a,\cA_T) (\cN_T+\cN_T^2).
\end{align*}
Note that we have abused slightly the notation, since when 
$g=\na_h\sigma^R,$ the second term in \eqref{commutator-g} should be replaced by 
$\| \|\na_h \sigma^R\|_{L_{t}^2W_{co}^{1,3}}\|\ep \na_h u^R\|_{L_t^2W_{tan}^{2,6}}\|\lesssim (\ep\nu^{-\f12}\eta)\eta\cN_T^2\lesssim \ep^{\f12}\eta\cN_T^2.$

However, when $\alpha_0=2,$ the previous arguments is not 
feasible since we do not have the estimate of 
$\|\p_z^2 (\ep\pt)^2\na_{h} u^R\|_{L_t^2L^2}.$ 
We thus instead deal with the problematic term $\ep\nu\iint_{Q_t} \na_h\p_z\sigma^R Z_0^2\p_z u^R \cdot \dot{v}^R$ as
\begin{align*}
\big|\ep\nu\iint_0 \na_{h}\p_z\sigma^R \cdot Z_0^2\p_z u^R\, \dot{v}^R\big|&\lesssim \nu^{\f12}\|\na\dot{v}^R\|_{L^2_tL^2} \nu^{\f12}\|Z_0^2\p_z u^R\|_{L^2_tL^2}\|\ep\na_h\p_z\sigma^R\|_{L_t^{\infty}L^{3}}\\
&\lesssim  
\ep^{\f12}(\eta+\ep^{\f14})\eta^2\cN_T^3\, .
\end{align*}
Consequently, we find that 
\begin{align*}
    \bigg|\iint_{Q_t} [Z^{\alpha}, {1}/{\rho},\div_{\nu}S  u^R]\cdot 
\,\dot{v}^R\bigg| \lesssim (\ep^{\f13}+\eta) \eta^2 \Lambda\big(\cM_T^a,\cA_T) (\cN_T^2+\cN_T^3),
\end{align*}
which, combined with \eqref{integ-1}, gives that, for any $\nu=\ep^{\kappa}$ with $0<\kappa<3,$
\begin{align}\label{es-integ}
    \bigg|\iint_{Q_t} [Z^{\alpha}, {1}/{\rho}]\div_{\nu}S  u^R\cdot Z^{\alpha} u^R\bigg| \lesssim (T^{\f12}+\ep^{\min\{\f14,\f{\kappa}{2}\}}) \eta^2 \Lambda\big(\cM_T^a,\cA_T) (\cN_T^2+\cN_T^3).
\end{align}
Plugging \eqref{es-cc1}, \eqref{es-cc2}-\eqref{es-cc4}, \eqref{es-integ}
into \eqref{EI-hightan}, we find 
\eqref{ES-hightan}.

\end{proof}

\subsection{Some estimates for the compressible part $(\na\sigma^R, \div u^R)$} In this step, we aim to prove some estimates for the compressible part $(\na\sigma^R, \div u^R),$ including the uniform estimates for the tangential derivatives: 
$\|(\na\sigma^R, \div u^R)\|_{L_t^2H_{tan}^2},$
and the estimate for $\ep^{\f12} \|\p_z\div u^R\|_{L_t^2H_{tan}^2}.$  

As the estimate of $\|\div u^R\|_{L_t^2H_{tan}^2}$ has been obtained in \eqref{ES-hightan},  let us first focus  on the estimate of $\|\p_z \sigma^R\|_{L_t^2H_{tan}^2}.$
We use the equation \eqref{CNS-remainder} to find the following equation for $\p_z \sigma^R$
\begin{align}\label{eq-pzsigma}
  \ep^2 \tilde{\mu}(\pt +u\cdot \na )\p_z \sigma^R+ \rho^{\gamma} \p_z\sigma^R=-\rho^{\f{\gamma+1}{2}} \ep \pt u_3^R+G_u+G_{\sigma},
\end{align}
where 
$\tilde{\mu}=\mu_2+\mu_1\nu, u=u^R+u^a$ and 
\beq\label{defGusigma}
\begin{aligned}
  &  G_u=-\rho^{\f{\gamma+1}{2}}\big(\ep u\cdot \na u_3^R+\ep u^R\cdot \na u_3^a+\ep R_u^a\big)+\ep\rho^{\f{\gamma-1}{2}}\big(\mu_1\nu\p_z\div_h u_h^R-\mu_1\Delta_h u_3^R-(\rho-1)\div_{\nu}\cS u^a\big), \\
   & G_{\sigma}=- \ep^2 \tilde{\mu}\big(\p_z u\cdot\na \sigma^R+\p_z(u^R\cdot\na\sigma^a)+\p_z R_{\sigma}^a\big).
\end{aligned}
\eeq
\begin{lem}\label{lem-pzsig-tan}
Under the same assumption as in Lemma \ref{lem-hightan}, we have that for any $(\ep,\nu)\in A_0,$ any $t\in[0,T],$
\beq
\begin{aligned}\label{es-nasigma-tan}
     &\quad  \ep^2 \|\p_z\sigma^R\|_{L_t^{\infty}H_{tan}^2}^2+\|\p_z\sigma^R\|_{L_t^2H_{tan}^2}^2\\
       &\lesssim  \ep^2 \|\sigma^R(0)\|_{H_{tan}^2}^2
       +\eta^2\Lambda(\cM_T^a)+(T^{\f12}
       +\ep^{\min\{\f{1}{20},\f{\kappa}{3},\f{3-\kappa}{4}\}}+\eta)\eta^2\Lambda(\cM_T^a)(\cN_T^2+\cN_T^4).
    \end{aligned}
    \eeq
\end{lem}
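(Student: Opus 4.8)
The plan is to perform weighted tangential energy estimates directly on the equation \eqref{eq-pzsigma} for $\p_z\sigma^R$. First I would apply $Z^\alpha$ with $|\alpha|\leq 2$, $\alpha_3=0$ to \eqref{eq-pzsigma}, obtaining an equation of the form $\ep^2\tilde\mu(\pt+u\cdot\na)Z^\alpha\p_z\sigma^R+\rho^\gamma Z^\alpha\p_z\sigma^R = -\rho^{\frac{\gamma+1}{2}}\ep\pt Z^\alpha u_3^R + Z^\alpha G_u + Z^\alpha G_\sigma + (\text{commutators})$, where the commutators are $[Z^\alpha,\ep^2\tilde\mu u\cdot\na]\p_z\sigma^R$ and $[Z^\alpha,\rho^\gamma]\p_z\sigma^R$. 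Taking the $L^2(\Omega)$ scalar product with $\rho^{-\gamma}Z^\alpha\p_z\sigma^R$, integrating over $Q_t$, and using the transport structure to handle $(\pt+u\cdot\na)$ after dividing by $\rho^\gamma$ (producing a $\div u$ term as in the proof of Lemma \ref{lem-hightan}), the key coercive term $\|Z^\alpha\p_z\sigma^R\|_{L_t^2L^2}^2$ appears on the left while $\ep^2\|Z^\alpha\p_z\sigma^R(t)\|_{L^2}^2$ is the boundary term at time $t$; the initial term gives $\ep^2\|\sigma^R(0)\|_{H_{tan}^2}^2$.

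Next I would estimate the right-hand side term by term. The term $\rho^{\frac{\gamma+1}{2}}\ep\pt u_3^R$ is handled by noting $\ep\pt = Z_0$, so $\ep\pt Z^\alpha u_3^R = Z^\beta u_3^R$ with $\beta_0\leq 3$; but to stay within $\underline H_{tan}^3$ (which allows $\alpha_0\leq 2$ for the full norm and $\alpha_0\le 3$ with one fewer total derivative) one must be careful — actually $Z_0 Z^\alpha$ with $|\alpha|\le 2$, $\alpha_3=0$ has at most three derivatives with $\alpha_0\le 3$, so this is controlled by $\|u^R\|_{L_t^2\underline H_{tan}^3}\lesssim \eta\cN_T$ after pairing with $\rho^{-\gamma}Z^\alpha\p_z\sigma^R$ and using Cauchy–Schwarz with the coercive term absorbing a small fraction. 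For $G_u$ and $G_\sigma$ in \eqref{defGusigma}, I would use the energy norms in $\cE_T$ and the $L_x^p$ norms in $\cA_T$: the pieces $\ep u\cdot\na u_3^R$, $\ep u^R\cdot\na u_3^a$, $\ep R_u^a$, $\ep\nu\p_z\div_h u_h^R$, $\ep\mu_1\Delta_h u_3^R$, and $\ep(\rho-1)\div_\nu\cS u^a$ for $G_u$, and $\ep^2\tilde\mu$ times $\p_z u\cdot\na\sigma^R$, $\p_z(u^R\cdot\na\sigma^a)$, $\p_z R_\sigma^a$ for $G_\sigma$ — each is bounded in $L_t^1 H_{tan}^2$ (after pairing, $L^2$ in time suffices) by $\eta\Lambda(\cM_T^a) + (\text{small power})\eta^2\Lambda(\cM_T^a,\cA_T)(\cN_T+\cN_T^2)$, mirroring the commutator estimates already carried out in Lemma \ref{lem-hightan}. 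The commutators $[Z^\alpha,\ep^2\tilde\mu u]\cdot\na\p_z\sigma^R$ and $[Z^\alpha,\rho^\gamma]\p_z\sigma^R$ are the analogues of $\cC_1^\alpha,\cC_2^\alpha,\cC_4^\alpha$ and are estimated by the same splitting (counting derivatives, using $\|\p_z u_3^R\|$-type bounds via $\div u^R$, Hardy's inequality, and the interpolation inequalities already established), producing the powers $\ep^{1/20}$, $\ep^{\kappa/3}$, etc.

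The main obstacle I anticipate is the appearance of $\ep^2\p_z\p_z\sigma^R$-type quantities (three $z$-derivatives effectively, or the second normal derivative of $\div u^R$ hidden in $\ep\nu\p_z\div_h u_h^R$) when $\alpha_0=2$ and one needs to differentiate $G_u$ — analogous to the difficulty flagged near the end of Lemma \ref{lem-hightan}'s proof when $\alpha_0=2$. The resolution is to integrate by parts in $z$ on the worst bilinear term and pair the surviving $\na\sigma^R$ against the $\ep\nu\p_z^2 u^R$-type control available in $\cA_T$ (the term $\|\ep\nu\p_z^2 u^R\|_{L_t^2L_x^\infty}$ weighted by $\nu^{1/4}$) or against $\ep\nu\|\na^2 u^R\|$ from $\cE_T$, yielding the extra factor $\ep^{(3-\kappa)/4}$ in the final bound. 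A secondary point requiring care: the $\ep\mu_1\Delta_h u_3^R$ piece of $G_u$ involves $\na_h^2 u_3^R$, one of whose tangential derivatives must be absorbed — this is fine since $\|\na_h u^R\|_{L_t^2\underline H_{tan}^3}\lesssim\eta\cN_T$ already bounds $Z^\alpha\na_h u^R$ for $|\alpha|\leq 2$, and the extra $\na_h$ is tangential so it just contributes another tangential conormal derivative, staying within the $\underline H_{tan}^3$ budget after using the $\ep$ prefactor to trade regularity. Collecting all terms and using Young's inequality to absorb the coercive term yields \eqref{es-nasigma-tan}, with $d$-power $\min\{1/20,\kappa/3,(3-\kappa)/4\}$ coming from the worst of the commutator and integration-by-parts contributions.
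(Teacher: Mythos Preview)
Your overall strategy matches the paper's, but there is a genuine gap in your treatment of the principal forcing term $\rho^{\frac{\gamma+1}{2}}\ep\pt Z^{\alpha}u_3^R$. You claim that $\ep\pt Z^{\alpha}u_3^R$ with $|\alpha|\le 2$, $\alpha_3=0$ is controlled by $\|u^R\|_{L_t^2\underline H_{tan}^3}$, based on the reading that $\underline H_{tan}^3$ ``allows $\alpha_0\le 2$ for the full norm and $\alpha_0\le 3$ with one fewer total derivative.'' This is a misreading of the definition: $\underline H_{tan}^m$ requires $|\alpha|\le m$ \emph{and} $\alpha_0\le m-1$ simultaneously, so for $m=3$ the multi-index $(3,0,0,0)$ is excluded. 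When $\alpha_0=2$, the quantity $\ep\pt Z^{\alpha}u_3^R=(\ep\pt)^3 u_3^R$ is \emph{not} contained in any norm in $\cE_T$ or $\cA_T$; direct Cauchy--Schwarz therefore fails here. This is precisely the point the paper flags as ``necessitat[ing] careful treatment since we only have the control of two time derivatives.''

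The paper's resolution is different from what you propose: instead of bounding this term directly, one integrates by parts first in time (moving $\ep\pt$ onto the factor $Z^{\alpha}\p_z\sigma^R$), and then in $z$ (using $u_3^R|_{\p\Omega}=0$), producing
\[
\iint_{Q_t}\p_z\big(\rho^{\frac{\gamma+1}{2}}Z^{\alpha}u_3^R\big)\cdot \ep\pt Z^{\alpha}\sigma^R\,\d x\,\d s
\]
plus boundary-in-time and lower-order terms. The factor $\p_z Z^{\alpha}u_3^R=Z^{\alpha}(\div u^R-\div_h u_h^R)$ is now controlled by $\|(\div u^R,\na_h u^R)\|_{L_t^2\underline H_{tan}^3}$, and for $\ep\pt Z^{\alpha}\sigma^R$ one uses the continuity equation $\eqref{CNS-remainder}_1$ to write $\ep\pt\sigma^R\approx-\rho^{\frac{\gamma-1}{2}}\div u^R$, so that $(\ep\pt)^{\alpha_0+1}\sigma^R$ becomes $(\ep\pt)^{\alpha_0}\div u^R$ with $\alpha_0\le 2$, which \emph{is} in $\underline H_{tan}^3$. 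The boundary-in-time contribution $\ep\int_{\Omega}\rho^{\frac{\gamma+1}{2}}Z^{\alpha}u_3^R\cdot Z^{\alpha}\p_z\sigma^R\,\d x$ is absorbed using Young's inequality against the coercive $\ep^2\tilde\mu\|Z^{\alpha}\p_z\sigma^R(t)\|_{L^2}^2$ on the left. Your ``main obstacle'' paragraph focuses on the wrong term; the real difficulty is not in $G_u$ but in the $\ep\pt u_3^R$ term itself, and the fix is integration by parts in time combined with the continuity equation, not integration by parts in $z$ against $\ep\nu\p_z^2 u^R$ controls.
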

\begin{proof}
Let $\alpha \in \mathbb{N}^4,$ with $\alpha_3=0.$ 
Taking 
$Z^{\alpha}(|\alpha|\leq 2)$ on the equation \eqref{eq-pzsigma}, and multiplying the resultant equation by $Z^{\alpha}\p_z\sigma^R,$ integrating over $Q_t=[0,t]\times \Omega,$ we find, after suitable integration by parts, the following energy inequality
\beq\label{EI-sigma}
\begin{aligned}
  &  \ep^2 \tilde{\mu} \int_{\Omega} |\p_z Z^{\alpha}\sigma^R|^2(t) \, \d x +\iint_{Q_t} \rho^{\gamma}|\p_z Z^{\alpha}\sigma^R|^2 \, \d x \d s\\
    & \lesssim  \ep^2 \tilde{\mu} \int_{\Omega} |\p_z Z^{\alpha}\sigma^R|^2(0) \, \d x+\ep \|\div(u^a+u^R)\|_{L^2_tL^{\infty}}\|\ep \p_z\sigma^R\|_{L_t^{\infty}H_{tan}^2}\|\p_z\sigma^R\|_{L_t^2H_{tan}^2}\\
    &\, +\bigg|\iint_{Q_t}\rho^{\f{\gamma+1}{2}} \ep\pt Z^{\alpha}u_3^R Z^{\alpha}\p_z \sigma^R\, \d x \d s \bigg|
    + \|Z^{\alpha}\p_z\sigma^R\|_{L_t^2L^2} \big(\|(\cF_1^{\alpha},\cF_2^{\alpha},\cF_3^{\alpha})\|_{L_t^2L^2}+\|(G_u,G_{\sigma})\|_{L_t^2H_{tan}^2} \big)
\end{aligned}
\eeq
where  $$\cF_1^{\alpha}=\ep^2[Z^{\alpha}, u^a+u^R]\cdot \na\p_z\sigma^R,\,\,\quad  \cF_2^{\alpha}=[Z^{\alpha}, \rho^{\gamma}]\p_z\sigma^R,\, \, \quad \cF_3^{\alpha}=[Z^{\alpha}, \rho^{\f{\gamma+1}{2}}]\ep\pt u_3^R.$$

Let us first deal with the integrable term, which necessitates the careful treatment since we only have the control of two time derivatives. Integrating by parts first in time and then in space, we find that 
\beq\label{es-integral}
\begin{aligned}
&\bigg|\iint_{Q_t}\rho^{\f{\gamma+1}{2}} \ep\pt Z^{\alpha}u_3^R\cdot Z^{\alpha}\p_z \sigma^R\,  
\bigg|\leq \ep \bigg|\int_{\Omega} \rho^{\f{\gamma+1}{2}} Z^{\alpha}u_3^R\cdot Z^{\alpha}\p_z \sigma^R \,\big|_0^t\bigg|\\
&\qquad+\bigg| \iint_{Q_t}(\ep\pt(\rho^{\f{\gamma+1}{2}}) ) Z^{\alpha}u_3^R\cdot Z^{\alpha}\p_z \sigma^R\,\bigg|+\bigg| \iint_{Q_t}\p_z (\rho^{\f{\gamma+1}{2}}  Z^{\alpha}u_3^R)\cdot \ep\pt Z^{\alpha}\sigma^R\,\bigg|\\
&\leq \f{\ep^2\tilde{\mu}}{2} \|Z^{\alpha}\p_z \sigma^R(t)\|_{L^2}^2+\|\ep \p_z\sigma^R(0)\|_{H_{tan}^2}^2+C \|u_3^R\|_{L_t^{\infty}H_{tan}^2}^2+\ep\|u_3^R\|_{L_t^2H_{tan}^2}\|\p_z\sigma^R\|_{L_t^2H_{tan}^2}\\
&\qquad + \ep \|\na(\sigma^a,\sigma^R)\|_{L_t^2L^{\infty}} \|u_3^R\|_{L_t^{\infty}H_{tan}^2} \|\p_z\sigma^R\|_{L_t^2H_{tan}^2}+\|(\div u^R, \na_h u^R,\ep\pt \sigma^R)\|_{L_t^2H_{tan}^2}^2\\
&\leq  \f{\ep^2\tilde{\mu}}{2} \|Z^{\alpha}\p_z \sigma^R(t)\|_{L^2}^2+\|\ep \p_z\sigma^R(0)\|_{H_{tan}^2}^2+C \|u_3^R\|_{L_t^{\infty}H_{tan}^2}^2+C\|(\div u^R, \na_h u^R)\|_{L_t^2H_{tan}^2}^2\\
&\qquad+\ep\|R_{\sigma}^a\|_{L_t^2H_{tan}^2}^2+(\ep+\eta)  \eta^2\Lambda(\cM_T^a,\cA_T)\cN_T^2.
\end{aligned}
\eeq
Note that in the last inequality we have used the equation $\eqref{CNS-remainder}_1$ to have that
\begin{align*}
 \|\ep\pt \sigma^R\|_{L_t^2H_{tan}^2}\lesssim \|\div u^R\|_{L_t^2H_{tan}^2}+(\ep^{\f12}+\eta)\eta\Lambda(\cM_T^a,\cA_T)\cN_T+\ep\|R_{\sigma}^a\|_{L_t^2H_{tan}^2}. 
\end{align*}
Next, 
concerning $\cF_1^{\alpha}-\cF_3^{\alpha},$ we use the definitions of $\cE_T$ and $\cA_T$ (see \eqref{def-cET},\eqref{def-cAT}) to 
 verify that
\beq\label{es-cF1}
\begin{aligned}
  \|\cF_1^{\alpha}\|_{L_t^2L^2}&\lesssim \ep^2\|(u^a, \p_z u_3^a)\|_{L_t^{\infty}W_{tan}^{2,\infty}} \|\p_z\sigma^R\|_{L_t^2{H}_{co}^2}+\|\ep Z(u_h^R,\p_z u_3^R)\|_{L_t^2L^{\infty}}\|\ep\p_z\sigma^R\|_{L_t^{\infty}\underline{H}_{co}^2}\\
&\qquad+\ep^2\|u_h^R\|_{L_t^{2}W_{tan}^{2,\infty}}\|\p_z\sigma^R\|_{L_t^{\infty}H_{tan}^{1}}+\ep^2\|u_3^R\|_{L_t^{2}W_{tan}^{2,\infty}} \|\p_z^2\sigma^R\|_{L_t^{\infty}L^2}\\
  & \lesssim \ep^2\eta\Lambda(\cM_T^a)\cN_T+
  \big(
  \eta+
  \ep\eta\max\{1,({\ep^7}/{\nu^5})^{\f16}, (\ep^3/{\nu^2})^{\f12}\}\big)\eta\cN_T^2\, \\
  & \lesssim \big(\eta+\ep^{\f12})\eta(\Lambda(\cM_T^a)\cN_T+\cN_T^2\big),
\end{aligned}
\eeq
and
\beq\label{es-cF23}
\begin{aligned}
 \|(\cF_2^{\alpha},\cF_3^{\alpha})&\|_{L_t^2L^2}\lesssim
 \ep\|\sigma^R\|_{L_t^{\infty}H_{tan}^2}\|(\p_z\sigma^R,\ep\pt u_3^R)\|_{L_t^2L^{\infty}}\\
 & \quad+
(\ep+\|\ep Z\sigma^R\|_{L_{t,x}^{\infty}})\Lambda\big(\|\sigma^a\|_{L_t^{\infty}W_{co}^{2,\infty}}+\|\ep Z\sigma^R\|_{L_{t,x}^{\infty}}\big) \|(\p_z\sigma^R, \ep\pt u_3^R)\|_{L_t^{2}H_{tan}^1}\\
 &\lesssim (\ep+\eta) \eta\Lambda(\cM_T^a,\cA_T)\cN_T.
\end{aligned}
\eeq
It now remains to control $\|(G_u,G_{\sigma})\|_{L_t^2H_{tan}^2}.$ Checking each term in the definition \eqref{defGusigma}, one can find that 
\beq\label{es-Gusigma}
\begin{aligned}
 \|G_u\|_{L_t^2H_{tan}^2}\lesssim (\ep +\ep\nu^{-\f14}\eta)\eta\Lambda(\cM_T^a)(\cN_T+\cN_T^2)+\ep \|R_u^a\|_{L_t^2H_{tan}^2}+\eta \Lambda(\cM_T^a),\\
  \|G_{\sigma}\|_{L_t^2H_{tan}^2}\lesssim 
   ( \eta+\ep^{\f{3-\kappa}{4}})\Lambda(\cM_T^a)(\cN_T+\cN_T^2)+\ep^2\|\p_z R_{\sigma}^a\|_{L_t^2H_{tan}^2}.
\end{aligned}
\eeq
For instance, in view of the definitions \eqref{def-cET}, \eqref{def-cAT} and the estimate \eqref{pzuh-dinfty},
it holds that 
\begin{align*}
    \ep^2 \|\p_z u^a\cdot\na \sigma^R\|_{L_t^2H_{tan}^2}\lesssim \|\ep^2\p_z u^a \|_{L_t^{\infty}W_{tan}^{2,\infty}}\|\na\sigma^R\|_{L_t^2H_{tan}^2}\lesssim (\ep^3/\nu)^{\f12}\Lambda(\cM_T^a)\eta\cN_T, 
    \end{align*}
    \begin{align*}
     \ep^2 \|\p_z u^R\cdot\na\sigma^R\|_{L_t^2H_{tan}^2}&\lesssim \|\ep^2\p_z u^R \|_{L_{t,x}^{\infty}}\|\na\sigma^R\|_{L_t^2H_{tan}^2}+\|\ep^2\p_z u_h^R \|_{L_{t}^{2}W_{tan}^{1,\infty}}\|\sigma^R\|_{L_t^{\infty}H_{tan}^{2}}\\
     &\quad +\|\ep\p_z u_3^R \|_{L_{t}^{2}W_{tan}^{1,\infty}}\|\ep\p_z\sigma^R\|_{L_t^{2}H_{tan}^{1}}+\|\ep^2\p_z u^R \|_{L_{t}^{\infty}H_{tan}^2}\|\na\sigma^R\|_{L_t^2L^{\infty}}\\
     &\lesssim \big(\eta+\ep^{\f{3-\kappa}{4}}+(\ep^2/\nu)^{\f34}\eta+\ep\eta(\ep^{-\f14}+\nu^{-\f14})\big) \eta\cN_T^2
     \lesssim  
    ( \eta+\ep^{\f{3-\kappa}{4}})\eta\cN_T^2. 
\end{align*}
Collecting \eqref{es-integral}-\eqref{es-Gusigma} 
and using Young's inequality, we get from \eqref{EI-sigma} that 
\begin{align*}
   & \ep^2 \|\p_z\sigma^R\|_{L_t^{\infty}H_{tan}^2}^2+\|\p_z\sigma^R\|_{L_t^2H_{tan}^2}^2\lesssim  \ep^2 \|\sigma^R(0)\|_{H_{tan}^2}^2+\ep\|(R_{\sigma}^a, R_u^a, \ep\p_zR_{\sigma}^a)\|_{L_t^2H_{tan}^2}^2\\
    &+\|u_3^R\|_{L_t^{\infty}H_{tan}^2}^2+\|(\div u^R, \na_h u^R)\|_{L_t^2H_{tan}^2}^2+(\ep^{\min\{\f{3-\kappa}{4}, \f12\}}+\eta)\eta^2\Lambda(\cM_T^a)(\cN_T^2+\cN_T^4),
\end{align*}
which, combined with \eqref{ES-hightan}, leads to \eqref{es-nasigma-tan}. 
\end{proof}
We also need the following estimate for $\ep^{\f12}\|\p_z\div u^R\|_{L_t^2\underline{H}_{tan}^2},$ which will be useful in the next step for the estimate of the vorticity.
\begin{lem}
Under the same assumption as in Lemma \ref{lem-hightan}, 
it holds that for any $(\ep,\nu)\in A_0,$  any $t\in[0,T],$  
\beq\label{pzdiv-tangential}
\begin{aligned}
  &  \ep\big(\|\div  u^R\|_{L_t^{\infty}\underline{H}_{tan}^2}^2+\|\p_z\div u^R\|_{L_t^2\underline{H}_{tan}^2}^2\big) \\
    & \lesssim \eta^2\cN_0^2+  \eta^2\Lambda(\cM_T^a) +\big(T^{\f12}+
    \ep^{\min\{\f{1}{20},\f{\kappa}{3},\f{3-\kappa}{4}\}}+\eta\big)\eta^2\Lambda(\cM_T^a)(\cN_T^2+\cN_T^4). 
    \end{aligned}
    \eeq
\end{lem}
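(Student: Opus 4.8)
The plan is to derive an evolution equation for $\div u^R$ (rather than for $\na\sigma^R$, as in Lemma~\ref{lem-pzsig-tan}), and run a weighted energy estimate on it. Starting from the remainder system \eqref{CNS-remainder}, I would take the divergence of the momentum equation and combine with the mass equation to eliminate $\sigma^R$; the term $\f1\rho\div_\nu\cS u^R$ contributes the good parabolic-type operator acting on $\div u^R$, giving schematically
\begin{align*}
\ep^2(\pt+u\cdot\na)\div u^R-\ep\,\tilde\mu\,\Delta_\nu\div u^R+\rho^{\gamma}\div u^R
=\ep\,\rho^{\f{\gamma+1}{2}}\,\pt\div u^R_{\text{drop}}+\widetilde G,
\end{align*}
more precisely I would keep $\div u^R$ as the principal unknown and treat $\na\sigma^R$ terms via the relation $\ep^2(\pt+u\cdot\na)\sigma^R=-\rho^{\f{\gamma-1}{2}}\ep\,\div u^R-\ep^2(\ldots)$, exactly mirroring \eqref{eq-pzsigma}–\eqref{defGusigma}. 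The source $\widetilde G$ collects: the commutators from moving $\div$ past the variable coefficients $1/\rho$, $\rho^{\gamma}$; the lower-order terms $u^R\cdot\na u^a$, $u^R\cdot\na\sigma^a$; the error terms $\div R^a_u$, $\pt R^a_\sigma$, $(\f1\rho-1)\div_\nu\cS u^a$; and the genuinely parabolic right-hand side $\ep\,\Delta_h(\ldots)$ and $\ep\nu\p_z^2(\ldots)$ pieces.

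Then I would apply $Z^\alpha$ with $|\alpha|\le 2$, $\alpha_3=0$, $\alpha_0\le 2$ (so we stay in $\underline H_{tan}^2$), multiply by $Z^\alpha\div u^R$, and integrate over $Q_t$. The leading term $\ep^2\p_t$ gives $\ep\|\div u^R\|_{L_t^\infty\underline H_{tan}^2}^2$ (note the half power: the coefficient is $\ep^2$ but we only get $\ep$ after dividing — consistent with the weight in \eqref{pzdiv-tangential}, wait, the stated bound is $\ep\|\div u^R\|^2$, so actually the $\ep^2$ coefficient with a $1/\ep$ from the $\rho^{\f{\gamma+1}{2}}\pt\div$-type integration gives the right power), the dissipative term $-\ep\tilde\mu\Delta_\nu\div u^R$ gives $\ep\|\na_\nu\div u^R\|_{L_t^2L^2}^2\gtrsim \ep\|\p_z\div u^R\|^2$ after the conormal commutators, and the zeroth-order term $\rho^\gamma\div u^R$ gives a positive $\|\div u^R\|_{L_t^2\underline H_{tan}^2}^2$ which is already controlled by \eqref{ES-hightan}. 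The integrable term $\ep\iint \rho^{\f{\gamma+1}{2}}\pt Z^\alpha(\ldots)\,Z^\alpha\div u^R$ is handled by integrating by parts in time and then in space exactly as in \eqref{es-integral}, producing $\|u^R\|_{L_t^\infty\underline H_{tan}^2}^2$, $\|(\div u^R,\na_h u^R,\ep\pt\sigma^R)\|_{L_t^2\underline H_{tan}^2}^2$ (all absorbed via \eqref{ES-hightan} and the mass equation), boundary-free contributions at $t=0$ giving $\eta^2\cN_0^2$, and the usual $(T^{\f12}+\ep+\eta)\eta^2\Lambda(\cM_T^a,\cA_T)(\cN_T^2+\cN_T^4)$ remainder.

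The bulk of the work is then estimating the commutators $[Z^\alpha,1/\rho]$, $[Z^\alpha,\rho^\gamma]$, $[Z^\alpha,u^a+u^R]\cdot\na$, and the source $\widetilde G$, in $L_t^2L^2$, which proceeds verbatim as in \eqref{es-cF1}–\eqref{es-Gusigma}: count how many derivatives land on the coefficient versus the principal unknown, use the $L_x^\infty$-type norms collected in $\cA_T$ (with their carefully chosen weights) for the low-regularity factor, and the $H_{tan}$ energy norms in $\cE_T$ for the high-regularity factor. The key structural point ensuring the powers work out is that every factor of $\p_z\div u^R$ or $\p_z^2 u^R$ appears multiplied by at least $\ep^{1/2}$ or $\ep\nu$, so the non-uniform weights in \eqref{def-cET}–\eqref{def-cAT} are exactly matched. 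The main obstacle — as flagged in Lemma~\ref{lem-pzsig-tan} and the surrounding text — is the borderline case $\alpha_0=2$, where $\|\p_z^2(\ep\pt)^2\div u^R\|_{L_t^2L^2}$ is \emph{not} available; there one must, as in the treatment of $\ep\nu\iint\na_h\p_z\sigma^R\,Z_0^2\p_z u^R\cdot\dot v^R$ above, integrate by parts to move the dangerous $\p_z$ onto a factor that \emph{is} controlled (e.g.\ onto $\dot v^R=\rho^{-\f{\gamma-1}{2}}Z^\alpha\div u^R$ whose gradient is in $L_t^2L^2$ with weight $\nu^{1/2}$, or onto a coefficient), trading it for an $L_t^\infty L^3$ bound on $\ep\na_h\p_z\sigma^R$ and paying the extra $\ep^{\min\{\f1{20},\f\kappa3,\f{3-\kappa}4\}}$ smallness. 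Collecting everything and using Young's inequality to absorb the $\f{\ep\tilde\mu}{2}\|Z^\alpha\p_z\div u^R(t)\|_{L^2}^2$ term, then invoking \eqref{ES-hightan} and \eqref{es-nasigma-tan} for the already-controlled pieces, yields \eqref{pzdiv-tangential}.
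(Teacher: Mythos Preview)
Your proposal contains a genuine gap: the schematic equation you write down does not follow from the manipulation you describe. Taking $\div$ of the momentum equation produces the singular term $\ep^{-1}\rho^{\f{\gamma-1}{2}}\Delta\sigma^R$ (second order in $\sigma^R$), and the mass equation only gives $\ep\pt\sigma^R\sim-\rho^{\f{\gamma-1}{2}}\div u^R$; combining these does \emph{not} yield a zero-order damping $\rho^{\gamma}\div u^R$. The true ``mirror'' of \eqref{eq-pzsigma} would be an equation for $\p_z\div u^R$ (not $\div u^R$), obtained by eliminating $\p_z\sigma^R$ between $\p_z$-of-mass and the $u_3^R$-momentum equation, and it has the damped-transport form $\ep^2\tilde\mu(\pt+u\cdot\na)\p_z\div u^R+\rho^{\gamma}\p_z\div u^R=(\ep\pt)^2(\rho u_3^R)+\ldots$ with \emph{no} parabolic $\Delta_\nu$ term. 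Separately, your claim ``$\ep\|\na_\nu\div u^R\|^2\gtrsim\ep\|\p_z\div u^R\|^2$'' is false: by definition $\|\na_\nu f\|^2=\|\na_h f\|^2+\nu\|\p_z f\|^2$, so the normal part carries an extra factor $\nu$. (The correct computation of $\div(\div_\nu\cS u^R)$ gives $(\mu_1+\mu_2)\Delta_h\div u^R+\tilde\mu\,\p_z^2\div u^R$, so the normal dissipation coefficient is $\tilde\mu\approx\mu_2$, not $\tilde\mu\nu$---but this is not the operator you wrote.) Finally, $\underline H_{tan}^2$ requires $\alpha_0\le 1$, not $\alpha_0\le 2$.

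The paper's route is shorter and avoids all of this: one works directly with the $u_3^R$-momentum equation (which already contains the term $-\tfrac{\tilde\mu}{\rho}\p_z\div u^R$), applies $Z^\alpha$ with $\alpha_0\le 1$, $\alpha_3=0$, and tests against $-\ep\,\p_z Z^\alpha\div u^R$. After integrating the time derivative by parts in $z$ (using $u_3^R|_{\p\Omega}=0$ and $\p_z u_3^R=\div u^R-\div_h u_h^R$), one obtains $\ep\|\div u^R\|_{L_t^\infty\underline H_{tan}^2}^2$, while the viscous term yields $\ep\tilde\mu\|\p_z\div u^R\|_{L_t^2\underline H_{tan}^2}^2$; the singular term $\ep^{-1}\rho^{\f{\gamma-1}{2}}\p_z\sigma^R$ pairs against $-\ep\,\p_z\div u^R$ with the $\ep$'s cancelling, and the remaining cross term is absorbed using \eqref{es-nasigma-tan}. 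Every other contribution is a straightforward source estimated by $\cE_T,\cA_T,\cM_T^a$. No new evolution equation, no $\alpha_0=2$ obstacle, and no integration by parts in time are needed.
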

\begin{proof}
To obtain such an estimate, we use the equation for $u_3^R:$ 
\begin{align*}
 & \big(\displaystyle\pt+ (u^{a}+u^R)\cdot\na\big) u_3^{R}+u^R\cdot\na u_3^a+
\f{\rho^{\f{\gamma-1}{2}}\p_z \sigma^{R}}{\ep}-\f{1}{\rho}\, \big(\tilde{\mu}\p_z\div u^R+\mu_1\Delta_h u_3^{R}-\mu_1\nu\p_z\div_h u_h^R\big)+(R^a_{u})_3\\
&=\big(\f{1}{\rho}-1\big)(\div_{\nu} S u^a)_3\,. 
\end{align*}
 Let $\alpha$ be a multi-index with $\alpha_0\leq 1, \alpha_3=0.$
 Taking $Z^{\alpha}$ on the above equation and taking the $L^2(Q_t)$ product of the above equation with  
 $-\ep\p_zZ^{\alpha}\div u^R,$ we obtain after suitable integration by parts and the application of the Young's inequality that   
\begin{align*}
\ep \big(\|\div u^R\|_{L_t^{\infty}\underline{H}_{tan}^2}^2&+\|\p_z\div u^R\|_{L_t^2\underline{H}_{tan}^2}^2\big) \lesssim \|\div u^R(0)\|_{\underline{H}_{tan}^2}^2 +\|\div_h u_h^R\|_{L_t^2\underline{H}_{tan}^3}^2+ \|(\p_z\sigma^R, \div u^R)\|_{L_t^2\underline{H}_{tan}^2}^2\\
&+ \ep \|((u^{a}+u^R)\cdot\na  u_3^{R}, u^R\cdot\na u_3^a, \Delta_h u_3^R, \nu\p_z\div_h u_h^R)\|_{L_t^2\underline{H}_{tan}^2}^2+\|[Z^{\alpha}, \rho^{\f{\gamma-1}{2}}]\p_z\sigma^R\|_{L_t^2\underline{H}_{tan}^2}^2\\
&+\ep \|(R^a_{u})_3\|_{L_t^2\underline{H}_{tan}^2}^2+\ep\|\big({1}/{\rho}-1\big)(\div_{\nu} S u^a)_3\|_{L_t^2\underline{H}_{tan}^2}^2. 
\end{align*}
We can readily check that the second line can be controlled by $(\ep+\eta) \eta^2\Lambda(\cA_T, \cM_T^a)(\cN_T^2+\cN_T^4),$ while the last term is controlled by $\ep \eta^2\Lambda(\cM_T^a, \cA_T)(1+\cN_T^2).$ The estimate \eqref{pzdiv-tangential} then follows from \eqref{ES-hightan} and \eqref{es-nasigma-tan}.

\end{proof}

\subsection{Energy estimates for the weight vorticity and horizontal components of the vorticity $\omega_h$}
In this subsection, we prove some estimates 
for the vorticity, including the uniform estimates
for $\phi \,\omega^R$  and non-uniform estimates for $\omega_h^R.$

First, we recover the uniform estimate for the vorticity $\omega^R$ with weight, which gives the (uniform) estimate for $Z_3 u_h^R.$

\begin{lem}\label{lem-vor-uniform}
    Under the same assumption as in Lemma \ref{lem-hightan}, it holds that for any $t\in[0, T],$
 \beq\label{es-vor-weight}
\begin{aligned}
\|\phi\, \omega^R\|_{L_t^{\infty}H_{co}^2}^2+ \|\phi\na_{\nu}\omega^R\|_{L_t^{2}H_{co}^2}^2&\lesssim
 \|\phi\, \omega^R(0)\|_{L_t^{\infty}H_{tan}^2}^2+ 
 \eta^2
 \Lambda(\cM_T^a)\\
     &\quad +\eta^2(T^{\f12}+\ep^{\min\{\f{1}{40},\f{\kappa}{3},\f{3-\kappa}{4}\}})
)\Lambda\big(\cM_T^a,\cA_T\big)\big(\cN_T^2+\cN^4_T\big),
\end{aligned}
\eeq
where we denoted for simplicity $\na_{\nu}=(\p_{x_1},\p_{x_2}, \nu^{\f12}\p_z ).$
\end{lem}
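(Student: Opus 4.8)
The plan is to derive from the momentum equation \eqref{CNS-remainder} the evolution equation for the vorticity $\omega^R = \curl u^R$ by taking the curl, and then run a weighted energy estimate with the weight $\phi^2 = (z(a_3-z))^2$. Taking the curl of the second equation in \eqref{CNS-remainder} kills the singular pressure term $\rho^{\f{\gamma-1}{2}}\na\sigma^R/\ep$ up to a commutator $\curl(\rho^{\f{\gamma-1}{2}}\na\sigma^R) = \na(\rho^{\f{\gamma-1}{2}})\times\na\sigma^R$, which carries an extra $\ep$ from $\na\rho$ and is therefore harmless. This is the point of the weighted estimate: there is no oscillation in the vorticity equation, so a straightforward (non-penalized) parabolic energy estimate applies, and the $\phi$-weight is what allows us to integrate by parts in $z$ without picking up boundary terms involving $\nu\p_z\omega^R|_{\p\Omega}$ (which, by the boundary condition $\nu\p_z\omega_h^R|_{\p\Omega} = \na\sigma^R/\ep|_{\p\Omega}+\cdots$, is only controlled with a bad weight — that is deferred to Step 3.2).

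First I would write the vorticity equation schematically as
\beqs
\big(\pt + (u^a+u^R)\cdot\na\big)\omega^R - \f{1}{\rho}\div_\nu\cS\,\omega^R = \cG,
\eeqs
where $\cG$ collects: the stretching/transport commutator terms $\omega^R\cdot\na(u^a+u^R)$ and $\omega^R\,\div(u^a+u^R)$ type quantities (from $\curl$ of the convective term), the term $\curl\big((\f1\rho-1)\div_\nu\cS u^a\big)$, the commutator $[\f1\rho,\div_\nu\cS]u^R$-type contributions, the pressure commutator $\na(\rho^{\f{\gamma-1}{2}})\times\na\sigma^R/\ep$ (which is $\cO(1)$ because $\na\rho = \cO(\ep)$), and $\curl R^a_u$. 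Then, for each multi-index $\alpha$ with $|\alpha|\le 2$, I apply $Z^\alpha$, multiply by $\phi^2\rho^{-\f{\gamma-1}{2}}Z^\alpha\omega^R$, and integrate over $Q_t$. The key structural points: (i) the transport term gives only lower-order contributions since $\phi^2$ is transported-friendly (the extra terms $\p_t\phi^2$ vanish and $(u^a+u^R)\cdot\na\phi^2$ involves $u_3/\phi$, controlled by $\p_z u_3$); (ii) the viscous term $\iint \phi^2\rho^{-\f{\gamma+1}{2}}(\mu_1|\na_h Z^\alpha\omega^R|^2 + \mu_1\nu|\p_z Z^\alpha\omega^R|^2 + \dots)$ is the good term, modulo commutators between $\phi^2$ and $\p_z$ which produce $\phi\na_\nu\omega^R$ against lower-order $\omega^R$ terms and are absorbed; (iii) no boundary terms appear thanks to the two factors of $\phi$. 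The commutator and source terms in $\cG$ are then estimated exactly in the style of Lemma \ref{lem-hightan} and Lemma \ref{lem-pzsig-tan}, using the $L^p_x$ bounds packaged in $\cA_T$, the tangential estimates \eqref{ES-hightan}, and the compressible-part estimates \eqref{es-nasigma-tan}, \eqref{pzdiv-tangential}; the powers of $\ep$ collected (and the constraint $\nu=\ep^\kappa$, $\kappa\in(0,3)$) produce the gain $\ep^{\min\{1/40,\kappa/3,(3-\kappa)/4\}}$.

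I expect the main obstacle to be the pressure commutator term $\iint \phi^2\rho^{-\f{\gamma-1}{2}}Z^\alpha\big(\na(\rho^{\f{\gamma-1}{2}})\times\na\sigma^R/\ep\big)\cdot Z^\alpha\omega^R$ together with the terms in $\cG$ that contain two normal derivatives of $\sigma^R$ or $u^R$: since $\na\rho = \cO(\ep)$ cancels the $1/\ep$, one is left needing $L^2_t$ control of $\na\sigma^R$ and, after the commutator hits, of $\na^2\sigma^R$ with an $\ep$ (or $\tau^2$) weight — which is exactly what \eqref{def-cET} provides via $\tau^2\|\na^2\sigma^R\|_{L^2_tH_{co}^1}$ and $\ep\|\na\div u^R\|_{\cdots}$, but matching the available weights against the $1/\phi$ losses from $u_3^R/\phi$ (Hardy) requires care. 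A secondary difficulty is the stretching term $Z^\alpha(\omega^R\cdot\na u^R)$ at top order $|\alpha|=2$: when all derivatives fall on $u^R$ one needs $\|\na u^R\|$ in a conormal space with enough integrability, which is supplied by the weighted vorticity norm being estimated (a mild bootstrap, closed by Young's inequality since such terms carry a small prefactor $T^{1/2}+\ep^{\cdots}$). Once all pieces are bounded, Young's inequality and absorbing the good viscous term yield \eqref{es-vor-weight}; combined with \eqref{ES-hightan} this also gives the uniform bound on $Z_3 u_h^R = \phi\p_z u_h^R$ advertised in the text.
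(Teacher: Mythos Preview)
Your plan is correct and matches the paper's proof essentially step for step: take the curl, run a $\phi^2$-weighted energy estimate on $Z^\alpha\omega^R$ to avoid boundary terms, and control the source and commutator terms via the norms in $\cN_T$ (the paper packages these as $F_1$--$F_4$ and $D_1^\alpha,D_2^\alpha$ in Lemma~\ref{lem-F-D}). Two small points you will discover along the way: the pressure commutator is actually simpler than you fear, since $\na\sigma^R\times\na\sigma^R=0$ leaves only $\na\sigma^a\times\na\sigma^R$, controlled directly by $\|\na\sigma^R\|_{L_t^2H_{co}^2}$ with no need for $\na^2\sigma^R$; and to close you must induct on the number of $Z_3$ derivatives in $\alpha$, because the commutators $[Z^\alpha,\p_z]$ and the $\phi\phi'$ cross terms produce $\nu\|Z^{\tilde\alpha}\omega^R\|_{L_t^2L^2}^2$ without a $\phi$-weight, which is only a priori available for $\alpha_3=0$ and is then fed forward via the viscous term $\nu^{1/2}\|\phi\p_z Z^{\tilde\alpha}\omega^R\|_{L_t^2L^2}\gtrsim\nu^{1/2}\|Z_3 Z^{\tilde\alpha}\omega^R\|_{L_t^2L^2}$.
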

\begin{proof}
    Taking the curl of the equation $\eqref{CNS-remainder}_2$ for the velocity, we find that $\omega^R=\curl u^R$ solves the equation
\begin{align}\label{eq-omega}
\big(\pt +(u^a+u^R)\cdot\na\big)\omega^R-\f{\mu_1}{\rho}\big(\Delta_h+\nu\p_z^2\big)\omega^R
=F-\curl R_u^a
\end{align}
where $F=F_1+F_2+F_3+F_4$ with
\beq\label{def-F}
\begin{aligned}
    &F_1=-\omega^a\div u^R+\omega^a\cdot\na u^R-u^R\cdot\na\omega^a+\omega^R\cdot \na u^a-\omega^R\div u^a, \\
 & F_2=\omega^R\cdot \na u^R+\omega^R\div u^R , \qquad F_3=\ep^{-1}\na (\rho^{-\f{\gamma-1}{2}})\times \na\sigma^R+\na(\rho^{-1})\times \div_{\nu} S u^R,\, \\
 & F_4=\na\rho \times \div_{\nu} S u^a+\mu_1(\rho-1)(\Delta_h+\nu\p_z^2)\omega^a.
\end{aligned}
\eeq
Let $\alpha\in \mathbb{N}^4$ be a multi-index with $|\alpha|\leq 2.$ Applying $ Z^{\alpha}$ on the equation \eqref{eq-omega}, we obtain the following system
\begin{align*}
&\big(\pt +(u^a+u^R)\cdot\na\big)(Z^{\alpha}\omega^R)-
\mu_1\big(\div_h Z^{\alpha} (\rho^{-1}\na_h \omega^R)+\nu Z^{\alpha}\p_z (\rho^{-1}\p_z \omega^R)\big)
\\
&=Z^{\alpha}F+D_{1}^{\alpha}+D_{2}^{\alpha}, 
\end{align*}
where 
\begin{align}\label{def-D12alpha}
  &  D_1^{\alpha}=-[Z^{\alpha}, (u^a+u^R)\cdot\na]\omega^R,\quad 
 D_2^{\alpha}=-\mu_1 Z^{\alpha}\big(\big(\na_h (\rho^{-1})\cdot\na_h+\nu\p_z(\rho^{-1})\p_z\big)\omega^R\big).
\end{align}
Taking the scalar product of the above system with $\phi^2 Z^{\alpha}\omega^R$ and integrating over $Q_t=[0,t]\times \Omega,$ we find the following energy equality 
\beq\label{EI-vor}
\begin{aligned}
&\f12\int_{\Omega} |\phi \, Z^{\alpha}\omega^R|^2(t)\, \d x+\mu_1 \iint_{Q_t} \rho^{-1}\big( |\phi \, \na_h  Z^{\alpha}\omega^R|^2+\nu |\phi \, \p_z Z^{\alpha}\omega^R|^2\big)\, \d x\d s \\
&=\f12 \int_{\Omega} |\phi \, Z^{\alpha}\omega^R|^2(0)\, \d x+ \f12 \iint_{Q_t} \big(\div(u^R+u^a)+2 \p_z\phi (u_3^a+u_3^R)/\phi \big)|\phi \, Z^{\alpha}\omega^R|^2\, \d x\d s\\
&-\mu_1\nu \iint_{Q_t} \rho^{-1}\big(2\phi \p_z\phi\, Z^{\alpha}\p_z\omega^R\cdot Z^{\alpha}\omega^R
+\phi^2[Z^{\alpha},\p_z]\omega^R\cdot \, \p_zZ^{\alpha}\omega^R\big)\, \d x \d s\\
&\, +\mu_1\nu \iint_{Q_t} \phi^2 [Z^{\alpha},\p_z](\rho^{-1}\p_z\omega^R)\cdot Z^{\alpha}\omega^R\, \d x \d s -\mu_1\iint_{Q_t} \phi^2 [Z^{\alpha}, \rho^{-1}]\na_{\nu} \omega^R \cdot \na_{\nu} Z^{\alpha} \omega^R \, \d x\d s\\
&\, + \iint_{Q_t} \phi^2 \big(Z^{\alpha}F+Z^{\alpha}\curl R_u^a+D_{1}^{\alpha}+D_{2}^{\alpha} \big) Z^{\alpha}\omega^R\d x \d s := \sum_{k=1}^6 \cJ_k^{\alpha}.
\end{aligned}
\eeq

The second term in the right hand side $\cJ_2^{\alpha}$ can be estimated simply by 
\begin{align*}
  |\cJ_2^{\alpha}|\lesssim T^{\f12} \|(\div u^a , \div u^R, \p_z u_3^a, \p_z u_3^R)\|_{L_t^2L^{\infty}} \|\phi\,\omega^R\|_{L_{t}^{\infty}H_{co}^2}^2\lesssim T^{\f12}\eta^2 (\nu^{-\f14}\eta\cN_T+\cM_T^a)\cN_T^2.
\end{align*}
For the next term, we use the following algebraic identity 
\begin{align}\label{algebraicid}
[Z_{\tan}, \p_z]=0, \qquad [Z_3,\p_z]=-\p_z\phi \p_z, \qquad [Z_3^2,\p_z]=-2\p_z\phi\p_z Z_3+\big((\p_z\phi)^2-Z_3\p_z\phi\big)\p_z
\end{align}
and the 
Young's inequality to find 
\begin{align}\label{es-cJ2}
    |\cJ_3^{\alpha}|\leq \f{\mu_1\nu}{100}\|\rho^{-\f12} Z_3 Z^{\alpha}\omega^R\|_{L_t^2L^2}^2+ C\nu \big(\|Z^{\alpha}\omega^R\|_{L_t^2L^2}^2+\mathbb{I}_{\{\alpha_3\geq 1\}}\sum_{\tilde{\alpha}_3\leq \alpha_3-1}\|Z_3Z^{\tilde{\alpha}}\omega^R\|_{L_t^2L^2}^2\big).
\end{align}
Similarly, for the term $\cJ_4^{\alpha},$ we use the identity  \eqref{algebraicid} and integration by parts to obtain 
that 
\begin{align*}
      |\cJ_4^{\alpha}|&\leq \f{\mu_1\nu}{100}\| Z_3 Z^{\alpha}\omega^R\|_{L_t^2L^2}^2 + C \nu \mathbb{I}_{\{\alpha_3\geq 1\}}\sum_{\tilde{\alpha}_3\leq \alpha_3-1} \|\phi Z^{\tilde{\alpha}}(\rho^{-1}\p_z\omega^R)\|_{L_t^2L^2}^2 \\
    &  \leq \f{\mu_1\nu}{100}\| Z_3 Z^{\alpha}\omega^R\|_{L_t^2L^2}^2+ C \nu \mathbb{I}_{\{\alpha_3\geq 1\}}\sum_{\tilde{\alpha}_3\leq \alpha_3-1} \|Z_3 Z^{\tilde{\alpha}}\omega^R\|_{L_t^2L^2}^2+C\eta^2\big(\ep\Lambda(\cM_T^a)+\eta\cN_T\big)^2\cN_T^2 .
\end{align*}

We now deal with the next term $\cJ_5^{\alpha}.$ First, 
by counting the derivatives hitting on each element, one has,
for any $1\leq |\alpha|\leq 2,$
\begin{align*}
    \|\phi [Z^{\alpha}, \rho^{-1}]\na_{\nu} \omega^R \|_{L_t^2L^2}&\lesssim \big( \ep \|\phi \na_{\nu}\omega^R\|_{L_t^2H_{co}^{1}}\|\sigma^a\|_{L_t^{\infty}W_{co}^{2,\infty}} +   
    \ep \|\phi \na_{\nu}\omega^R\|_{L_t^2L^{\infty}}\|\sigma^R\|_{L_t^{\infty}H_{co}^2}\big)(1+\|(Z\sigma^a,\ep Z\sigma^R)\|_{L^{\infty}_{t,x}})  \\
    &\lesssim \eta\Lambda(\cM_T^a)\big(\ep \cN_T +\ep^{\f12}\nu^{-\f14}\eta\cN_T^2 \big).
\end{align*}
Note that by the Sobolev embedding,  
\begin{align*}
    \|\phi \na_{\nu}\omega^R\|_{L_t^2L^{\infty}}\lesssim \|Z_3 \na_{\nu}\omega^R\|_{L_t^2H_{co}^1}^{\f12}\|\phi\na_{\nu}\omega^R\|_{L_t^2H_{co}^2}^{\f12}\lesssim \ep^{-\f12}\nu^{-\f14}\eta \cN_T.
\end{align*}
It then follows again from the Young's inequality that
\begin{align*}
 |\cJ_5^{\alpha}|\leq \f{\mu_1}{100}\|\rho^{-\f12} \phi \na_{\nu} Z^{\alpha}\omega^R\|_{L_t^2L^2}^2+ C\ep \eta^2 \Lambda(\cM_T^a) (\cN_T^2+\cN_T^4).
\end{align*}
Finally, the last term $\cJ_6^{\alpha}$ can be bounded by
\beqs 
|\cJ_6^{\alpha}|\lesssim \big(\|R_u^a\|_{L_t^1\underline{H}_{co}^3}+ \|\phi(Z^{\alpha} F, D_1^{\alpha}, D_2^{\alpha} )\|_{L_t^1L^2}\big) \|\phi Z^{\alpha}\omega^R\|_{L_t^{\infty}L^2},
\eeqs
which, combined with \eqref{es-FD12} proved in the  Lemma \ref{lem-F-D} and the Young's inequality, 
gives that 
\beq
|\cJ_6^{\alpha}|\leq \f{1}{4}\|\phi Z^{\alpha}\omega^R\|_{L_t^{\infty}L^2}+\|R_u^a\|_{L_t^1\underline{H}_{co}^3}^2+ \eta^2 \cN_T^2 
 \Lambda(\cM_T^a, \cA_T)  \big(T^{\f12}+
(T^{\f12}+
    \ep^{\min\{\f{1}{40},\f{\kappa}{3}
    \}})\cN_T\big).
\eeq
Plugging the previous estimates for $\cJ_2-\cJ_6$ 
into the energy equality \eqref{EI-vor},
we find that, under the assumption \eqref{epsigmaLinfty} for any $|\alpha|\leq 2,$
\beq\label{es-vor-weight-1}
\begin{aligned}
   & \|\phi Z^{\alpha} \omega^R\|_{L_t^{\infty}L^2}^2+\|\phi\na_{\nu} Z^{\alpha}\omega^R\|_{L_t^2L^2}^2\\
    &\lesssim 
     \|\phi Z^{\alpha} \omega^R(0)\|_{L^2}^2+\|R_u^a\|_{L_t^1\underline{H}_{co}^3}^2+
     (T^{\f12}+
    \ep^{\min\{\f{1}{40},\f{\kappa}{3},\f{3-\kappa}{4}\}})\Lambda(\cM_T^a, \cA_T)\big(\cN_T^2+\cN_T^4\big) \\
     &\quad +  \nu \big(\|Z^{\alpha}\omega^R\|_{L_t^2L^2}^2+\mathbb{I}_{\{\alpha_3\geq 1\}}\sum_{\tilde{\alpha}_3\leq \alpha_3-1}\|Z_3Z^{\tilde{\alpha}}\omega^R\|_{L_t^2L^2}^2\big).
\end{aligned}
\eeq
Since we do not have the a-priori control of $\|Z^{\alpha}\omega^R\|_{L_t^2L^2}$ unless $\alpha_3=0,$ 
to close the estimate, we do inductions for the amount of weighted normal derivative $Z_3.$ First, applying the above estimate for those $\alpha$ with $\alpha_3=0,$ we get thanks to \eqref{ES-hightan} that 
\begin{align*}
     \|\phi \, \omega^R\|_{L_t^{\infty}H_{tan}^2}^2+\|\phi\na_{\nu} \omega^R\|_{L_t^2H_{tan}^2}^2&\lesssim 
     \|\phi\, \omega^R(0)\|_{L_t^{\infty}H_{tan}^2}^2+\|R^a\|_{L_t^1\underline{H}_{co}^3}^2\\
     &\quad +\eta^2(T^{\f12}
   +\ep^{\min\{\f14,\f{5\kappa}{12},\f{3-\kappa}{4}\}})\Lambda\big(\cM_T^a,\cA_T\big)\big(\cN_T^2+\cN^4_T\big).
\end{align*}
Note that from this estimate we get the control $\nu^{\f12}\|Z_3 \omega^R\|_{L_t^2H_{tan}^2},$ which enables us to apply \eqref{es-vor-weight-1}  to obtain the estimate of $\|\phi Z_3 \omega^R\|_{L_t^{\infty}H_{tan}^1}, \|\nu^{\f12}Z_3^2 \omega^R\|_{L_t^2H_{co}^2}.$ Applying  \eqref{es-vor-weight-1} again gives the control of $\|\phi Z_3^2\omega^R\|_{L_t^{\infty}L^2}, \|\na_{\nu}Z_3^2 \omega^R\|_{L_t^2L^2}.$ We thus finish the proof of \eqref{es-vor-weight}.
\end{proof}

Next, we aim to prove some non-uniform estimates 
for horizontal components of the vorticity $\omega_h^R:$ 
\begin{lem}
    Under the same assumption as in Lemma \ref{lem-hightan}, we have that for any $t\in[0,T],$
\beq\label{es-omegah-weight}
\begin{aligned}
&\min\{(\ep^2\nu)^{\f23}, \ep\nu\}\big(\|\omega_h^R\|_{L_t^{\infty}H_{co}^1}^2+\|\na_{\nu}\omega_h^R \|_{L_t^{2}{H}_{co}^1}^2\big)+\ep^2\nu\big(\|\omega_h^R\|_{L_t^{\infty}\underline{H}_{co}^2}^2  + \|\na_{\nu}\omega_h^R \|_{L_t^{2}\underline{H}_{co}^2}^2\big)\\
&\lesssim \eta^2\big( \cN_0^2+ 
 \Lambda(\cM_T^a)\big) +\eta^2(T^{\f12}+\ep^{\min\{\f{1}{40},\f{\kappa}{3},\f{3-\kappa}{4}\}})\Lambda\big(\cM_T^a,\cA_T\big)\big(\cN_T^2+\cN^4_T\big),
\end{aligned}
\eeq
where we denote as before $\na_{\nu}=(\p_{x_1},\p_{x_2}, \nu^{\f12}\p_z).$ 
\end{lem}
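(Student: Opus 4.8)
The proof will follow closely the structure of the proof of Lemma~\ref{lem-vor-uniform}, but now the test function is $\omega_h^R$ itself (no weight $\phi$), so the boundary terms coming from the viscous part no longer vanish and must be absorbed, which forces the extra power of $\ep\nu$ (or $(\ep^2\nu)^{2/3}$). First I would write down the equation satisfied by $\omega_h^R$. Recall from \eqref{eq-omega} that $\omega^R=\curl u^R$ solves
\beq
\big(\pt+(u^a+u^R)\cdot\na\big)\omega^R-\f{\mu_1}{\rho}\big(\Delta_h+\nu\p_z^2\big)\omega^R=F-\curl R_u^a ,
\eeq
with $F=F_1+F_2+F_3+F_4$ as in \eqref{def-F}. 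Projecting onto the horizontal components and recording the boundary condition (already announced as \eqref{bdrycond-omegah} and used in Step~3.2, namely $\nu\p_z\omega_h^R|_{\p\Omega}=\ep^{-1}\na_h\sigma^R|_{\p\Omega}+\cdots$, coming from the tangential part of the momentum equation restricted to $\p\Omega$), I would then apply $Z^\alpha$ for $|\alpha|\le 1$ (resp. $\alpha_3\le 1$ for the $\underline H_{co}^2$ part) and take the $L^2(Q_t)$ scalar product with the appropriate weight times $Z^\alpha\omega_h^R$: namely with $\min\{(\ep^2\nu)^{2/3},\ep\nu\}\,Z^\alpha\omega_h^R$ in the first case, and $\ep^2\nu\,Z^\alpha\omega_h^R$ in the second.

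\textbf{Key steps.} After integration by parts in the viscous term $-\mu_1 Z^\alpha(\rho^{-1}(\Delta_h+\nu\p_z^2)\omega^R)$ there appears the boundary integral $\mu_1\nu\int_{\p\Omega}\rho^{-1}\p_z Z^\alpha\omega_h^R\cdot Z^\alpha\omega_h^R$ (plus lower order commutators). Using the boundary condition $\nu\p_z\omega_h^R|_{\p\Omega}=\ep^{-1}\na_h\sigma^R|_{\p\Omega}+\cdots$ together with a trace inequality $\|g|_{\p\Omega}\|_{L^2(\p\Omega)}\lesssim \|g\|_{L^2}^{1/2}\|\p_z g\|_{L^2}^{1/2}+\|g\|_{L^2}$ (applied to $g=Z^\alpha\omega_h^R$ and $g=\ep^{-1}Z^\alpha_{tan}\na_h\sigma^R$), this boundary term is bounded by $C\,\ep^{-2}\|\na\sigma^R\|_{\cdot}^2 + (\text{small})\,\nu\|\p_z Z^\alpha\omega_h^R\|^2$; multiplying through by the weight $w=\min\{(\ep^2\nu)^{2/3},\ep\nu\}$ makes $w\ep^{-2}$ bounded by $\nu^{2/3}$ (resp. $w\ep^{-2}=\nu$ with the $\ep^2\nu$ weight), which is exactly the kind of $\eta^2$-controllable quantity appearing on the right-hand side once one invokes the control of $\|\na\sigma^R\|_{L_t^2 H_{co}^2}$ and $\|\na\sigma^R\|_{L_t^2L_x^\infty}$ from Lemma~\ref{lem-pzsig-tan} and the definition of $\cA_T$. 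The remaining terms — the convection commutator $D_1^\alpha$, the viscosity-coefficient commutator $D_2^\alpha$, the forcing $Z^\alpha F$, and $Z^\alpha\curl R_u^a$ — are treated verbatim as in Lemma~\ref{lem-vor-uniform} (via Lemma~\ref{lem-F-D}), the only difference being that the absence of $\phi$ is compensated by the weight $w$ and that $F_3=\ep^{-1}\na(\rho^{-\f{\gamma-1}{2}})\times\na\sigma^R+\na(\rho^{-1})\times\div_\nu S u^R$ contributes the term $\ep^{-1}\cdot\ep\|\na\sigma^a\|\cdot\|\na\sigma^R\|$ plus $\|\na(\rho^{-1})\|\|\div_\nu S u^R\|$, which after multiplication by $w$ and use of the $\cA_T$-norms is $O(\eta^2)$. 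Finally, as in the previous lemma, one closes by induction on $\alpha_3$: first run the estimate with $\alpha_3=0$ (using \eqref{ES-hightan} for the zeroth-order terms $\|Z^\alpha\omega^R\|_{L_t^2L^2}$ that are not a priori controlled), which yields control of $\nu^{1/2}\|Z_3\omega_h^R\|$, then feed this back to get the $\alpha_3=1$ estimate, obtaining \eqref{es-omegah-weight}.

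\textbf{Main obstacle.} The delicate point is the bookkeeping of the weight against the $\ep^{-1}$ (and the $\ep^{-1}$ inside $F_3$) so that \emph{every} term lands in $\eta^2(T^{1/2}+\ep^{\min\{1/40,\kappa/3,(3-\kappa)/4\}})\Lambda(\cM_T^a,\cA_T)(\cN_T^2+\cN_T^4)$ uniformly over $\nu=\ep^\kappa$, $\kappa\in(0,3)$; in particular one must check that the two choices of weight $(\ep^2\nu)^{2/3}$ vs $\ep\nu$ are each optimal in the appropriate sub-range of $\kappa$ (the $\min$ is there precisely because the boundary term wants $\ep\nu$-type scaling while the commutator $D_2^\alpha$, which involves $\ep^2\|Z\sigma^R\|$-type factors and $\ep^{-1}$ from $F_3$, wants $(\ep^2\nu)^{2/3}$). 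A secondary subtlety is that when $\alpha_0=1$ (one time derivative), the term $\ep\pt\na_h\sigma^R$ on the boundary must be re-expressed using $\eqref{CNS-remainder}_1$ (i.e. $\ep\pt\sigma^R=-\rho^{\f{\gamma-1}{2}}\div u^R-\cdots$) to avoid needing an uncontrolled second time derivative of $\sigma^R$ — exactly the same manoeuvre as in the integrable-term estimate \eqref{es-integral} of Lemma~\ref{lem-pzsig-tan}.
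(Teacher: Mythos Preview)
Your overall structure is right — energy estimate on $Z^\alpha\omega_h^R$, boundary integral from $\nu\p_z^2$, use of the boundary condition \eqref{bdrycond-omegah}, and Lemma~\ref{lem-F-D} for the bulk terms — but the treatment of the boundary term $\cG_7^\alpha$ contains a genuine gap.

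\textbf{The bound $w\ep^{-2}$ is not uniformly controlled.} Your claim that after a crude Young the boundary term becomes $C\ep^{-2}\|\na\sigma^R\|^2 + (\text{small})\nu\|\p_z Z^\alpha\omega_h^R\|^2$, and that $w\ep^{-2}$ is ``bounded by $\nu^{2/3}$'', is false: with $w=(\ep^2\nu)^{2/3}$ one gets $w\ep^{-2}=(\nu/\ep)^{2/3}=\ep^{2(\kappa-1)/3}$, which blows up for $\kappa<1$. The paper instead applies the trace inequality to \emph{both} factors and keeps all four half-powers, writing the boundary term (for $|\alpha|\le 1$, $\alpha_3=0$) as
\[
\beta^{3/2}(\ep^2\nu)^{-1/2}\,\|\beta\nu^{1/2}\na Z^\alpha\omega_h^R\|_{L_t^2L^2}^{1/2}\,\|\nu^{1/2}\omega_h^R\|_{L_t^2H_{tan}^1}^{1/2}\cdot T^{1/4}\|\sigma^R\|_{L_t^\infty H_{tan}^2}^{1/2}\|\na\sigma^R\|_{L_t^2H_{tan}^2}^{1/2}.
\]
The weight is chosen precisely so that $\beta^{3/2}(\ep^2\nu)^{-1/2}\le 1$ (this is the origin of $\beta=\min\{(\ep^2\nu)^{1/3},(\ep\nu)^{1/2}\}$, not any balance with $D_2^\alpha$ or $F_3$). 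After Young with exponents $(4,4,2)$, the first factor is absorbed, the second is controlled by $\|\nu^{1/2}\na u^R\|_{L_t^2H_{tan}^1}^2$ via \eqref{ES-hightan}, and the rest lands in the admissible remainder.

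\textbf{The $|\alpha|=2$ case needs a different argument.} You do not distinguish it, but the naive $L^2$--$L^2$ pairing on $\p\Omega$ would require, via the trace inequality on $Z^\alpha\na_h\sigma^R$, control of $\|\p_z\na_h\sigma^R\|_{L_t^2H_{tan}^2}$, i.e.\ $\|\na\sigma^R\|_{L_t^2H_{tan}^3}$, which is \emph{not} available. The paper avoids this by using the $H^{1/2}(\p\Omega)$--$H^{-1/2}(\p\Omega)$ duality: one tangential half-derivative is shifted from $\na_h\sigma^R$ onto $Z^\alpha\omega_h^R$, giving
\[
\ep\nu\,|Z^\alpha\omega_h^R|_{L_t^2H^{1/2}(\p\Omega)}\,|\na_h\sigma^R|_{L_t^2H^{3/2}(\p\Omega)}\;\le\;\tfrac14\|\ep\nu\na Z^\alpha\omega_h^R\|_{L_t^2L^2}^2+C\|\na\sigma^R\|_{L_t^2\underline H_{tan}^2}^2,
\]
which now only needs the already-established \eqref{es-nasigma-tan}. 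This also explains the weight $\beta^2=\ep^2\nu$ at top order. Finally, no re-expression of $\ep\pt\na_h\sigma^R$ via the continuity equation is needed here: the $H^{3/2}$ trace uses only $\|\na\sigma^R\|_{L_t^2\underline H_{tan}^2}$, which already contains one $\ep\pt$.
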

\begin{proof}
The proof follows from direct energy estimates for $\omega_h$ which solve the first two equations of \eqref{eq-omega}. By using the equation for the velocity $u_h$, the boundary condition $u_h^R|_{\p\Omega}=u_h^a|_{\p\Omega}=0,$
we have the following boundary condition for $\omega_h:$
\begin{align}\label{bdrycond-omegah}
\mu_1\nu\p_z(\omega_h^R)^{\perp}=\rho^{-\f{\gamma-1}{2}} \na_h{\sigma}^R/\ep-\tilde{\mu}\na_h\div u^R+(R_{u}^{a})_h \qquad \text{ on }\, \p\Omega
\end{align}
where $\tilde{\mu}=\mu_1\nu+\mu_2,\, (\omega_h^R)^{\perp}=(\omega_2^R,-\omega_1^R).$ 

Let $\alpha\in \mathbb{N}^4$ be a multi-index with $\alpha_0\leq 1, |\alpha|\leq 2,$ and denote $\beta=\beta(\alpha)=\min\{(\ep^2\nu)^{\f13}, (\ep\nu)^{\f12}\}$ if $|\alpha|\leq 1$ and $
\beta=\ep \nu^{\f12}$ if $|\alpha|=2.$ 
Taking $Z^{\alpha}$ on the first two equations of \eqref{eq-omega}, testing by $\beta^2 Z^{\alpha}\omega_h^R$ and using the boundary condition \eqref{bdrycond-omegah}, we get the following identity which is similar to  \eqref{EI-vor},
\beq\label{EI-vor-h}
\begin{aligned}
&\f{\beta^2}{2}\int_{\Omega} | Z^{\alpha}\omega_h^R|^2(t)\, \d x+\beta^2 \mu_1 \iint_{Q_t} \rho^{-1} |\na_{\nu}  Z^{\alpha}\omega_h^R|^2
\, \d x\d s \\
&=\f{\beta^2}{2} \int_{\Omega} |Z^{\alpha}\omega_h^R|^2(0)\, \d x+ \f{\beta^2}{2} \iint_{Q_t} \div(u^R+u^a)| Z^{\alpha}\omega_h^R|^2\, \d x\d s\\
&\,\,-\beta^2 \mu_1\nu \iint_{Q_t} \rho^{-1}
[Z^{\alpha},\p_z]\omega_h^R\cdot \, \p_zZ^{\alpha}\omega_h^R \, \d x \d s  +\beta^2\mu_1\nu \iint_{Q_t}  [Z^{\alpha},\p_z](\rho^{-1}\p_z\omega_h^R)\cdot Z^{\alpha}\omega_h^R\, \d x \d s\\
&\, \, -\beta^2\mu_1\iint_{Q_t} [Z^{\alpha}, \rho^{-1}]\na_{\nu} \omega_h^R \cdot \na_{\nu} Z^{\alpha} \omega_h^R \, \d x\d s +\beta^2\iint_{Q_t}  \big(Z^{\alpha}F_h+Z^{\alpha}(\curl R_u^a)_h+D_{1h}^{\alpha}+D_{2h}^{\alpha} \big) \cdot Z^{\alpha}\omega_h^R\d x \d s\\
&\, \,+\beta^2\mu_1\nu \mathbb{I}_{\{\alpha_3=0\}} \int_0^t\int_{\p\Omega} \rho^{-1} Z^{\alpha}\p_z\omega_h^R\cdot Z^{\alpha}\omega_h^R \, \d x_h\, \d s  := \sum_{k=1}^7 \cG_k^{\alpha}
\end{aligned}
\eeq
where we denote $f_h$ the first two components of a three dimensional vector $f.$ 
Following the similar (and easier) computations as we performed in the proof of Lemma \ref{lem-vor-uniform}, 
one can verify that 
\begin{align*}
\sum_{k=1}^6 |\cG_k^{\alpha}|&\leq \f{\mu_1}{100} \beta^2(\alpha) \|\na_{\nu}Z^{\alpha}\omega_h^R\|_{L_t^2L^2}^2+
\beta^2(\alpha) \|Z^{\alpha}\omega_h^R(0)\|_{L_t^2L^2}^2+C\beta^2(\alpha)\|\na_{\nu} \omega_h^R\|_{L_t^2H_{co}^{|\alpha|-1}}^2 \\
&+\beta^2(\alpha)\|\na R_{u}^a\|_{L_t^1{H}_{co}^{|\alpha|}}^2+\eta^2(T^{\f16}+\ep^{\f12} +\nu^{\f{1}{12}})\Lambda\big(\cM_T^a,\cA_T\big)\big(\cN_T^2+\cN^4_T\big).
\end{align*}
For instance, when $|\alpha|=2,$ we can deal with $\cG_5^{\alpha}$ as 
\begin{align*}
    \ep^2\nu \mu_1\iint_{Q_t} &[Z^{\alpha}, \rho^{-1}]\na_{\nu} \omega_h^R \cdot \na_{\nu} Z^{\alpha} \omega_h^R \, \d x\d s \leq \f{\mu_1}{200}\ep^2\nu \|\na_{\nu}Z^{\alpha}\omega_h^R\|_{L_t^2L^2}^2 \\
    & +C\ep^2\Lambda(\cM_T^a, \cA_T)\|\sigma^R\|_{L_t^{\infty}H_{co}^2}^2\|\ep\nu^{\f12} \na_{\nu} \omega_h^R\|_{L_t^2L^{\infty}}^2+C\ep^2\nu \Lambda(\cM_T^a,\cA_T) \|\na_{\nu}\omega_h^R\|^2_{L_t^2H_{co}^{1}} \\
    &\leq  \f{\mu_1}{200}\ep^2\nu \|\na_{\nu}Z^{\alpha}\omega_h^R\|_{L_t^2L^2}^2+ C(\ep^2+\nu^{\f13})\eta^2 \Lambda(\cM_T^a,\cA_T) (\cN_T^2+\cN_T^4).
\end{align*}
Note that it holds 
by the definition of the $\cA_T$ (see \eqref{def-cAT}) that 
\begin{align*}
    \|\ep\nu^{\f12} \na_{\nu} \omega_h^R\|_{L_t^2L^{\infty}}\lesssim \|\ep\nu\p_z\omega_h^R\|_{L_t^2L^{\infty}}+\|\ep\nu^{\f12}\omega_h^R\|_{L_t^2W_{tan}^{1,\infty}}\lesssim\nu^{-\f14}\eta \cA_T\lesssim  \nu^{-\f14}\eta \cN_T.
\end{align*}
Moreover, it follows from Lemma \ref{lem-F-D} that the term $\cG_6^{\alpha}$ can be bounded as
\begin{align*}
    \cG_6^{\alpha}&\lesssim \|\beta \omega_h^R\|_{L_t^{\infty}H_{co}^2}\, \beta\|(Z^{\alpha}(F_h, \curl R_u^a), D_{1h}^{\alpha}, D_{2h}^{\alpha})\|_{L_t^1L^2}\\
    &\leq \f{1}{2}\|\beta \omega_h^R\|_{L_t^{\infty}H_{co}^2}^2+C\beta^2\|\na R_u^a\|_{L_t^1H^{|\alpha|}}^2+\eta^2 \cN_T^2 \Lambda(\cM_T^a, \cA_T)  \big(T^{\f12}+(T^{\f12}+
    \ep^{\min\{\f14,\f{5\kappa}{12},\f{3-\kappa}{4}\}})\cN_T\big).
\end{align*}
It now remains to bound the boundary term $\cG_7^{\alpha},$ which we need to use the boundary condition \eqref{bdrycond-omegah}.
On the one hand, when $|\alpha|\leq 1,$ 
we use Cauchy-Schwarz  inequality and the trace inequality to find that
\begin{align*}
   | \cG_7^{\alpha}|&\lesssim \beta^2 \ep^{-1}|Z^{\alpha}\omega_h^R|_{L_t^2L^2(\p\Omega)} |\ep\nu \p_z\omega_h^R|_{L_t^2H^{1}(\p\Omega)} \lesssim  \beta^{\f32} (\ep^{2}\nu)^{-\f12} \|\beta\nu^{\f12}Z^{\alpha}\na \omega_h^R\|_{L_t^2L^2}^{\f12}\|\nu^{\f12}\omega_h^R\|_{L_t^2H_{tan}^1}^{\f12}  \\
   & \cdot\big( \Lambda(\cM_T^a, \cA_T) T^{\f14}\|(\sigma^R, \ep\div u^R)\|_{L_t^{\infty}H_{tan}^2}^{\f12}\|\na(\sigma^R, \ep\div u^R)\|_{L_t^2H_{tan}^2}^{\f12}+\ep \|R_u^{a}\|_{L_t^{2}H_{tan}^1}^{\f12}\|\na R_u^{a}\|_{L_t^{2}H_{tan}^1}^{\f12}\big)\\
    &\leq  
    \f{\mu_1}{4} \|\beta\nu^{\f12}Z^{\alpha}\na \omega_h^R\|_{L_t^2L^2}^2+\|\nu^{\f12}\na u^R\|_{L_t^2H_{tan}^1}^2+\ep^2\|R_u^{a}\|_{L_t^{2}H_{tan}^1}\|\na R_u^{a}\|_{L_t^{2}H_{tan}^1}+ 
    T^{\f14}\eta^2 \Lambda(\cM_T^a, \cA_T) \cN_T^2
\end{align*}
since $\beta\leq (\ep^{2}\nu)^{\f13}.$
On the other hand, when $|\alpha|=2,$ we use the duality between $H^{\f12}(\p\Omega)$ and $H^{-\f12}(\p\Omega)$ to obtain that 
\begin{align*}
 | \cG_7^{\alpha}|&\lesssim \ep\nu|Z^{\alpha}\omega_h^R|_{L_t^2H^{\f12}(\p\Omega)} |\ep\nu \p_z\omega_h^R-\ep(R_{u}^a)_h|_{L_t^2H^{\f32}(\p\Omega)}+ \ep^2\nu|Z^{\alpha}\omega_h^R|_{L_t^2L^2(\p\Omega)}|(R_{u}^a)_h|_{L_t^2H^2(\p\Omega)}\\
 &\leq \f14 \|\ep\nu\na Z^{\alpha}\omega_h^R\|_{L_t^2L^2}^2+(\|\na(\sigma^R, \ep\div u^R)\|_{L_t^2\underline{H}_{tan}^2}^2+\|\nu^{\f12}\na u^R\|_{L_t^2\underline{H}_{tan}^2}^2+\ep^3\nu^{\f12} \|R_u^{a}\|_{L_t^{2}\underline{H}_{tan}^2}\|\na R_u^{a}\|_{L_t^{2}\underline{H}_{tan}^2}).
\end{align*}
Plugging the estimates for $\cG_1^{\alpha}-\cG_7^{\alpha}$ into \eqref{EI-vor-h}, and utilizing the estimate \eqref{ES-hightan} for $\nu^{\f12}\|\na u^R\|_{L_t^{2}\underline{H}_{tan}^2},$ \eqref{es-nasigma-tan} for $\|\na\sigma^R\|_{L_t^{2}\underline{H}_{tan}^2},$
we get \eqref{es-omegah-weight} by using Young's inequality and induction.

\end{proof}

\begin{lem}\label{lem-F-D}
Let $F, D_1^{\alpha}, D_2^{\alpha}$ be defined in \eqref{def-F},\eqref{def-D12alpha}. Recall that $\beta(\alpha)=\min\{(\ep^2\nu)^{\f13}, (\ep\nu)^{\f12}\}$ if $|\alpha|\leq 1$ and $
\beta=\ep \nu^{\f12}$ if $|\alpha|=2.$
Under the same assumption as in  Lemma \ref{lem-hightan}, it holds that for $|\alpha|\leq 2,$
\beq\label{es-FD12}
\begin{aligned}
&\|\phi F\|_{L_t^1\underline{H}_{co}^2}+\|\phi (D_1^{\alpha}, D_2^{\alpha})\|_{L_t^1L^2} 
+\beta(\alpha)\| (Z^{\alpha} F_h, D_{1h}^{\alpha}, D_{2h}^{\alpha})\|_{L_t^1L^2}\\
&\lesssim \eta \cN_T \Lambda(\cM_T^a, \cA_T)  (T^{\f12}+(T^{\f12}+ \ep^{\min\{
\f{1}{40},\f{\kappa}{3}\}})
\cN_T).
\end{aligned}
\eeq
\end{lem}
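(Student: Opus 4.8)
The plan is to estimate each of the quantities $\phi F$, $\phi D_1^\alpha$, $\phi D_2^\alpha$ and the weighted horizontal components $\beta(\alpha) Z^\alpha F_h$, $\beta(\alpha) D_{1h}^\alpha$, $\beta(\alpha) D_{2h}^\alpha$ term by term, using the decomposition $F=F_1+F_2+F_3+F_4$ from \eqref{def-F} together with the definitions \eqref{def-D12alpha} of $D_1^\alpha,D_2^\alpha$. For the unweighted (i.e.\ $\phi$-weighted) part, the key observations are: first, $\phi=z(a_3-z)$ vanishes at the boundary, so that $\phi/\phi'$ arguments let us convert a factor of $1/\phi$ into a factor of $\p_z$, and more importantly $\phi$ absorbs one power of $z$ or $a_3-z$ to control quantities like $u_3^a/\phi$ and $u_3^R/\phi$ in $L^\infty$ via the fundamental theorem of calculus; second, every term in $F$ is at worst quadratic, consisting of a product of either $(\na\sigma^R,\div u^R)$-type quantities (which have good weights in $\cE_T$ and good $L^p_x$ bounds in $\cA_T$), vorticity/velocity-gradient quantities (controlled through $\phi\,\omega^R$ in $\cE_T$ and \eqref{energynorm-uh}), and smooth approximate-solution profiles (bounded by $\cM_T^a$). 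I would split each product so that the less regular factor is placed in an $L^2_tL^2_x$ or $L^2_tL^\infty_x$ norm and the other in $L^\infty_tL^2_x$ or $L^2_tL^\infty_x$, systematically invoking \eqref{rewrite-nasigmaR}, \eqref{rewrite-pzu3R}, \eqref{pzuh-dinfty} to trade the (possibly negative) powers of $\ep$ and $\nu$ in the $\cA_T$-weights against the prefactor $\eta$; the gain of a positive power $T^{1/2}$ comes from Hölder in time whenever a term is genuinely a product of two remainder quantities (hence quadratic in $\cN_T$), while the linear-in-$\cN_T$ terms (those involving exactly one remainder factor and one approximate factor) contribute the $T^{1/2}\Lambda(\cM_T^a)$ piece.

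The more delicate terms are $F_3=\ep^{-1}\na(\rho^{-\frac{\gamma-1}{2}})\times\na\sigma^R+\na(\rho^{-1})\times\div_\nu\cS u^R$ and the commutators $D_1^\alpha,D_2^\alpha$, because of the explicit negative power of $\ep$ and the appearance of $\div_\nu\cS u^R=\mu_1(\Delta_h+\nu\p_z^2)u^R+\mu_2\na\div u^R$ which carries two derivatives of $u^R$. For the first part of $F_3$ I would use $\na(\rho^{-\frac{\gamma-1}{2}})=\cO(\ep)\na(\sigma^a+\sigma^R)$, so the $\ep^{-1}$ is cancelled and one is left with $\na(\sigma^a+\sigma^R)\times\na\sigma^R$, controlled by placing $\na\sigma^R$ in the $L^2_tL^\infty_x$ norm (with the weight $\min\{1,(\nu^3/\ep^7)^{1/4}\}$ from $\cA_T$) against $\|\na\sigma^R\|_{L^2_tH^2_{co}}$; the weighted factor is exactly what produces the $\ep^{\min\{1/40,\kappa/3\}}$ loss after substituting $\nu=\ep^\kappa$. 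For the second part of $F_3$ and for $D_2^\alpha$ (which involves $\na(1/\rho)=\cO(\ep)\na(\sigma^a+\sigma^R)$ hitting $\na_\nu\omega^R$), the factor $\ep$ from $\na(1/\rho)$ combines with the $\phi$-weighted vorticity gradient bounds; multiplying by $\phi$ and using the Sobolev embedding $\|\phi\na_\nu\omega^R\|_{L^2_tL^\infty}\lesssim \ep^{-1/2}\nu^{-1/4}\eta\cN_T$ (exactly as in the proof of Lemma~\ref{lem-vor-uniform}) gives the bound. For $D_1^\alpha=-[Z^\alpha,(u^a+u^R)\cdot\na]\omega^R$ I would expand the commutator, again converting a $1/\phi$ from $u_3/\phi$ using boundary vanishing, and distribute derivatives so that the top-order $\omega^R$-derivative is matched against lower-order velocity regularity, using \eqref{energynorm-uh} and the $L^\infty$-bounds on $u^R,Z_3u_h^R$ in $\cA_T$.

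For the weighted horizontal estimates $\beta(\alpha)\|(Z^\alpha F_h,D_{1h}^\alpha,D_{2h}^\alpha)\|_{L^1_tL^2}$ with $\beta(\alpha)=\min\{(\ep^2\nu)^{1/3},(\ep\nu)^{1/2}\}$ for $|\alpha|\le 1$ and $\beta=\ep\nu^{1/2}$ for $|\alpha|=2$, the strategy is identical but one may no longer use the factor $\phi$; instead one relies on the vorticity bounds with the weights $\min\{(\ep^2\nu)^{1/3},(\ep\nu)^{1/2}\}$ and $\ep\nu$ built into $\cE_T$ (the last two lines of \eqref{def-cET}) and on the weighted $L^\infty$ bounds for $u_h^R,Zu_h^R,\na u_h^R$ in $\cA_T$. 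The main obstacle I anticipate is the bookkeeping of powers: one must check, for every one of the $\gtrsim 20$ product terms and every admissible multi-index $\alpha$, that after assigning each factor its norm from $\cE_T$ or $\cA_T$ or $\cM_T^a$ the accumulated negative power of $\ep$ (equivalently, of $\nu=\ep^\kappa$) is strictly dominated by the prefactor $\eta=\ep+(\ep\nu)^{1/4}+\nu^{3/4}$, uniformly for $\kappa\in(0,3)$; tracking this is what forces the exponent $\min\{1/40,\kappa/3\}$ (and the implicit $(3-\kappa)/4$ from terms like $\ep^2\p_z u^R$) and is the reason the weights in \eqref{def-cET}--\eqref{def-cAT} were chosen as they are. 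The $T^{1/2}$ gain is harmless and automatic once a term is recognized as quadratic in $\cN_T$; the genuinely linear-in-$\cN_T$ terms, all of which involve a smooth approximate-solution factor with no $\ep$-loss, are absorbed into $T^{1/2}\eta\cN_T\Lambda(\cM_T^a)$ without difficulty.
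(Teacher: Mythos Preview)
Your overall plan is the paper's plan: term-by-term product estimates with factors placed in the weighted norms of $\cE_T,\cA_T,\cM_T^a$, converting $u_3\p_z=(u_3/\phi)Z_3$ via Hardy, and trading $\ep,\nu$–weights against $\eta$. Two points deserve correction, though neither is fatal.

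First, you have the source of the $T^{1/2}$ factor reversed. In the paper it is the \emph{linear}-in-$\cN_T$ pieces (one remainder factor, one approximate factor such as $\omega^a\div u^R$ or $u^a\cdot\na\omega^R$) that pick up $T^{1/2}$, because the approximate factor sits happily in $L_t^{\infty}W_{co}^{k,\infty}$ and the remainder factor in $L_t^2$. The genuinely quadratic terms (e.g.\ $F_2=\omega^R\cdot\na u^R+\omega^R\div u^R$, or the $u^R$–part of $D_1^\alpha$) typically \emph{cannot} be handled with a $T^{1/2}$ gain: putting a remainder factor in $L_t^{\infty}L_x^{\infty}$ costs at least $(\ep\nu)^{-1/2}$, and $(\ep\nu)^{-1/2}\eta$ is not bounded. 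Instead one uses $L_t^2\times L_t^2$ and the smallness comes from the weight arithmetic, e.g.\ $\|(\phi\omega_h^R,\omega_3^R)\|_{L_t^2W_{co}^{1,\infty}}\lesssim\nu^{-1/4}\eta\cN_T$ against $\|(Zu^R,\div u^R)\|_{L_t^2H_{co}^2}\lesssim\eta\cN_T$, yielding $(\ep^{1/4}+\nu^{1/2})\eta\cN_T^2$. You actually do exactly this for $F_3$, so the mechanism is in your proposal; just be aware that it, and not the $T^{1/2}$ Hölder trick, is what closes the quadratic terms.

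Second, for $\ep^{-1}\na(\rho^{-(\gamma-1)/2})\times\na\sigma^R$ you can do better than placing both $\na\sigma^R$ factors in separate norms: after writing $\ep^{-1}\na(\rho^{-(\gamma-1)/2})=g(\rho)\na(\sigma^a+\sigma^R)$, the identity $\na\sigma^R\times\na\sigma^R=0$ kills the quadratic part outright, leaving only the linear term $g(\rho)\na\sigma^a\times\na\sigma^R$, which is handled with $T^{1/2}$ as above. Your proposed bound still works, but this observation removes one round of weight-checking.
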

\begin{proof}
We will give the proof of $\|\phi F\|_{L_t^2H_{co}^2}$ $\|\phi (D_1^{\alpha}, D_2^{\alpha})\|_{L_t^1L^2},$ the other terms can be 
bounded in similar manners (by essentially replacing $\phi\omega^R$ by $\beta\omega_h^R$ ).
We first control $F_1-F_3$ defined in \eqref{def-F}. 
By using $u_3\p_z=\f{u_3}{\phi}Z_3$ and Hardy inequality,
the term $F_1$ can be controlled easily by 
\begin{align*}
    \|\phi F_1\|_{L_t^1H_{co}^2}&\lesssim T^{\f12} \big(\|(\div u^R, \na_h u_h^R, Z_3 u^R, u^R)\|_{L_t^2H_{co}^2} 
    \|\phi \omega_h^a, \omega_3^a\|_{L_t^{\infty}W_{co}^{3,\infty}}\\
   & \qquad\qquad  +\|(\phi \omega_h^R, \omega_3^R)\|_{L_t^2H_{co}^2}\|(Z u^a,\div u^a)\|_{L_t^{\infty}W_{co}^{2,\infty}}\big)\\
   &\lesssim T^{\f12}\eta \Lambda(\cM_T^a) \cN_T.
\end{align*}
Next, to estimate $F_2,$ we first  use the Sobolev embedding to get that 
\begin{align}\label{vor-1infty}
    \|(\phi \omega_h^R,\omega_3^R)\|_{L_t^{2}W_{co}^{1,\infty}}\lesssim \|\na (\phi \omega_h^R,\omega_3^R) \|_{L_t^{2}H_{co}^2}^{\f12}
    \|(\Id, \na_h^2) (\phi \,\omega_h^R,\omega_3^R) \|_{L_t^{2}H_{co}^1}^{\f12}
    \lesssim 
   (\nu^{-\f14}\eta)\eta \cN_T,
\end{align}
using which we can estimate $F_2$ in the following way
\begin{align*}
 \|\phi F_2\|_{L_t^1H_{co}^2}&\lesssim  \|(\phi \omega_h^R,\omega_3^R)\|_{L_t^{2}H_{co}^2}\|(Z u^R, \div u^R)\|_{L_t^2L^{\infty}}+\|(\phi \omega_h^R,\omega_3^R)\|_{L_t^{2}W_{co}^{1,\infty}} \|(Z u^R, \div u^R)\|_{L_t^2{H_{co}^2}}\\
 &\lesssim 
 (\nu^{-\f14}\eta)\eta\cN_T^2 \lesssim (\ep^{\f14}+\nu^{\f12})\eta\cN_T^2.
\end{align*}
We now deal with $F_3.$ First, we use the identity $f\times f=0$ to have that 
\begin{align*}
  \ep^{-1} \|\phi \na (\rho^{-\f{\gamma-1}{2}})\times \na\sigma^R\|_{L_t^1H_{co}^2}\lesssim T^{\f12}\|\na\sigma^R\|_{L_t^2H_{co}^2} \Lambda(\|\sigma^a\|_{L^{\infty}W_{co}^{3,\infty}})\lesssim T^{\f12}\eta\Lambda(\cM_T^a)\cN_T.
\end{align*}
For the next term, we denote $g=(\phi\na_h u^R, \nu\p_z u^R, \div u^R)$ and
count the derivatives hitting on each element and apply \eqref{rewrite-nasigmaR} to find  
\begin{align*}
\|\na(\rho^{-1})\times \phi \,& \div_{\nu} S u^R\, \|_{L_t^1H_{co}^2}\lesssim T^{\f12}\Lambda\big( \|(\sigma^a, \na\sigma^a)\|_{L_t^{\infty}W_{co}^{2,\infty}} \big) \|\ep g \|_{L_t^2\underline{H}_{co}^3}\\
& + \big(\|\na\sigma^R\|_{L_t^2H_{co}^2}+\|\sigma^R\|_{L_t^{\infty}H_{co}^2}\big) \Lambda\big(\ep \|\na\sigma^R\|_{L_t^2L^{\infty}}+\ep\| Z(\sigma^R,\sigma^a)\|_{L_{t,x}^{\infty}}\big) \|\ep (\na_h, Z_3)g\|_{L_t^2L^{\infty}}\\
&+ \|\na\sigma^R\|_{L_t^2W_{co}^{1,3}}\Lambda\big(\ep\|Z(\sigma^R,\sigma^a)\|_{L_{t,x}^{\infty}}\big)\|\ep (\na_h, Z_3)g\|_{L_t^2W_{co}^{1,6}}+\|\na \sigma^R\|_{L_t^2L^{\infty}}\|\ep (\na_h, Z_3)g\|_{L_t^2H_{co}^2}\\
&\lesssim \eta\cN_T \big( T^{\f12}\Lambda(\cM_T^a)+(\eta+\ep^{\f14}) \Lambda(\cM_T^a+\cA_T) \cN_T\big).
\end{align*}
The last term $F_4$ is similar and easier to control
\begin{align*}
    \|\phi F_4\|_{L_t^1H_{co}^2}&\lesssim \Lambda(\cM_T^a)\|\ep \phi (\, \div_{\nu}S u^a, \nu\p_z^2\omega^a, \Delta_h\omega^a)\|_{L_t^1H_{co}^2}+ T^{\f12}\Lambda(\cM_T^a,\cA_T)\|(\na\sigma^R, \sigma^R)\|_{L_t^2H_{co}^2}\\
  &\lesssim \eta \Lambda(\cM_T^a)+ T^{\f12}\eta\Lambda(\cM_T^a,\cA_T)\cN_T.
\end{align*}
Let us now estimate  $\|\phi (D_1^{\alpha}, D_2^{\alpha})\|_{L_t^1L^2}$ where $D_1^{\alpha}, D_2^{\alpha}$ are defined in \eqref{def-D12alpha}. The term $\phi\, D_2^{\alpha}$ can be bounded in exactly the same way as $\|\na(\rho^{-1})\times \phi \, \div_{\nu} S u^R\, \|_{L_t^1H_{co}^2},$ we thus omit the detail. Concerning the term $\phi\, D_1^{\alpha},$ we use the identity $u_3\p_z=\f{u_3}{\phi}Z_3$ and the Hardy inequality to find that 
\begin{align*}
\|\phi\, D_{1}^{\alpha}\|_{L_t^1L^2}&\lesssim 
T^{\f12}\|(u^a, \p_z u_3^a)\|_{L_t^{\infty}W_{co}^{2,\infty}} \|\phi Z\omega^R\|_{L_t^2H_{co}^1}+\|(u_h^R,\p_z u_3^R)\|_{L_t^2H_{co}^2}\|\phi Z\omega^R\|_{L_t^2L^{\infty}}\\
& +\| u_h^R\|_{L_t^2W_{co}^{1,\infty}} \|\phi \na_h\omega^R\|_{L_t^2H_{co}^{1}}+\min\{\|Z \p_z u_3^R\|_{L_t^2L^{\infty}} \|\phi Z_3\omega^R\|_{L_t^2H_{co}^{1}}, \|\p_z u_3^R\|_{L_t^2W_{co}^{1,6}}\|\phi Z_3\omega^R\|_{L_t^2W_{co}^{1,3}} \}.
\end{align*}
Thanks to \eqref{vor-1infty}, the inequalities \eqref{ineq-319} \eqref{ineq-320} (with `tan' changed to `co') as well as  
\begin{align*}
   \|\phi Z\omega^R\|_{L_t^2(L_x^{\infty}\cap W_{co}^{1,3})} 
  \lesssim \|\phi \na \omega^R\|_{L_t^2H_{co}^2}^{\f{1}{2}}\|\phi\omega^R\|_{L_t^2H_{co}^2}^{\f12} \lesssim 
  \nu^{-\f14} \eta \cN_T, 
\end{align*}
we obtain by using $\min\{ \nu^{-\f{5}{12}}, \ep^{-(\f12+\f{1}{16})}+\nu^{-\f14}\} \eta\lesssim \ep^{\f{1}{40}}+\nu^{\f13}$ 
\begin{align*}
    \|\phi\, D_{1}^{\alpha}\|_{L_t^1L^2}&\lesssim \big(T^{\f12}+
   \ep^{\min\{\f{1}{40},\f{\kappa}{3}\}}
  \big) \eta\cN_T \big(\Lambda(\cM_T^a)+\cN_T\big).
\end{align*}
We thus finish the proof.
\end{proof}

\subsection{Some estimates for the compressible part}
 In this substep, we aim to bound $\ep \|(\na\sigma^R, \div u^R)\|_{L_t^{\infty}\underline{H}_{co}^2}$, $\ep\|\na\div u^R\|_{L_t^2\underline{H}_{co}^2}$ which will be useful to control uniformly the $L_t^2H_{co}^2$ norm of $(\na\sigma^R, \div u^R)$ in the next step.   
\begin{lem}
    Under the same assumption as in Lemma \ref{lem-hightan}, we have that for any $t\in[0,T],$ 
    \beq
 \begin{aligned} \label{es-nadiv}
    \ep^2 \|(\na\sigma^R, \div u^R)\|_{L_t^{\infty}\underline{H}_{co}^2}^2+ \ep^2\|\na\div u^R\|_{L_t^2\underline{H}_{co}^2}^2 &\lesssim \ep^2 \|(\na\sigma^R, \div u^R)(0)\|_{\underline{H}_{co}^2}^2+ \eta^2
 \Lambda(\cM_T^a) \\
 &+\eta^2(T^{\f13}+
 \ep^{\min\{\f{1}{40},\f{\kappa}{3},\f{3-\kappa}{4}\}})\Lambda\big(\cM_T^a,\cA_T\big)\big(\cN_T^2+\cN^4_T\big).
 \end{aligned}
 \eeq
\end{lem}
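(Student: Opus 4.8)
The goal here is to bound $\ep^2\|(\na\sigma^R,\div u^R)\|_{L_t^\infty\underline H_{co}^2}^2+\ep^2\|\na\div u^R\|_{L_t^2\underline H_{co}^2}^2$. The plan is to derive from \eqref{CNS-remainder} a closed evolution system for the pair $(\na\sigma^R,\div u^R)$ whose principal part is the acoustic operator $(\div,\na)/\ep$ plus a parabolic dissipation on $\div u^R$ coming from $\tilde\mu\na\div u^R$, and then perform weighted conormal energy estimates. Concretely, I would take $\na$ of the first equation of \eqref{CNS-remainder} and $\div$ of the second equation; the leading terms produce $\pt\na\sigma^R+\rho^{\f{\gamma-1}{2}}\na\div u^R/\ep+\cdots$ and $\pt\div u^R+\rho^{\f{\gamma-1}{2}}\Delta\sigma^R/\ep-\f{1}{\rho}\div_\nu\cS(\cdot)+\cdots$, where every commutator term involves at most one derivative of $\rho$ (hence an $\ep$-gain through $\na\rho=\cO(\ep)\na(\sigma^a,\sigma^R)$), plus the source terms $\na R^a_\sigma$ and $\div R^a_u$ which are part of $\cM_T^a$ after multiplication by the proper weight.

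Next I would apply $Z^\alpha$ for $\alpha\in\mathbb N^4$ with $\alpha_0\le 2$, $|\alpha|\le 2$ — the definition of $\underline H_{co}^2$ allows all such multi-indices including $\alpha_3=2$ — and test the $\na$-of-density equation against $\ep^2\rho^{-\f{\gamma-1}{2}}Z^\alpha\na\sigma^R$ and the $\div$-of-velocity equation against $\ep^2Z^\alpha\div u^R$ (up to density weights making the symmetrization of the $1/\ep$ terms work, exactly as in the $L^2$ energy identity \eqref{EI-zero}). The $1/\ep$ acoustic terms cancel after integration by parts up to lower order (the boundary terms vanish because $u^R|_{\p\Omega}=0$ controls the needed traces, or are absorbed via the boundary condition for $\div u^R$ as used for the vorticity in \eqref{bdrycond-omegah}); the parabolic term $\f{1}{\rho}\tilde\mu\na\div u^R$ yields the good dissipation $\ep^2\|\na\div u^R\|_{L_t^2\underline H_{co}^2}^2$, and its non-diagonal pieces ($\mu_1\Delta_h$, $\mu_1\nu\p_z^2$ acting on $u^R$) are handled using the already-established estimates \eqref{ES-hightan}, \eqref{es-nasigma-tan}, \eqref{pzdiv-tangential} together with the vorticity bounds \eqref{es-vor-weight}, \eqref{es-omegah-weight} to control the normal derivatives of $u^R$. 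The convective and commutator terms $[Z^\alpha,(u^a+u^R)\cdot\na]$, $[Z^\alpha,\rho^{\pm}]$, $Z^\alpha(u^R\cdot\na U^a)$, etc., are estimated exactly as in Lemmas \ref{lem-hightan} and \ref{lem-pzsig-tan}: split derivatives between factors, use $u_3\p_z=(u_3/\phi)Z_3$ with Hardy, use Sobolev embeddings into the $L_x^p$ norms collected in $\cA_T$, and extract the small factors $T^{1/3}+\ep^{\min\{1/40,\kappa/3,(3-\kappa)/4\}}+\eta$; the $\ep^2$ prefactor is crucial because it absorbs the potentially dangerous inverse powers of $\ep$ and $\nu$ appearing in the $\cA_T$-type norms of $\div_\nu\cS u^R$.

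The main obstacle I anticipate is the treatment of the terms carrying two time derivatives and one normal derivative, i.e. when $\alpha_0=2$ and $\alpha_3\ge1$: for instance the commutator $\ep^{-1}[Z^\alpha,\rho^{\f{\gamma-1}{2}}]\Delta\sigma^R$ and the cross term $\ep\nu\,\na_h\p_z\sigma^R\,Z_0^2\p_z u^R$ that already required the special integration-by-parts trick in the proof of Lemma \ref{lem-pzsig-tan} (we do not have control of $\|\p_z^2(\ep\pt)^2\na_h u^R\|_{L_t^2L^2}$). The remedy is the same as there — integrate by parts in $z$ to move the problematic normal derivative onto a better-controlled factor, and use the equation $\eqref{CNS-remainder}_1$ to trade $\ep\pt\sigma^R$ for $\div u^R$ plus acceptable errors — so that only quantities appearing in $\cE_T$ and $\cA_T$ remain. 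A secondary technical point is the induction on the number of weighted normal derivatives $Z_3$: since $\ep\|\na\div u^R\|_{L_t^\infty\underline H_{co}^1}$ is part of $\cE_T$ but not its full $\underline H_{co}^2$ analogue a priori, one closes the estimate level by level in $\alpha_3$, exactly as in Lemma \ref{lem-vor-uniform}. Collecting all contributions, applying Young's inequality to absorb the dissipation terms, and invoking \eqref{ES-hightan}, \eqref{es-nasigma-tan}, \eqref{pzdiv-tangential}, \eqref{es-vor-weight}, \eqref{es-omegah-weight} yields \eqref{es-nadiv}.
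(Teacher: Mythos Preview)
Your overall strategy is correct and close to the paper's, but two points deserve comment.

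First, the paper's testing is slightly different from yours. Rather than taking $\div$ of the velocity equation and pairing with $Z^\alpha\div u^R$, the paper applies $Z^\alpha$ directly to the velocity equation (no $\div$) and tests against $-\ep^2 Z^\alpha\na\div u^R$. After integrating by parts in space (boundary terms vanish since $Z^\alpha u_3^R|_{\p\Omega}=0$), the time-derivative term yields $\tfrac12\pt\|Z^\alpha\div u^R\|^2$ up to the commutators $[Z^\alpha,\p_z]$ and $[Z^\alpha,\div]$; the singular term $\rho^{\f{\gamma-1}{2}}Z^\alpha\na\sigma^R/\ep$ cancels \emph{exactly} against its counterpart from the $\na$-density equation; and $\tfrac{\tilde\mu}{\rho}Z^\alpha\na\div u^R$ produces the good dissipation $\ep^2\tilde\mu\|\rho^{-1/2}Z^\alpha\na\div u^R\|_{L_t^2L^2}^2$ directly. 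The remaining parts of $\div_\nu\cS u^R$, namely $\mu_1\nu\p_z(\omega_h^R)^\perp$, $\mu_1\nu\na\div_h u_h^R$ and $\mu_1\Delta_h u^R$, are collected into a source term $\cJ_{u,4}^\alpha$ and bounded via \eqref{es-omegah-weight} and \eqref{ES-hightan}. Your ``$\div$ then test'' route also works after one more integration by parts, but the bookkeeping of $\Delta_\nu u^R\cdot\na\div u^R$ is slightly less clean.

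Second, you misread the index range: for $\underline H_{co}^2$ the definition requires $\alpha_0\le m-1=1$, not $\alpha_0\le 2$. The paper accordingly restricts to $|\alpha|\le 2$, $\alpha_0\le 1$, and the obstacle you anticipate (``$\alpha_0=2$ and $\alpha_3\ge 1$'', which is anyway impossible under $|\alpha|\le 2$) simply does not arise here; that difficulty is the one from Lemma~\ref{lem-hightan}, where $\underline H_{tan}^3$ allows $\alpha_0=2$. For the same reason no induction on $\alpha_3$ is needed: the estimate is carried out for all admissible $\alpha$ simultaneously, and the source terms $\cJ_{\sigma,k}^\alpha,\cJ_{u,k}^\alpha$ are bounded (see the companion lemma giving \eqref{es-cJsigmau}) using only the already-established estimates \eqref{ES-hightan} and \eqref{es-omegah-weight}.
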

\begin{proof}
Let $\alpha\in \mathbb{N}^4$ be a multi-index with $\alpha_0\leq 1.$ Taking $(Z^{\alpha}\na, Z^{\alpha})$ on the equations \eqref{CNS-remainder}, we find the following equation 
 \beq \label{eq-nasi-divu}
\left\{
\begin{array}{l}
 \big(\displaystyle\pt+ (u^{a}+u^R)\cdot\na\big)Z^{\alpha}\na \sigma^{R}+\f{\rho^{\f{\gamma-1}{2}}Z^{\alpha}\na\div u^{R}}{\ep}=-Z^{\alpha} \na R^a_{\sigma}-\cJ_{\sigma,1}^{\alpha}- \cJ_{\sigma,2}^{\alpha}
 \\[8pt]
 \pt Z^{\alpha} u^R+ \f{\rho^{\f{\gamma-1}{2}}Z^{\alpha}\nabla \sigma^{R}}{\ep}-\f{\tilde{\mu}}{\rho}Z^{\alpha}\na\div u^R=-Z^{\alpha}R^a_{u}
-\cJ_{u,1}^{\alpha}-\cJ_{u,2}^{\alpha}
-\cJ_{u,3}^{\alpha}-\cJ_{u,4}^{\alpha}
\end{array}
\right.
\eeq
where $\tilde{\mu}=\mu_2+\mu_1\nu>0$ and
\beq\label{def-cJ}
\begin{aligned}
  & \cJ_{\sigma,1}^{\alpha}=Z^{\alpha}\big( \na (u^R\cdot\na\sigma^a)+\na(u^a+u^R)\cdot\na \sigma^R+\na (\ep^{-1}\rho^{\f{\gamma-1}{2}})\, \div u^R\big), \quad \cJ_{\sigma,2}^{\alpha}= [Z^{\alpha}, \ep^{-1}\rho^{\f{\gamma-1}{2}}]\na\div u^R,\\
&  \cJ_{u,1}^{\alpha}=  Z^{\alpha}\big((u^{a}+u^R)\cdot\na u^{R}+u^R\cdot\na u^a\big), \qquad  \cJ_{u,2}^{\alpha}=[Z^{\alpha}, \ep^{-1}\rho^{\f{\gamma-1}{2}}]\na \sigma^R-\tilde{\mu}[Z^{\alpha},  \rho^{-1}]\na\div u^R, \\
&  \cJ_{u,3}^{\alpha}=-Z^{\alpha} \big(\big({1}/{\rho}-1\big)\div_{\nu} S u^a\big), \qquad \cJ_{u,4}^{\alpha}=-\mu_1 Z^{\alpha}\big( \rho^{-1} \nu\big(  \p_3 (\omega_2^R, -\omega_1^R, 0)^t -\na\div_h u_h^R \big)+\rho^{-1}\Delta_h u^R\big).
\end{aligned}
\eeq
Taking the inner product of the above equations with $\ep^2(Z^{\alpha}\na\sigma^R, -Z^{\alpha}\na\div u^R)$ in $L^2(Q_t),$ 
we obtain, after suitable integration by parts the following energy equality
\beqs
\begin{aligned}
&\ep^2\int_{\Omega} (|Z^{\alpha}\na\sigma^R|^2+|Z^{\alpha}\div u^R|^2)(t) \, \d x\, + \ep^2 \tilde{\mu}  \iint_{Q_t} \rho^{-1}|Z^{\alpha}\na\div u^R|^2\, \d x \d s \\
&= \ep^2\int_{\Omega} (|Z^{\alpha}\na\sigma^R|^2+|Z^{\alpha}\div u^R|^2)(0) \, \d x +\ep^2  \iint_{Q_t} \div (u^a+u^R) |Z^{\alpha}\na\sigma^R|^2\, \d x\d s \\
&\qquad -\ep  \iint_{Q_t} \ep\pt Z^{\alpha} u_3^R \cdot [Z^{\alpha},\p_z]\div u^R +\ep\pt [Z^{\alpha},\div] u^R  \cdot Z^{\alpha}\div u^R\d x\d s \\
&\qquad + \ep^2  \iint_{Q_t} -(Z^{\alpha} \na R^a_{\sigma}+\cJ_{\sigma,1}^{\alpha}+ \cJ_{\sigma,2}^{\alpha})\cdot Z^{\alpha}\na\sigma^R
+\big(Z^{\alpha} R^a_{u}+\sum_{k=1}^4 \cJ_{u,k}^{\alpha}\big)\cdot Z^{\alpha}\na\div u^R \, \d x \d s.
\end{aligned}
\eeqs
The second term in the right hand side can be bounded directly by 
\beq \label{term1}
T^{\f12}  \|\div(u^a, u^R)\|_{L_t^2L^{\infty}}\|\ep \na\sigma^R\|_{L_t^{\infty}H_{co}^2}^2\lesssim T^{\f12}\Lambda(\cM_T^a,\cA_T)\eta^2\cN_T^2.
\eeq
By the virtue of identities \eqref{algebraicid} and integration by parts, we can control the  second line  in the right hand side by 
\begin{align}\label{term2}
  \ep  \|\p_z u_3^R\|_{L_t^2H_{co}^2}\|\div u^R\|_{L_t^2H_{co}^2}\lesssim \ep \eta^2\cN_T^2.
\end{align}
By using the Young's inequality, we bound the last line by 
\begin{align*}
   & \ep^2 (\|\na R_{\sigma}^a\|_{L_t^2H_{co}^2}+\|(\cJ_{\sigma,1}^{\alpha},\cJ_{\sigma,2}^{\alpha})\|_{L_t^2L^2})\|\na\sigma^R\|_{L_t^2H_{co}^2}\\
    &+\f{\ep^2\tilde{\mu}}{100}\|Z^{\alpha}\na\div u^R\|_{L_t^2L^2}^2+C\|\ep R_{u}^a\|_{L_t^2H_{co}^2}^2+\sum_{k=1}^4\|\ep \cJ_{u,k}^{\alpha}\|_{L_t^2L^2}^2 .
\end{align*}
By the definition of $\cM_T^a$ in \eqref{def-cM}, it holds that 
$$\ep^2\|\na R_{\sigma}^a\|_{L_t^2H_{co}^2}\lesssim (\ep+(\ep^3/\nu)^{\f12})\eta\cM_T^a,\quad  \|\ep R_{u}^a\|_{L_t^2H_{co}^2}\lesssim \ep\eta\cN_T.$$
We then apply \eqref{es-cJsigmau} to find that the last line can be controlled further by 
\begin{align*}
    \f{\ep^2\tilde{\mu}}{100}\|Z^{\alpha}\na\div u^R\|_{L_t^2L^2}^2+\|\ep Z^{\alpha}\Delta_h u^R\|_{L_t^2L^2}^2+\|\ep\nu\p_z\omega_h^R\|_{L_t^2\underline{H}_{co}^2}^2\\
+\eta^2\Lambda(\cM_T^a)+\big(\eta+\ep^{\f{3-\kappa}{2}}
    +T \big)\eta^2\Lambda(\cM_T^a,\cA_T)(\cN_T^2+\cN_T^4).
\end{align*}
This, together with \eqref{term1} \eqref{term2}, \eqref{ES-hightan}, \eqref{es-omegah-weight}, enables us to find \eqref{es-nadiv}.
\end{proof}
\begin{lem}
    Let $\cJ_{\sigma,1}^{\alpha},\cJ_{\sigma,2}^{\alpha},\cJ_{u,1}^{\alpha}-\cJ_{u,4}^{\alpha} $ be defined in \eqref{def-cJ}. It holds that, under the same assumption as in Lemma \ref{lem-hightan}, for any $|\alpha|\leq 2, \alpha_0\leq 1,$ $t\in[0,T],$
    \beq \label{es-cJsigmau}
    \begin{aligned}
     &  \ep^2 \|(\cJ_{\sigma,1}^{\alpha},\cJ_{\sigma,2}^{\alpha})\|_{L_t^2L^2}\lesssim \big(\ep^{\f{3-\kappa}{4}}+\eta\big)\eta\Lambda(\cM_T^a, \cA_T)(\cN_T+\cN_T^2), \\ 
      & \sum_{k=1}^4 \ep\|\cJ_{u,k}^{\alpha}\|_{L_t^2L^2}\lesssim \|\ep Z^{\alpha}\Delta_h u^R\|_{L_t^2L^2}+\|\ep{\nu} \p_z \omega_h^R\|_{L_t^2\underline{H}_{co}^2}+\ep\eta \Lambda(\cM_T^a)\\
      &\qquad\qquad \qquad \qquad+\big(\eta+\ep^{\min\{\f{3-\kappa}{4},\f14, \f{\kappa}{2}\}}
    +T^{\f12}  \big)\eta\Lambda(\cM_T^a,\cA_T)(\cN_T+\cN_T^2).
    \end{aligned}
    \eeq
\end{lem}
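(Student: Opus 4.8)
The strategy is purely to expand each commutator and product in \eqref{def-cJ} and estimate it term by term, distributing the available $\ep,\nu$ weights supplied by $\cE_T,\cA_T,\cM_T^a$ according to how many normal derivatives fall on each factor. All estimates below are at fixed $t\in[0,T]$, $|\alpha|\leq 2$, $\alpha_0\leq 1$, and use the Sobolev embeddings $H_{co}^2\hookrightarrow L^6$, $W_{co}^{1,6}\hookrightarrow L^\infty$ plus the interpolation inequalities already recorded in \eqref{rewrite-nasigmaR}, \eqref{rewrite-pzu3R}, \eqref{pzuh-dinfty}, and the Hardy inequality for the factors involving $u_3/\phi$. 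The overall shape of the argument mirrors the commutator estimates of Lemma~\ref{lem-hightan} (the treatment of $\cC_2^\alpha$, $\cC_4^\alpha$ and the $[Z^\alpha,1/\rho,\div_\nu\cS u^R]$ term), so I will reuse those patterns wherever possible.

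\emph{Step 1: the $\sigma$-commutators.} For $\cJ_{\sigma,1}^\alpha$ I split into three pieces. In $Z^\alpha\na(u^R\cdot\na\sigma^a)$ and $Z^\alpha\big(\na(u^a+u^R)\cdot\na\sigma^R\big)$ the worst subterm is the one where a normal derivative hits $u^R$: here I use $\ep^2\|\p_z u^R\|$-type bounds, controlled either by $\ep\nu\|\na^2 u^R\|_{L_t^2\underline H_{co}^2}$ from $\cE_T$ (giving a factor $(\ep/\nu)^{1/2}\le \ep^{(3-\kappa)/4}\cdot(\text{something})$, hence the $\ep^{(3-\kappa)/4}$ in \eqref{es-cJsigmau}) or by $\nu^{1/2}\|\na u^R\|$ combined with the $L_x^\infty$ bounds on $\na\sigma^a,\na\sigma^R$. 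The term $\na(\ep^{-1}\rho^{\gamma-1\over2})\div u^R = \rho^{\gamma-1\over2}\cdot\f{\gamma-1}{2}\na(\sigma^a+\sigma^R)\cdot\div u^R$ carries the decisive $\ep^{-1}$; multiplying by $\ep^2$ leaves one $\ep$, and $\|\na\sigma^R\|_{L_t^2(L_x^\infty\cap W_{co}^{1,3})}\lesssim(\eta+\ep^{1/4})\cN_T$ by \eqref{rewrite-nasigmaR} absorbs the loss. For $\cJ_{\sigma,2}^\alpha=[Z^\alpha,\ep^{-1}\rho^{\gamma-1\over2}]\na\div u^R$, I write $\ep^{-1}\rho^{\gamma-1\over2}=\ep^{-1}+O(1)\sigma$, so $\ep\cdot[Z^\alpha,\ep^{-1}\rho^{\gamma-1\over2}]= [Z^\alpha,\rho^{\gamma-1\over2}]$ up to $O(\ep)$; the commutator then pairs $Z\sigma$ (in $L_x^\infty$, with an $\ep$ or $\ep^{1/2}$ weight from $\cA_T$) against $\ep\na\div u^R\in L_t^2\underline H_{co}^2$ from $\cE_T$, exactly as in \eqref{es-cc4}.

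\emph{Step 2: the $u$-commutators $\cJ_{u,1}^\alpha,\cJ_{u,2}^\alpha$.} $\cJ_{u,1}^\alpha=Z^\alpha\big((u^a+u^R)\cdot\na u^R+u^R\cdot\na u^a\big)$ is treated exactly as $\cC_1^\alpha+\cC_2^\alpha+\cC_3^\alpha$ in Lemma~\ref{lem-hightan}: after multiplying by $\ep$ the transport term $u^R\cdot\na u^R$ produces the $\ep^{\min\{1/20,\kappa/3\}}$ factor via the $[Z^\alpha,u_3^R]\p_3 u_j^R$ piece (using the Hardy inequality $\|u_3^R/\phi\|\lesssim\|\p_z u_3^R\|\lesssim\|(\div u^R,\na_h u_h^R)\|$ and \eqref{ineq-319}, \eqref{ineq-320}), while the terms with $u^a$ only cost $T^{1/2}\Lambda(\cM_T^a)$. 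For $\cJ_{u,2}^\alpha$, the first commutator $\ep[Z^\alpha,\ep^{-1}\rho^{\gamma-1\over2}]\na\sigma^R$ is handled like $\cF_2^\alpha$ in Lemma~\ref{lem-pzsig-tan}, pairing $\ep Z\sigma$ in $L_x^\infty$ against $\|\na\sigma^R\|$; the second, $\ep\tilde\mu[Z^\alpha,\rho^{-1}]\na\div u^R$, is bounded like the $f=\div u^R$ case of $\|[Z^\alpha,1/\rho,f]\|$ in Lemma~\ref{lem-hightan}, using $\ep\|\na\div u^R\|_{L_t^2\underline H_{co}^2}\lesssim\eta\cE_T$ from \eqref{def-cET} and the $\ep\|Z\sigma^R\|_{L_{t,x}^\infty}$, $\|\sigma^a\|_{L_t^\infty W_{co}^{2,\infty}}$ bounds. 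The $\ep^{\kappa/2}$ appearing in the final estimate comes from distributing weights in exactly the way \eqref{es-integ} does.

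\emph{Step 3: the source and viscous commutators $\cJ_{u,3}^\alpha,\cJ_{u,4}^\alpha$; assembling.} For $\cJ_{u,3}^\alpha=-Z^\alpha\big((1/\rho-1)\div_\nu\cS u^a\big)$ I Leibniz-expand: either the derivatives hit $1/\rho-1=O(\ep\sigma)$, producing an explicit $\ep$ against $\|\ep\div_\nu\cS u^a\|$-type $\cM_T^a$ quantities and $\|\sigma^a\|_{W_{co}^{3,\infty}}$, $\|\ep Z\sigma^R\|_{L_{t,x}^\infty}$; or they hit $\div_\nu\cS u^a$, which is $O(\ep)\cdot\cM_T^a$ by the definition of $\cM_T^a$ — this yields the $\ep\eta\Lambda(\cM_T^a)$ contribution. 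For $\cJ_{u,4}^\alpha$ I keep the principal pieces $\ep\|Z^\alpha\Delta_h u^R\|_{L_t^2L^2}$ and $\ep\nu\|\p_z\omega_h^R\|_{L_t^2\underline H_{co}^2}$ on the right-hand side explicitly (they will be absorbed into $\cE_T$ — the $\na_h u^R$ dissipation and the weighted vorticity estimate \eqref{es-omegah-weight} — when \eqref{es-cJsigmau} is fed back into the energy identity), and I need only bound the \emph{commutator} parts $[Z^\alpha,\rho^{-1}]$ acting on $\nu\p_z\omega_h^R$, $\nu\na\div_h u_h^R$, $\Delta_h u^R$; these are again of $[Z^\alpha,1/\rho,f]$ type. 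Note $\nu\na\div_h u_h^R$ is $\nu$-small and harmless, and $\nu\p_z\omega_h^R=\na_h\na_h u^R+\dots$ modulo the vorticity boundary condition, so it is covered by $\cE_T$'s $\na_h u^R$ and $\phi\na_h\omega_h^R$ norms.

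\emph{Main obstacle.} The delicate point is the bookkeeping of $\ep,\nu$ weights in $\cJ_{\sigma,1}^\alpha$ and $\cJ_{u,2}^\alpha$: the factor $\ep^{-1}$ inside $\na(\ep^{-1}\rho^{\gamma-1\over2})$ and inside the $\rho^{\gamma-1\over2}$-commutator is only barely compensated, and one must choose at each subterm whether to put the (scarce) normal-derivative control on $u^R$ into $\nu^{1/2}\|\na u^R\|$ (cheap in $\nu$ but then one pays in $L_x^\infty$ norms of $\na\sigma$) or into $\ep\nu\|\na^2 u^R\|$ (which costs $(\ep/\nu)^{1/2}=\ep^{(1-\kappa)/2}$, bad when $\kappa>1$, hence the need to trade it against the $\ep^{(3-\kappa)/4}$-type margin). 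Getting a \emph{uniform} positive power of $\ep$ (not merely $\ep^0$) out of every such subterm for the whole range $\kappa\in(0,3)$ — so that the final bound closes after multiplication by $(T+\ep)^{d(\kappa)}$ in Proposition~\ref{prop-unifromes} — is exactly what forces the specific exponents $\ep^{(3-\kappa)/4}$, $\ep^{\kappa/2}$, $\ep^{1/40}$ in \eqref{es-cJsigmau}, and is the only part of the proof that is not a routine imitation of earlier lemmas.
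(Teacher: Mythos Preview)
Your overall strategy---expand each commutator and distribute the $\ep,\nu$ weights supplied by $\cE_T,\cA_T,\cM_T^a$---is exactly the paper's, and most of your term-by-term bookkeeping is fine. There are, however, two concrete places where the paper does something you do not, and where your proposed route either gives a worse exponent than the lemma states or leaves a genuine gap.

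\textbf{(i) The $u_3^R\p_z u_h^R$ piece of $\cJ_{u,1}^\alpha$.} You propose to treat $\ep Z^\alpha(u^R\cdot\na u^R)$ ``exactly as $\cC_1^\alpha+\cC_2^\alpha+\cC_3^\alpha$'', i.e.\ via $u_3^R\p_z=(u_3^R/\phi)Z_3$ and Hardy. The paper instead uses the algebraic identity
\[
u_3^R\,\p_z u_h^R=\tfrac{u_3^R}{\phi}\,\phi(\omega_h^R)^{\perp}+u_3^R\,\na_h u_3^R,
\]
which converts the dangerous normal derivative of $u_h^R$ into the \emph{uniformly} controlled weighted vorticity $\phi\omega_h^R$ plus a horizontal-derivative term. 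This is what produces the exponent $\ep^{\min\{(3-\kappa)/4,\,1/4,\,\kappa/2\}}$ actually stated in \eqref{es-cJsigmau}; the $\cC_2^\alpha$ route you invoke gave only $\ep^{\min\{1/20,\kappa/3\}}$ in Lemma~\ref{lem-hightan}, which for moderate $\kappa$ is strictly weaker than the lemma claims.

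\textbf{(ii) The top-order piece of $\cJ_{\sigma,2}^\alpha$.} You say this is handled ``exactly as in \eqref{es-cc4}'', pairing $Z\sigma$ in $L_x^\infty$ against $\ep\na\div u^R\in L_t^2\underline H_{co}^2$. That covers the commutator terms where at most one derivative falls on $\sigma$, but when $|\alpha|=2$ and both derivatives land on $\sigma^R$ you face $\ep\,Z^\alpha\sigma^R\cdot\na\div u^R$. Here $Z^\alpha\sigma^R$ is only in $L^6$ (via $\|\sigma^R\|_{L_t^2W_{co}^{2,6}}$ from $\cA_T$), so you need $\ep\na\div u^R$ in $L^3$---a quantity not directly contained in $\cE_T$ or $\cA_T$. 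The paper obtains it by going back to the equation $\eqref{CNS-remainder}_1$: taking $\na$ and rearranging gives $\ep\na\div u^R\approx -\rho^{(1-\gamma)/2}\ep^2\na\pt\sigma^R+\ldots$, and then $\|\ep\na\sigma^R\|_{L_t^2W_{co}^{1,3}}$ (available in $\cA_T$ with weight $\tau$) closes the bound. Your reference to the $\cC_4^\alpha$ pattern does not cover this, because $\na\div u^R$ carries one more derivative than the $\div u^R$ appearing there.

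Neither point is fatal for the overall Proposition~\ref{prop-unifromes} (which only needs \emph{some} positive power $d(\kappa)$), but both are needed to prove the lemma with the specific exponents written in \eqref{es-cJsigmau}.
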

\begin{proof}
Let us begin with the estimates for $\cJ_{\sigma,1}^{\alpha}, \cJ_{\sigma,2}^{\alpha}.$ For the term $\cJ_{\sigma,1}^{\alpha},$ let us
 detail the estimate of $Z^{\alpha}(\na u^R\cdot\na\sigma^R),$ the other ones are easier. 
 We count the derivative hitting on each element to have that 
 \begin{align*}
\ep^2\|Z^{\alpha}(\na u^R\cdot\na\sigma^R)\|_{L_t^2L^2}&\lesssim \ep^2\|\na u_h^R\|_{L_t^{\infty}(L_x^{\infty}\cap W_{co}^{1,3})}\|\na\sigma^R\|_{L_t^2\underline{H}_{co}^2}+\ep\|\na u_h^R\|_{L_t^2{\underline{H}}_{co}^2}  \|\ep \na_h \sigma^R\|_{L_{t}^{\infty}L_x^{\infty}}\\
&\,\,\quad +\|\ep\na u_3^R\|_{L_t^2W_{co}^{1,\infty}}   \|\ep\p_z\sigma^R\|_{L_t^{\infty}\underline{H}_{co}^2}+\ep\|\ep \na u_3^R\|_{L_t^{\infty}\underline{H}_{co}^2}\|\p_z \sigma^R\|_{L_t^2L^{\infty}}.
 \end{align*}
 Applying \eqref{pzuh-dinfty} and \eqref{rewrite-nasigmaR}, we find 
 \beqs
\ep^2\|Z^{\alpha}(\na u^R\cdot\na\sigma^R)\|_{L_t^2L^2}\lesssim \big(
\eta+\ep^{\f{3-\kappa}{4}}+\ep\nu^{-\f12}\eta\big) \eta\cN_T^2\lesssim  \big(
\eta+\ep^{\f{3-\kappa}{4}}\big) \eta\cN_T^2.
\eeqs
%
Next, since 
\begin{align*}   \|\ep\na\div u^R\|_{L_t^2L^3}\lesssim \|\ep\na\sigma^R\|_{L_t^2W_{co}^{1,3}}+\ep^2 \|\na(R_{\sigma}^a, (u^a+u^R)\na\sigma^R, u^R\cdot\na\sigma^a)\|_{L_t^2W_{co}^{1,3}}\lesssim \cN_T+\Lambda(\cM_T^a),\end{align*}
the term $\ep^2\cJ_{\sigma,2}^{\alpha}$ can be bounded as 
\begin{align*}
    \ep^2\|\na\div u^R\|_{L_t^2H_{co}^2}\Lambda(\cM_T^a)+\ep\Lambda(\cM_T^a, \cA_T) \|\sigma^R\|_{L_t^2W_{co}^{2,6}}\|\ep\na\div u^R\|_{L_t^2L^3}\lesssim \ep\eta (\cN_T+\cN_T^2) \Lambda(\cM_T^a, \cA_T).
\end{align*}

We can now switch to the estimate of $\ep \cJ_{u,k}^{\alpha}.$ 
For $\cJ_{u,1}^{\alpha},$
let us showcase the control of the term $\ep Z^{\alpha}(u^R\cdot\na u^R),$ the other terms are easier.  On the one hand, in view of \eqref{energynorm-uh}, 
\begin{align*}
  \ep  \| Z^{\alpha}(u_h^R\cdot\na_h u^R)\|_{L_t^2L^2}&\lesssim  \ep\|u_h^R\|_{L_t^{\infty}(L_x^{\infty}\cap W_{co}^{1,3})}\|\na_h u^R\|_{L_t^2H_{co}^2}+\| u_h^R\|_{L_t^{\infty}H_{co}^2}  \|\ep \na_h u^R\|_{L_{t}^{2}L^{\infty}}\\
&\lesssim \big(\ep\eta \max\{(\ep\nu)^{-\f14}, (\ep^2\nu)^{-\f16}\} \big)\eta\cN_T^2\lesssim \ep^{\f23}\eta\cN_T^2.
\end{align*}
On the other hand, by using the identity 
$u_3^R\p_3u_h^R=\f{u_3^R}{\phi}\phi(\omega_h^R)^{\perp}+u_3^R\na_h u_3^R,$
the Hardy inequality and the Fundamental Theorem of Calculus as well as the definitions \eqref{def-cET} \eqref{def-cAT}, we get that
\begin{align*}
\ep \| Z^{\alpha}(u_3^R\p_z u ^R)\|_{L_t^2L^2}&\lesssim  \|\phi \omega_h^R\|_{L_t^{\infty}H_{co}^2}\|\ep\p_z u_3^R\|_{L_t^2W_{co}^{1,\infty}}+\|\p_z u_3^R\|_{L_t^2H_{co}^2}\|\ep\phi \omega_h^R\|_{L_t^{\infty}L^{\infty}}\\
&\quad\,\, 
+\ep\|\na u_3^R\|_{L_t^2(L^{\infty}\cap W_{co}^{1,3})}\|\na u_3^R\|_{L_t^{\infty}H_{co}^1}+\ep \|\na u_3^R\|_{L_t^2H_{co}^2}\|u_3^R\|_{L_t^{\infty}L^{\infty}}\\
&\lesssim \big(\eta+\ep\eta (\ep\nu)^{-\f12} \big)\eta\cN_T^2\lesssim \big(\eta+\ep^{\f{3-\kappa}{4}}\big)\eta\cN_T^2.
\end{align*}
The above two estimates then give that 
\begin{align*}
    \ep \| Z^{\alpha}(u\cdot\nabla 
    u ^R)\|_{L_t^2L^2}\lesssim \big(\eta+\ep^{\min\{\f{3-\kappa}{4},\f14+\f{\kappa}{2}\}}\big)\eta\cN_T^2\,.
\end{align*}
Next, the term $\ep\cJ_{u,2}^{\alpha}$ can be estimated in the similar way as $\ep^2\cJ_{\sigma,2}^{\alpha},$ we thus omit the details. The term $\ep\cJ_{u,3}^{\alpha}$ is easily controlled by 
\begin{align*}
\|\ep \cJ_{u,3}^{\alpha}\|_{L_t^2L^2}&\lesssim \ep\Lambda(\|\sigma^a\|_{L_t^{\infty}W_{co}^{3,\infty}})\|\ep\div_{\nu} S u^a\|_{L_t^2H_{co}^2} 
+T^{\f12}\|\sigma^R\|_{L_t^{\infty}\underline{H}_{co}^2}\Lambda(\cM_T^a,\cA_T)\\
&\lesssim \eta \big( \Lambda(\cM_T^a)+ T^{\f12} \Lambda(\cM_T^a,\cA_T)\cN_T\big).
\end{align*}
Finally, in view of its definitions \eqref{def-cJ}, we get readily that 
\begin{align*}
\|\ep \cJ_{u,4}^{\alpha}\|_{L_t^2L^2}&\lesssim \|\ep\nu \nabla \omega_h^R\|_{L_t^2\underline{H}_{co}^2}+\|\ep \Delta_h u^R\|_{L_t^2\underline{H}_{co}^2}+
(\ep^{\f14}+\ep^{\f{\kappa}{2}})\eta(\cN_T+\cN_T^2),
\end{align*}
since $\ep\nu \|\na^2 u^R\|_{L_t^2L^{\infty}}\lesssim \nu^{-\f14}\eta \cN_T\lesssim (\ep^{\f14}+\ep^{\f{\kappa}{2}})\cN_T. $
\end{proof}

\subsection{ Uniform control of the of $(\na\sigma^R, \div u^R)$ } 
In this step, we aim to prove the uniform boundedness of $\|(\na\sigma^R, \div u^R)\|_{L_t^2H_{co}^2\cap L_t^{\infty}\underline{H}_{co}^1},$ which, due to the singular terms $\ep^{-1}(\div u^R, \na\sigma^R)$ in the equations \eqref{CNS-remainder}, is unlikely to be obtained by direct energy estimates. Since we have obtained in the second step the estimate for high order tangential derivatives, we could now use the equations to recover weighted normal derivatives.  

\begin{lem}\label{lem-nasigmadiv-uniform}
Under the same assumption as in Lemma \ref{lem-hightan}, we have that for any $t\in[0,T],$
\beq\label{es-nasigma-divuR}
\begin{aligned}
   & \|(\na\sigma^R, \div u^R)\|_{L_t^2H_{co}^2\cap L_t^{\infty}\underline{H}_{co}^1}^2+\|\ep\na\div u^R\|_{L_t^{\infty}\underline{H}_{co}^1}^2  
   \\
    & \lesssim \eta^2\big(\cN_0^2+\Lambda(\cM_T^a)\big)+(T^{\f12}+
    \ep^{\min\{\f{1}{40},\f{\kappa}{3},\f{3-\kappa}{4}\}})\eta^2\Lambda(\cM_T^a,\cA_T)(\cN_T^2+\cN_T^4).
    \end{aligned}
\eeq
\end{lem}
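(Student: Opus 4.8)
The plan is to recover the missing weighted normal derivatives of $(\na\sigma^R,\div u^R)$ directly from the equations, now that all the high-order tangential estimates (Lemmas \ref{lem-hightan}, \ref{lem-pzsig-tan}), the vorticity estimates (Lemma \ref{lem-vor-uniform} and the weighted $\omega_h^R$ bound \eqref{es-omegah-weight}), and the auxiliary compressible estimate \eqref{es-nadiv} are available. The key algebraic observation is that $\p_z\sigma^R$ and $\div u^R$ satisfy, modulo lower order terms, a damped transport structure: from the third component of $\eqref{CNS-remainder}_2$ one has $\rho^{\f{\gamma-1}2}\p_z\sigma^R/\ep = -\pt u_3^R - \big((u^a+u^R)\cdot\na\big)u_3^R - u^R\cdot\na u_3^a + \f1\rho\div_\nu\mathcal S u^R - (R_u^a)_3 + (\f1\rho-1)(\div_\nu\mathcal S u^a)_3$, which expresses $\na_h$-free derivatives of $\sigma^R$ in terms of quantities already controlled (here $\div_\nu\mathcal S u^R$ contributes $\tilde\mu\,\p_z\div u^R$ plus tangential/vorticity pieces). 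Combined with the continuity equation $\eqref{CNS-remainder}_1$, $\rho^{\f{\gamma-1}2}\div u^R/\ep = -\pt\sigma^R - (u^a+u^R)\cdot\na\sigma^R - u^R\cdot\na\sigma^a - R_\sigma^a$, this gives a closed elliptic-type coupling. I would apply $Z^\alpha$ with $|\alpha|\le 2$, $\alpha_3\le 1$ to these relations and read off $\|Z_3(\na\sigma^R,\div u^R)\|$ in terms of $\|\pt(\sigma^R,u_3^R)\|$, $\|Z_{tan}(\na\sigma^R,\div u^R)\|$, the tangential vorticity norms, and the source $R^a$.

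First I would treat the time-derivative terms $\ep\pt u_3^R$ and $\ep\pt\sigma^R$. From $\eqref{CNS-remainder}_1$ one controls $\|\ep\pt\sigma^R\|_{L_t^2H_{co}^2}$ by $\|\div u^R\|_{L_t^2H_{co}^2}$ plus nonlinear remainders, exactly as in the proof of Lemma \ref{lem-pzsig-tan}; the term $\ep\pt u_3^R$ is handled by using the $u_3^R$-equation to write it in terms of $\p_z\sigma^R/\ep$ (the very quantity we estimate — so this must be absorbed carefully via Young's inequality with a small constant) together with $\div_\nu\mathcal S u^R$ and lower order pieces. This is the delicate bookkeeping point: one needs the $\ep^{-1}$-weighted terms to appear on the left with a favorable sign after an energy-type pairing, or alternatively to be closed by a fixed-point/bootstrap in the norm $\|(\na\sigma^R,\div u^R)\|_{L_t^2H_{co}^2}$ itself, exploiting that the coefficient of the dangerous term can be made $\le\f12$. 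I would organize the argument as an energy estimate: pair $Z^\alpha$ of $\eqref{eq-nasi-divu}$ (which already isolates $\na\sigma^R$ and $\na\div u^R$) against $(Z^\alpha\na\sigma^R, -Z^\alpha\na\div u^R)$ but now without the $\ep^2$ weight — i.e. at the natural scaling — picking up $\|Z^\alpha\na\div u^R\|_{L_t^2L^2}^2$ with a good sign from $\tilde\mu\rho^{-1}>0$, and bounding the singular cross terms via the cancellation between the $\na\div u^R/\ep$ term in the $\sigma$-equation and the $\na\sigma^R/\ep$ term in the $u$-equation after integration by parts.

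The right-hand side commutators $\cJ_{\sigma,1}^\alpha,\cJ_{\sigma,2}^\alpha,\cJ_{u,1}^\alpha$–$\cJ_{u,4}^\alpha$ are already estimated in \eqref{es-cJsigmau}, but there the bounds carry an $\ep$ (resp. $\ep^2$) prefactor; at the natural scaling I would need the analogous unweighted estimates, which follow by the same term-counting using \eqref{pzuh-dinfty}, \eqref{rewrite-nasigmaR}, \eqref{rewrite-pzu3R}, the interpolation inequalities for $u_3^R/\phi$, and crucially the vorticity term $\nu\p_z(\omega_h^R)^\perp$ in $\cJ_{u,4}^\alpha$ which is controlled by \eqref{es-omegah-weight} and the weighted estimate \eqref{es-vor-weight}. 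The term $\ep\|\na\div u^R\|_{L_t^\infty\underline H_{co}^1}$ is then recovered directly from $\eqref{eq-nasi-divu}$ by solving for $\tilde\mu\rho^{-1}Z^\alpha\na\div u^R$ algebraically in $L_t^\infty\underline H_{co}^1$, using the already-obtained $L_t^\infty$ bounds on $\ep\pt u^R$ (from the tangential estimates and \eqref{es-nadiv}) and on $\ep^{-1}\rho^{\f{\gamma-1}2}Z^\alpha\na\sigma^R$. Combining with \eqref{ES-hightan}, \eqref{es-nasigma-tan}, \eqref{es-vor-weight}, \eqref{es-omegah-weight}, \eqref{es-nadiv} and Young's inequality yields \eqref{es-nasigma-divuR}. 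The main obstacle is the self-referential appearance of $\ep^{-1}(\na\sigma^R,\div u^R)$ on both sides: one must carefully arrange the pairing so that these terms either cancel or come with a coefficient strictly less than one, which is exactly why the earlier auxiliary $\ep$-weighted estimate \eqref{es-nadiv} is needed as an input to absorb the leftover $\ep^2\|\na\div u^R\|^2$ contributions.
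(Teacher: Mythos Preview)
Your opening paragraph identifies the right algebraic tool: the rewritten relations
\[
\div u^R=-\ep\pt\sigma^R-\cH_1,\qquad \p_z\sigma^R=-\ep\pt u_3^R-\cH_2,
\]
which is exactly what the paper uses. But you then misdiagnose the difficulty and take a wrong turn. You worry that $\ep\pt u_3^R$ must be re-expressed via the $u_3^R$-equation in terms of $\p_z\sigma^R$, creating a self-referential loop to be closed by absorption. There is no loop: $\ep\pt=Z_0$ is one of the \emph{tangential} conormal fields, so $\ep\pt u_3^R$ and $\ep\pt\sigma^R$ (and their $Z_0,Z_1,Z_2$ iterates) are already controlled by Lemma~\ref{lem-hightan}. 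The paper simply iterates the two displayed relations to trade each $Z_3$ for an $\ep\pt$: e.g.
\[
\|Z_3^2\div u^R\|_{L_t^2L^2}\lesssim \|(\ep\pt)Z_3\p_z\sigma^R\|_{L_t^2L^2}+\|\cH_1\|_{L_t^2\underline H_{co}^2}
\lesssim \|(\ep\pt)^2(\div u^R,\div_h u_h^R)\|_{L_t^2L^2}+\|(\cH_1,\cH_2)\|_{L_t^2\underline H_{co}^2},
\]
and after at most two iterations one is left with purely tangential quantities plus $\cH_1,\cH_2$. The only nontrivial residual in $\cH_2$ is $\ep\tilde\mu\,\p_z\div u^R$, and \emph{that} is where the auxiliary $\ep$-weighted estimate \eqref{es-nadiv} enters. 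The $L_t^\infty\underline H_{co}^1$ bounds are recovered the same way, reading off $Z_3(\div u^R,\p_z\sigma^R)$ algebraically from the two relations and feeding in \eqref{ES-hightan}, \eqref{es-nadiv}, \eqref{es-omegah-weight}. No energy pairing is used at this step.

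Your proposed alternative---running the energy identity of \eqref{eq-nasi-divu} against $(Z^\alpha\nabla\sigma^R,-Z^\alpha\nabla\div u^R)$ \emph{without} the $\ep^2$ weight---does not close. The singular cross terms indeed cancel, but the source $\cJ_{u,4}^\alpha$ contains $\rho^{-1}\Delta_h u^R$ and $\nu\rho^{-1}\p_z(\omega_h^R)^\perp$. For $\alpha$ with $\alpha_3=2$ you would need $\|Z_3^2\Delta_h u^R\|_{L_t^2L^2}$ and $\|\nu\p_z\omega_h^R\|_{L_t^2\underline H_{co}^2}$ uniformly bounded by $\eta$; neither is in $\cE_T$. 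From \eqref{es-omegah-weight} one only has $\ep\nu^{1/2}\|\nu^{1/2}\p_z\omega_h^R\|_{L_t^2\underline H_{co}^2}\lesssim\eta$, so the unweighted term is of size $\eta\nu^{1/2}/\ep$, which blows up for $\kappa<2$. This is precisely why the paper says, at the start of that subsection, that the uniform bound ``is unlikely to be obtained by direct energy estimates'' and resorts instead to the iterative algebraic substitution.
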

\begin{proof}
Let us rewrite the equation \eqref{CNS-remainder} into the following form 
\begin{align}\label{rewrite-eq}
    \div u^R=-\ep\pt \sigma^R
    -\cH_{1}, \quad 
    \p_z\sigma^R=-\ep\pt u_3^R-\cH_{2}
\end{align}
where 
\beq
\begin{aligned}\label{def-cH12}
    \cH_{1}&=\ep (\rho^{\f{\gamma-1}{2}}-1)\pt\sigma^R+\ep \rho^{\f{\gamma-1}{2}} \big(  \displaystyle  (u^{a}+u^R)\cdot\na \sigma^{R}+u^R\cdot\na\sigma^a+R^a_{\sigma} \big), \\
    \cH_{2}&=\ep (\rho^{\f{\gamma-1}{2}}-1)\pt u_3^R+\ep \rho^{\f{\gamma-1}{2}} \big(  (u^{a}+u^R)\cdot\na u_3^{R}+u^R\cdot\na u_3^a+(R^a_{u})_3 \big)\\
    &\qquad -\ep \tilde{\mu}\,\p_z\div u^R-\ep \mu_1(\Delta_h u_3^R-\nu\p_z\div_h u_h^R).
\end{aligned}
\eeq
By using the equation \eqref{rewrite-eq} twice, we get easily that 
\begin{align*}
    \|Z_3^2\, \div u^R\|_{L_t^2L^2}&\lesssim \|(\ep\pt)Z_3\p_z\sigma^R\|_{L_t^2L^2}+\|\cH_1\|_{L_t^2\underline{H}_{co}^2}\\
    &\lesssim \|(\ep\pt)^2(\div u^R, \div_h u_h^R)\|_{L_t^2L^2}+\|(\cH_1,\cH_2)\|_{L_t^2\underline{H}_{co}^2}, \\
   \|Z_3^2\, \p_z\sigma^R\|_{L_t^2L^2}&\lesssim \|(\ep\pt)Z_3\p_zu_3^R\|_{L_t^2\underline{H}_{co}^2}+\|\cH_2\|_{L_t^2L^2}\\
    &\lesssim \|(\ep\pt)^2\p_z \sigma^R\|_{L_t^2L^2}+\|Z_3 u_h^R\|_{L_t^2H_{tan}^2}+\|(\cH_1,\cH_2)\|_{L_t^2\underline{H}_{co}^2}.
\end{align*}
The norm $\|Z_3(\div u^R,\p_z\sigma^R)\|_{L_t^2H_{tan}^1}$ can be estimated in the same manner. 
As a result, we obtain that
\begin{align*}
\|(\div u^R,\p_z\sigma^R)\|_{L_t^2H_{co}^2}\lesssim 
\|(\div u^R, \p_z\sigma^R)\|_{L_t^2H_{tan}^2}+T^{\f12}\|(\phi \, \omega_h^R, \na_h u_3^R)\|_{L_t^{\infty}H_{tan}^2}+\|(\cH_1,\cH_2)\|_{L_t^2\underline{H}_{co}^2}.
\end{align*}
In view of the estimate \eqref{cH1-2-L2} for  $\cH_1, \cH_2,$
we obtain that 
\begin{align*}
    \|(\div u^R,\na \sigma^R)\|_{L_t^2H_{co}^2}&\lesssim \|(\div u^R, \na\sigma^R)\|_{L_t^2H_{tan}^2}+\ep\|\na\div u^R\|_{L_t^2\underline{H}_{co}^2}+\eta\Lambda(\cM_T^a)\\
    &\qquad + \eta \cN_T  \Lambda(\cM_T^a,\cA_T) \big( T^{\f12}+\ep+ (\ep+\eta)\cN_T \big).
\end{align*}
Let us now recover $L_{t}^{\infty}\underline{H}_{co}^1$ norms of $(\na\sigma^R, \div u^R).$ 
We first get from
the first equation of \eqref{rewrite-eq} that   
\begin{align*}
  \ep\|  \p_z\div u^R\|_{L_t^{\infty}\underline{H}_{co}^1}\lesssim \ep\|\na\sigma^R\|_{L_t^{\infty}\underline{H}_{co}^2}+ \ep\|\p_z \cH_1\|_{L_t^{\infty}\underline{H}_{co}^1},
\end{align*}
and use it together with the second equation 
 of \eqref{rewrite-eq} to find
 \begin{align*}
\|\p_z\sigma^R\|_{L_t^{\infty}\underline{H}_{co}^1}&\lesssim \ep\|(\div u^R, \div_h u_h^R,\na\sigma^R)\|_{L_t^{\infty}\underline{H}_{co}^2}
+\ep \|u_3^R\|_{L_t^{\infty}\underline{H}_{tan}^3}+\ep\nu\|\omega_h^R\|_{L_t^{\infty}H_{co}^2}+ \|\ep\p_z\cH_1, \cH_{2,1}\|_{L_t^{\infty}\underline{H}_{co}^1} 
 \end{align*}
where we denoted $\cH_{2,1}$ as the first line in the definition of $\cH_2.$ From the second equation of \eqref{rewrite-eq} it also stems 
\begin{align*}
    \|Z_3(\ep\pt)\sigma^R\|_{L_t^{\infty}L^2}\lesssim \|u_3^R\|_{L_t^{\infty}H_{tan}^2}+\ep\|\div u^R\|_{L_t^{\infty}\underline{H}_{co}^2}+\ep \|u_3^R\|_{L_t^{\infty}\underline{H}_{tan}^3}+\ep\nu\|\phi\omega_h^R\|_{L_t^{\infty}H_{tan}^2}+\|\phi \cH_{2,1}\|_{L_t^{\infty}H_{tan}^1}
\end{align*}
while from the
first equation one obtains
\begin{align*}
\|\div u^R\|_{L_t^{\infty}\underline{H}_{co}^1}\lesssim \|\sigma^R\|_{L_t^{\infty}\underline{H}_{tan}^2}+\|Z_3(\ep\pt)\sigma^R\|_{L_t^{\infty}L^2}+\|\cH_1\|_{L_t^{\infty}\underline{H}_{co}^1}.
\end{align*}
Combining the previous three estimates and using \eqref{Linfty-H1H2},
we deduce that 
\begin{align*}
  \|(\p_z\sigma^R,\div u^R)\|_{L_t^{\infty}\underline{H}_{co}^1}&\lesssim 
  \|(\sigma^R, u_3^R)\|_{L_t^{\infty}H_{tan}^2}+\ep\|(\div u^R,\na\sigma^R)\|_{L_t^{\infty}\underline{H}_{co}^2}+\eta\Lambda(\cM_T^a)\\
  & \qquad + \eta\cN_T\Lambda(\cM_T^a,\cA_T) \big( \ep+ (\ep+\eta)\cN_T \big).
\end{align*}
Thanks to the estimates \eqref{es-nasigma-tan}, \eqref{ES-hightan}, \eqref{es-nadiv}, we derive \eqref{es-nasigma-divuR}.
\end{proof}
We state in the next lemma some estimates for $\cH_1,\cH_2$ used in the proof of the above lemma.
\begin{lem}
Let $\cH_1,\cH_2$ be defined in \eqref{def-cH12}.
Under the same assumption as in Lemma \ref{lem-hightan}, we have that for any $t\in[0,T],$ 
\begin{align}\label{cH1-2-L2}
    \|(\cH_1,\cH_2)\|_{L_t^2\underline{H}_{co}^2}\lesssim \eta \cN_T  \Lambda(\cM_T^a,\cA_T) \big( \ep+ (\ep+\eta)\cN_T \big)+\ep \|(R_{\sigma}^a, R_u^a)\|_{L_t^2H_{co}^2}+\ep\|\na\div u^R\|_{L_t^2\underline{H}_{co}^2}, 
\end{align}
\begin{align}\label{Linfty-H1H2}
    \|(\ep\p_z\cH_1, \cH_1 , \cH_{2,1})\|_{L_t^{\infty}\underline{H}_{co}^1}\lesssim \big( \ep^{\min\{\f13,\f{3-\kappa}{2}\}}+\eta\big) \eta  \Lambda(\cM_T^a,\cA_T) (\cN_T+\cN_T^2) +\ep\|(\Id, \ep\p_z)R_{\sigma}^a, (R_u^a)_3)\|_{L_t^{\infty}\underline{H}_{co}^1}.
\end{align}
\end{lem}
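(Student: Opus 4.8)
The plan is to prove the two estimates in \eqref{cH1-2-L2} and \eqref{Linfty-H1H2} by directly inspecting the definitions \eqref{def-cH12} of $\cH_1$ and $\cH_2$, term by term, and invoking the energy and $L_x^p$ norms collected in $\cE_T$, $\cA_T$ together with the bounds on the approximate solution encoded in $\cM_T^a$. The guiding principle is bookkeeping of weights: every term in $\cH_1,\cH_2$ carries a factor $\ep$ (or $\ep\nu$) which must absorb the negative powers of $\ep,\nu$ hidden in the non-uniform parts of $\cN_T$ (the last two lines of \eqref{def-cET} and the lower lines of \eqref{def-cAT}), leaving a residual power $\ep^{\min\{1/3,(3-\kappa)/2\}}$ or $\ep$ times $\eta\,\Lambda(\cM_T^a,\cA_T)(\cN_T+\cN_T^2)$ plus the $R^a$ contributions. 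I would also use freely the elementary fact that $\rho^{(\gamma-1)/2}-1=\cO(\ep(\sigma^a+\sigma^R))$, so the prefactors $\ep(\rho^{(\gamma-1)/2}-1)$ are effectively of size $\ep^2$ times a polynomial in $\sigma^a,\sigma^R$, and that $\pt\sigma^R$ and $\pt u_3^R$ can be re-expressed through the equations \eqref{CNS-remainder} (i.e.\ $\ep\pt\sigma^R=-\rho^{(\gamma-1)/2}\div u^R-\ep(u\cdot\na\sigma^R+u^R\cdot\na\sigma^a+R^a_\sigma)$, and similarly for $\ep\pt u_3^R$), so that no uncontrolled time derivative ever appears.

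For \eqref{cH1-2-L2}, I would split each of $\cH_1,\cH_2$ into: (i) the ``$\rho-1$'' prefactor terms $\ep(\rho^{(\gamma-1)/2}-1)\pt(\sigma^R,u_3^R)$, which after substituting the equation become $\ep^2(\sigma^a+\sigma^R)\cdot(\text{bounded stuff})$ and are handled by $\|\ep\na\sigma^a\|_{L_t^\infty W_{co}^{2,\infty}}$, $\|\ep Z\sigma^R\|_{L_{t,x}^\infty}$ type bounds in $\cM_T^a,\cA_T$; (ii) the convection/source terms $\ep\rho^{(\gamma-1)/2}(u\cdot\na\sigma^R+u^R\cdot\na\sigma^a+R^a_\sigma)$ and their $u_3$ analogue, estimated in $\underline H_{co}^2$ by distributing $Z^\alpha$ with $\alpha_0\le 1$ (so at most one time derivative), using $\|u^R\|_{L_t^\infty H_{co}^2}$, $\|\na\sigma^R\|_{L_t^2 H_{co}^2\cap L_t^2 L^\infty}$ from \eqref{rewrite-nasigmaR}, and the Hardy inequality for the $u_3\p_z$ terms exactly as in the proof of Lemma \ref{lem-hightan}; and (iii) for $\cH_2$ only, the viscous terms $\ep\tilde\mu\,\p_z\div u^R$ and $\ep\mu_1(\Delta_h u_3^R-\nu\p_z\div_h u_h^R)$. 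The first of these is precisely the $\ep\|\na\div u^R\|_{L_t^2\underline H_{co}^2}$ term kept on the right-hand side; the second is absorbed because $\|\ep\Delta_h u_3^R\|_{L_t^2\underline H_{co}^2}\lesssim\ep\|\na_h u^R\|_{L_t^2\underline H_{co}^3}\lesssim\ep\eta\cN_T$ and $\|\ep\nu\p_z\div_h u_h^R\|_{L_t^2\underline H_{co}^2}\lesssim\ep\|\na_h u_h^R\|_{L_t^2\underline H_{co}^3}$ likewise. The $R^a$ contributions collect into $\ep\|(R^a_\sigma,R^a_u)\|_{L_t^2H_{co}^2}$, matching the stated bound; all other pieces carry an extra $\ep$ or $\eta$ or a product $\cN_T^2$, as claimed.

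For \eqref{Linfty-H1H2} the structure is the same but now in $L_t^\infty\underline H_{co}^1$ and with the extra normal derivative $\ep\p_z\cH_1$ to control, which forces use of the equation once more: $\ep\p_z\cH_1$ involves $\ep^2\p_z\pt\sigma^R$, rewritten via \eqref{CNS-remainder} as $\ep\p_z(\rho^{(\gamma-1)/2}\div u^R)+\cdots$, and then bounded by $\ep\|\na\div u^R\|_{L_t^\infty\underline H_{co}^1}$ together with $\ep\|\div u^R\|_{L_t^\infty\underline H_{co}^2}$-type quantities available from \eqref{es-nadiv}. Here the residual exponent $\ep^{\min\{1/3,(3-\kappa)/2\}}$ appears: it comes from pairing the $\ep$ (or $\ep^2$) weights against the worst non-uniform $L_x^\infty$ bounds, namely $\|\na u_h^R\|_{L_t^\infty(L_x^\infty\cap W_{co}^{1,3})}\lesssim(\ep^{-(3+4\delta)/2}+(\ep\nu)^{-(3/4+\delta)})\eta\cN_T$ from \eqref{pzuh-dinfty} and the $\min\{(\ep^2\nu)^{1/6},(\ep\nu)^{1/4}\}$-weighted $u_h^R$ bound, which after multiplication by $\ep$ (resp.\ $\ep^2$) give exactly $\ep^{1/3}$ (resp.\ $\ep^{(3-\kappa)/2}$) up to $\eta$. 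The term $\cH_{2,1}$ (the first line of $\cH_2$) contains no viscous contribution and is estimated just like $\cH_1$. The main obstacle — and the only place genuine care is needed — is to verify that in \emph{every} product the sum of the positive $\ep$-weights strictly dominates the negative powers of $\ep,\nu$ for all $\kappa\in(0,3)$, in particular for the $u_3^R\p_z$ terms where one must use the identity $u_3^R\p_z=\tfrac{u_3^R}{\phi}Z_3$, Hardy's inequality $\|u_3^R/\phi\|\lesssim\|\p_z u_3^R\|$, and the rewriting $\p_z u_3^R=\div u^R-\div_h u_h^R$ to convert a normal derivative of $u_3^R$ into compressible quantities, which is where the bounds \eqref{ineq-319}, \eqref{ineq-320} (with ``tan'' replaced by ``co'') and \eqref{rewrite-pzu3R} enter; once this weight-counting is done, Young's inequality finishes the proof.
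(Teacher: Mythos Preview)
Your approach is correct and matches the paper's own treatment: the paper in fact omits the proof entirely, stating only that it ``follows from direct estimation of each term by using the norms involved in $\cN_T$, since it is very similar to what has been done before.'' Your term-by-term plan—rewriting $\ep\pt\sigma^R,\ep\pt u_3^R$ via \eqref{CNS-remainder}, handling $u_3\p_z$ through the $u_3/\phi\cdot Z_3$ identity and Hardy, isolating the $\ep\tilde\mu\,\p_z\div u^R$ contribution, and tracking weights against the non-uniform parts of $\cE_T,\cA_T$—is exactly the bookkeeping the paper has been doing throughout Section~4 (e.g.\ in the estimates of $\cJ_{\sigma,j}^\alpha,\cJ_{u,k}^\alpha$ and $\cC_j^\alpha$), so there is nothing new here.

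One small caution: your attribution of the residual exponent $\ep^{\min\{1/3,(3-\kappa)/2\}}$ to \eqref{pzuh-dinfty} alone is not quite right; that estimate after multiplication by $\ep^2$ gives $\ep^{\min\{1/4,(3-\kappa)/4\}}$, not the exponent claimed. The $\ep^{1/3}$ in fact arises from pairing $\ep$ with $\|u_h^R\|_{L_t^\infty(L_x^\infty\cap W_{co}^{1,3})}$ (weighted by $\min\{(\ep^2\nu)^{1/6},(\ep\nu)^{1/4}\}$ in $\cA_T$), in the same spirit as the $\ep^{2/3}$ appearing in the $\cJ_{u,1}^\alpha$ estimate; and the $(3-\kappa)/2$ comes from the $(\ep^3/\nu)^{1/2}$-type terms as in the treatment of $G_\sigma$ in Lemma~\ref{lem-pzsig-tan}. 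This does not affect the validity of your plan, only the precise provenance of the constants. Also, when you invoke $\ep\|\na\div u^R\|_{L_t^\infty\underline H_{co}^1}$ for $\ep\p_z\cH_1$, you do not need \eqref{es-nadiv}: this quantity is already part of $\cE_T$ and hence bounded by $\eta\cN_T$ directly, which keeps the argument self-contained.
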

The proof of this lemma follows from direct estimation of each term by using the norms involved in $\cN_T,$ since it is very similar to what has been done before, we omit the proof.

\subsection{Second normal derivative of $\sigma^R$}
In this short subsection, we state an estimate for the second normal derivative of $\sigma^R,$ which is useful to control $\|\p_z\sigma^R\|_{L_t^{2}L^p}$ with $p>2.$
\begin{lem}
Let $\tau:=\min\{(\nu^{5}/\ep^7)^{\f16}, (\nu^2/\ep^3)^{\f12}, 1 \}.$
Under the same assumption as in Lemma \ref{lem-hightan}, we have that for any $(\ep,\nu)\in A_0,$ any  $t\in[0,T],$ 
\begin{align*}
 &
 \tau^2\big(\|\ep\p_z^2\sigma^R \|_{L_t^{\infty}H_{co}^1}^2+\|\p_z^2\sigma^R\|_{L_t^2H_{co}^1}^2\big)\\
 &\lesssim \cN^2(0)+\eta^2 \Lambda(\cM_T^a)
 + (T+\ep)^{d_1(\kappa)}\eta^2 \Lambda(\cM_T^a,\cA_T)(\cN_T^2+\cN_T^4),
\end{align*}
for some $0<d_1(\kappa)\lesssim \ep^{\min\{\f{1}{40},\f{\kappa}{3},\f{3-\kappa}{4}\}}.$
\end{lem}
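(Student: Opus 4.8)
The goal is an estimate for $\p_z^2\sigma^R$ in $H_{co}^1$ with a prefactor $\tau^2=\min\{(\nu^5/\ep^7)^{1/3},(\nu^2/\ep^3),1\}$ (squared), so one should obtain, roughly, a bound for $\tau\p_z^2\sigma^R$ that is uniform up to $\eta$. The natural starting point is the same elliptic-type equation for $\p_z\sigma^R$ used in Lemma \ref{lem-pzsig-tan}, namely \eqref{eq-pzsigma}:
\[
\ep^2\tilde\mu(\pt+u\cdot\na)\p_z\sigma^R+\rho^\gamma\p_z\sigma^R=-\rho^{\f{\gamma+1}{2}}\ep\pt u_3^R+G_u+G_\sigma .
\]
The plan is to apply $\p_z$ to this equation (together with $Z^\alpha$ with $\alpha_3\le 1$, $|\alpha|\le 1$ to get the full $H_{co}^1$ norm of $\p_z^2\sigma^R$), multiply by $\tau^2\,Z^\alpha\p_z^2\sigma^R$ and integrate over $Q_t$, exactly as in Lemma \ref{lem-pzsig-tan}. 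Differentiating in $z$ produces the damping term $\rho^\gamma|\p_z^2\sigma^R|^2$ on the left (plus commutators $[\p_z,\rho^\gamma]\p_z\sigma^R=\gamma\rho^{\gamma-1}\p_z\rho\,\p_z\sigma^R$, controlled by $\ep\|\na\sigma^R\|_{L^\infty}\|\p_z\sigma^R\|$), and on the right the new dangerous term $\tau\ep\pt\p_z u_3^R$ coming from $\p_z(\rho^{\f{\gamma+1}{2}}\ep\pt u_3^R)$, plus $\p_z G_u$, $\p_z G_\sigma$ and transport commutators. As before, one integrates the $\ep\pt\p_z u_3^R$ term by parts in $t$ and then in $z$; the boundary-in-time terms are absorbed by $\cN_0^2$ and $\f12\|\tau\ep\p_z^2\sigma^R(t)\|^2$, and after the space integration by parts one is left with $\p_z(\rho^{\f{\gamma+1}{2}}Z^\alpha\p_z u_3^R)\cdot\tau^2\ep\pt Z^\alpha\p_z\sigma^R$, where $\ep\pt\p_z\sigma^R$ is rewritten via the equation in terms of $\div u^R$ and lower-order stuff. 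The term $\p_z^2 u_3^R$ arising here is exactly why the weight $\tau$ is needed: using $\p_z^2 u_3^R=\p_z\div u^R-\p_z\div_h u_h^R$ and the various non-uniform controls on $\p_z\div u^R$ (from \eqref{es-nadiv}, \eqref{es-nasigma-divuR}) and on $\na_\nu\omega_h^R$, one checks $\tau\|\ep\p_z^2 u_3^R\|$ and $\tau\|\p_z^2\div u^R\|$ type quantities are bounded by $\eta\cN_T$ for the stated choice of $\tau$; this is the bookkeeping that fixes the exponents $(\nu^5/\ep^7)^{1/6}$ etc.

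\textbf{Key steps, in order.} First, write the equation for $Z^\alpha\p_z^2\sigma^R$ ($\alpha_3\le 1$, $|\alpha|\le 1$) obtained from $\p_z\eqref{eq-pzsigma}$, collecting the commutators $\cF_1^\alpha=\ep^2[Z^\alpha\p_z,u]\cdot\na\p_z\sigma^R$, $\cF_2^\alpha=[Z^\alpha\p_z,\rho^\gamma]\p_z\sigma^R$, $\cF_3^\alpha=[Z^\alpha\p_z,\rho^{\f{\gamma+1}{2}}]\ep\pt u_3^R$, and the new forcing $\p_z G_u,\p_z G_\sigma$. Second, perform the energy estimate with weight $\tau^2$ and test function $Z^\alpha\p_z^2\sigma^R$, producing an inequality of the shape of \eqref{EI-sigma} with an extra factor $\tau^2$ everywhere and one extra $\p_z$ on each quantity. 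Third, handle the time-integrated term $\iint\rho^{\f{\gamma+1}{2}}\ep\pt Z^\alpha\p_z u_3^R\, \tau^2 Z^\alpha\p_z^2\sigma^R$ by the double integration by parts as in \eqref{es-integral}, substituting $\ep\pt\sigma^R=-\rho^{\f{\gamma-1}{2}}\ep\div u^R+\dots$ from $\eqref{CNS-remainder}_1$. Fourth, estimate $\|(\cF_1^\alpha,\cF_2^\alpha,\cF_3^\alpha)\|_{L_t^2L^2}$ and $\|\p_z(G_u,G_\sigma)\|_{L_t^2H_{co}^1}$ term by term, using the norms in $\cE_T,\cA_T$ together with \eqref{es-nasigma-tan}, \eqref{pzdiv-tangential}, \eqref{es-vor-weight}, \eqref{es-omegah-weight}, \eqref{es-nadiv}, \eqref{es-nasigma-divuR}; each occurrence of a genuinely non-uniform quantity (second normal derivatives, $\ep\nu\p_z^2 u^R$, $\p_z\div u^R$, $\na_\nu\omega_h^R$) must be multiplied by a suitable power of $\tau$ so that the product is $\lesssim(\ep^{c}+\nu^{c})\eta\cN_T$ for some $c>0$ and $\kappa\in(0,3)$; this is where the explicit definition $\tau=\min\{(\nu^5/\ep^7)^{1/6},(\nu^2/\ep^3)^{1/2},1\}$ is forced. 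Finally, apply Young's inequality to absorb $\f12\|\tau\ep\p_z^2\sigma^R(t)\|^2$ and $\f14\|\tau\p_z^2\sigma^R\|_{L_t^2L^2}^2$ into the left side and collect the remainder as $\cN_0^2+\eta^2\Lambda(\cM_T^a)+(T+\ep)^{d_1(\kappa)}\eta^2\Lambda(\cM_T^a,\cA_T)(\cN_T^2+\cN_T^4)$, reading off $d_1(\kappa)\gtrsim\min\{1/40,\kappa/3,(3-\kappa)/4\}$ from the worst power gained.

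\textbf{Main obstacle.} The crux is the forcing term $\p_z(\rho^{\f{\gamma+1}{2}}\ep\pt u_3^R)$, which carries $\ep\pt\p_z u_3^R$ and, after integration by parts, $\ep\p_z^2 u_3^R=\ep\p_z\div u^R-\ep\p_z\div_h u_h^R$; neither $\ep\pt\p_z u_3^R$ in $L_t^2L^2$ nor $\ep\p_z\div u^R$ in $L_t^2\underline{H}_{co}^1$ is bounded uniformly, so one must spend precisely the right amount of the weight $\tau$ on each, balanced against the gain $\ep^2\tilde\mu$ in front of the time derivative and the $\ep^{-1}$ in $\eqref{CNS-remainder}$. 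Getting the bookkeeping of the exponents so that everything closes for all $\kappa\in(0,3)$ — in particular checking that $\tau\cdot(\text{worst non-uniform factor})\lesssim\ep^{c}\eta$ with $c>0$ — is the delicate part; the energy identity itself is entirely parallel to Lemma \ref{lem-pzsig-tan} and presents no new structural difficulty.
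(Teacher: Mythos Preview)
Your overall skeleton is right and matches the paper: apply $Z^\alpha\p_z$ (with $|\alpha|\le 1$) to \eqref{eq-pzsigma}, test against $\tau^2 Z^\alpha\p_z^2\sigma^R$, and use the damping term $\rho^\gamma|\p_z^2\sigma^R|^2$ together with Young's inequality. The commutator terms $\cF_j^\alpha$ are the natural ones.

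Where your proposal diverges from the paper is in the identification of the ``main obstacle''. You single out $\p_z(\rho^{\f{\gamma+1}{2}}\ep\pt u_3^R)$ and plan a double integration by parts in time and space as in \eqref{es-integral}. This is unnecessary: the paper does \emph{not} integrate by parts in time here. The point is that $\ep\pt\p_z u_3^R = Z_0(\div u^R-\div_h u_h^R)$, and both pieces are already uniformly controlled in $L_t^2H_{co}^1$ by $\eta\cE_T$ (from $\|\div u^R\|_{L_t^2H_{co}^2}$ in \eqref{def-cET} and $\|u_h^R\|_{L_t^2\underline{H}_{co}^3}$ in \eqref{energynorm-uh}). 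So your claim that ``$\ep\pt\p_z u_3^R$ in $L_t^2L^2$ is not bounded uniformly'' is incorrect, and this term does not dictate $\tau$.

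The terms that actually force the weight $\tau$ live in $\p_z G_\sigma$, and the paper says so explicitly: they are $\tau\ep^2\|\p_z^2 u^a\cdot\na\sigma^R\|_{L_t^2H_{co}^1}$ and $\tau\ep^2\|\p_z^2 u^R\cdot\na\sigma^a\|_{L_t^2H_{co}^1}$, coming from differentiating the transport commutator $\p_z u\cdot\na\sigma^R$ in \eqref{defGusigma}. For the first, the boundary-layer structure of the approximate solution gives $\|\p_z^2(u_h^a,\p_z u_3^a)\|_{L_t^\infty W_{co}^{1,p}}\lesssim(\ep\nu)^{-1+\f{1}{2p}}\Lambda(\cM_T^a)$, and balancing $\tau\ep^2(\ep\nu)^{-5/6}\lesssim 1$ (at $p=3$) produces the factor $(\nu^5/\ep^7)^{1/6}$. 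For the second, one writes $\p_z^2 u^R$ in terms of $\p_z\omega_h^R$ plus compressible pieces; since $\p_z\omega_h^R$ is only controlled with the weight $\beta_1=\min\{\ep^{2/3}\nu^{5/6},\ep^{1/2}\nu\}$ (from \eqref{es-omegah-weight}), one needs $\tau\ep^2\beta_1^{-1}\lesssim 1$, which yields $(\nu^2/\ep^3)^{1/2}$.

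In short: your plan would not fail, but the elaborate time integration by parts you propose for the $\ep\pt u_3^R$ term is superfluous, and the genuine bookkeeping that fixes $\tau$ happens inside $\p_z G_\sigma$, not where you placed it. Redirect your term-by-term analysis of $\|\p_z G_\sigma\|_{L_t^2H_{co}^1}$ to isolate the two terms above; that is where the exponents in the definition of $\tau$ come from.
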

\begin{proof}
The proof is similar to that of Lemma \ref{lem-pzsig-tan}, we thus only sketch the proof.
We apply $Z^{\alpha}\p_z $ $(|\alpha|=1 \text{ or } 0) $ on the equation \eqref{eq-pzsigma} and taking the inner product of the resultant equation with $\tau^2 Z^{\alpha}\p_z^2\sigma^R ,$ we find after  applying Young's inequality that
\begin{align*}
(\tau \ep)^2 \|\p_z^2\sigma^R\|_{L_t^{\infty}H_{co}^1}^2+\tau^2\| \p_z^2\sigma^R\|_{L_t^2H_{co}^1}^2& \lesssim    (\tau \ep)^2 \|\p_z^2\sigma^R(0)\|_{H_{co}^1}^2+\tau^2 \|(\rho^{\f{\gamma+1}{2}}\ep\pt u_3^R,\p_z G_u, \p_z G_{\sigma})\|_{L_t^2H_{co}^1}^2 \\
&+\ep\|\div u\|_{L_t^{2}L^{\infty}}\|\tau \p_z^2\sigma^R\|_{L_t^2H_{co}^1}\|\tau\ep\p_z^2\sigma^R\|_{L_t^{\infty}H_{co}^1} \\
&+\tau^2 \sum_{|\alpha|=0,1}\big(\|[Z^{\alpha}\p_z,\rho^{\gamma}]\p_z\sigma^R\|_{L_t^2L^2}^2\|\ep^2[Z^{\alpha}\p_z, u\cdot\na]\p_z\sigma^R \|_{L_t^2L^2}^2\big).
\end{align*}
It is not hard to check that the second line can be bounded by $\ep^{d_1(\kappa)} \eta^2 \Lambda(\cM_T^a)(\cN_T^2+\cN_T^4).$ For instance,  it holds that
\begin{align*}
   & \tau \ep^2 \|[Z^{\alpha}\p_z, u\cdot\na]\p_z\sigma^R \|_{L_t^2L^2}\lesssim  \|\ep^2\p_z u 
    \|_{L_{t}^{\infty}(L^{\infty}\cap W_{co}^{1,3})}\|\tau\na\p_z\sigma^R\|_{L_t^2H_{co}^1}\\
    &\qquad\qquad\qquad\qquad\qquad+\|\tau\ep \na\p_z\sigma^R\|_{L_t^{\infty}H_{co}^1}\|\ep \p_z u_3\|_{L_t^2W_{co}^{1,\infty}}\lesssim \ep^{
    \f13} \eta^2 \Lambda(\cM_T^a)(\cN_T^2+\cN_T^4).
\end{align*}
Concerning the estimates of $\|\p_z(G_u, G_{\sigma})\|_{L_t^2H_{co}^1},$ let us detail two terms appearing in $G_{\sigma}$ namely $\tau\ep^2\| \p_z^2 u^a \cdot \na\sigma^R\|_{L_t^2H_{co}^1}, \tau\ep^2\|\p_z^2 u^R \cdot \na\sigma^a\|_{L_t^2H_{co}^1}$ whose estimates explain why we have to put some weights $\tau$ in the estimate of $\p_z^2\sigma^R.$ As we have that $\|
\p_z^2 (u_h^a, \p_z u_3^a)\|_{L_t^{\infty}W_{co}^{1,p}}\lesssim (\ep\nu)^{-1+\f{1}{2p}}\Lambda(\cM_T^a),$ 
it holds that 
\begin{align*}
    \tau\ep^2\| \p_z^2 u^a \cdot \na\sigma^R\|_{L_t^2H_{co}^1}&\lesssim \|\tau\ep^2 \p_z^2 (u_h^a,\p_z u_3^a)\|_{L_t^{\infty}W_{co}^{1,3}}\|\na\sigma^R\|_{L_t^2H_{co}^2}+\|\tau\ep^2\p_z^2(u_h^a,\p_z u_3^a)\|_{L_t^{\infty}H_{co}^1}\|Z\sigma^R\|_{L_t^2L^{\infty}}\\
    &\lesssim \eta^2\Lambda(\cM_T^a)+\|\na\sigma^R\|_{L_t^2H_{co}^2}^2+\ep^{\min\{\f{1+\kappa}{12},\f16\}}\eta\Lambda(\cM_T^a)\cN_T.
\end{align*}
Moreover, we readily control the second one by 
\begin{align*}
    \tau\ep^2\| \p_z^2 u^R & \cdot \na\sigma^a\|_{L_t^2H_{co}^1}\lesssim \|\tau\ep^2 \p_z^2 u^R\|_{L_t^{2}H_{co}^{1}}\|\na\sigma^a\|_{L_t^{\infty}W_{co}^{1,\infty}}\\
    &\lesssim \tau \ep^2 \big(
\beta_1^{-1}\|\beta_1\p_z\omega_h^R\|_{L_t^2H_{co}^1}+
    \|\p_z u_3^R\|_{L_t^2H_{co}^1}+\|\p_z(\div u^R, \div_h u_h^R)\|_{L_t^2H_{co}^1}\big)
    \Lambda(\cM_T^a) \\
    &\lesssim  \|\beta_1\p_z\omega_h^R\|_{L_t^2H_{co}^1}\Lambda(\cM_T^a)+\tau\ep^{\f12}\eta \Lambda(\cM_T^a)\cN_T. 
\end{align*}
where $\beta_1=\min\{\ep^{\f23}\nu^{\f56}, \ep^{\f12}\nu\}.$

The other terms appearing in $\p_z(G_u, G_{\sigma})$ indeed enjoy better property, we omit the details. The desired estimates follows from above estimates and \eqref{es-omegah-weight},\eqref{es-nasigma-divuR}.
\end{proof}

\section{Proof of Theorem \ref{thm1}}\label{sec-proofofthm1}
In this section, we will give the proof of Theorem \ref{thm1} based on  Proposition \ref{prop-unifromes}. Before that, we will first 
finish the proof of Proposition \ref{prop-unifromes} by providing the estimates of $L_x^{p}\, (p=3,6,+\infty)$ type norms listed in the definition of $\cA_T$ in \eqref{def-cAT}.
\begin{prop}\label{prop:est-A-t}
    Under the same assumption as in Proposition \ref{prop-unifromes}, we have that for any $(\ep,\nu=\ep^{\kappa})\in A_0,$ 
    any $t\in[0,T],$ there exists $d_2(\kappa)>0$ such that 
    \begin{align*}
    \cA_T\lesssim \cN_0+\eta\Lambda(\cM_T^a) (T+\ep)^{d_2(\kappa)}(\cN_T +\cN_T^2). 
    \end{align*}
\end{prop}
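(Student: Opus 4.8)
The goal is to bound every $L_x^p$-type quantity collected in $\cA_T$ (see \eqref{def-cAT}) in terms of $\cN_0$, $\cM_T^a$, and a small power of $(T+\ep)$ times a polynomial in $\cN_T$. The overall strategy is that, having already controlled all the conormal energy norms in $\cE_T$ by Proposition \ref{prop:est-E-T}, most of the $L_x^\infty$, $L_x^6$ and $L_x^3$ norms can be recovered from those energy norms by anisotropic Sobolev/Gagliardo--Nirenberg embeddings adapted to the strip $\Omega = \mathbb{T}^2_{a_1,a_2}\times[0,a_3]$, together with the structural equations \eqref{rewrite-eq}. The only terms that genuinely require extra work are those involving normal derivatives of the velocity (the vorticity-type and $\p_z u^R$ quantities) and those involving $\na\sigma^R$ in $L_t^2 L_x^\infty$; for these I would go through the equation once more or use a trace/interpolation argument. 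The claimed smallness factor $(T+\ep)^{d_2(\kappa)}$ comes either from a factor $T^{1/2}$ produced by Hölder in time, or from an explicit power of $\ep$ (equivalently $\nu=\ep^\kappa$) hidden in the weights appearing in $\cA_T$, exactly as already exploited repeatedly in Section 3.

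\textbf{Step 1: the tangential-type norms.} First I would treat all quantities of the form $\|g\|_{L_t^2 W_{co}^{k,q}}$ with $g$ among $\div u^R$, $\na\sigma^R$, $\sigma^R$, $u_3^R$, $u_h^R$ (with their prescribed weights). For these I use the one-dimensional anisotropic embedding $\|f\|_{L_x^\infty(\Omega)}\lesssim \|f\|_{H_{co}^1}^{1/2}\,\|\p_z f\|_{H_{co}^1}^{1/2} + \|f\|_{H_{co}^2}$ and the $L_x^6$ and $L_x^3$ analogues (Gagliardo--Nirenberg in the normal variable, Sobolev in the tangential torus), which reduce everything to the conormal norms already bounded in $\cE_T$: e.g. $\|\div u^R\|_{L_t^2 L_x^\infty\cap L_t^2 W_{co}^{1,6}}$ follows from \eqref{rewrite-nasigmaR} and \eqref{es-nasigma-divuR}; $\|(\sigma^R,u_3^R)\|_{L_t^\infty L_x^\infty\cap L_t^2 W_{co}^{1,\infty}\cap L_t^2 W_{co}^{2,6}}$ follows from the $\underline{H}_{tan}^3$ and $H_{co}^2$ control of $(\sigma^R,u_3^R)$ plus the recovered $\|\p_z\sigma^R\|_{L_t^2 H_{co}^2}$ and $\|\p_z u_3^R\|$ estimates \eqref{rewrite-pzu3R}; and the weighted oscillatory pieces $\|(\ep^{1/4}Z_{tan},\ep^{1/2+\delta}Z_3)\div u^R\|_{L_t^2 L_x^\infty}$, $\ep\|(Z\sigma^R,\div u^R)\|_{L_t^\infty L_x^\infty}$ come out of the $\ep$- and $\ep^2$-weighted conormal bounds in \eqref{def-cET} by the same embedding, the prefactor $\ep$ (or $\ep^{1/4}$) supplying the required power $\ep^{d_2}$. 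The $\na\sigma^R$ in $L_t^2 L_x^\infty$ term with its weight $\min\{1,(\nu^3/\ep^7)^{1/4}\}$ is handled by interpolating between $\|\na\sigma^R\|_{L_t^2 H_{co}^2}$ and $\|\p_z^2\sigma^R\|_{L_t^2 H_{co}^1}$ (using the last lemma of Section 3), the weight $\tau$ there being precisely designed so that $\min\{1,(\nu^3/\ep^7)^{1/4}\}$ times the interpolated bound closes.

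\textbf{Step 2: the velocity normal-derivative norms and the main obstacle.} The remaining block consists of $\nu^{1/4}\big(\|(Z_{tan},\ldots\p_z)u^R\|_{L_t^2 W_{tan}^{1,\infty}} + \|\ep\nu\p_z^2 u^R\|_{L_t^2 L_x^\infty} + \|Z_3 u_h^R\|_{L_t^2(W_{tan}^{1,\infty}\cap W_{tan}^{2,3})}\big)$ and the last line of \eqref{def-cAT} with the weights $\min\{(\ep^2\nu)^{1/6},(\ep\nu)^{1/4}\}$, $(\ep\nu)^{1/2}$ and $(\ep^{(3+4\delta)/2}+(\ep\nu)^{3/4+\delta})$ in front of $u_h^R$, $Zu_h^R$, $\na u_h^R$. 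The plan here is: for the $\p_z$-weighted $u_h^R$ and $\p_z^2 u^R$ pieces, use the identity $\p_z u_h^R = (\omega_h^R)^\perp + \na_h u_3^R$ to convert normal derivatives of $u_h^R$ into vorticity plus tangential derivatives of $u_3^R$, then apply the weighted vorticity estimates \eqref{es-vor-weight}, \eqref{es-omegah-weight} (which carry exactly the weights $\phi$, $\min\{(\ep^2\nu)^{1/3},(\ep\nu)^{1/2}\}$, $\ep^2\nu$) together with the anisotropic embedding; the $\nu^{1/4}$ or $(\ep\nu)^{1/4}$ prefactor absorbs the loss from the embedding and leaves a genuine power $\ep^{d_2(\kappa)}$ since $\nu=\ep^\kappa$ with $\kappa\in(0,3)$ forces $(\ep^2\nu)^{1/6}$ etc.\ to dominate the Sobolev loss for the admissible range of $\kappa$. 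The main obstacle is the term $\|Z_3 u_h^R\|_{L_t^2 W_{tan}^{2,3}}$ (and its $W^{1,\infty}$ companion): it involves two tangential and one weighted-normal derivative of $u_h^R$ at the $L^3$/$L^\infty$ level, which is exactly at the edge of what the energy norms allow, so one must carefully split $Z_3 u_h^R = \phi(\omega_h^R)^\perp + \phi\na_h u_3^R$, bound $\phi(\omega_h^R)^\perp$ via \eqref{es-vor-weight} together with the Gagliardo--Nirenberg inequality $\|f\|_{W_{tan}^{2,3}}\lesssim \|f\|_{H_{tan}^2}^{2/3}\|f\|_{W_{tan}^{2,\infty}}^{1/3}$ and then interpolate $\|\phi\omega_h^R\|_{W_{co}^{2,\infty}}$ between $\|\na(\phi\omega_h^R)\|_{H_{co}^2}$ and $\|\phi\omega_h^R\|_{H_{co}^2}$ exactly as in \eqref{vor-1infty}, tracking the $\nu$-powers so that $\nu^{1/4}$ times the result is $\lesssim (\ep^{1/4}+\nu^{1/2})\cN_T$. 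The $u_h^R$-without-derivative pieces in the last line are the easiest: $\|u_h^R\|_{L_t^\infty(L_x^\infty\cap W_{co}^{1,3})}\lesssim \|u_h^R\|_{L_t^\infty H_{co}^2}$ by Sobolev embedding on $\Omega$, which is $\lesssim\eta\cE_T$ by \eqref{energynorm-uh}, and the weights $\min\{(\ep^2\nu)^{1/6},(\ep\nu)^{1/4}\}$ etc.\ immediately give the factor $\ep^{d_2(\kappa)}$.

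\textbf{Step 3: assembling.} Collecting the bounds from Steps 1--2, each term of $\cA_T$ is controlled by $\cN_0 + \eta\,\Lambda(\cM_T^a)\,(T+\ep)^{d_2(\kappa)}(\cN_T+\cN_T^2)$ for a suitable $d_2(\kappa)>0$ depending only on $\kappa$ (one takes $d_2$ to be the minimum of all the individual exponents $\tfrac14$, $\tfrac1{40}$, $\tfrac\kappa3$, $\tfrac{3-\kappa}4$, $\tfrac{3-\kappa}2$, $\tfrac{1+\kappa}{12}$, etc.\ that appear), completing the proof of Proposition \ref{prop:est-A-t}. Feeding this back into Proposition \ref{prop:est-E-T} and recalling $\cN_T=\cE_T+\cA_T$ closes the bootstrap and yields Proposition \ref{prop-unifromes}, from which Theorem \ref{thm1} (and hence Theorem \ref{thm-remainder}) follows by a standard continuation argument once $T_0$ and $\ep_0$ are chosen small enough that $(T_0+\ep_0)^{d_2(\kappa)}\Lambda(\cM_{T_0}^a,\cA_{T_0})$ is absorbed.
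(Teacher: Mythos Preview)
Your overall strategy---reducing most of $\cA_T$ to $\cE_T$ via anisotropic Sobolev embeddings---matches the paper's, and Step~2 is essentially what the paper does. But two of the quantities in Step~1 cannot be closed the way you describe, and these are precisely the ones the paper singles out for special treatment.

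\textbf{The gap for $\|\p_z\sigma^R\|_{L_t^2L_x^\infty}$.} Your claim that interpolating between $\|\na\sigma^R\|_{L_t^2H_{co}^2}$ and $\tau^2\|\na^2\sigma^R\|_{L_t^2H_{co}^1}$ closes against the weight $\min\{1,(\nu^3/\ep^7)^{1/4}\}$ is false. The interpolation yields at best $\|\p_z\sigma^R\|_{L_t^2L_x^\infty}\lesssim \tau^{-1}\eta\cE_T$, so you need $\min\{1,(\nu^3/\ep^7)^{1/4}\}\cdot\tau^{-1}\lesssim 1$. Take $\kappa=2$ (so $\nu=\ep^2$): then $(\nu^3/\ep^7)^{1/4}=\ep^{-1/4}>1$, hence the weight equals $1$, while $\tau=\min\{1,\ep^{1/4},\ep^{1/4}\}=\ep^{1/4}$, giving $\tau^{-1}=\ep^{-1/4}$. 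The product blows up. (There is also a secondary issue: the $L_x^\infty$ embedding in the tangential variables needs $H_{tan}^{1+\delta}$ regularity on $\p_z^2\sigma^R$, but $\cE_T$ only supplies $H_{co}^1$.) The weight $\tau$ was designed to close the $W_{co}^{1,3}$ norm of $\na\sigma^R$, not the $L_x^\infty$ norm. The paper instead exploits the \emph{damped transport} structure of equation \eqref{eq-pzsigma}: writing $\ep^2\tilde\mu(\pt+u\cdot\na)\p_z\sigma^R+\rho^\gamma\p_z\sigma^R=\text{source}$, passing to Lagrangian coordinates $X_t$, and using the pointwise Duhamel formula followed by a convolution inequality in time (the kernel $e^{-ct/\ep^2}$ has $L_t^1$-norm $\sim\ep^2$, producing the needed gain). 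This avoids any derivative loss and is what makes the $L_x^\infty$ bound uniform.

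\textbf{The circularity for $\|\div u^R\|_{L_t^2L_x^\infty\cap L_t^2W_{co}^{1,6}}$.} Your reference to \eqref{rewrite-nasigmaR} is circular: that inequality is a \emph{consequence} of the definition of $\cA_T$, not an independent bound. A direct embedding from $\cE_T$ would require $\|\na\div u^R\|_{L_t^2H_{co}^1}$, which in $\cE_T$ carries weight $\ep$ and therefore yields only $\ep^{-1/2}\eta\cE_T$. The paper's route is to use the first equation of \eqref{CNS-remainder} to write $\rho^{(\gamma-1)/2}\div u^R=-\ep\pt\sigma^R-\ep(\ldots)$, so that $\|\div u^R\|_Y\lesssim\|\ep\pt\sigma^R\|_Y+\ep(\text{lower order})$, and then bound $\|\ep\pt\sigma^R\|_Y$ by $\|\na\sigma^R\|_{L_t^2H_{co}^2}$ via Sobolev embedding (this works because $\ep\pt\sigma^R$ has one more tangential derivative available in $\cE_T$ than $\div u^R$ does). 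A smaller slip: $H_{co}^2$ does \emph{not} embed into $L_x^\infty$ on the strip, so your last-line argument for $\|u_h^R\|_{L_t^\infty L_x^\infty}$ also needs the weighted $\|\na u^R\|_{L_t^\infty H_{co}^1}$ norm, as in \eqref{pzuh-dinfty}.
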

\begin{proof}
Indeed, most of quantities included in $\cA_T$
are easily estimated by $\mathcal{E}_{T}$ and the Sobolev embedding 
\begin{align*}
    \|f(t)\|_{L_x^{\infty}}\lesssim \|\p_z f (t)\|_{H_{tan}^1}^{\f12}\|f(t)\|_{H_{tan}^{1+\delta}}^{\f12}, \quad \|f(t)\|_{L_x^{3}}\lesssim \|\p_z f (t)\|_{L_x^2}^{\f12}\|f(t)\|_{L_x^2}^{\f12}, \,\quad  \|f(t)\|_{L_x^6}\lesssim \|\na f(t)\|_{L_x^2},
\end{align*}
for any $\delta>0.$ 
%
For instance, it holds that 
\begin{align*}
  \|Z_3 u_h^R\|_{L_t^2{W}_{tan}^{2,3}}&\lesssim \|\na  Z_3 u_h^R\|_{L_t^2H_{tan}^2}^{\f12}\|Z_3 u_h^R\|_{L_t^2H_{tan}^2}^{\f12}\\
 & \lesssim \big(\|Z_3 \,\omega_h^R\|_{L_t^2H_{tan}^2} +\|\nabla u^R\|_{L_t^2\underline{H}_{tan}^3}\big)^{\f12} \big(\|\phi \, \omega_h^R\|_{L_t^2H_{tan}^2}+\|u_3^R\|_{L_t^2\underline{H}_{tan}^3}\big)^{\f12},\\ 
 \nu^{\f14}\|u^R\|_{L_t^2W_{tan}^{2,\infty}}& \lesssim \|\nu^{\f12}\p_z u^R\|_{L_t^2\underline{H}_{tan}^3}^{\f12}\|\na_h u^R\|_{L_t^2\underline{H}_{tan}^3}^{\f12},\\
(\ep\nu)^{\f12}\|u_h\|_{L_t^2W_{co}^{1,\infty}}&\lesssim \|\ep\nu(\omega_h^R,\na_h u_3^R)\|_{L_t^{2}H_{co}^2}^{\f12}\|u_h^R\|_{L_t^{2}\underline{H}_{co}^3}^{\f12},\\
\tau^{\f12}\|\na\sigma^R\|_{L_t^2W_{co}^{1,3}}&\lesssim \|\tau \na^2 \sigma^R\|_{L_t^2H_{co}^1}^{\f12}\|\na\sigma^R\|_{L_t^2H_{co}^1}^{\f12}, 
\end{align*}
and 
\beq\label{pzuRLinfty}
(\beta_1\nu)^{\f12}\|\p_z u^R\|_{L_t^2L_x^{\infty}}\lesssim \|\beta_1\nu^{\f12}\p_z^2 u^R\|_{L_t^2H_{co}^1}^{\f12}\|\nu^{\f12}\p_z u^R\|_{L_t^2H_{co}^2}^{\f12} \lesssim \eta\cE_T,
\eeq
where $\beta_1=\min\{\ep^{\f23}\nu^{\f13}, (\ep\nu)^{\f12}\}, \tau=\min\{(\nu^{5}/\ep^7)^{\f16}, 1 \}.$ Moreover, 
\begin{align*} 
\|\p_z u_h^R\|_{L_{t,x}^{\infty}}&\lesssim \|\p_z^2u^R\|_{L_t^2H_{tan}^1}^{\f12}\|(\Id, |\na_h|^{2\delta})\p_z u^R\|_{L_t^2H_{tan}^1}^{\f12}\\
 &\lesssim  \min\{\|\na^2 u_h^R\|_{L_t^{\infty}H_{co}^1}^{\f12+2\delta} \|\na u_h^R\|_{L_t^{\infty}H_{co}^1}^{\f12-2\delta},\|\na^2 u_h^R\|_{L_t^{\infty}H_{co}^1}^{\f12} \|\na u_h^R\|_{L_t^{\infty}H_{co}^1}^{\f12}\}\\
 &\lesssim (\ep\nu)^{-(\f12+2\delta)}\beta_1^{-(\f12-2\delta)}\eta\cE_T\lesssim 
 (\ep^{-\f{3+4\delta}{2}}+(\ep\nu)^{-(\f34+\delta)})\eta\cE_T.
\end{align*}

In the remaining part of this section, we prove the corresponding estimates for $\|\div u^R\|_{Y}, \|\na\sigma^R\|_{L_t^2L_x^{\infty}}$ where $Y=L_t^2L_x^{\infty}\cap L^2_tW_{co}^{1,6}.$ To control $\div u^R,$ we notice that
\beno
 \big( \pt+ (u^{a}+u^R)\cdot\na\big) \sigma^{R}+u^R\cdot\na\sigma^a+\f{\rho^{\f{\gamma-1}{2}}\div u^{R}}{\ep}+R^a_{\sigma}=0
\eeno
which implies that
\begin{align*}
\|\div u^R\|_{Y}&\lesssim \|\ep\pt \sigma^R\|_{Y}+\ep\|u^R\cdot\na\sigma^R\|_{Y}+\ep\Lambda(U^a) \|(u^R, Z\sigma^R)\|_{Y}+\ep\|R_{\sigma}^a\|_{Y}\\
&\lesssim \|\na\sigma^R\|_{L_t^2H_{co}^2}+\ep \|(u_h^R, u_3^R/\phi)\|_{L_{T}^{\infty}(L_x^{\infty}\cap W_{co}^{1,6})}\|Z\sigma^R\|_{Y}+\ep^{\f{4-\kappa}{4}}\, \eta \, \cN_T+\ep\,\Lambda(\cM_T^a)
\\
&\lesssim \|\na\sigma^R\|_{L_T^2H_{co}^2} +\big(
\eta+\ep^{\min\{\f23,\f{3-\kappa}{4}\}}\big)\eta\,  (\cN_T+\cN^2_T)+\eta\,\Lambda(\cM_T^a).
\end{align*}
It then follows from Lemma \ref{lem-nasigmadiv-uniform} that 
\begin{align*}
\|\div u^R\|_{Y}\lesssim \eta \big(\cN_0+\Lambda(\cM_T^a)\big)+  \big(T^{\f12}+ \ep^{\min\{\f{1}{40},\f{\kappa}{3},\f{3-\kappa}{4}\}}\big)\eta\, (\cN_T+\cN^2_T) .
\end{align*}
We are now left to control $\|\na\sigma^R\|_{L_{t}^2L_x^{\infty}}.$
The tangential derivative can be controlled directly by using the Sobolev embedding 
\beqs 
\|\na_h\sigma^R\|_{L_{t}^2L_{x}^{\infty}}\lesssim \|\na\sigma^R\|_{L_t^2H_{co}^2}.
\eeqs
To control the normal derivative, we study the equation \eqref{eq-pzsigma}.
In order not to lose derivatives, it is convenient to use  the Lagranian coordinates. Define the unique flow $X_t(x)=X(t,x)$
associated to $u$ 
 \begin{equation}
 \left\{
  \begin{array}{l}
   \partial_t X(t,x)=u(t,X(t,x))\\[3pt]
   X(0,x)=x\in\Omega.
  \end{array}  
  \right.
 \end{equation}
Note that since $u_3|_{\partial{\Omega}}=0,$ and $u\in \Lip([0,T]\times\Omega)$ (although the norm is not necessarily uniform in $\ep,\nu$), we have for each $t\in[0,T],$ $X_t:\Omega\rightarrow\Omega$
 is a diffeomorphism. Consequently, it holds that 
 \begin{align*}
 (\p_z \sigma^R)(t, X_t(x))=e^{-\Gamma(t,x)} (\p_z \sigma^R)(0)+\int_0^t e^{-\Gamma(t-s,x)}\f{1}{\epsilon^2\tilde{\mu}}\big(G_u+G_{\sigma}\big)(s, X_s(x))\,
 \d s
 \end{align*}
 where $\Gamma(t,x)=\frac{1}{\varepsilon^2\tilde{\mu}}\int_0^t \rho^{\gamma}(s, X_s(x))\d s\geq \frac{4^{-\gamma} t}{\epsilon^2\tilde{\mu}}$ since $\rho(t,x)\geq \f14$ for any $(t,x)\in [0,T]\times \Omega.$ Taking the supreme in $x\in \Omega$ on  both 
 sides, and using that $X(t,\cdot)(0\leq t\leq T)$ is a diffeomorphism from $\Omega$ to $\Omega,$ we deduce
\begin{equation}\label{pzsigmaLinfty}
 \|\p_z\sigma^R(t,\cdot)\|_{L_x^\infty}\lesssim e^{-\f{4^{-\gamma}t}{\ep^2\tilde{\mu}}}\|\p_z\sigma^R(0)\|_{L_x^{\infty}(\Omega)}+\int_0^t
 e^{-\frac{4^{-\gamma}(t-s)}{(2\mu+\lambda)\epsilon^2}}\frac{1}{\epsilon^2\tilde{\mu}} \|(G_u+G_{\sigma})(s,\cdot)\|_{L^{\infty}_x}\,\d s\,.
 \end{equation}
We then apply the convolution inequality after zero extension of the above functions to $[T,+\infty]$ to get
\begin{align*} \|\p_z\sigma^R(t,\cdot)\|_{L_t^2L_x^\infty}\lesssim \ep \|\p_z\sigma^R(0)\|_{L_x^{\infty}} +\|(\ep\pt u^R, G_u, G_{\sigma})\|_{L_t^2L_x^{\infty}}.
\end{align*}
In view of the definitions of $G_{u}, G_{\sigma}$ in \eqref{defGusigma} and the definition of $\cA_T$ in \eqref{def-cAT}, we readily get  that 
\begin{align*}
   \|(G_u, G_{\sigma})\|_{L_T^2L_x^{\infty}}\lesssim  \eta \Lambda(\cM_T^a) \big(1+(\eta+ \ep^{\f{3-\kappa}{4}}) (\cN_T +\cN^2_T)\big)+\ep^2(\beta_1\nu)^{-\f12}\|(\beta_1\nu)^{\f12}\p_z u^R\|_{L_t^2L^{\infty}}\|\na\sigma^a\|_{L^{\infty}_{t,x}}.
\end{align*}
The desired estimate for $\p_z\sigma^R$ then follows from the estimate \eqref{pzuRLinfty}
the fact $\ep^2(\beta_1\nu^{\f12})^{-\f12}\lesssim \max\{1, (\ep^7/\nu^3)^{\f14}\}.$
\end{proof}

 The Proposition \ref{prop-unifromes} is then a 
 consequence of  Proposition \ref{prop:est-E-T} and Proposition \ref{prop:est-A-t}. 
 
 We are now in position to prove Theorem \ref{thm1}, which, in turn implies the results stated in Theorem \ref{thm-remainder}.
 To achieve this, we require the following local well-posedness result, which can be derived using arguments similar to those employed in the proof of the local well-posedness of the compressible Navier-Stokes system with Dirichlet boundary conditions, see for instance \cite{loacl-CNS-low,local-CNS}.
 
\begin{thm}\label{classical-local}
    Assume that $(\sigma_0^R,u_0^R)\in L^2(\Omega)$ and that $\sigma_0^R$ is such that 
$$\rho_0(x)=\bigg(\f{\gamma-1}{2}\ep(\sigma_0^{a}+\sigma_0^R)+1\bigg)^{\f{2}{\gamma-1}}\in [1/2, 2],\quad  \forall \, x\in \Omega.$$ 
There exists $T^{\ep,\nu}>0,$ such that \eqref{CNS-remainder} has a strong unique solution in $C([0,T^{\ep,\nu}],H^2(\Omega)).$ Moreover, the velocity $u^R\in L^2([0,T^{\ep,\nu}],H^3(\Omega))$ and the density $\rho(t,x)$ satisfies
    \beq\label{epsigmaLinfty-2}
 \rho(t,x) \in [1/4,4], \qquad \forall \, (t,x)\in [0,T_0]\times \Omega .
 \eeq
\end{thm}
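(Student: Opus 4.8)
The statement is a standard local-in-time existence result for the remainder system \eqref{CNS-remainder}, which is a (slightly perturbed) compressible Navier–Stokes system with anisotropic viscosity and Dirichlet boundary conditions, with $\ep,\nu$ frozen. Accordingly, I would follow the classical scheme for such systems (as in \cite{loacl-CNS-low,local-CNS}), simply carrying along the lower-order source terms $R^a_\sigma, R^a_u$ and the linear coefficients built from $U^a$, all of which are fixed smooth functions on $[0,T_1]\times\Omega$ by the construction of Section 2. First I would set up an iteration scheme: given $(\sigma^{(n)},u^{(n)})$, define $\rho^{(n)}=\big(\tfrac{\gamma-1}{2}\ep(\sigma^a+\sigma^{(n)})+1\big)^{\frac{2}{\gamma-1}}$, solve the linear transport equation for $\sigma^{(n+1)}$ along the flow of $u^a+u^{(n)}$, and solve the linear (parabolic in $u$) momentum equation for $u^{(n+1)}$ with the anisotropic operator $\tfrac1{\rho^{(n)}}\div_\nu\mathcal S$ and Dirichlet data $u^{(n+1)}|_{\p\Omega}=0$. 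Since $\mu_1>0$ and $\mu_1\nu+\mu_2>0$, this linear momentum operator is (uniformly for the given $\ep,\nu$) parabolic, so the linear theory gives $u^{(n+1)}\in C([0,T],H^2)\cap L^2([0,T],H^3)$ from $H^2$ data, and $\sigma^{(n+1)}\in C([0,T],H^2)$ by transport together with the $\ep^{-1}\div u$ coupling treated as a source.

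The key steps, in order, are: (i) a-priori bounds — derive an energy estimate in $H^2$ for the iterates on a time interval $[0,T^{\ep,\nu}]$ depending only on $\|(\sigma_0^R,u_0^R)\|_{H^2}$, $\ep$, $\nu$, and $\sup_{[0,T_1]}$ norms of $U^a,R^a$; the only subtlety is propagating positivity of the density, which follows because the transport estimate gives $\|\ep\sigma^{(n)}(t)\|_{L^\infty}\le \|\ep\sigma_0^R\|_{L^\infty}+\ep\, t\,\Lambda(\cdots)$, so shrinking $T^{\ep,\nu}$ keeps $\rho^{(n)}(t,x)\in[1/4,4]$ given $\rho_0\in[1/2,2]$; (ii) contraction — estimate differences $(\sigma^{(n+1)}-\sigma^{(n)}, u^{(n+1)}-u^{(n)})$ in the lower-norm $L^\infty_t L^2\cap L^2_t H^1$ to close a fixed point, obtaining convergence of the iterates to a limit $(\sigma^R,u^R)$ in this weaker topology, which combined with the uniform $H^2$ bounds upgrades (by interpolation and weak-* compactness) to a genuine solution with the claimed regularity $u^R\in L^2([0,T^{\ep,\nu}],H^3)$; (iii) uniqueness — the same difference estimate in $L^\infty_tL^2\cap L^2_tH^1$ applied to two solutions with identical data, using Grönwall, forces them to coincide; (iv) the density bound \eqref{epsigmaLinfty-2} is then inherited from the transport estimate on the limit $\sigma^R$.

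The main obstacle, as usual for compressible Navier–Stokes with no density dissipation, is the apparent loss of derivatives in the coupling: the term $\ep^{-1}\rho^{\frac{\gamma-1}2}\na\sigma^R$ in the momentum equation and $\ep^{-1}\rho^{\frac{\gamma-1}2}\div u^R$ in the density equation seem to demand one more derivative than is available from the parabolic smoothing of $u$. The standard remedy — which I would use — is to perform the $H^2$ estimate using the natural energy functional that symmetrizes this coupling (testing the $\sigma$-equation by $\rho^{-\frac{\gamma-1}2}\sigma$ and the $u$-equation by $\rho^{-\frac{\gamma-1}2}u$, as in the basic estimate \eqref{EI-zero}), so that the singular $\ep^{-1}$ terms cancel upon integration by parts modulo commutators controlled by $\|\na\rho\|_{L^\infty}\lesssim\ep\|\na\sigma\|_{L^\infty}$; the transport structure of the $\sigma$-equation then supplies the missing control of $\na\sigma^R$ in $L^\infty_t H^2$ without parabolic regularization. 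Since none of these estimates need to be uniform in $(\ep,\nu)$ here, one may freely absorb powers of $\ep^{-1},\nu^{-1}$ into the constants, which makes the argument routine; I would therefore keep the proof brief and refer to \cite{loacl-CNS-low,local-CNS} for the details of the iteration and contraction.
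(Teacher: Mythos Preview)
Your proposal is correct and matches the paper's treatment: the paper does not give a proof of Theorem \ref{classical-local} at all, but simply states it and refers to the classical local existence theory for compressible Navier--Stokes with Dirichlet boundary conditions \cite{loacl-CNS-low,local-CNS}, exactly as you do. Your sketch of the iteration/contraction argument with symmetrized $H^2$ energy estimates, parabolic regularity for the anisotropic operator (elliptic for each fixed $\nu>0$), and propagation of the density bounds is precisely the standard route those references take, and your observation that no uniformity in $(\ep,\nu)$ is needed here is the correct reason the argument is routine.
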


\medskip

{\bf Proof of Theorem \ref{thm1}:} Since $(\sigma_0^R, u_0^R)\in \cN_0,$ a space with higher regularity, by standard propagation of regularity arguments (for example based on applying  finite difference instead of derivatives)
 in the estimates of Section 3, we find that 
 the estimates of  Proposition \ref{prop-unifromes} hold. 
 Specifically, there are polynomials $\Lambda_1, \Lambda_2$ such that for any $T\leq T^{\ep,\nu},$
\begin{align}\label{unifrom-final}
  \mathcal{N}_{T}^2
  \leq C \mathcal{N}_{0}^2+\Lambda_1(\cM_T^a)
  + (T+\ep)^{d({\kappa})}\Lambda_2\big(\cM_T^a,
  \cA_{T}
  \big) \big(\cN_T^2+\cN_T^4\big).
\end{align}
Define
\beqs
T^{\ep,\nu}_{*}=\sup\{T\big| (\sigma^{R},u^{R})\in C([0,T],H^2), u^{R}\in L^2([0,T],H^3)\},
\eeqs
\beqs
\begin{aligned}
T_0^{\ep,\nu}=\sup\big\{T\leq T^{\ep,\nu}_{*}&\big| \cN_{T}\leq 2 M, \, \rho(t,x)\in [1/4,4]
\quad \forall (t,x)\in [0,T]\times\Omega \big\},
\end{aligned}
\eeqs
where $M\geq C\cN_0^2+\sup_{T\leq T^{\ep,\nu}_{*}}\Lambda_1(\cM_T^a).$ 
Denote also $M_1=\sup_{t\leq T^{\ep,\nu}_{*}, x\in\Omega } |\sigma^a(t,x)|.$
 We choose successively two  constants $\ep_0\leq 1$ and $T_0>0$ (uniform in $\ep\in(0,\ep_0]$) which are small enough, such that
 \beqs 
 \ep(2M +M_1)\leq {\f{2}{\gamma-1}} \min\{ (4^{\f{\gamma-1}{2}}-1), (1-1/4^{\f{2}{\gamma-1}})\},
 \eeqs
 \beqs
4({T_0}+\ep_0)^{d(\kappa)}\Lambda_2\big(\cM_T^a, 
  2M\big) \big(1+4M^2\big)<1.
 \eeqs
In order to prove Theorem \ref{thm1}, it suffices for us to show that $T_0^{\ep,\nu}\geq {T_0}$ for any $0<\ep\leq \ep_0, \nu=\ep^{\kappa} (0<\kappa<3).$ Suppose otherwise $T_0^{\ep,\nu}<{T_0}$ for some $0<\ep\leq \ep_0, \nu=\ep^{\kappa},$ then in view of inequalities 
\eqref{unifrom-final} 
we have by the definition of $\ep_0$ and $T_0$ that
\beq\label{sec6:eq3}
\cN_{T}\leq 2M, \qquad \forall\, T\leq \tilde{T}:=\min\{T_0, T^{\ep,\nu}_{*}\},
\eeq
\beq\label{sec6:eq4}
\f14\leq \rho(t,x)\leq 4 \quad \forall (t,x)\in [0,\tilde{T}]\times\Omega.
\eeq
We will prove that $\tilde{T}=T_0\leq T_{*}^{\ep,\nu}.$ This fact, combined with the definition of $T_0^{\ep,\nu}$ and estimates \eqref{sec6:eq3}, \eqref{sec6:eq4},
yields $T_0^{\ep,\nu}\geq T_0,$ which is a contradiction with assumption $T_0^{\ep,\nu}<T_0.$ To continue, we shall need the claim stated and proved below.
 Indeed, once the following claim holds, we have by \eqref{sec6:eq3} that 
 $\|(\sigma^{R},u^{R})(T_0)\|_{H^2(\Omega)}<+\infty,$ which, combined with the local existence result stated in 
 Theorem \ref{classical-local}, yields that $T_{*}^{\ep,\nu}\geq T_0.$

 $\textbf{Claim.}$ 
For all $(\ep, \nu)\in A_0,$
if $\cN_{T}<+\infty,$ then $(\sigma^{R},u^{R})\in C([0,T], H^2),$
 $u^{R}\in L^2([0,T],H^3).$
 \begin{proof}[Proof of claim]
We obtain from the definition of $\cN_{T}$ and the equation of the velocity 
that
$$\ep\nu^{\f32} u^{R}\in 
L^2([0,T],H^3),\quad \ep\nu^{\f12}\pt u^{R}\in L^2([0,T],H^1), \quad \ep \tau\sigma^{R} \in L^{\infty}([0,T],H^2), \, $$
where $\tau=
\min\{(\nu^{5}/\ep^7)^{\f16}, (\nu^2/\ep^3)^{\f12}, 1 \}.$
One deduces from the first two facts and the interpolation that $\ep\nu u^{R}\in C([0,T],H^2).$
Moreover, carrying out direct energy estimates for $\sigma^{R}$ in $H^2(\Omega),$
one gets that:
\beq\label{ineq-gronwall}
\pt g(t)
\leq  K\big(g(t)+f(t)\big)
\eeq
where  $K=\Lambda( \|(u^a, \na\sigma^a, \na u^a)\|_{L_T^{\infty}W^{2,\infty}}+\|(\sigma^R, u^R)\|_{L_T^{\infty}W^{1,\infty}})<+\infty$ 
and $$g(t)=\|\sigma^{R}(t)\|_{H^2}^2,\quad f(t)=\| u^{R}(t)\|_{H^3}\|\sigma^{R}(t)\|_{H^2}\in L^1([0,T]).$$
Inequality \eqref{ineq-gronwall} and the boundedness of $\|\ep\tau g(\cdot)\|_{L^{\infty}([0,T])}$
lead to the fact that $g(\cdot)\in C([0,T]),$ which further yields that $\ep \tau\sigma^R\in C([0,T],H^2).$ We thus finish the proof of this claim.
Note that at this stage we do not require the norm $\|(\sigma^{R},u^{R})\|_{C([0,T], H^2)}$ to be bounded uniformly.

\end{proof}

\section{Remarks for the case $\kappa\geq 3.$}\label{sec-remark-othercase}
In this section, we make some comments on how to prove incompressible and vanishing vertical viscosity limits when the vertical viscosity parameter $\nu$
is much smaller than $\ep$ in the sense that $\nu=\ep^{\kappa}$ with $\kappa\geq 3.$ In this regime, it is indeed simpler to construct the approximation solution in another way in which the error between the approximated solution and the real solution is smaller.
That is, we directly do Taylor expansion for the system \eqref{CNS-O} in $\nu.$
The approximate solution  $U^a=(\rho^a, u^a)$ reads
\begin{align*}
  U^a= \sum_{l=0}^j \sqrt{\nu}^j\big(U^{e,j}(t,x)+ U^{p,j}(t,y, \f{z}{\sqrt{\nu}})+\tilde{U}^{p,j}(t,y, \f{a_3-z}{\sqrt{\nu}})\big),
\end{align*}
$U^e, U^{p,j}, \tilde{U}^{p,j}$ stand respectively for the interior part, lower and upper boundary layer profiles. The first interior profile
 $(\rho^e, u^e):=(\rho^{e,0}, u^{e,0})$ is the solution to the compressible Navier-Stokes with fast oscillation and  with vanishing vertical dissipation 
\begin{align}\label{eq-ue1}
\left\{
   \begin{array}{l}
         \pt \rho^{e}+\div(\rho^e u^e)=0, \\
         \rho^e (\pt +u^e\cdot\na) u^e+\f{\na P(\rho^e)}{\ep^2}-\f{\mu_1}{\rho^e}\Delta_h u^e-\f{\mu_2}{\rho^e}\na\div u^e=0,\\
         (\rho^e, u^e)|_{t=0}=(\rho, u)|_{t=0}; \quad u_3^e|_{\p\Omega}=0.
\end{array}
    \right.
\end{align}
The boundary layer profile  $U^{p,0}=(0, u^{p}_h, \sqrt{\nu}u_3^{p,1})^t$ associated to the lower boundary satisfies the Prandtl equation
\begin{align}\label{comp-Prandtl}
\left\{
 \begin{array}{l}
   \pt u_h^p+(\overline{u_h^e}+u_h^p)\na_h u_h^p+u_h^p\cdot\na_h \overline{u_h^e}+\big(\f{-\int_0^{+\infty}\div_h(\overline{\rho^e} u_h^p)\d \theta}{\overline{\rho^e}}+u_3^{p,1}+\theta\overline{\p_z u_3^e}\big)\p_{\theta} u_h^p \\[5pt]
   \qquad\qquad\qquad\qquad\qquad\qquad-\f{\mu_1}{\overline{\rho^e}}(\p_{\theta}^2u_h^p+\Delta_h  u_h^p)+\mu_2\na_h (\ln \overline{\rho^e})\cdot u_h^p=0 \\
   \div_h (\overline{\rho^e} u_h^p)+\p_{\theta}(\overline{\rho^e}u_3^{p,1})=0,
   \qquad u_3^{p,1}\rightarrow_{z\rightarrow +\infty} 0 , \\
   u_h^p|_{z=0}=-u_h^e|_{z=0}.
 \end{array}
      \right.
\end{align}
The next order interior equation takes the form 
\begin{align}\label{eq-ue2}
\left\{
   \begin{array}{l}
         \pt \rho^{e,1}+\div(\rho^{e,0} u^{e,1}+\rho^{e,1}u^{e,0})=0, \\
        \pt u^{e,1}+u^{e,0}\cdot\na u^{e,1}+u^{e,1}\cdot\na u^{e,0}+\f{\na (g'(\rho^{e,0})\rho^{e,1})}{\ep^2}\\[3pt]
        \qquad   \qquad    \qquad     -\f{\mu_1}{\rho^e}\Delta_h u^{e,1}-\f{\mu_2}{\rho^e}\na\div u^{e,1}-\f{\mu_1}{\rho^{e,1}}\Delta_h u^{e,0}-\f{\mu_2}{\rho^{e,1}}\na\div u^{e,0}=0,\\[4pt]
         (\rho^{e,1}, u^{e,1})|_{t=0}=0, \quad u_3^{e,1}|_{z=0}=-u_3^{p,1},
\end{array}
    \right.
\end{align}
where $g'(x)=\f{P'(x)}{x}.$ 
One can also write down the equation satisfied by the second Prandtl boundary layer, 
but it has been already enough for us to do stability analysis of the remainder. 
Define 
\beqs 
U^a=
 U^{e,0}+(0, u_h^p, \sqrt{\nu} (u_{3}^{p,1}))^t+U^{e,1}+\sqrt{\nu}(0,  \mathfrak{R}_1, \mathfrak{R}_2, 0)^t
\eeqs
where  $\mathfrak{R}_j$ is chosen such that $\mathfrak{R}_j|_{\p\Omega}=
u_j^{e,1}|_{\p\Omega}$ with the property $\|\mathfrak{R}_j(t)\|_{H^4}\lesssim \nu^{\f14}.$
Let us comment that the uniform well-posedness of the system \eqref{eq-ue1}, \eqref{eq-ue2} in the usual Sobolev spaces are not an issue since there is no boundary layers and the singular terms are anti-symmetric. For the boundary layer equation \eqref{comp-Prandtl}, the existence of a solution in the conormal  Sobolev spaces on a time interval independent of $\ep,\nu$ is also not hard to attain, thanks to the horizontal dissipation appears in the system.

The approximate solution solves 
\begin{align*}
\left\{
   \begin{array}{l}
         \pt \rho^{a}+\div(\rho^a u^a)=R_{\rho}^a, \\[3pt]
         (\pt +u^a\cdot\na) u^a+\f{\na g(\rho^a)}{\ep^2}-\f{\mu_1}{\rho^a}(\Delta_h u^a+\nu\p_z^2 u^a)-\f{\mu_2}{\rho^a}\na\div u^a=R_{u}^a,\\[3pt]
         (\rho^a, u^a)|_{t=0}=(\rho, u)|_{t=0}; \quad u^a|_{z=0}=0,
\end{array}
    \right.
\end{align*}
where $\|(R_{\rho}^a, R_u^a)\|_{L_t^{\infty}H_{co}^3}\lesssim \nu^{\f34}.$
Denote $\sigma^R=\f{g(\rho)-g(\rho^e)}{\ep},$ then $\sigma^R$ satisfies
\beqs
\pt\sigma^R+\ep^{-1}g'(\rho)\big(\div (\rho^R u^R)+\div ({\rho^R}(u^a+u^R)) \big)=g'(\rho)R_{\rho}^a/\ep + \f{(g'(\rho)-g'(\rho^e))\pt \rho^e}{\ep}:=R_{\sigma}^a.
\eeqs
We could then write down the equation satisfied by the error $U^R=(\sigma^R, u^R)=(\sigma, u)-(\sigma^a, u^a)$ and prove some uniform regularity estimates for $U^R.$ 
The norms involved would be similar but slightly different as in the definition of $\cN_T.$ The possible changes are the estimates of the compressible part $(\na\sigma^R,\div u^R),$ we should instead prove that 
\begin{align*}
    (\ep\nu)^{\f14}\big(\|(\na \sigma^R,\div u^R)\|_{L_T^{\infty}\underline{H}_{co}^{2}}+ \|\na\div u^R\|_{L_T^{2}\underline{H}_{co}^{2}\cap L_T^{\infty}\underline{H}_{co}^{1}}\big)+(\nu/\ep^3)^{\f14}\|(\nabla\sigma^{R},\div u^R)\|_{L_T^2H_{co}^2\cap L_T^{\infty}\underline{H}_{co}^1}
\end{align*}
and thus the corresponding $L_x^p\,(p=3,6,+\infty)$ norm for $\sigma^R, u_3^R, \na\sigma^R, \div u^R$ should be updated correspondingly.
However, 
the main gains here is that the source term $R^a$ in the equation of $U^R$ has the size of  $\tilde{\eta} =\nu^{\f34}/\ep$ in the Sobolev norm  which is indeed very good when $\nu\leq \ep^{\kappa}$ with $\kappa\geq 3.$
One can use the similar arguments as in  Section 4-5  to close the estimate.
Note that to achieve such a goal, the crucial facts one needs are 

$\bullet$
$\nu^{-\f14}\tilde{\eta} \lesssim 1$ so that $\|u\|_{L_t^2L_x^{\infty}}\lesssim 1,$


$\bullet$ $\|\div u^R\|_{L_t^2L_x^{\infty}}\lesssim 
(\ep^3/\nu)^{\f14}\tilde{\eta}\lesssim ({\nu^2}/{\ep})^{\f14}\lesssim 1.$

$\bullet$
$(\ep\nu)^{\f12}\|\na u^R\|_{L_{t,x}^{\infty}}\lesssim (\ep\nu)^{\f12}\|\na^2 u^R\|_{L_t^{\infty}H_{co}^{1}}^{\f12+\delta}\|\na u^R\|_{L_{t}^{\infty}H_{co}^{1}}^{\f12-\delta}\lesssim (\ep\nu)^{\f12} (\ep\nu)^{-(\f12+\delta)} (\ep\nu)^{-(\f14-\f{\delta}{2})}\tilde{\eta}= {\nu^{\f{1-{\delta}}{2}}}/\ep^{\f{5+2\delta}{4}}\lesssim 1 , $ as long as $\delta$ is chosen small enough.

\appendix
\section{ Study of the group generated by $L.$}\label{app-A}
Recall the linear operator $L=-\left( \begin{array}{cc}
    0 &  \div \\
    \na  & 0 
    \end{array} 
    \right).$
Consider the evolution problem
\begin{align}\label{evolpb}
    \p_{\tau}U-LU=0, \quad U|_{\tau=0}=U^0, \quad U_3|_{\p\Omega}=0\, .  
\end{align}
Denote $V^0=\left\{ f=(f_0,\na q) \in (L^2(\Omega))^4\big|\, \p_3 q|_{\p\Omega}=0\right\}$ and  denote $\tilde{\cL}(\tau)$ the group generated by $L$ in $V^0$ which is an isometry with respect to $(L^2(\Omega))^4.$  To find the explicit form of the semigroup, we 
reduce the problem to the three dimensional torus $\mathbb{T}_a^3=\mathbb{T}_{a_1,a_2}^2\times [-a_3,a_3]=\mathbb{T}_{a_1,a_2}^2\times \mR/(2a_3)$ by using the even and odd extensions:
\beqs 
(e_0 f )(x_h,z)= f(x_h,-z); \quad (e_1 f )(x_h,z)=- f(x_h, -z); \quad \forall\, z<0\,.
\eeqs
Upon defining the extension
$EU=(e_0 U_0, e_0 U_1, e_0 U_2, e_1 U_3)^t,$ we have :
\beqs 
(\p_{\tau}-L)(EU)=0, \quad EU|_{\tau=0}=EU^0\,.
\eeqs
 It is direct to compute that the eigenvalues and eigenfunctions (in $V^0$) of $L$ are:
 \beqs 
\lambda_{k}^{\pm}=\pm |k|, \quad e_{k}^{\pm}=\f{1}{\sqrt{2|\mathbb{T}_a^3|}|k|} e^{ik\cdot x}\left(\begin{array}{c}
    \mp |k|  \\
     k
\end{array}
\right)
 \eeqs
where $k$ is a three-elements vector whose components are defined by 
\beq\label{rela-k'-k}
k_j=\f{2\pi}{a_j}k'_j, (j=1,2);\quad  k_3=\f{\pi}{a_3}k_3'\,,
\eeq
and $k'=(k'_1,k'_2,k'_3)\in \mathbb{Z}^3$ and  
$|\mathbb{T}_a^3|=2 a_1a_2a_3.$  As $\{e_{k}^{\pm}\}$ form the orthogonal  bases in the space $V^0\,,$ 
we  decompose   
$$EU^0(x)=\sum_{k'\in\mathbb{Z}^3}\sum_{\alpha\in\{\pm\}} b_k^{\alpha}e_{k}^{\alpha}, \quad EU(\tau, x) =\sum_{k'\in\mathbb{Z}^3}\sum_{\alpha\in\{\pm\}}b_k^{\alpha}e^{i\alpha|k|\tau}e_{k}^{\alpha}.$$ 
More explicitly, by denoting $EU=(\sigma, u)^t,$ it holds that
\beqs
\sigma=\f{c_{*}}{2}\sum_{k'\in\mathbb{Z}^3}\sum_{\alpha\in\{+,-\}}
-\alpha b_k^{\alpha}e^{i\alpha|k|\tau} e^{ik\cdot x}, \quad u=\f{c_{*}}{2}\sum_{k'\in\mathbb{Z}^3}\sum_{\alpha\in\{+,-\}}b_k^{\alpha}e^{i\alpha|k|\tau} \f{k}{|k|}\,e^{ik\cdot x} , \, \quad \bigg(c_{*}=\f{1}{\sqrt{a_1a_2a_3}}\bigg).
\eeqs
For any three-elements vector $f,$ let the operator $S$ be defined as $Sf=(f_1,f_2,-f_3)^t.$ 
Since $EU$ is supposed to be real and $u(x)=Su(Sx), \sigma(x)=\sigma(S x),$ we readily find that
\beq\label{id-2}
b_k^{+}=-\overline{b_{-k}^{-}}\, , \qquad b_{k}^{\pm}=b_{Sk}^{\pm}\,, 
\eeq
from which we derive 
that
\beq\label{formula-EU}
EU(\tau, t, x)= \sum_{k'\in\mathbb{Z}^3}
\sum_{\alpha\in\{\pm\}} b_k^{\alpha}(t)N_{k}^{\alpha} e^{i\alpha|k|\tau}\, ,
\eeq
where
\beqs 
N_{k}^{\alpha}=\f{c_{*}}{2
|k|}\left(\begin{array}{c}
    -\alpha|k|\cos(k_3 z)  \\
    k_h \cos(k_3 z)\\
    ik_3\sin(k_3 z)
\end{array}\right)e^{ik_h\cdot x_h}.
\eeqs
For instance, the first element can be checked in the following way:
\beqs
\begin{aligned}
\sigma(\tau,t,x)&=\f{c_{*}}{2}\sum_{k'\in \in\mathbb{Z}^3 }\big(b_{Sk}^{-}(t)e^{-i|k|\tau}-b_{Sk}^{+}(t)e^{i|k|\tau}\big)e^{ik\cdot x}\\
&=\f{c_{*}}{2} \sum_{k'\in \in\mathbb{Z}^3 }\big(b_{k}^{-}(t)e^{-i|k|\tau}-b_{k}^{+}(t)e^{i|k|\tau}\big)e^{iSk\cdot x}\\
&= \f{c_{*}}{2}\sum_{k'\in \in\mathbb{Z}^3 } \big(b_{k}^{-}(t)e^{-i|k|\tau}-b_{k}^{+}(t)e^{i|k|\tau}\big)(e^{ik\cdot x}+ e^{iSk\cdot x})/{2}\, .
\end{aligned}
\eeqs
We notice also that $\{N_{k}^{\pm}\}_{k'\in \mathbb{Z}^2\times\mathbb{N}}$ are eigenfunctions of the operator $L$ in $V^0$ and  form the orthogonal basis of the space 
 $V_{\sym}^s, (s\geq 0),$ where the high order symmetric Sobolev space
\beq \label{def-vsym}
V_{\sym}^s=\big\{  f=(f_0,\na q) \in (\dot{H}^s(\Omega))^4\,|\, \p_z^{2k+1}f_0=0, \p_z^{2\ell+1}q=0 \text{ on } \Omega, 2k+1<s-\f12,\, 2\ell<s-\f12  \big\}\, .
\eeq
Restricting the formulae \eqref{formula-EU} to $z>0,$ we find that the solution to the evolution problem \eqref{evolpb} in $V_{\sym}^s$ is given by 
 \beqs 
U(\tau, x)=\sum_{k'\in\mathbb{Z}^3}
\sum_{\alpha\in\{+,-\}} b_k^{\alpha}N_{k}^{\alpha} e^{i\alpha|k|\tau}\, ,
 \eeqs
 where $\{b_{k}^{\alpha}(t)\}$ are the coefficients of initial condition $U^0(x)$ associated to the bases $\{N_k^{\alpha}\}:$
$$ U^0(x)=\sum_{k'\in\mathbb{Z}^3}\sum_{\alpha\in\{+,-\}} b_k^{\alpha}N_{k}^{\alpha} \, .$$ 
It is direct to verify that the semigroup $\cL(\tau)$ generated by $L$ is an isometry in $V_{\sym}^s$ for any $s\geq 0$ and that 
\begin{align*}
\|U(\tau)\|_{V_{\sym}^s}^2=\|U(0)\|_{V_{\sym}^s}^2=\f12\sum_{k'\in \mathbb{Z}^3}\sum_{\alpha\in\{+,-\}}|b_{k}^{\alpha}|^2 |k|^{2s} . 
\end{align*}
For the later use, we also note that since $b_{k}^{\alpha}=b_{S k}^{\alpha},$ 
\begin{align*}
\|U(0)\|_{V_{\sym}^s}^2\approx \sum_{k'\in \mathbb{Z}^2\times \mathbb{N}}\sum_{\alpha\in\{+,-\}}|b_{k}^{\alpha}|^2 |k|^{2s} . 
\end{align*}

\section{Study of the Filtered quantity $W=\cL(-\tau)U^{I,0}.$} \label{app-B}
In this appendix, we study the filtered quantity $W=\cL(-\tau)U^{I,0},$ which solves the equation \eqref{eq-W-1}.
Let $\mathbb{Q}=\left( \begin{array}{cc}
 \Id  & 0 \\
  0 & -\na (-\Delta_N)^{-1}\div \end{array}\right)$ be the projection from $(L^2(\Omega))^4$ to $V^0$ 
 and write $W=W_{osc}+(0,v^{\INS})^t$ with 
 \beqs 
 W_{osc}=\mathbb{Q} W=\sum_{k'\in \mathbb{Z}^3}\sum_{\alpha\in\{+,-\}}  b_k^{\alpha}(t)
 N_k^{\alpha}, \qquad (0,v^{\INS})^t=(\Id -\mathbb{Q})W.
 \eeqs
 \subsection{Derivation of the equations for $W$}
 In order to determine the equations governing $W,$ we still need to compute the dissipation term $\overline{\Delta} W$  and the nonlinear term $\overline{Q}(W,W)$ defined in 
 \eqref{res-dissipation}, \eqref{res-nonlinear}.
By careful studies on the resonant systems 
we find the following:
\begin{prop}
    It holds that
    \begin{align}\label{limit-dissp}
        \overline{\Delta} W=\f12\big(\mu_1\Delta_h+\mu_2\Delta \big)W_{osc}+\mu_2(0,\Delta_h v^{\INS})^t
    \end{align}
and 
\beq\label{nonlinearterm}
\overline{Q}(W,W)=\left( \begin{array}{c}
     0  \\
     v^{\INS}\cdot\na v^{\INS}
\end{array}\right) + \overline{Q}_1(W_{osc},W_{osc})+ \overline{Q}_2\big((0,v^{\INS})^t,W_{osc}\big)\, ,
\eeq
where the limit of  the oscillating-oscillating interactions 
\beq\label{def-osc-osc}
\overline{Q}_1(W_{osc},W_{osc})= \f{\gamma+1}{8}c_{*}\sum_{m'\in \mathbb{Z}^3
}\sum_{
\alpha\in\{+,-\}}i|m| d_m^{\alpha} N_m^{\alpha}\, ,
\eeq
with the coefficient $d_m^{\alpha}$ defined as
\beqs 
d_m^{\alpha}=d_{Sm}^{\alpha}=\sum_{\substack{k+l=m,\\ |k|+|l|=|m|,\\ k_3,l_3\geq 0}}b_{k}^{\alpha}b_l^{\alpha}-\sum_{\substack{k+Sl=m,\\ |k|-|l|=|m|,\\ k_3,l_3\geq 0}}b_k^{\alpha}b_l^{-\alpha}-\sum_{\substack{l+Sk=m,\\|l|-|k|=|m|,\\ k_3,l_3 \geq 0}}b_k^{-\alpha}b_l^{\alpha},  \qquad 
(m_3\geq 0), 
\eeqs
and the limit of mean flow-oscillating interactions 
\beq\label{cross-inter}
\overline{Q}_2\big((0,v^{\INS})^t,W_{osc}\big)=\sum_{m'\in \mathbb{Z}^3
}\sum_{\alpha\in\{+,-\}}i m_h\cdot \bigg(\f{1}{|\Omega|}\int_{\Omega} v_h^{\INS}\d x\bigg) b_m^{\alpha} N_m^{\alpha}\,.
\eeq
\end{prop}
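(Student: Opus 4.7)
The plan is to decompose $W$ according to the kernel/range decomposition of $L$ and reduce each of $\overline{\Delta}W$ and $\overline{Q}(W,W)$ to the Bohr mean of an explicit almost-periodic trigonometric series. Writing $W=W_{osc}+(0,v^{\INS})^t$ with $W_{osc}=\mathbb{Q}W=\sum_{k',\alpha}b_k^\alpha N_k^\alpha$ and noting that $\cL(\tau)(0,v^{\INS})^t=(0,v^{\INS})^t$ (since $v^{\INS}$ is divergence-free, so it lies in $\ker L$), while $\cL(\tau)W_{osc}=\sum_{k',\alpha}b_k^\alpha N_k^\alpha e^{i\alpha|k|\tau}$, the semigroup acts diagonally on oscillating modes and trivially on the mean flow. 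This turns all time averages into purely algebraic resonance computations.

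For $\overline{\Delta}W$, I would split $D=\mathrm{diag}(0,\mu_1\Delta_h+\mu_2\na\div)$ as acting on the two components. On the mean flow $(0,v^{\INS})^t$ the operator is time-independent, and $\na\div v^{\INS}=0$ yields the pure contribution $\mu_1\Delta_h v^{\INS}$ (any discrepancy with the stated coefficient can be absorbed into a gradient/pressure). On $W_{osc}$, one expands $\cL(-\tau')D\cL(\tau')W_{osc}$ in the basis $\{N_m^\gamma\}$: the $L^2$-orthogonality of eigenfunctions combined with the oscillating factor $e^{i(\alpha|k|-\gamma|m|)\tau'}$ forces all non-diagonal modes to average to zero. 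A direct computation of $\langle D N_k^\alpha, N_k^\alpha\rangle$ using the explicit form of $N_k^\alpha$ splits into a horizontal piece giving $\mu_1|k_h|^2/|k|^2$ and a compressible piece giving $\mu_2$; after symmetrization the factor $1/2$ arises from averaging $\cos^2(k_3 z)+\sin^2(k_3 z)$-type identities, producing $\tfrac{1}{2}(\mu_1\Delta_h+\mu_2\Delta)W_{osc}$.

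For $\overline{Q}(W,W)$, the bilinear structure splits the average into four parts. The mean-mean piece $Q((0,v^{\INS})^t,(0,v^{\INS})^t)$ is time-independent; applying $\cL(-\tau')$ and then averaging amounts to projecting onto $\ker L$, which by the Leray-Hodge decomposition returns $(0,\mathbb{P}(v^{\INS}\cdot\na v^{\INS}))^t$, with the gradient remainder absorbed into a pressure consistent with the Leray structure. For the oscillating-oscillating part I would expand
\[
\cL(-\tau')\,Q(N_k^\alpha,N_l^\beta)\,e^{i(\alpha|k|+\beta|l|)\tau'}=\sum_{m',\gamma}c_{klm}^{\alpha\beta\gamma}e^{i(\alpha|k|+\beta|l|-\gamma|m|)\tau'}N_m^\gamma,
\]
so that only resonant triples with $\alpha|k|+\beta|l|-\gamma|m|=0$ survive the Bohr average. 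Using the explicit $N_k^\alpha$ one obtains $u_k^\alpha\cdot\na N_l^\beta+\tfrac{\gamma-1}{2}\sigma_k^\alpha L N_l^\beta$ in closed form, yielding a coefficient proportional to $i|l|(\alpha k+\beta l)\cdot l/(|k||l|)$; the spectral expansion in $\{N_m^\gamma\}$ then gives the factor $\tfrac{\gamma+1}{8}c_*i|m|$, and the three resonance cases $\alpha=\beta=\gamma$, and $(k\to k,l\to Sl)$ or $(k\to Sk,l\to l)$ correspond exactly to the three sums defining $d_m^\alpha$. For the cross terms, only the transport part $v^{\INS}\cdot\na N_k^\alpha$ oscillates at the single frequency $\alpha|k|$, so averaging $\cL(-\tau')v^{\INS}\cdot\na\cL(\tau')W_{osc}$ extracts only the spatial mean of $v_h^{\INS}$ (the fluctuating part of $v^{\INS}$ produces modes of different spatial frequency which are killed by $L^2$-orthogonality), giving \eqref{cross-inter}; the other cross interaction vanishes after averaging because $W_{osc}$ is zero-mean in $x$ and its contribution never lands back in the range.

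The main obstacle is the combinatorial bookkeeping for the oscillating-oscillating resonances: the strip-to-torus symmetrization $k\mapsto Sk$ together with the constraints $k_3\geq 0$ creates three distinct families of resonant triples, and one must correctly track the symmetry relations \eqref{id-2} to reduce sums over $k'\in\mathbb{Z}^3$ to sums over $k_3\geq 0$ while preserving the reality of $\overline{Q}(W,W)$. Checking that the resulting $d_m^\alpha$ satisfies $d_m^\alpha=d_{Sm}^\alpha$ (so that the output indeed lies in the symmetric space $V_{\sym}^s$) and that the gradient/compressible residues cancel correctly between the four pieces is the delicate part; the explicit form $N_k^\alpha=\tfrac{c_*}{2|k|}(-\alpha|k|\cos k_3 z,\ k_h\cos k_3 z,\ ik_3\sin k_3 z)^t e^{ik_h\cdot x_h}$ makes all these verifications into finite trigonometric identities.
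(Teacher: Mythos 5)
Your overall strategy coincides with the paper's: the same decomposition $W=W_{osc}+(0,v^{\INS})^t$, diagonal action of $\cL(\tau)$ on the eigenbasis, reduction of the Bohr mean to a resonance condition $\alpha|k|+\beta|l|=\gamma|m|$, and the three resonant families $k+l=m$, $k+Sl=m$, $Sk+l=m$ producing the three sums in $d_m^\alpha$. The paper likewise omits the proof of \eqref{limit-dissp} as "simpler", so your sketch of the dissipation term is acceptable in spirit (though note the mean-flow contribution should come out as $\mu_1\Delta_h v^{\INS}$ from $\div v^{\INS}=0$, matching \eqref{eq-vINS}; nothing needs to be "absorbed into a pressure" there since $\Delta_h$ commutes with the Leray projection only up to the projection already built into $\overline{\Delta}W$).

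There is, however, one concrete gap in your treatment of the cross term $\overline{Q}_2$. You assert that the fluctuating (nonzero-frequency) part of $v^{\INS}$ "produces modes of different spatial frequency which are killed by $L^2$-orthogonality". This is not the actual mechanism: the resonance condition for the mean--oscillating interaction is $|l|=|m|$, $\beta=\nu$, together with $k+l=m$ or $k+Sl=m$ or $Sk+l=m$, and this admits the nontrivial family $B_{m,2}^{\nu}=\{-Sl=k/2=m\}$, i.e. $k=2m$, $l=-Sm$, in which a \emph{nonzero} Fourier mode of $v^{\INS}$ interacts with an oscillating mode at exactly the right spatial frequency $k_h+l_h=m_h$ and zero temporal frequency. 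Orthogonality does not eliminate it; in the paper it is removed only because a direct computation shows the interaction coefficients $h_{lmk}^{\beta\nu}$ vanish identically on $B_{m,2}^{\nu}$ (a cancellation between the transport term $v^{\INS}\cdot\na U_{osc}$ and the term $U_{osc}'\cdot\na(0,v^{\INS})^t$ using the specific geometry $-Sl=m$). If you rely on orthogonality alone, your argument stalls at precisely this configuration, so you must add the explicit coefficient computation on $B_{m,2}^{\nu}$ to conclude that only the zero mode $\f{1}{|\Omega|}\int_\Omega v_h^{\INS}\,\d x$ survives in \eqref{cross-inter}.
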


\begin{proof}
  Let us sketch the proof of \eqref{nonlinearterm}, the proof of \eqref{limit-dissp} is simpler and will thus be omitted.  
We split 
\begin{align*}
    \overline{Q}(W,W)&= \overline{Q}\big((0,v^{\INS})^t,(0, v^{\INS})^t\big)+\overline{Q}\big(W_{osc},W_{osc}\big)+
\bigg(\overline{Q}\big(W_{osc},(0,v^{\INS})^t\big)+\overline{Q}\big((0,v^{\INS})^t, W_{osc}\big)\bigg), \\
&:=\mathrm{I}+ \mathrm{II}+ \mathrm{III},
\end{align*}
where 
\begin{align*}
    \overline{Q}(U,V):= \lim_{\tau\rightarrow +\infty}\f{1}{\tau} \int_0^{\tau} \cL(-\tau') Q\big(\cL(\tau')U, \cL(\tau')V\big) \,\d \tau' . 
\end{align*}
We refer to \eqref{def-quadratic} for the definition of the quadratic term $Q(U,V).$
Since $(0,v^{\INS})^t$ belongs to the kernel of $L,$ it is direct to check that 
\begin{align*}
   \mathrm{I}=(\Id-\mathbb{Q})Q\big((0,v^{\INS})^t,(0, v^{\INS})^t\big)=\big(0, \mathcal{P}(v^{\INS}\cdot \na v^{\INS}) \big)^t .
\end{align*}
Let us now turn to the computation of the second term which is the most involved. Using the identities 
$(\na\psi\cdot \na)\na\psi=\f12 \na |\na\psi|^2,$ 
$\sigma\cdot\na\sigma=\f12\na \sigma^2,$
one can verify that $(\Id-\mathbb{Q})\mathrm{II}=0,$ which is consistent with the fact that interactions between the oscillating (compressible) components do not contribute to the mean incompressible flow. We then split $\mathrm{II}=\mathrm{II}_1+\mathrm{II}_2$ with 
\begin{align*}
   \mathrm{II}_1=  \lim_{\tau\rightarrow +\infty}\f{1}{\tau} \int_0^{\tau} \mathbb{Q} \cL(-\tau') \big((U_{osc}'\cdot\na)  U_{osc} \big)(\tau')\,\d \tau' , \\
   \mathrm{II}_2= \f{\gamma-1}{2} \lim_{\tau\rightarrow +\infty}\f{1}{\tau} \int_0^{\tau} \mathbb{Q} \cL(-\tau') \big( U_{osc}^0\, L U_{osc} \big)\,\d \tau', 
\end{align*}
where we denote $U_{osc}=\cL(\tau)W_{osc}=(U_{osc}^0, U_{osc}').$ Let us detail the calculation of $\mathrm{II}_1.$ To continue, it is convenient to define the normalized bases $\tilde{N}_k$ and the modified coefficients  $\tilde{b}_k$
\begin{align*}
\tilde{N}_k^{\pm}=\displaystyle \left\{ \begin{array}{cc}
      2 N_k^{\pm}  & \text{ if } k_3>0 \\
    \sqrt{2}N_k^{\pm} & \text{ if } k_3=0
    \end{array},  \right. \qquad \tilde{b}_k^{\pm}= \displaystyle \left\{ \begin{array}{cc}
     b_k^{\pm}  & \text{ if } k_3>0 \\
    \sqrt{2}^{-1} b_k^{\pm} & \text{ if } k_3=0
    \end{array} \right.
\end{align*}
so that 
$\|\tilde{N}_k^{\pm}\|_{(L^2(\Omega))^4}=1$ and $U_{osc}=\sum_{k'\in \mathbb{Z}^2\times\mathbb{N}}\sum_{\alpha\in\{+,-\}}\tilde{b}_k^{\alpha}\tilde{N}_k^{\alpha}e^{i\alpha|\tau|}.$ 
Direct computations show that 
\begin{align*}
    \mathbb{Q} \cL(-\tau') \big((U_{osc}'\cdot\na)  U_{osc} \big)(\tau')=\sum_{k',l',m',\in \mathbb{Z}^2\times\mathbb{N}}\sum_{\alpha,\beta,\nu\in\{+,-\}}
    g_{k l m}^{\alpha\beta\nu} \tilde{N}_{m}^{\nu}
    e^{i\tau'(\alpha |k|+\beta|l|-\nu|m|)}
\end{align*}
where $ g_{k l m}^{\alpha\beta\nu}:={i} \, \tilde{b}_k^{\alpha} \tilde{b}_{l}^{\beta}\f{c_{*}^2}{2}\bigg\langle \f{1}{|k||l|} \left( \begin{array}{c}
     -\beta |l| F_{kl}^1     \\[1pt]
     l_h  F_{kl}^1   \\[1pt]
      i\,l_3  F_{kl}^2 
   \end{array} \right)e^{i (k_h+l_h)\cdot x_h}  
   ,  \tilde{N}_{m}^{\nu} \bigg\rangle $ 
with 
\begin{align*}
F_{kl}^1=[(k\cdot l) \cos((k_3+l_3)z)+(k\cdot Sl) \cos ((k_3-l_3)z)] \chi(k_3)\chi(l_3),\, \\
F_{kl}^2=[(k\cdot l) \sin((k_3+l_3)z)-(k\cdot Sl)\sin ((k_3-l_3)z)] \chi(k_3)\chi(l_3). 
\end{align*} 
Here the function $\chi: \mathbb{N}\rightarrow \mathbb{R} $ is such that $\chi(0)=\f{1}{\sqrt{2}}, \chi(k_3)=1$ for any $k_3>0.$ The resonant system satisfied by $(k,l,\alpha,\beta)$ that has nontrivial contribution to $I$ is such that 
$$\alpha |k|+\beta |l|=\nu |m|; \quad k_h+l_h=m_h; \quad k_3+l_3=m_3 \text{ or } |k_3-l_3|=m_3$$ which gives the resonant set $\cA_{m}^{\nu}=\cA_{m,1}^{\nu}+\cA_{m,2}^{\nu}+\cA_{m,3}^{\nu}$ where 
\begin{align*}  
&\cA_{m,1}^{\nu}=\{ k,l\in \mathbb{Z}^2\times\mathbb{N}|\, k+l=m, 
k=\lambda m\, (\lambda>0 \text{ if } m_3> 0), \,\, \alpha=\beta=\nu \}, \\
&\cA_{m,2}^{\nu}=\{ k,l\in \mathbb{Z}^2\times\mathbb{N}|\, k+Sl=m, \, k\cdot Sl=-|k||l|, \,\, \alpha=-\beta=\nu \},\\
&\cA_{m,3}^{\nu}=\{ k,l\in \mathbb{Z}^2\times\mathbb{N}|\, l+Sk=m, \, Sk\cdot l=-|k||l|, \,\, -\alpha=\beta=\nu \}.
\end{align*}
Therefore, by using the symmetry of $A_{m}^{\nu},$ further calculations lead to that 
\begin{align*}
\mathrm{II}_1&=\sum_{m',\nu}\tilde{N}_{m}^{\nu}\sum_{(k,l,\alpha,\beta)\in A_{m}^{\nu}} \f12 \big(g_{k l m}^{\alpha\beta\nu}+  g_{lk m}^{\beta\alpha\nu} \big)=
\sum_{m',\nu}{N}_{m}^{\nu} \sum_{(k,l,\alpha,\beta)\in A_{m}^{\nu}} \f{i c_{*}}{4}  b_{k}^{\alpha} b_l^{\beta} \alpha\beta |m|,
\end{align*}
where hereafter, we use the shorthand notation $\sum_{m',\nu}=\sum_{m'\in\mathbb{Z}^2\times \mathbb{N},\nu\in\{+,-\}}.$
Following the similar computations, we find that
\begin{align*}
\mathrm{II}_2&=
\sum_{m',\nu}{N}_{m}^{\nu} \sum_{(k,l,\alpha,\beta)\in A_{m}^{\nu}} \f{i (\gamma-1) c_{*}}{8}  b_{k}^{\alpha} b_l^{\beta} \alpha\beta |m|.
\end{align*}
The above two identities then give that 
\begin{align*}
\mathrm{II}=
\sum_{m',\nu}{N}_{m}^{\nu} \sum_{(k,l,\alpha,\beta)\in A_{m}^{\nu}} \f{i (\gamma+1) c_{*}}{8}  b_{k}^{\alpha} b_l^{\beta} \alpha\beta |m|,
\end{align*}
which is \eqref{def-osc-osc}. 

It remains to compute the last term $\mathrm{III}.$ Again, it can be verified that $(\Id-\mathbb{Q})\mathrm{III}=0.$ Since $ v^{\INS}$ is divergence-free, it can be written in the form 
$$v^{\INS}=\sum_{k'\in \mathbb{Z}^2\times  \mathbb{N}}\bigg( \begin{array}{c}
    v_h^{k_3}(k_h)\cos (k_3 z)   \\[2pt]
      -ik_h\cdot v_h^{k_3}(k_h)\f{\sin(k_3 z)}{k_3}
\end{array}\bigg), \qquad (k_h\cdot v_h^{0}=0)$$
One can then compute that 
\begin{align*}
     \mathbb{Q} \cL(-\tau')\big(v^{\INS}\cdot \nabla U_{osc}+U_{osc}'\cdot\nabla (0, v^{\INS})^t \big)=\sum_{m',l'}\sum_{\beta,\nu} \tilde{N}_m^{\nu} e^{i(\beta|l|-\nu|m|)\tau} \tilde{b}_l^{\beta} \sum_{k'} h_{lmk}^{\beta\nu},
\end{align*}
where $h_{lmk}^{\beta\nu}= \f{C_{*}}{2|l|} \left\langle 
\left( \begin{array}{c}
     -\beta |l| B_{kl}^1     \\[1pt]
     l_h  B_{kl}^1+ v_h^{k_3}C_{kl}^1   \\[1pt]
      i\,l_3  B_{kl}^2 - \f{ik_h}{k_3}\cdot v_h^{k_3} C_{kl}^2
   \end{array} \right)e^{i (k_h+l_h)\cdot x_h} ,\tilde{N}_m^{\nu}\right\rangle, $
   with  
   $$B_{kl}^1=i \big[\, (d \cdot l) \cos ((k_3-l_3)z)+(d\cdot Sl) \cos((k_3+l_3)z)\big] \chi(l_3), \quad \big(d=(v_h^{k_3}, (k_h\cdot v_h^{k_3})/{k_3})^t\big),$$
    $$B_{kl}^2= \big[\, (d \cdot Sl) \sin ((k_3+l_3)z)-(d\cdot Sl) \sin((k_3-l_3)z)\big] \chi(l_3), $$
   $$C_{kl}^1=F_{kl}^1/\chi(k_3), \quad C_{kl}^2=[(k\cdot l) \sin((k_3+l_3)z)+(k\cdot Sl)\sin ((k_3-l_3)z)] \chi(l_3).  $$
The resonant system is $$|l|=|m|, \quad \beta=\nu, \quad  k+l=m \text{ or } k+Sl=m \text{ or } Sk+l=m  $$ from which we find the resonant set $B_m^{\nu}=B_{m,1}^{\nu}+B_{m,2}^{\nu},$ where 
\begin{align*}
    B_{m,1}^{\nu}=\{ l=m, k=0, \beta=\nu \}, \qquad  B_{m,2}^{\nu}=\{ -S l={k}/{2}=m, \, \beta=\nu \}.
\end{align*}
However, direct computations show that all the coefficients $h_{lmk}^{\beta\nu}$ vanish on  $B_{m,2}^{\nu}$ so that
\begin{align*}
    \mathrm{III}=\sum_{m',\nu}  \tilde{N}_m^{\nu}
\tilde{b}_{m}^{\nu} h_{mm0}^{\nu\nu}=\sum_{m',\nu} \tilde{N}_m^{\nu} 
\tilde{b}_{m}^{\nu} (i m_h\cdot v_h^0(0)), 
\end{align*}
which is \eqref{cross-inter}.

\end{proof}

\begin{rmk}
 Note that the interactions of the mean flow $v^{^{\INS}}\cdot\na v^{\INS}$ have no effects on the limit of the oscillating part. 
Consequently, taking the projection $\mathbb{Q}, \Id-\mathbb{Q}:=\diag(0, \mathcal{P})\, $ on \eqref{eq-W-1},
 we arrive that
 \beq \label{eq-Wosc}
\pt W_{osc}+ \overline{Q}_1(W_{osc},W_{osc})+ \overline{Q}_2\bigg((0,v^{\INS})^t,W_{osc}\bigg)-\f12\big(\mu_1\Delta_h+\mu_2\Delta \big)W_{osc}+\sqrt{\f{\nu}{\ep}}\overline{S}=0\, ,
\eeq
\beq\label{eq-vINS}
\pt v^{\INS}+\mathcal{P}\big(v^{\INS}\cdot \na v^{\INS}-{\mu_1}\Delta_h v^{\INS} \big)=0\,.
\eeq
\end{rmk}

\medskip

\subsection{Study of $W_{osc}-$ well-posedness and damping}
By \eqref{defS-barS},
the system \eqref{eq-Wosc} for  $W_{osc}$ is equivalent to the following infinite collections of coupled (damped Burgers) equations for $b_{m}^{\alpha}:$
\beqs \label{bmpm}
(\text{DB})_m^{\alpha}: \quad  \, \pt b_m^{\alpha}+\f{\gamma+1}{8}c_{*} i|m|d_m^{\alpha}+im_h\cdot A(v_h^{\INS}) b_m^{\alpha}+\f12\big(\mu_1|m_h|^2+\mu_2|m|^2\big)b_m^{\alpha}+ \sqrt{\f{\nu}{\ep}}( -\lambda_{m,1}^{\alpha})b_m^{\alpha}=0 
\eeqs
where $\lambda_{m,1}^{\pm}=-\f{2(1\pm i)}{a_3}\sqrt{\f{\mu_1}{2}}  \f{|m_h|^2}{|m|^{{3}/{2}}}$ and the real constant $A(v_h^{\INS})=\int_{\Omega} v_h^{\INS}\,\d x$ denotes the zero mode of $v_h^{\INS}.$  Note that the interaction between the mean flow and 
oscillating part is very weak and has no effects on the energy estimates of oscillating part. 
In the following, we only establish the a prior estimates, which, together with some regularization arguments, leads to the local well-posedness of system \eqref{bmpm}.
\begin{lem}\label{well-posedness-W}
Assume that $
W_{osc}(0)\in V_{\sym}^s (s\geq 5/2).$
    There exists a $T>0,$ the system \eqref{bmpm} with the initial condition \eqref{initial-w} admits a solution in $C([0,T], H^s(\Omega))$  which satisfies
    \beqs
    \|W_{osc}\|_{L^{\infty}({[0,T]}, H^s)}\lesssim  \|W_{osc}(0)\|_{H^s}\, .
    \eeqs
\end{lem}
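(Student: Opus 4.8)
\textbf{Proof strategy for Lemma \ref{well-posedness-W}.}
The plan is to establish a priori estimates for the coupled system $(\text{DB})_m^{\alpha}$ in the high-order symmetric Sobolev norm $V_{\sym}^s$ and then close the argument by a standard Picard/Galerkin iteration together with a regularization argument, so I will focus on the a priori estimate. Since $\{N_m^{\alpha}\}$ forms an orthogonal basis of $V_{\sym}^s$ and $\|W_{osc}(t)\|_{V_{\sym}^s}^2\approx\sum_{m',\alpha}|b_m^{\alpha}(t)|^2|m|^{2s}$, the energy identity comes from multiplying $(\text{DB})_m^{\alpha}$ by $\overline{b_m^{\alpha}}|m|^{2s}$, taking real parts, and summing over $m'\in\mathbb Z^2\times\mathbb N$ and $\alpha\in\{+,-\}$. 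The dissipative terms $\tfrac12(\mu_1|m_h|^2+\mu_2|m|^2)|b_m^{\alpha}|^2|m|^{2s}$ and the damping $\sqrt{\nu/\ep}\,(-\Re\lambda_{m,1}^{\alpha})|b_m^{\alpha}|^2|m|^{2s}$ have a favorable sign (recall $\Re\lambda_{m,1}^{\pm}=-\tfrac{2}{a_3}\sqrt{\mu_1/2}\,|m_h|^2/|m|^{3/2}\le 0$), so they can simply be discarded (or kept) — they do not obstruct the estimate and give uniformity in $\nu/\ep$. The linear transport-type term $i\,m_h\cdot A(v_h^{\INS})\,b_m^{\alpha}$ is purely imaginary times $|b_m^{\alpha}|^2$ once multiplied by $\overline{b_m^{\alpha}}$, hence contributes nothing to the real part of the energy identity (this is the content of the remark that the mean-flow/oscillation interaction has no effect on the oscillating energy), so the only genuinely nonlinear contribution is from the quadratic term $\tfrac{\gamma+1}{8}c_* i|m|d_m^{\alpha}$.

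The core of the argument is therefore the bilinear estimate: one must show that the quantity
\[
\mathcal B := \Big|\,\Re\sum_{m',\alpha} \tfrac{\gamma+1}{8}c_* \, i|m|\, d_m^{\alpha}\,\overline{b_m^{\alpha}}\,|m|^{2s}\,\Big|
\lesssim \|W_{osc}\|_{V_{\sym}^s}^3.
\]
Here $d_m^{\alpha}$ is a convolution-type sum over the three resonant sets $k+l=m$, $k+Sl=m$, $l+Sk=m$ (with the corresponding constraints $|k|+|l|=|m|$, resp. $|k|-|l|=|m|$, etc.), so $\mathcal B$ has exactly the structure of a trilinear paraproduct estimate $\sum |m|^{2s+1}|d_m^{\alpha}||b_m^{\alpha}|$ with $|d_m^{\alpha}|\lesssim \sum_{k+l=m \text{ (in the three ways)}}|b_k^{\alpha}||b_l^{\pm\alpha}|$. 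The standard move is: on each resonant set, $|m|\le |k|+|l|$ (or $|m|\le |k|$ on the set where $|k|-|l|=|m|$), so $|m|^{2s+1}\lesssim |k|^{2s+1}+|l|^{2s+1}$ after using $s\ge 5/2 > 3/2$; by symmetry of $d_m^{\alpha}$ in the two summation variables one reduces to the case where the high frequency sits on, say, $b_k$, paying a factor $|m|\lesssim |k|$; then one applies Cauchy–Schwarz in $m'$ followed by Young's convolution inequality, using that $\sum_{l'}|l|^{\theta}|b_l^{\beta}|\lesssim \|W_{osc}\|_{H^s}$ provided $s>3/2+\theta$ with $\theta=1$ here — which is exactly why the threshold $s\ge 5/2$ appears. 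This yields $\tfrac{d}{dt}\|W_{osc}\|_{V_{\sym}^s}^2\lesssim \|W_{osc}\|_{V_{\sym}^s}^3$, and Gronwall (or a continuity/bootstrap argument on $[0,T]$ with $T$ depending only on $\|W_{osc}(0)\|_{H^s}$) gives the stated bound $\|W_{osc}\|_{L^\infty([0,T],H^s)}\lesssim\|W_{osc}(0)\|_{H^s}$.

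For existence and uniqueness one regularizes: either truncate the system to finitely many modes $|m'|\le N$ (a finite ODE system, globally solvable, to which the same a priori estimate applies uniformly in $N$) and pass to the limit $N\to\infty$ using the uniform bound plus an Aubin–Lions-type compactness argument (the dissipative terms give compactness in time), or add an artificial viscosity and run a fixed-point scheme; either way the uniform estimate above is what makes the limit land in $C([0,T],H^s)$. Uniqueness follows from the same bilinear estimate applied to the difference of two solutions, at one order of regularity lower, using $s-1>3/2$. The main obstacle is purely the bilinear/convolution estimate for $\mathcal B$: one has to organize the three resonant sums carefully so that the loss of one derivative from the factor $|m|$ in the quadratic nonlinearity is absorbed by the top regularity index, and one has to make sure the restriction $k_3,l_3\ge 0$ and the symmetrization $m\mapsto Sm$, $d_m^\alpha=d_{Sm}^\alpha$ are handled consistently with the identification between $V_{\sym}^s(\Omega)$ and sequences indexed by $\mathbb Z^2\times\mathbb N$; no new ideas beyond a careful product estimate are needed, but the bookkeeping is the delicate part.
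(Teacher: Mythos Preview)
Your overall architecture (energy identity in $V_{\sym}^s$, dissipative and damping terms dropped by sign, transport term killed by taking real parts, existence via Galerkin truncation) matches the paper's. The gap is in the one place that matters: the nonlinear estimate for $\mathcal B$.

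The crude bound you propose, $|m|^{2s+1}\lesssim |k|^{2s+1}+|l|^{2s+1}$ followed by Cauchy--Schwarz and Young, does \emph{not} close. After symmetrizing so that the high frequency sits on $b_k$, you are left with a trilinear sum of size $\sum_m|b_m|\sum_{k+l=m}|k|^{2s+1}|b_k||b_l|$, and any way of running Cauchy--Schwarz/Young on this puts $s+1$ derivatives on a single factor, i.e.\ produces $\|W_{osc}\|_{H^{s+1}}$ on the right-hand side. The step ``paying a factor $|m|\lesssim|k|$'' merely relocates the extra derivative from $m$ to $k$ without lowering the total count; the condition $\sum_l|l||b_l|\lesssim\|W_{osc}\|_{H^s}$ for $s>5/2$ is correct but is not the bottleneck. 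This is exactly the classical derivative loss in Burgers-type energy estimates.

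What the paper actually does (Appendix~B.2.1--B.2.2) is use the \emph{Burgers cancellation} hidden in the resonant structure of $d_m^{\alpha}$. Using the relation $\overline{b_m^{\alpha}}=-b_{-Sm}^{-\alpha}$, the three resonant sums defining $d_m^{\alpha}$ can be relabeled onto a single resonant set; at the $L^2$ level the combined coefficient is $|k|+|l|-|m|=0$, so the nonlinear contribution vanishes identically. At the $H^s$ level the same relabeling replaces the raw weight $|m|\,m^{2\beta}$ by the commutator $|m|\,m^{2\beta}-|k|\,k^{2\beta}-|l|\,l^{2\beta}$, which on the resonant set is bounded by $C_s\,|m|^s(|k|^s|l|+|l|^s|k|)$. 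Now the $2s+1$ derivatives are split as $s$ on two factors and $1$ on the low-frequency factor, and the trilinear sum is controlled by $\|W_{osc}\|_{H^s}^2\cdot\sup_k|k||b_k^{\alpha}|\lesssim\|W_{osc}\|_{H^s}^3$ for $s>5/2$. You should insert this symmetrization step in place of your paraproduct-style splitting; without it the estimate cannot close.
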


\subsubsection{A priori estimates in $L^2$}
Multiplying $(\text{DB})_m^{\alpha}$ by $\overline{b_{m}^{\alpha}},$ summing up in $m'\in \mathbb{Z}^2\times \mathbb{N}$ and $\alpha\in\{\pm\}$ and taking the real part we get that
\beq\label{EnergyId-bm}
  \sum_{m', \alpha}\bigg(\f{\pt}{2}+ 
 \sqrt{\f{2\mu_1}{a_3^2}}\sqrt{\f{\nu}{\ep}}  \f{|m_h|^2}{|m|^{{3}/{2}}}+\f12\big(\mu_1|m_h|^2+\mu_2|m|^2\big)\bigg) |b_m^{\alpha}|^2=-\f{\gamma+1}{8}c_{*}\Re \bigg(\sum_{m', \alpha} i|m|d_m^{\alpha} \overline{b_m^{\alpha}}\bigg)
=0. 
\eeq
Let us check the right hand side. Indeed, 
it follows from the relation \eqref{id-2} that
\beqs
\begin{aligned}
   &\qquad  \sum_{m', \alpha} i|m|d_m^{\alpha} \overline{b_m^{\alpha}}
   \\
    &= \sum_{m',\alpha} i|m|\bigg[
    -\sum_{\substack{k+l=m,\\ |k|+|l|=m,\\ k_3,l_3 \geq 0}}b_{k}^{\alpha}b_l^{\alpha} b_{-Sm}^{-\alpha}+\sum_{\substack{k+Sl=m,\\ |k|-|l|=|m|,\\ k_3,l_3 \geq 0}}b_k^{\alpha}b_l^{-\alpha}b_{-Sm}^{-\alpha}+\sum_{\substack{l+Sk=m,\\|l|-|k|=|m|,\\ k_3,l_3 \geq 0}}b_k^{-\alpha}b_l^{\alpha}b_{-Sm}^{-\alpha}\bigg]\\
    &= \sum_{m',\alpha} \sum_{\substack{k+l=m,\\ |k|+|l|=m,\\ k_3,l_3 \geq 0}}(|k|+|l|-|m|)b_{k}^{\alpha}b_l^{\alpha} b_{-Sm}^{-\alpha}=0.
\end{aligned}
\eeqs
Note that we have redenoted $(k,l,m)$ by $(-Sm, l,-Sk)$ and $(k,-Sm,-Sl)$ in the last two summations.

\subsubsection{A priori estimates in $H^s\, (s>0)$}
Similar to the $L^2$ estimates,
for any $\beta\in\mathbb{N}^3, |\beta|=s,$ 
we have that 
\beqs\label{EnergyId-bm}
\begin{aligned}
  \sum_{m', \alpha}\bigg(\f{\pt}{2}+ 
 \sqrt{\f{2\mu_1\nu}{a_3^2\ep}}  \f{|m_h|^2}{|m|^{{3}/{2}}}\bigg) |m^{\beta}b_m^{\alpha}|^2&=-\f{\gamma+1}{8}c_{*}\Re \bigg(\sum_{m', \alpha} i|m|m^{2\beta}d_m^{\alpha} \overline{b_m^{\alpha}}\bigg).
 \end{aligned}
\eeqs
Since 
\begin{align*}
\sum_{m', \alpha}|m|m^{2\beta}d_m^{\alpha} &=\sum_{\substack{k+l=m, \\|k|+|l|=|m|}}(|m|m^{2\beta}-|k|(-Sk)^{2\beta}-|l|(-Sl)^{2\beta})b_k^{\alpha} b_l^{\alpha}\\
&\lesssim \sum_{\substack{k+l=m}}|b_k^{\alpha}||b_l^{\alpha}| |m|^s(|k|^s|l|+|l|^s|k|),
\end{align*} 
the right hand side can be controlled as
\beqs 
\sum_{m', \alpha} |b_m^{\alpha}|^2|m|^{2s} \sup_{k,\alpha}|b_k^{\alpha}||k|\lesssim \|W_{osc}\|_{H^s}^3
\eeqs
as long as $s>{5}/{2}.$ 
We thus obtained the desired a-prior estimates $$\|W_{osc}(t)\|_{H^s}^2\lesssim \|W_{osc}(0)\|_{H^s}^2+ t \sup_{0\leq t' \leq t}\|W_{osc}(t')\|_{H^s}^3. $$ 

\subsubsection{The damping} The following result concerns the damping of the interior oscillating part $W_{osc}:$
\begin{lem}
    It holds that for any $r\geq 0,$
\beqs 
\|\na_h W_{osc}\|_{L_t^2H^r}\lesssim (\f{\ep}{\nu})^{\f{1}{4}}\|W_{osc}|_{t=0}\|_{H^{r+\f{3}{4}}}.
\eeqs
\end{lem}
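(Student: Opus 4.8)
The plan is to exploit the explicit ODE system $(\text{DB})_m^{\alpha}$ for the coefficients $b_m^{\alpha}$, following the same energy computation used for the $L^2$ and $H^s$ a priori bounds in this appendix, but this time keeping the dissipative term produced by the damping eigenvalue $\lambda_{m,1}^{\pm}$. Concretely, fix $\beta\in\mathbb{N}^3$ with $|\beta|=r$, multiply $(\text{DB})_m^{\alpha}$ by $m^{2\beta}\overline{b_m^{\alpha}}$, sum over $m'\in\mathbb{Z}^2\times\mathbb{N}$ and $\alpha\in\{\pm\}$, and take the real part. As already observed in the appendix, the skew-adjoint term $im_h\cdot A(v_h^{\INS})b_m^{\alpha}$ contributes nothing after taking the real part, and the Burgers nonlinearity $\tfrac{\gamma+1}{8}c_*i|m|d_m^{\alpha}$ contributes a term controlled by $\|W_{osc}\|_{H^s}^3$ (for $s>5/2$, or more precisely for $r$ in a range where the trilinear estimate closes); the viscous term $\tfrac12(\mu_1|m_h|^2+\mu_2|m|^2)|m^\beta b_m^\alpha|^2$ is nonnegative. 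What remains is the damping contribution
\beqs
\sqrt{\tfrac{\nu}{\ep}}\;\Re\big(-\lambda_{m,1}^{\alpha}\big)\,|m^{\beta}b_m^{\alpha}|^2
=\sqrt{\tfrac{\nu}{\ep}}\;\tfrac{2}{a_3}\sqrt{\tfrac{\mu_1}{2}}\,\frac{|m_h|^2}{|m|^{3/2}}\,|m^{\beta}b_m^{\alpha}|^2,
\eeqs
using $\Re\lambda_{m,1}^{\pm}=-\tfrac{2}{a_3}\sqrt{\mu_1/2}\,|m_h|^2/|m|^{3/2}<0$ whenever $m_h\neq0$. Since $W_{osc}$ is purely oscillating (it lies in the range of $\mathbb{Q}$ restricted to compressible modes), every mode appearing in $\na_h W_{osc}$ has $m_h\neq 0$, so this term genuinely controls $\na_h W_{osc}$.

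The next step is bookkeeping with the weight. We have $|m|^{2\beta}|m_h|^2 = |m|^{2\beta}|m_h|^2$, and $\frac{|m_h|^2}{|m|^{3/2}}\,|m^{\beta}|^2 \gtrsim |m_h|^2|m|^{2r}/|m|^{3/2} = |m_h|^2|m|^{2r-3/2}$; after summing, the dissipated quantity dominates $|m|^{2r}\,\frac{|m_h|^2}{|m|^{3/2}}|b_m^{\alpha}|^2$, and since $\widehat{\na_h W_{osc}}$ at mode $m$ has size $\sim |m_h||b_m^{\alpha}|$, we get $\frac{|m_h|^2}{|m|^{3/2}}|m|^{2r}|b_m^\alpha|^2 = |m|^{-3/2}\cdot |m|^{2r}\,|\widehat{\na_h W_{osc}}(m)|^2$. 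This is not quite $\|\na_h W_{osc}\|_{H^r}^2$ because of the extra factor $|m|^{-3/2}$, so I instead estimate $\|\na_h W_{osc}\|_{H^{r}}$ by noting $|m|^{2r}|m_h|^2|b_m^\alpha|^2 = |m|^{3/2}\cdot\big(|m|^{-3/2}|m_h|^2|m|^{2r}|b_m^\alpha|^2\big)$ and absorbing $|m|^{3/2}$ into the weight on the left — i.e. one estimates $\|\na_h W_{osc}\|_{L_t^2 H^r}$ in terms of the dissipated quantity at regularity level $r+3/4$, since $|m|^{2(r+3/4)}\frac{|m_h|^2}{|m|^{3/2}} = |m|^{2r}|m_h|^2 \gtrsim |m|^{2r}|m_h|^2$. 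Thus integrating the energy identity in time gives
\beqs
\sqrt{\tfrac{\nu}{\ep}}\int_0^t \|\na_h W_{osc}(s)\|_{H^r}^2\,ds \;\lesssim\; \|W_{osc}(0)\|_{H^{r+3/4}}^2 + \int_0^t\Lambda\big(\|W_{osc}(s)\|_{H^{s_0}}\big)\,ds,
\eeqs
where $s_0$ is a fixed regularity threshold ($s_0>5/2$) and $\Lambda$ a polynomial coming from the Burgers nonlinearity. By Lemma \ref{well-posedness-W}, on the uniform interval $[0,T]$ the right-hand side is bounded by $C\|W_{osc}(0)\|_{H^{r+3/4}}^2$ provided the data is taken in $H^{s}$ with $s$ large enough (here $s\geq 11+r_0$ is assumed, so $r+3/4$ and $s_0$ are both below this). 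Multiplying through by $(\ep/\nu)^{1/2}$ and taking square roots yields $\|\na_h W_{osc}\|_{L_t^2H^r}\lesssim (\ep/\nu)^{1/4}\|W_{osc}(0)\|_{H^{r+3/4}}$, which is the claim.

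The main obstacle I anticipate is the careful matching of the weight powers: one must be honest that the damping rate degenerates like $|m_h|^2|m|^{-3/2}$ rather than $|m_h|^2$, so controlling $\na_h W_{osc}$ at regularity $r$ costs $3/4$ extra derivative on the initial data — this is exactly why the statement has $H^{r+3/4}$ on the right. A secondary technical point is ensuring the Burgers nonlinearity can be absorbed: on the short interval $[0,T]$ this follows from the $H^s$ a priori bound $\|W_{osc}\|_{L^\infty([0,T],H^s)}\lesssim\|W_{osc}(0)\|_{H^s}$ already established, so the time integral of $\Lambda(\|W_{osc}\|_{H^{s_0}})$ is harmless; one should just check that $s_0 = r+3/4$ (or the slightly larger value needed for the trilinear estimate) stays within the regularity available, which it does under the standing hypothesis $s\geq 11+r_0$. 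No genuinely new resonance analysis is needed — the cancellation in the cubic term is the same one already verified in the $L^2$ and $H^s$ estimates above.
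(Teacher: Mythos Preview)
Your proof is correct and takes essentially the same approach as the paper: exploit the damping contribution $\Re(-\lambda_{m,1}^\alpha)=\tfrac{2}{a_3}\sqrt{\mu_1/2}\,|m_h|^2|m|^{-3/2}$ in the weighted energy identity for the coefficients $b_m^\alpha$, with the weight taken at level $r+\tfrac34$ so that the dissipated quantity is exactly $\|\nabla_h W_{osc}\|_{H^r}^2$. The paper's proof is a one-line sketch displaying only the unweighted inequality from \eqref{EnergyId-bm}; you correctly supply the weighted version and control the Burgers nonlinearity via the $H^s$ a priori bound already established (one small cosmetic point: since $r+\tfrac34$ need not be an integer, use the scalar weight $|m|^{2(r+3/4)}$ rather than $m^{2\beta}$ with $\beta\in\mathbb{N}^3$ --- the symmetrization of the cubic term depends only on $|m|$, so nothing changes).
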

\begin{proof}
It results from \eqref{EnergyId-bm} that 
\beqs 
 \sum_{\substack{m'\in \mathbb{Z}^2\times \mathbb{N}},\\ \alpha\in\{+,-\}}  \sqrt{\f{2\mu_1}{a_3^2}}\sqrt{\f{\nu}{\ep}}  \f{|m_h|^2}{|m|^{{3}/{2}}} \int_0^t |b_m^{\alpha}|^2 \leq \sum_{\substack{m'\in \mathbb{Z}^2\times \mathbb{N}},\\ \alpha\in\{+,-\}} |b_m^{\alpha}|^2(0)<+\infty.
\eeqs
\end{proof}
\subsubsection{The boundedness of the time derivative.}
The following estimates for $\pt b_k^{\alpha}$ is useful in the control of the $\cB_h'$ defined in \eqref{defcB'h}.
\begin{lem}\label{lem-elem}
For any $s\geq 0,$ it holds that
\beq
\sum_{\alpha=\{+,-\}}\bigg(\langle m\rangle^s\int_0^t |\pt b_m^{\alpha}|\bigg)_{\ell_m^2}\lesssim \|W_{osc}(0)\|_{H^{s}}+(1+t)\|W_{osc}\|_{L_t^{\infty}H^{s+1}}^2.
\eeq
As a consequence,
\beq \label{es-ptb}
\sum_{m',\alpha}\langle m\rangle ^s \int_0^t  |\pt b_m^{\alpha}(t')| \d t' \lesssim \|W_{osc}(0)\|_{H^{s+(3/2)^{+}}}+(1+t)\|W_{osc}\|_{L_t^{\infty}H^{s+(5/2)^{+}}}^2.
\eeq
\end{lem}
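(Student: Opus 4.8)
The plan is to treat $(\text{DB})_m^{\alpha}$ as a family of scalar linear ODEs in which the dissipation and the damping appear with the favourable sign, so that $\partial_t b_m^{\alpha}$ is controlled without losing more than one derivative and without any power of $\ep/\nu$. First I would rewrite $(\text{DB})_m^{\alpha}$ as $\partial_t b_m^{\alpha}+\ell_m^{\alpha}(t)\,b_m^{\alpha}=n_m^{\alpha}$, with $n_m^{\alpha}:=-\tfrac{\gamma+1}{8}c_{*}i|m|d_m^{\alpha}$ and $\ell_m^{\alpha}(t):=\tfrac12(\mu_1|m_h|^2+\mu_2|m|^2)+i\,m_h\cdot A(v_h^{\INS})-\sqrt{\nu/\ep}\,\lambda_{m,1}^{\alpha}$, and record the two facts that drive the argument: $\Re\,\ell_m^{\alpha}\ge 0$ (here $\mu_2\ge 0$ since $\mu_1\nu+\mu_2>0$ for all $\nu\in(0,1]$, and $\Re(-\lambda_{m,1}^{\alpha})=\tfrac{2}{a_3}\sqrt{\mu_1/2}\,|m_h|^2|m|^{-3/2}\ge 0$), and $|\ell_m^{\alpha}|\le\sqrt2\,\Re\,\ell_m^{\alpha}+C\langle m\rangle$ — the latter because $|\lambda_{m,1}^{\alpha}|=\sqrt2\,\Re(-\lambda_{m,1}^{\alpha})$ while the only genuinely oscillatory piece $m_h\cdot A(v_h^{\INS})$ is purely imaginary and $O(\langle m\rangle)$, with constant controlled by the conserved zero mode $A(v_h^{\INS})$. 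Thus the large factor $\sqrt{\nu/\ep}$ and the order-$|m|^2$ symbol enter only through $\Re\,\ell_m^{\alpha}$.

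Next I would multiply $(\text{DB})_m^{\alpha}$ by $\overline{b_m^{\alpha}}/|b_m^{\alpha}|$, after a routine $\sqrt{|b_m^{\alpha}|^2+\eta^2}$ regularisation, and take real parts; since the imaginary part of $\ell_m^{\alpha}$ drops out this gives the pointwise-in-time differential inequality $\partial_t|b_m^{\alpha}|+(\Re\,\ell_m^{\alpha})|b_m^{\alpha}|\le|n_m^{\alpha}|$. Integrating on $[0,t]$ and discarding $|b_m^{\alpha}(t)|\ge 0$ yields $\int_0^t(\Re\,\ell_m^{\alpha})|b_m^{\alpha}|\,dt'\le|b_m^{\alpha}(0)|+\int_0^t|n_m^{\alpha}|\,dt'$. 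Going back to the equation and using $|\partial_t b_m^{\alpha}|\le|\ell_m^{\alpha}||b_m^{\alpha}|+|n_m^{\alpha}|\le\sqrt2\,(\Re\,\ell_m^{\alpha})|b_m^{\alpha}|+C\langle m\rangle|b_m^{\alpha}|+|n_m^{\alpha}|$ together with the previous line, I obtain the master inequality
\[
\int_0^t|\partial_t b_m^{\alpha}|\,dt'\ \lesssim\ |b_m^{\alpha}(0)|\ +\ \langle m\rangle\!\int_0^t|b_m^{\alpha}|\,dt'\ +\ \int_0^t|n_m^{\alpha}|\,dt',
\]
in which all the dangerous coefficients have disappeared and only the harmless weight $\langle m\rangle$ and the quadratic term remain.

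Then I would multiply by $\langle m\rangle^s$, sum over $\alpha$ and take the $\ell^2_m$ norm. The first term gives $\|\langle m\rangle^s b_m^{\alpha}(0)\|_{\ell^2_m}\approx\|W_{osc}(0)\|_{H^s}$; the second gives $\|\langle m\rangle^{s+1}\int_0^t|b_m^{\alpha}|\|_{\ell^2_m}\le t\|W_{osc}\|_{L^{\infty}_t H^{s+1}}$; and for the third I would use $|n_m^{\alpha}|\lesssim|m||d_m^{\alpha}|$, the fact that the three sums in $d_m^{\alpha}$ are discrete convolutions (the reflections $Sk,Sl$ being harmless since $|b_k^{\alpha}|=|b_{Sk}^{\alpha}|$), the splitting $\langle m\rangle^{s+1}\lesssim\langle k\rangle^{s+1}+\langle l\rangle^{s+1}$ on $k+l=m$, and Young's inequality $\|f*g\|_{\ell^2}\le\|f\|_{\ell^2}\|g\|_{\ell^1}$, to bound $\|\langle m\rangle^s n_m^{\alpha}\|_{\ell^2_m}\lesssim\|W_{osc}\|_{H^{s+1}}\sum_{\beta}\|b^{\beta}\|_{\ell^1_m}\lesssim\|W_{osc}\|_{H^{s+1}}\|W_{osc}\|_{H^{3/2+\delta}}$, where $\|b^{\beta}\|_{\ell^1}\lesssim\|W_{osc}\|_{H^{3/2+\delta}}$ follows from Cauchy–Schwarz with the summable weight $\langle m\rangle^{-3/2-\delta}$ on $\mathbb{Z}^3$. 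After $\int_0^t$, and absorbing the lower-regularity factor (harmless when $s$ is not too small; otherwise one simply keeps the $H^{3/2+\delta}$ factor), the three contributions combine to $\|W_{osc}(0)\|_{H^s}+(1+t)\|W_{osc}\|_{L^{\infty}_t H^{s+1}}^2$, which is the first inequality. The consequence \eqref{es-ptb} then follows by one more Cauchy–Schwarz in $m$: $\sum_{m',\alpha}\langle m\rangle^s(\cdot)\le\|\langle m\rangle^{-3/2-\delta}\|_{\ell^2_m}\sum_{\alpha}\|\langle m\rangle^{s+3/2+\delta}(\cdot)\|_{\ell^2_m}$, and estimating the inner $\ell^2_m$ norm by the first inequality with $s$ replaced by $s+(3/2)^+$ gives exactly $\|W_{osc}(0)\|_{H^{s+(3/2)^+}}+(1+t)\|W_{osc}\|_{L^{\infty}_t H^{s+(5/2)^+}}^2$.

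The only genuine obstacle is the one handled in the second step: a naive estimate of $\partial_t b_m^{\alpha}$ would keep the terms $\tfrac12(\mu_1|m_h|^2+\mu_2|m|^2)b_m^{\alpha}$ and $\sqrt{\nu/\ep}(-\lambda_{m,1}^{\alpha})b_m^{\alpha}$, costing two spatial derivatives and an unbounded power of $\ep/\nu$; the point is that these are, up to the constant $\sqrt2$, precisely the dissipative part $\Re\,\ell_m^{\alpha}$, so integrating the modulus inequality $\partial_t|b_m^{\alpha}|+(\Re\,\ell_m^{\alpha})|b_m^{\alpha}|\le|n_m^{\alpha}|$ absorbs them for free, leaving only the single-derivative loss that the statement allows. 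Everything else is routine bilinear and Sobolev bookkeeping on the torus, with the transport contribution $\langle m\rangle\int_0^t|b_m^{\alpha}|$ — whose coefficient $A(v_h^{\INS})$ is conserved, hence bounded — being of lower order and absorbed into the right-hand side of the claimed bound.
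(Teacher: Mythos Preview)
Your argument is correct and follows essentially the same idea as the paper, which simply writes down the Duhamel formula
\[
b_m^{\alpha}(t)=e^{-\ell_m^{\alpha}t}b_m^{\alpha}(0)+\int_0^t e^{-\ell_m^{\alpha}(t-s)}n_m^{\alpha}(s)\,ds
\]
and reads off the bound from there. Your modulus--inequality approach is just the differentiated form of this; both rely on the same structural fact that the large pieces of $\ell_m^{\alpha}$ sit in $\Re\,\ell_m^{\alpha}\ge 0$.

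One small sharpening is worth noting. Your bound $|\ell_m^{\alpha}|\le\sqrt2\,\Re\,\ell_m^{\alpha}+C\langle m\rangle$ produces the extra linear term $t\,\|W_{osc}\|_{L^{\infty}_tH^{s+1}}$, which is not in the stated inequality. In fact one has the cleaner bound $|\ell_m^{\alpha}|\le C\,\Re\,\ell_m^{\alpha}$: when $m_h\neq 0$ the drift satisfies $|m_h\cdot A(v_h^{\INS})|\le |A(v_h^{\INS})|\,|m_h|\lesssim |m_h|^2\le \tfrac{2}{\mu_1}\Re\,\ell_m^{\alpha}$ because $|m_h|$ is bounded below on the lattice, and when $m_h=0$ the coefficient $\ell_m^{\alpha}=\tfrac12\mu_2|m_3|^2$ is real (and vanishes if $\mu_2=0$). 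With this, your step~5 gives $|\partial_t b_m^{\alpha}|\le C(\Re\,\ell_m^{\alpha})|b_m^{\alpha}|+|n_m^{\alpha}|$, and after integrating against step~4 the middle term disappears, yielding exactly $\int_0^t|\partial_t b_m^{\alpha}|\lesssim |b_m^{\alpha}(0)|+\int_0^t|n_m^{\alpha}|$, which matches the statement without the stray linear contribution. The rest of your bookkeeping (Young's convolution inequality for $n_m^{\alpha}$ and the Cauchy--Schwarz passage to~\eqref{es-ptb}) is correct as written.
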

\begin{proof}
This estimate follows directly from the explicit formulae
\beqs 
b_m^{\alpha}(t)=e^{r(m)t\sqrt\f{\nu}{{\ep}}}b_m^{\alpha}(0)
-\int_0^t e^{r(m)(t-s)\sqrt\f{\nu}{{\ep}}} \big(\f{\gamma+1}{8}c_{*} i|m|d_m^{\alpha}\big)\, \d s
\eeqs
where $r_m=\lambda_{m,1}^{\alpha}-\f12({\mu_1|m_h|^2 +\mu_2|m|^2})\sqrt{\f{\ep}{\nu}}+i m_h\cdot A(v_h^{INS})\sqrt{\f{\ep}{\nu}}.$ 
\end{proof}
As a consequence of the estimate \eqref{es-ptb},
we also have the following estimate for the error term $\cB_h^{'0}$ defined in \eqref{defcB'h}:
\beq\label{prop-bhp0}
\|{\cB'}_h^0\|_{L_t^1H_{co}^s}+ (\ep\nu)^{\f{1}{2}} \|\p_z{\cB'}_h^0\|_{L_t^1H_{co}^{s-1}}\lesssim (\ep\nu)^{\f{1}{4}} (\|W_{osc}(0)\|_{H^{s}}+\|W_{osc}(0)\|_{H^{s+(5/2)^{+}}}^2).
\eeq   

\section{Study of the approximate solution}
\subsection{The oscillating  profiles}
\subsubsection{The interior profiles
$U^{I,0}, S_{osc}^{I,1}, U_{osc}^{I,1} $} 
Let $W$ solves the equation \eqref{eq-W-1} with the initial condition 
 \beqs \label{initial-w}
W(0)
=\sum_{k'\in \mathbb{Z}^3
}\sum_{\alpha\in\{+,-\}}  b_k^{\alpha}(0)
 N_k^{\alpha}+
 (0,v_0^{\INS})^t:=W_{osc}(0)+(0,v_0^{\INS})^t\, .
\eeqs
We first have the following property for the first interior profile $U^{I,0}=\cL(\tau)W$ which is a consequence of the well-posedness of $W$ stated in Lemma \ref{well-posedness-W} and the isometry of $\cL(\tau)$ in $V_{\sym}^{m}$ defined in \eqref{def-vsym} and the standard well-posedness of the incompressible Navier-Stokes equations \eqref{eq-vINS}.
\begin{thm}\label{lem-UI0}
  Assume that $W_{osc}(0)\in V_{\sym}^{m} (m\geq 0),$  
  then it holds that, for any $t\leq T,$
\beq  
\|U^{I,0}\|_{L_t^{\infty} H^{m}}\lesssim \|W(0)\|_{H^{m}}
\,,
\eeq
\beq 
\|\na_h U_{osc}^{I,0} \|_{L_t^2H^{m-\f34}}\lesssim ({\ep}/{\nu})^{\f{1}{4}}\|
W(0)\|_{H^{m}}.
\eeq
\end{thm}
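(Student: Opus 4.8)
\textbf{Proof plan for Theorem \ref{lem-UI0}.}
The plan is to exploit the two structural facts established in Appendix \ref{app-A} and Appendix \ref{app-B}: first, that $\cL(\tau)$ is an isometry on $V_{\sym}^m$ for every $m\ge 0$; and second, that the filtered profile $W$ decomposes orthogonally as $W = W_{osc} + (0,v^{\INS})^t$ with $W_{osc}=\mathbb{Q}W \in V^0$ solving the coupled system $(\text{DB})_m^{\alpha}$ and $v^{\INS}$ solving the incompressible Navier–Stokes system \eqref{eq-vINS} with only horizontal dissipation. Since $U^{I,0}=\cL(\tau)W$ and $\cL(\tau)$ acts trivially on the kernel component $(0,v^{\INS})^t$, I would write $U^{I,0}(\tau,t,\cdot) = \cL(\tau)W_{osc}(t,\cdot) + (0,v^{\INS}(t,\cdot))^t = U_{osc}^{I,0} + (0,v^{\INS})^t$, so both bounds split into an oscillating piece and a mean-flow piece that can be handled separately.

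For the first estimate, I would bound each summand of $U^{I,0}$ in $L_t^\infty H^m$. For $U_{osc}^{I,0}=\cL(t/\ep)W_{osc}$, the isometry property of $\cL(\tau)$ on $V_{\sym}^m$ (combined with the norm equivalence $\|\cdot\|_{V_{\sym}^m}\approx\|\cdot\|_{H^m}$ on the relevant subspace, as recorded in Appendix \ref{app-A}) gives $\|U_{osc}^{I,0}(t/\ep,t,\cdot)\|_{H^m}\lesssim\|W_{osc}(t,\cdot)\|_{H^m}$ pointwise in $t$, and then Lemma \ref{well-posedness-W} yields $\|W_{osc}\|_{L_T^\infty H^m}\lesssim\|W_{osc}(0)\|_{H^m}$ on the uniform interval $[0,T]$. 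For the mean-flow piece, the standard local well-posedness of \eqref{eq-vINS} in $H^m(\Omega)$ gives $\|v^{\INS}\|_{L_T^\infty H^m}\lesssim\|v_0^{\INS}\|_{H^m}$. Adding these and recalling $W(0)=W_{osc}(0)+(0,v_0^{\INS})^t$ (an orthogonal decomposition, so the norms add) produces $\|U^{I,0}\|_{L_T^\infty H^m}\lesssim\|W(0)\|_{H^m}$.

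For the second (damping) estimate, only the oscillating part $U_{osc}^{I,0}$ contributes, since $\na_h$ of a divergence-free $H^m$ field gives nothing new but more importantly the claim is specifically about $U_{osc}^{I,0}$. Here I would invoke the damping lemma from Appendix \ref{app-B}, which states $\|\na_h W_{osc}\|_{L_t^2 H^r}\lesssim(\ep/\nu)^{1/4}\|W_{osc}|_{t=0}\|_{H^{r+3/4}}$, with $r=m-3/4$. Since $\cL(\tau)$ is an isometry on $V_{\sym}^s$ and commutes with the horizontal derivatives $\na_h$ (the semigroup acts through phases $e^{i\alpha|k|\tau}$ and multiplication by $N_k^{\alpha}$, both of which commute with $\p_{x_1},\p_{x_2}$), one has $\|\na_h U_{osc}^{I,0}(t/\ep,t,\cdot)\|_{H^{m-3/4}}\approx\|\na_h W_{osc}(t,\cdot)\|_{H^{m-3/4}}$ for each $t$. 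Taking the $L_t^2$ norm and using $\|W_{osc}(0)\|_{H^m}\le\|W(0)\|_{H^m}$ gives $\|\na_h U_{osc}^{I,0}\|_{L_T^2 H^{m-3/4}}\lesssim(\ep/\nu)^{1/4}\|W(0)\|_{H^m}$, which is exactly the assertion.

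The one genuinely delicate point — and the step I would treat most carefully — is the commutation/isometry argument relating $\|(\na_h)\cL(\tau)W_{osc}\|_{H^s}$ to $\|(\na_h)W_{osc}\|_{H^s}$ at fixed $t$ when $\tau=t/\ep$ is the rapidly oscillating argument: one must check that the substitution $\tau\mapsto t/\ep$ does not spoil the $L_t^2$-in-time integration, i.e. that the bound is genuinely pointwise in $t$ before integrating. This is fine because the isometry holds for each frozen $\tau$, so evaluating at $\tau=t/\ep$ is harmless, but it is worth spelling out to avoid conflating the fast and slow time variables. Everything else reduces to the already-established well-posedness results (Lemma \ref{well-posedness-W} and the classical theory for \eqref{eq-vINS}) plus the explicit spectral representation of $\cL$ from Appendix \ref{app-A}.
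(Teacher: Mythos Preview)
Your proposal is correct and follows exactly the approach the paper indicates: the result is stated as a direct consequence of the isometry of $\cL(\tau)$ on $V_{\sym}^m$, Lemma \ref{well-posedness-W} for $W_{osc}$, the standard well-posedness of \eqref{eq-vINS} for $v^{\INS}$, and the damping lemma for the second bound. Your additional care in spelling out the commutation of $\na_h$ with $\cL(\tau)$ and the pointwise-in-$t$ nature of the isometry (so that substituting $\tau=t/\ep$ before taking $L_t^2$ is harmless) is appropriate and fills in details the paper leaves implicit.
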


Moreover,
as a direct consequence of 
the explicit expression \eqref{defS-barS}, we have the following result for $S_{osc}^{I,1}:$
\begin{thm}
For any $s\geq 0, 1\leq p \leq +\infty,$  it holds that
    \beq\label{SoscI1}
    \|S_{osc}^{I,1}\|_{L_t^{p}H^s}\lesssim \|U_{osc}^{I,0}\|_{L_t^{p}H^{s+\f12}}\, .
\eeq
\end{thm}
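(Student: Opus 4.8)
The plan is to exploit the explicit Fourier representation $S_{osc}^{I,1}=\sum_{k',\alpha} b_k^{\alpha}(t)\,\Psi_{k,1}^{I,\alpha}\,e^{i\alpha|k|\tau}$ from \eqref{defS-barS} together with the expression \eqref{psi1-I} for $\Psi_{k,1}^{I,\pm}$, and to compare it term-by-term with the representation $U_{osc}^{I,0}=\sum_{k',\alpha} b_k^{\alpha}(t)\,N_k^{\alpha}\,e^{i\alpha|k|\tau}$ from \eqref{form-UI0}. Since $|\alpha|=1$, the oscillatory factors $e^{i\alpha|k|\tau}$ have modulus one and contribute nothing to the $L^p$ (in space) or $L^p_t$ norms, so it suffices to bound, mode by mode, the $H^s(\Omega)$-Fourier-weight of $\Psi_{k,1}^{I,\pm}$ against that of $N_k^{\pm}$ with a uniform constant. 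Concretely, I would show
\[
\|\Psi_{k,1}^{I,\pm}\|_{H^s(\Omega)}\lesssim (1+|k|)^{1/2}\,\|N_k^{\pm}\|_{H^s(\Omega)},
\]
which, by orthogonality (essentially Parseval, as in the identity $\|U(0)\|_{V_{\sym}^s}^2\approx\sum_{k',\alpha}|b_k^\alpha|^2|k|^{2s}$ recorded in Appendix \ref{app-A}), immediately gives $\|S_{osc}^{I,1}(t)\|_{H^s}\lesssim\|U_{osc}^{I,0}(t)\|_{H^{s+1/2}}$ pointwise in $t$, and then taking $L^p_t$ norms yields \eqref{SoscI1}.

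The heart of the matter is therefore the single-mode estimate. From \eqref{psi1-I}, $\Psi_{k,1}^{I,\pm}$ is built from $\lambda_{k,1}^{\pm}$, from $f_k^{\pm}=\tfrac{\pm c_*\lambda_{k,1}^\pm}{2k_3}\sin(k_3 z)(z-a_3/2)$, from $\p_z f_k^\pm$, and from the combination $\cos(k_3 z)-\tfrac{|k|^2}{k_3}\sin(k_3 z)(z-a_3/2)$. Using \eqref{deflambdak1} one has $|\lambda_{k,1}^\pm|\lesssim |k_h|^2/|k|^{3/2}\lesssim |k|^{1/2}$. Each of the constituent functions is a trigonometric polynomial in $(x_h,z)$ of frequency comparable to $|k|$ (the factor $z-a_3/2$ is bounded and smooth and only produces, upon differentiation, lower-order-in-$|k|$ corrections), so differentiating $s$ times costs at most $(1+|k|)^s$. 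The only place an \emph{extra} power of $|k|$ appears beyond that of $N_k^\pm$ is the term $\tfrac{|k|^2}{k_3}\sin(k_3 z)(z-a_3/2)$ in the first component and, after dividing by the $|k|^{-1}$ normalization already present, this is of size $\lesssim |k|^2/|k_3|$; but crucially $f_k^\pm$ carries the small prefactor $\lambda_{k,1}^\pm/k_3$, i.e. size $\lesssim |k|^{1/2}/(|k|\,|k_3|)$ times $|k|^2/|k_3|$... — here I would need to be slightly careful and track that the worst case (small $k_3$) is controlled because $\lambda_{k,1}^\pm$ vanishes like $|k_h|^2|k|^{-3/2}$ while the normalization of $N_k^\pm$ is $c_*/(2|k|)$; collecting powers, the net loss relative to $\|N_k^\pm\|_{H^s}$ is exactly the advertised $(1+|k|)^{1/2}$. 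I would organize this as: (i) bound $\|f_k^\pm\|_{H^{s}}$ and $\|\p_z f_k^\pm\|_{H^s}$; (ii) bound the first-component function; (iii) assemble, using $\|N_k^\pm\|_{H^s}\approx c_* |k|^{s-1}\cdot|k|$ up to constants depending on whether $k_3=0$.

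The main obstacle I anticipate is precisely the bookkeeping around $k_3$: the ODE solution $f_k^\pm$ has a $1/k_3$ in front, which is singular as $k_3\to 0^+$, and one must use the numerator $\lambda_{k,1}^\pm\sim|k_h|^2/|k|^{3/2}$ (together with $|k|\gtrsim|k_h|$ and, when $k_3=0$, $|k|=|k_h|$) to see that no genuine singularity occurs and that the loss is no worse than half a derivative. A clean way to sidestep hand-computation is to argue more softly: multiply the defining ODE $(\p_z^2+k_3^2)f_k^\pm=\pm c_*\lambda_{k,1}^\pm\cos(k_3z)$ by $\overline{f_k^\pm}$, integrate over $[0,a_3]$, use the Neumann-type boundary data $\p_z f_k^\pm|_{z=0,a_3}$ of size $\lesssim|\lambda_{k,1}^\pm|$, and obtain $\|f_k^\pm\|_{L^2}+\|\p_z f_k^\pm\|_{L^2}\lesssim|\lambda_{k,1}^\pm|\lesssim|k|^{1/2}$ uniformly; then bootstrap the ODE to get $\|\p_z^j f_k^\pm\|_{L^2}\lesssim (1+|k|)^{j-1}|k|^{1/2}$ for $j\ge 2$, which combined with the $x_h$-frequency $|k_h|\le|k|$ gives $\|f_k^\pm\|_{H^s}\lesssim (1+|k|)^{s-1}|k|^{1/2}$. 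Matching this against $\|N_k^\pm\|_{H^s}\approx(1+|k|)^{s-1}$ furnishes the factor $(1+|k|)^{1/2}$ and closes the argument. Finally, I would remark that the same representation shows \eqref{SoscI1} holds for all $1\le p\le\infty$ simultaneously since time enters only through the coefficients $b_k^\alpha(t)$, which are common to $S_{osc}^{I,1}$ and $U_{osc}^{I,0}$.
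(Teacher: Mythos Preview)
Your overall strategy (pull out the common coefficients $b_k^\alpha(t)e^{i\alpha|k|\tau}$ and compare the spatial building blocks $\Psi_{k,1}^{I,\pm}$ to $N_k^{\pm}$) is the same as the paper's, which simply records the result as ``a direct consequence of the explicit expression \eqref{defS-barS}.'' But two of your key steps fail, and in fact the inequality with a loss of only $1/2$ a derivative is not what the explicit formula supports.

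\textbf{The soft energy argument does not give a bound.} Multiplying $(\partial_z^2+k_3^2)f_k^{\pm}=\pm c_*\lambda_{k,1}^{\pm}\cos(k_3 z)$ by $\overline{f_k^{\pm}}$ and integrating by parts produces
\[
k_3^2\|f_k^{\pm}\|_{L^2}^2-\|\partial_z f_k^{\pm}\|_{L^2}^2=(\text{forcing and boundary contributions}),
\]
which is not coercive: under the Neumann-type conditions \eqref{bdc} the operator $\partial_z^2+k_3^2$ has the kernel $\cos(k_3 z)$, and the forcing is exactly resonant with it. So nothing bounds $\|f_k^{\pm}\|_{L^2}$ this way. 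One has to read the size off the explicit formula $f_k^{\pm}=\frac{\pm c_*\lambda_{k,1}^{\pm}}{2k_3}\sin(k_3 z)(z-\tfrac{a_3}{2})$, which gives $\|f_k^{\pm}\|_{L^2_z}\sim |\lambda_{k,1}^{\pm}|/|k_3|$.

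\textbf{The mode-by-mode bound is off by a full power of $|k|$.} The horizontal velocity entry of $\Psi_{k,1}^{I,\pm}$ is $ik_h f_k^{\pm}e^{ik_h\cdot x_h}$, so its $L^2(\Omega)$ norm is $\sim |k_h|\,|\lambda_{k,1}^{\pm}|/|k_3|\sim |k_h|^{3}/(|k|^{3/2}|k_3|)$, while $\|N_k^{\pm}\|_{L^2}$ is a fixed constant (equal to $1/2$). In the regime $k_3'=1$ (so $|k_3|=\pi/a_3$ is bounded) and $|k_h|\to\infty$ this ratio is $\sim |k|^{3/2}$, not $|k|^{1/2}$. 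The same scaling appears in the density entry through $\tfrac{|k|^2}{k_3}\sin(k_3 z)(z-\tfrac{a_3}{2})$. Your bookkeeping only tracks $\|f_k^{\pm}\|_{H^s_z}$ and forgets the extra factor $|k_h|$ (respectively $|k|^2/|k_3|$) introduced when passing from $f_k^{\pm}$ to the velocity (respectively density) components of $\Psi_{k,1}^{I,\pm}$; this is exactly the missing power. As a consequence, the explicit representation only yields
\[
\|S_{osc}^{I,1}\|_{L_t^{p}H^s}\lesssim \|U_{osc}^{I,0}\|_{L_t^{p}H^{s+3/2}},
\]
and a single-mode choice ($b_k^\alpha$ supported at one $k$ with $k_3'=1$ and $|k_h|$ large) shows the stated $s+\tfrac12$ cannot hold uniformly. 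This does not harm the paper's applications (there is ample slack, since $s\ge 11+r_0$), but your argument cannot be repaired to give the half-derivative loss as written.
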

We now study the second interior profile $U_{osc}^{I,1}:=V-\sqrt{\f{\nu}{\ep}}S_{osc}^{I,1},$ where $V$ is defined in \eqref{def-V}. Based on the above estimate for $S_{osc}^{I,1},$ it is enough for us to show the following result on $V:$
\begin{lem}\label{lem-V}
Assume that $a_1, a_2, a_3$ are such that the small divisor estimate \eqref{no-resonantasp} holds for some $r_0>0,$ then for any $s\geq 5/2,$
\beq\label{es-V}
\begin{aligned}
\|\ep V\|_{L_t^{\infty}H^s}&\lesssim \ep(1+t) \bigg(\|U^{I,0}\|_{L_t^{\infty}H^{s+r_0+2}}^2+\|U^{I,0}\|_{L_t^{\infty}H^{s+r_0+2}}^3\bigg)\\
&\lesssim \ep(1+t)\big(\|W(0)\|_{H^{s+r_0+2}}^2+\|W(0)\|_{H^{s+r_0+2}}^3\big)\, .
\end{aligned}
\eeq
\end{lem}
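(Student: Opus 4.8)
\textbf{Proof plan for Lemma \ref{lem-V}.} The starting point is the explicit representation \eqref{def-V} for $V$, which expresses $V$ as $\cL(\tau)$ applied to a time integral of the oscillatory defect
$$
G(\tau',t,x):= -\cL(-\tau')Q(U^{I,0},U^{I,0})+\overline{Q}(W,W)+\cL(-\tau')\Big(\begin{smallmatrix}0&0\\0&\mu_1\Delta_h+\mu_2\na\div\end{smallmatrix}\Big)U^{I,0}-\overline{\Delta}W,
$$
where $\tau=t/\ep$. Since $\cL(\tau)$ is an isometry on $V_{\sym}^s$ (Appendix \ref{app-A}), it suffices to bound $\ep\int_0^{t/\ep}G(\tau',t,\cdot)\,\d\tau'$ in $H^s$. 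The point of subtracting $\overline{Q}(W,W)$ and $\overline{\Delta}W$ is exactly that $G$ has zero mean in the fast variable $\tau'$ by the definitions \eqref{res-dissipation}, \eqref{res-nonlinear}; that is, $G$ is an almost-periodic function of $\tau'$ (valued in $H^{s}$ at each fixed $t$) whose mean vanishes. For such a function the antiderivative $\int_0^{\tau} G\,\d\tau'$ stays bounded in $\tau$ provided the nonzero Fourier frequencies of $G$ are bounded away from $0$ — this is precisely where the small divisor condition \eqref{no-resonantasp} enters.

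The concrete plan is as follows. First, I would expand $Q(U^{I,0},U^{I,0})$ using $U^{I,0}=\cL(\tau)W=\sum_{k',\alpha}b_k^{\alpha}(t)N_k^{\alpha}e^{i\alpha|k|\tau}+(0,v^{\INS})^t$ and likewise for the dissipation term, obtaining for $G$ an absolutely convergent series
$$
G(\tau',t,x)=\sum_{k',l',m'}\sum_{\alpha,\beta,\gamma} c_{klm}^{\alpha\beta\gamma}(t)\,N_m^\gamma(x)\,e^{i\omega_{klm}^{\alpha\beta\gamma}\tau'},
$$
where $\omega_{klm}^{\alpha\beta\gamma}=\lambda_m^\gamma-\lambda_k^\alpha-\lambda_l^\beta$ and the resonant terms $\omega_{klm}^{\alpha\beta\gamma}=0$ are absent because they have been absorbed into $\overline{Q}(W,W)+\overline{\Delta}W$. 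Integrating termwise in $\tau'$ from $0$ to $t/\ep$ and multiplying by $\ep$ gives
$$
\ep\int_0^{t/\ep}G\,\d\tau'=\ep\sum_{\text{nonresonant}} \frac{c_{klm}^{\alpha\beta\gamma}(t)}{i\,\omega_{klm}^{\alpha\beta\gamma}}\big(e^{i\omega_{klm}^{\alpha\beta\gamma}t/\ep}-1\big)N_m^\gamma.
$$
Now \eqref{no-resonantasp} gives $|\omega_{klm}^{\alpha\beta\gamma}|\gtrsim (1+|k|)^{-r_0}(1+|l|)^{-r_0}$ on the nonresonant set, so each denominator costs at most $r_0$ derivatives on $k$ and $l$. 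Since the coefficients $c_{klm}^{\alpha\beta\gamma}$ are bilinear (from $Q$) in $b^\alpha_k b^\beta_l$ with an extra factor $|m|\le|k|+|l|$ coming from the $L$ or $\Delta_h$ structure, the $H^s$ norm of the sum is controlled by $\|U^{I,0}\|_{H^{s+r_0+2}}^2$ (one extra power of $|m|$ from the derivative in $Q$, the two $r_0$-shifts, absorbed via Cauchy–Schwarz and the algebra property of $H^{s'}$ for $s'>3/2$, using $s\ge 5/2$). The extra $(1+t)$ factor tracks the slow time dependence of the $b_k^\alpha(t)$: differentiating $c_{klm}^{\alpha\beta\gamma}(t)$ in the (hidden) integration-by-parts in $t$ inside the isometry estimate, or simply the crude bound $\|b(t)\|\lesssim(1+t)\cdots$, produces it; one should integrate by parts once in $t$ (Duhamel in the slow variable) to avoid a genuine loss, using $\pt b_k^\alpha$ controlled by Lemma \ref{lem-elem}. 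Finally the second inequality in \eqref{es-V} is immediate from Theorem \ref{lem-UI0} and Lemma \ref{well-posedness-W}, which bound $\|U^{I,0}\|_{L_t^\infty H^{s+r_0+2}}$ by $\|W(0)\|_{H^{s+r_0+2}}$.

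\textbf{Main obstacle.} The delicate point is the bookkeeping of the small divisors: one must check that the resonant contributions (both the $\omega=0$ triple resonances and any degenerate ones) are exactly those removed by $\overline{Q}$ and $\overline{\Delta}$ — this uses the precise description of the resonant sets $\cA_m^\nu$, $B_m^\nu$ from Appendix \ref{app-B} — and that on the complement the lower bound \eqref{no-resonantasp} really applies to the frequencies arising here (with $\gamma,\alpha,\beta\in\{+,-\}$ and the correct identification $\lambda_k^\pm=\pm i|k|$). A secondary technical nuisance is the cross term $\overline{Q}_2$ and the mean-flow contributions $v^{\INS}$: these are not purely oscillatory, but their interaction with oscillating modes still produces only nonresonant frequencies (since $|l|=|m|$ forces $k=0$ in $B_m^\nu$, already absorbed), so they contribute a term of the same shape. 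Once the resonance structure is pinned down, the remaining estimates are routine multilinear bounds in $H^s$ with algebra-property and Cauchy–Schwarz, plus the $t$-integration by parts to keep the growth linear in $t$.
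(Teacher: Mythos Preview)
Your plan is correct and follows essentially the same route as the paper: expand the integrand of \eqref{def-V} along the eigenbasis $\{N_m^\gamma\}$, observe that subtracting $\overline{Q}(W,W)$ and $\overline{\Delta}W$ removes precisely the resonant frequencies (the sets $\cA_m^\nu$, $B_m^\nu$ of Appendix~\ref{app-B}), and then gain the factor $\ep$ from the oscillatory time integral using the small-divisor lower bound \eqref{no-resonantasp}. The paper organizes the argument by splitting off the mean-flow/oscillating cross term $J_1$ (where the two-wave frequency $\omega_{kl}^{\alpha\beta}$ already has a good lower bound without invoking \eqref{no-resonantasp}) from the fully oscillating triple interaction $J_2$, and it obtains the $(1+t)$ factor by integrating by parts once in the slow variable and invoking Lemma~\ref{lem-elem} for $\big(\langle k\rangle^{s}\!\int_0^t|\partial_t b_k^\alpha|\big)_{\ell^2}$ --- exactly the mechanism you allude to at the end of your first paragraph.
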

\begin{proof}
    Let us give the estimate of the first term in  \eqref{def-V}, namely the nonlinear term 
    \beqs
   \cL(\tau) \int_0^{\tau}\bigg(-\cL(-\tau') Q\big(
U^{I,0},U^{I,0}\big)
+\overline{Q}(W,W)\bigg) \d \tau',
\eeqs
the other term concerning the horizontal dissipation is easier to control whose estimate is thus omitted. 
Since $\cL(\tau)$ is unitatry in $V_{\sym}^s,$ it suffices to show that  $\int_0^{\tau}\bigg(-\cL(-\tau') Q\big(
U^{I,0},U^{I,0}\big)
+\overline{Q}(W,W)\bigg) \d \tau'$ is uniformly bounded in $H^s$ and can be controlled by the right hand side of \eqref{es-V}.
By the definition of $\overline{Q}(W,W),$ it could be written in the form
\begin{align*}
\sum_{\omega_{kl}^{\alpha\beta}\neq 0} N_l^{\beta} \int_0^t
F_{1,l}^{\beta}\bigg(\left(\begin{array}{c}
   0  \\ 
   v^{\INS}
\end{array}\right), b_k^{\alpha} N_{k}^{\alpha} \bigg)  e^{i\omega_{kl}^{\alpha\beta}\f{t'}{\ep}}\f{\d t'}{\ep}  +\text{symmetric term} \\
+\sum_{\omega_{klm}^{\alpha\beta\gamma}\neq 0} N_m^{\gamma}  \int_0^t F_{2,m}^{\gamma}\bigg( b_k^{\alpha} N_{k}^{\alpha}, b_l^{\beta} N_{l}^{\beta}\bigg)   e^{i\omega_{klm}^{\alpha\beta\gamma}\f{t'}{\ep}}\f{\d t'}{\ep}:= J_1(t)+J_2(t), 
\end{align*} 
where $\omega_{kl}^{\alpha\beta}=\alpha |k|-\beta |l|,$  
$\omega_{klm}^{\alpha\beta\gamma}=\alpha |k|+\beta |l|-\gamma |m|,$  and $ F_{1,l}^{\beta}, F_{2,m}^{\gamma}$ are some polynomials with degree 2. 
As $|\omega_{kl}^{\alpha\beta}|\geq 1$ if it does not vanish, we can first integrate by parts in time and then take the 
$H^s$ norm. By using the convolution inequality, we find, for $s\geq 5/2,$
\beqs 
\|J_1(t)\|_{H^s}\lesssim \|W\|_{L_t^{\infty}H^s}^2+ \| v^{\INS}\|_{L_t^{\infty}H^s}
\bigg(\langle k\rangle^s\int_0^t |\pt b_k^{\alpha}|\bigg)_{\ell_k^2}
+\|\pt v^{\INS}\|_{L_t^{2}H^s}\| W_{osc}\|_{L_t^{2}H^s}\, .
\eeqs
Note that thanks to Lemma \ref{lem-elem}, it holds that 
$$
\bigg(\langle k\rangle^s\int_0^t |\pt b_k^{\alpha}|\bigg)_{\ell_k^2}\lesssim \|W_{osc}(0)\|_{H^{s}}+(1+t)\|W_{osc}\|_{L_t^{\infty}H^{s+1}}^2.$$

The next term is controlled in a similar manner. One has, 
if the small divisor estimate \eqref{no-resonantasp} holds true, 
\beqs 
\|J_2(t)\|_{H^s}\lesssim \|W\|_{L_t^{\infty}H^{s+r_0}}^2+ \|W_{osc}\|_{L_t^{\infty}H^{s+r_0}}\bigg(\langle k\rangle^{s+r_0+1}\int_0^t |\pt b_k^{\alpha}|\bigg)_{\ell_k^2}\, . 
\eeqs
\end{proof}

\subsubsection{The boundary layers oscillating boundary layer $U_{osc}^{B,0}=(0,\cB_h^0, \sqrt{\ep\nu}\cB_3^1)^t$}
\begin{lem}
    Let $\cB_h^0, \cB_3^1$ be defined in \eqref{def-cB_h^0}, \eqref{bo-bl-3} it holds that
    \begin{align}\label{cBh-Linfty}
      \|
     \cB_h^0 \|_{L_t^{\infty}H_{co}^s}+\|\cB_3^1\|_{L_t^{\infty}H_{co}^{s-1/2}} + \sqrt{\ep\nu}  (\|\p_z\cB_h^0
     \|_{L_t^{\infty}H_{co}^{s-1}}+\|\p_z\cB_3^1 
     \|_{L_t^{\infty}H_{co}^{s-3/2}})\lesssim (\ep\nu)^{1/4}\|W_{osc}\|_{L_t^{\infty}H^{s+\f14}}.
    \end{align}
 Moreover, it may behave better when taking $L_t^2$ type norm: 
\begin{align}\label{cBh-L2}
  \|\cB_h^0\|_{L_t^2H_{co}^s}  + \sqrt{\ep\nu}  \|\cB_h^0\|_{L_t^2H_{co}^{s-1}}\lesssim \min\{(\ep\nu)^{1/4}, \ep^{\f12}\}\|W_{osc}\|_{L_t^{\infty}H^{s+1}\cap L_t^2H^{s+2^{+}}}.
\end{align}
\begin{rmk}
   In view of \eqref{cBh-L2}, we have indeed established a stronger estimate than required (specifically, the factor on the right-hand side being $(\ep\nu)^{\f14}$) when $\nu < \ep$. This improvement is attributed to the damping effect caused by the strong boundary layers.
\end{rmk}

\end{lem}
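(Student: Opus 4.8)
\textbf{Proof strategy for \eqref{cBh-Linfty}, \eqref{cBh-L2}.}
The plan is to exploit the explicit Fourier–Laplace representation \eqref{def-cB_h^0} of $\cB_h^0$, which is a finite superposition over $(k',\alpha)$ of terms of the form $c_{k}\,\frac{k_h}{|k|}\,b_k^{\alpha}(t)\,e^{ik_h\cdot x_h}\,e^{i\alpha|k|\tau}\,e^{-\zeta\sqrt{|k|/(2\mu_1)}(1+\alpha i)}$ with $\zeta=z/\sqrt{\ep\nu}$, together with the companion formula for $\cB_3^1$ obtained from the divergence-free relation \eqref{bo-bl-3}. The conormal vector fields $Z_1,Z_2,Z_0=\ep\p_t$ act on these profiles by producing harmless factors ($ik_h$, $\ep\p_t b_k^{\alpha}$, or the oscillation phase, which is bounded), while $Z_3=\phi(z)\p_z$ acting on the exponential layer gains a factor $\phi(z)\cdot|k|^{1/2}/\sqrt{\ep\nu}$, and since $\phi(z)\lesssim z$ and $z\,e^{-cz/\sqrt{\ep\nu}}\lesssim \sqrt{\ep\nu}$, each $Z_3$ is in fact bounded (this is the point of using $\phi$ rather than $\p_z$). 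Thus $Z^{\alpha}\cB_h^0$ is again a superposition of the same type with $b_k^{\alpha}$ replaced by $\p_t^{j}b_k^{\alpha}$, $j\le\alpha_0$, and an extra polynomial weight $|k|^{|\alpha|}$ (plus lower-order terms from differentiating $\phi$).

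First I would reduce matters to a one–dimensional (in $\zeta$) $L^2$ estimate: for a single mode, $\|e^{-\zeta\sqrt{|k|/(2\mu_1)}}\|_{L^2_z([0,a_3])}=\sqrt{\ep\nu}\,\|e^{-\zeta\sqrt{|k|/(2\mu_1)}}\|_{L^2_\zeta}\lesssim (\ep\nu)^{1/4}|k|^{-1/4}$, which is exactly where the prefactor $(\ep\nu)^{1/4}$ and the loss of $1/4$ derivative come from. Summing in $(k_h,k_3)$ via orthogonality in $x_h$ and in $z$ (the modes $\cos(k_3z)$ are not orthogonal against the layer, but one still has an $\ell^2$–in–$k$ bound by Schur's test / the boundedness of the Laplace transform, as was used implicitly in Appendix A and B), one gets $\|\cB_h^0\|_{H^s_{co}}\lesssim (\ep\nu)^{1/4}\,\big(\sum_{k}|k|^{2(s+1/4)}\sup_t|b_k^{\alpha}|^2\big)^{1/2}\lesssim (\ep\nu)^{1/4}\|W_{osc}\|_{H^{s+1/4}}$, uniformly in $t$; for $\p_z$ one loses one more derivative and gains $(\ep\nu)^{-1/2}$, giving the stated $\sqrt{\ep\nu}\,\|\p_z\cB_h^0\|_{H^{s-1}_{co}}$ bound. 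The $\cB_3^1$ estimates follow verbatim since $\cB_3^1=\int_\zeta^\infty \div_h\cB_h^0$, which trades one $x_h$–derivative for the $\zeta$–integration (another half-power), explaining the shifts $s-1/2$, $s-3/2$. The time derivatives hitting $b_k^{\alpha}$ are controlled through Lemma \ref{lem-elem} (estimate \eqref{es-ptb}) and the well-posedness Lemma \ref{well-posedness-W}, so that $\sup_t|\p_t^j b_k^{\alpha}|$ is dominated by $\|W_{osc}\|_{L_t^\infty H^{s+\cdots}}$ up to finitely many extra derivatives absorbed in the $(\cdot)^{+}$.

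For the improved bound \eqref{cBh-L2} the key observation is the damping estimate for $W_{osc}$: from \eqref{EnergyId-bm} one has $\sqrt{\nu/\ep}\sum_{k,\alpha}\frac{|k_h|^2}{|k|^{3/2}}\int_0^t|b_k^\alpha|^2\lesssim \|W_{osc}(0)\|_{L^2}^2$, i.e. $\|\na_hW_{osc}\|_{L_t^2H^r}\lesssim (\ep/\nu)^{1/4}\|W_{osc}(0)\|_{H^{r+3/4}}$. Since $\cB_h^0$ carries a factor $k_h/|k|$, the single-mode $L^2_z$ bound $(\ep\nu)^{1/4}|k|^{-1/4}$ combined with an extra $|k_h|/|k|\lesssim |k_h|^{1/2}|k|^{-1/2}$ lets one write $\|\cB_h^0\|_{L_t^2H^s_{co}}\lesssim (\ep\nu)^{1/4}\big(\sum_k |k|^{2(s+\cdots)}\int_0^t|b_k^\alpha|^2\tfrac{|k_h|}{|k|}\cdots\big)^{1/2}$ and then feed in the damping sum to replace a factor $(\ep\nu)^{1/4}\cdot(\nu/\ep)^{-1/4}=\ep^{1/2}$; taking the minimum with the crude $(\ep\nu)^{1/4}$ bound yields $\min\{(\ep\nu)^{1/4},\ep^{1/2}\}$, at the cost of one more $x_h$–derivative and a bit of integrability (hence $L_t^\infty H^{s+1}\cap L_t^2 H^{s+2^+}$). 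The main obstacle — and the only genuinely delicate point — is making the summation over the normal Fourier index $k_3$ rigorous: the layer profile $e^{-\zeta\sqrt{|k|}}$ is not diagonal in the $\{\cos(k_3 z)\}$ basis, so one must justify the $\ell^2_{k}$ control either by a direct Schur-test estimate on the kernel $\langle e^{-\zeta\sqrt{|k|}},\cos(k_3z)e^{ik_h\cdot x_h}\rangle$, or, as elsewhere in the paper, by simply keeping the physical-space $L^2$ norm of the explicit sum and using almost-orthogonality of distinct exponentials; I expect the latter, combined with the finite-band structure in $\alpha$, to go through without new ideas but with some bookkeeping.
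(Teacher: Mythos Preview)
Your strategy for \eqref{cBh-Linfty} is essentially correct: the single-mode bound $\|e^{-\zeta\sqrt{|k|}}\|_{L^2_z}\sim(\ep\nu)^{1/4}|k|^{-1/4}$ is the source of both the $(\ep\nu)^{1/4}$ prefactor and the $\tfrac14$-derivative loss, and the conormal fields act boundedly on the layer as you say. Your worry about the $k_3$-summation is justified --- the exponentials $e^{-\zeta\sqrt{|k|}(1+\alpha i)}$ are \emph{not} almost-orthogonal in $L^2_z$, and a Schur test on the kernel $(\sqrt{|k|}+\sqrt{|l|})^{-1}$ fails (the row sums diverge). What does work is a Minkowski--Cauchy--Schwarz argument: after orthogonality in $x_h$, apply Minkowski in $(k_3,\alpha)$ to the $L^2_z$-norm and then Cauchy--Schwarz in $k_3$, using that $\sum_{k_3}|k|^{-(1+2\delta)}\lesssim |k_h|^{-2\delta}\lesssim C$ (discreteness of $k_h$). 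This yields the claimed $\ell^2$ bound with a $0^+$ extra loss.

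For \eqref{cBh-L2} the paper takes a genuinely different route from yours. It does \emph{not} separate modes via Minkowski; instead it expands $\int_0^t\|\cB_h^0\|_{L^2}^2$ as a double sum over $(k,l,\alpha,\beta)$ with $k_h=l_h$ and splits according to the resonance phase $\phi(k,l)=\alpha|k|-\beta|l|$. On the \emph{diagonal} $\phi=0$ (forcing $k=l,\ \alpha=\beta$) one uses exactly the damping $\sqrt{\nu/\ep}\sum\tfrac{|m_h|^2}{|m|^{3/2}}\int|b_m^\alpha|^2\lesssim\|W(0)\|_{L^2}^2$ you invoke, giving $\sqrt{\ep\nu}\min\{1,\sqrt{\ep/\nu}\}$. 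The \emph{off-diagonal} piece, however, is handled not by damping but by integrating by parts in time against $e^{i\phi s/\ep}$, exploiting the fast oscillation to gain a factor $\ep/|\phi|$; one then uses the lower bound $|\phi|\gtrsim(|k_3|^2-|l_3|^2)/(|k|+|l|)$ and estimates on $\p_t b_k^\alpha$. Your damping-only plan can also be pushed through (via the Minkowski--CS route above, noting $|k_h|\geq c>0$), and arguably loses fewer derivatives; but you never mention the time-oscillation mechanism, which is the paper's key device for the cross terms and is what makes the proof ``go through without new ideas'' in the paper's sense.
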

 \begin{proof}
     We shall present only the proof of \eqref{cBh-L2} for $\cB_h^0$ and for the case $s=0,$ the estimate \eqref{cBh-Linfty} is easier and the case of $s>0$ is similar.
     By the explicit formulae for $\cB_h^0$ in \eqref{def-cB_h^0}, 
we compute
\beqs 
\int_0^t  \|\cB_h^0\|_{L^2(\Omega)}^2 \, \d s=a_3^2\sum_{\substack{k', l' \in \mathbb{Z}^2\times \mathbb{N}, \\k_h'=l'_h\neq 0}}\sum_{\alpha,\beta \in\{+,-\}} \f{|k_h|^2}{|k||l|} \int_0^{a_3}e^{-\f{z}{\sqrt{\ep\nu}}d(k,l)}\d z \int_0^t b_k^{\alpha} \overline{b_l^{\beta}} e^{\phi(k,l){s}/{\ep}}\,\d s
\eeqs
 where $d(k,l)=\sqrt{|k|}(1+\alpha i)+\sqrt{|l|}(1-\beta i), \,\phi(k,l)= \alpha |k|-\beta|l|.$  
 We split the above summation into two pieces, depending on $\phi(k,l)$ vanishes or not and denote them as $I_1$ and $I_2.$ For the first piece, the non-zero contribution comes merely from the summation where
 $\alpha=\beta, k=l.$ It thus holds  that
 \beqs 
|I_1|\lesssim \sqrt{\ep\nu}\sum_{k',\alpha}a_3^2 \f{|k_h|^2} {|k|^{5/2}} \int_0^t |b_k^{\alpha}|^2 (s)\,\d s\lesssim \sqrt{\ep\nu}\min\{ 1, \sqrt{\ep/\nu} \} \|W_{osc}(0)\|_{L^2}^2 .
 \eeqs
 Note that in the last inequality, we have used the damping of $b_k^{\alpha}$ proved in \eqref{EnergyId-bm}.

 As for the second piece $I_2$, we take benefits of the fast oscillations in the time integral. 
More precisely, by integrating by parts in time, one has 
\beqs 
|I_2|\leq \ep \sqrt{\ep\nu}\sum_{\substack{k',l',\alpha,\beta, \\\phi\neq 0}}\sum_{} \f{|k_h|^2}{|k||l|} 
\f{1}{| \phi d(k,l)|}\bigg(\bigg| \int_0^t \p_s(b_k^{\alpha}\overline{b_l^{\beta}})
\d s\bigg|+ \bigg|( b_k^{\alpha}\overline{b_l^{\beta}} e^{is\phi/\ep} )\big|_{0}^t\bigg|\bigg):= I_{21}+ I_{22}.
\eeqs
Note that once $\phi(k,l)\neq 0,$ it holds that $|\phi(k,l)|\gtrsim (|k|+|l|) \mathbf{I}_{\{\alpha\neq\beta\}}+\f{k_3^2-l_3^2}{|k|+|l|}\mathbf{I}_{\{\alpha=\beta,k_3\neq l_3\}},$ this together with 
$|d(k,l)|\gtrsim \sqrt{|k|+|l|},
$ yields that
\begin{align*}
    I_{21}&\lesssim \ep\nu \int_0^t\sum_{k', l', k_h'=l_h'} \sum_{\alpha,\beta} \f{|k_h|^2}{|k||l|} \f{\sqrt{|k|+|l|}}{|k_3|+|l_3|+1}  |\sqrt{\ep/\nu}\pt b_k^{\alpha}||b_l^{\beta}|\, \d s \\
    & \lesssim \ep\nu \int_0^t \sum_{k',\alpha} (1+|k_3|)^{0^{+}} |\sqrt{\ep/\nu}\pt b_k^{\alpha}|^2 
    + \sum_{l', \beta}(1+|k_3|)^{0^{+}} 
   |k_h| |b_l^{\beta}|^2 \,\d s\\
    & \lesssim \ep\nu \big(\|\sqrt{\ep/\nu}\pt W_{osc}\|_{L_t^2 H^{0^{+}}}^2+ \|
    W_{osc}\|_{L_t^2 H^{{\f{1}{2}}^{+}}}^2\big)\lesssim \ep\nu .
\end{align*}
The second term $I_{22}$ can be controlled in a similar way
\beqs 
 I_{22}\lesssim \ep \sqrt{\ep\nu}  \| W_{osc}\|_{L_t^{\infty} H^{{\f{1}{4}}^{+}}}^2 .
\eeqs
 We thus proved that $\|\cB_0^h\|_{L_t^2L^2(\Omega)}\lesssim \min\big\{ {\ep\nu}^{\f{1}{4}}, \ep^{\f12}\big\},$ the case for $s>0$ being similar. 
 \end{proof}
\subsection{The non-oscillating profiles}
\subsubsection{Well-posedness of the boundary layer $v^{p,0}$}
To state the result, we introduce some notations. 
Denote $\tilde{\Omega}=\mathbb{T}_{a_1,a_2}^2\times \mR_{+}$ and let $\delta, \lambda$ be two positive numbers.
For any $t\in (0, 1/\lambda],$ define the norm
\begin{align*}
\|f(t)\|_{H_{\delta,\lambda}^s}^2:=\sum_{j+k+l\leq s}\int_{\tilde{\Omega}} e^{2(2-\lambda t)z^2}|\p_{x_1}^{j}\p_{x_2}^{k} (\delta\theta\p_\theta)^{l} f|^2\, \d x_h \d \theta .
\end{align*}
\begin{prop} \label{prop-well-vhp}
 Let $s\geq 11$ and $$\underline{U_{osc}^{I,0}}, \,\underline{v_h^{\INS}}\in L_{T_2}^{\infty}H^{s-\f12}(\mathbb{T}_{a_1,a_2}^2), \quad \underline{\pt v_h^{\INS}}\in L_{T_2}^{\infty}H^{s-\f52}(\mathbb{T}_{a_1,a_2}^2).$$ There exist $\delta>0$ small enough, $\lambda>0$ large enough and
   $T_4\in (0,\min\{T_2,  1/\lambda\}]$ independent of $\ep,\nu,$ such that the equation \eqref{prandtl-bo} admits a unique solution in $C([0,T_4], H_{\delta,\lambda}^{s-1}(\tilde{\Omega}))\cap L^2([0,T_4], H_{\delta,\lambda}^{s}(\tilde{\Omega})).$ 
\end{prop}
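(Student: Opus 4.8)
The plan is to solve the anisotropic Prandtl system \eqref{prandtl-bo} by a standard energy method in the conormal analytic-in-$z$ (i.e. Gaussian-weighted) Sobolev spaces $H_{\delta,\lambda}^s(\tilde\Omega)$, treating the vertical dissipation $\mu_1\p_\theta^2 v_h^p$ as the regularizing principal part and the horizontal dissipation $\mu_1\Delta_h v_h^p$ as a bonus term that supplies control of $\na_h v_h^p$. The unknowns are $v_h^p$ together with the slaved quantity $v_3^{p,1}$, which by the second equation in \eqref{prandtl-bo} is recovered from $v_h^p$ through $\p_\theta v_3^{p,1}=-(\div_h v_h^p)+O(\ep)(\dots)$, hence $v_3^{p,1}=v_3^{p,1}|_{\theta=0}-\int_0^\theta \div_h v_h^p\,\d\theta'+O(\ep)(\dots)$; the transport coefficient multiplying $\p_\theta v_h^p$ in the first equation is $-v_3^{p,1}|_{\theta=0}+v_3^{p,1}+\theta\,\underline{\p_z u_3^{I,0}}$, which is designed precisely so that it vanishes at $\theta=0$ and grows at most linearly in $\theta$, so the Gaussian weight $e^{2(2-\lambda t)z^2}$ can absorb it provided $\lambda$ is large.

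First I would set up the approximate/iteration scheme: either a parabolic regularization (adding $\mu\p_\theta^4$ or mollifying in $x_h$) or a direct Picard iteration on the quasilinear system, producing approximate solutions $v_h^{p,(n)}$ with uniform bounds on a time interval $[0,T_4]$. Then I would run the a priori estimate. Apply $\p_{x_1}^j\p_{x_2}^k(\delta\theta\p_\theta)^l$ with $j+k+l\le s-1$ to the first equation of \eqref{prandtl-bo}, multiply by $e^{2(2-\lambda t)z^2}$ times the same derivative of $v_h^p$, and integrate over $\tilde\Omega$. The key points: (i) the weight derivative in time produces a good term $-2\lambda t\!\int z^2 e^{\dots}|\cdots|^2$ which, combined with the $\theta\p_\theta$-type contributions, dominates the dangerous transport term with linearly growing coefficient once $\lambda$ is chosen large enough and $\delta$ small enough (this is the classical Gaussian-weight trick, cf.\ the analytic-Prandtl literature and \cite{MR3843301}); (ii) the vertical viscosity gives $+\mu_1\!\int e^{\dots}|\p_\theta(\cdots)|^2$ after integration by parts, with commutator $[\p_\theta,e^{(2-\lambda t)z^2}]$ terms controlled by the same good weighted term; (iii) the horizontal viscosity gives $+\mu_1\!\int e^{\dots}|\na_h(\cdots)|^2$, needed to close the estimate on the $\div_h v_h^p$ nonlinearity entering through $v_3^{p,1}$; (iv) the nonlinear terms $(\underline{u_h^{I,0}}+v_h^p)\na_h v_h^p$, $v_h^p\cdot\na_h\underline{u_h^{I,0}}$ and the $\p_\theta$-transport term are handled by the algebra/Moser estimates in $H_{\delta,\lambda}^s$ together with the hypotheses $\underline{U_{osc}^{I,0}},\underline{v_h^{\INS}}\in L^\infty_{T_2}H^{s-1/2}$, $\underline{\pt v_h^{\INS}}\in L^\infty_{T_2}H^{s-5/2}$, noting that $v_3^{p,1}|_{\theta=0}=-\underline{v_h^{\INS}}$ supplies exactly the $x_h$-regularity needed. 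Summing over multi-indices yields $\frac{\d}{\d t}\|v_h^p\|_{H_{\delta,\lambda}^{s-1}}^2 + \|v_h^p\|_{H_{\delta,\lambda}^{s}}^2 \lesssim \Lambda(\|v_h^p\|_{H_{\delta,\lambda}^{s-1}})(1+\|v_h^p\|_{H_{\delta,\lambda}^{s-1}}^2) + (\text{data})$, whence a uniform bound on $[0,T_4]$ with $T_4\le 1/\lambda$ by a continuation argument; uniqueness follows from the same estimate applied to the difference of two solutions at one order lower, and passing the approximate solutions to the limit gives existence in $C([0,T_4],H_{\delta,\lambda}^{s-1})\cap L^2([0,T_4],H_{\delta,\lambda}^{s})$. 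The $\ep$-dependent terms (those carrying an explicit factor $\ep$, from $\ep\underline{\sigma_{osc}^{I,0}}$ and $\ep v_h^p\cdot\na_h\underline{\sigma^{I,0}_{osc}}$) are lower order and bounded uniformly in $\ep\in(0,1]$ since $\underline{\sigma^{I,0}_{osc}}$ is controlled by $\|W(0)\|_{H^s}$ via Theorem \ref{lem-UI0}; in particular the positivity $\frac{\gamma-1}{2}\ep\underline{\sigma_{osc}^{I,0}}+1\ge 1/2$ needed to invert the coefficient of $\p_\theta v_3^{p,1}$ holds for $\ep$ small.

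The main obstacle I expect is the treatment of the vertical transport term $\big(-v_3^{p,1}|_{\theta=0}+v_3^{p,1}+\theta\,\underline{\p_z u_3^{I,0}}\big)\p_\theta v_h^p$: because $v_3^{p,1}$ is a $\theta$-antiderivative of $\div_h v_h^p$ it costs one horizontal derivative, and its linear growth in $\theta$ (through the $\theta\,\underline{\p_z u_3^{I,0}}$ piece and the non-decay of $\int_0^\theta\div_h v_h^p$) is what forces the Gaussian weight and the restriction $t\le 1/\lambda$. Showing that the commutator between $(\delta\theta\p_\theta)^l$ and this growing coefficient, paired against the weight, is absorbed by $\mu_1\|\p_\theta(\cdots)\|^2$ plus the $\lambda t\,z^2$-term — with constants independent of $\ep,\nu$ — is the delicate bookkeeping step; the choice of $\delta$ small (to make the $\theta\p_\theta$-weight subordinate) and $\lambda$ large (to beat the growth on the short interval $[0,1/\lambda]$) is dictated here. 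Everything else is routine Moser-type nonlinear estimates in weighted conormal spaces, essentially as in \cite{MR3843301}, now carried out uniformly in the parameters $\ep$ and $\nu$ (the vertical viscosity coefficient $\mu_1$ in $\p_\theta^2$ being a fixed positive constant after the $\theta=z/\sqrt\nu$ rescaling, so no smallness in $\nu$ is lost).
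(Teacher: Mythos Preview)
Your overall strategy---Gaussian-weighted conormal energy estimates, with the time-decreasing exponent $2-\lambda t$ supplying a good term that absorbs the linearly growing vertical transport coefficient---is the right one and is exactly what the paper does (following \cite{MR3843301}). But you are missing one concrete preparatory step that the paper singles out as the content of its proof sketch: the inhomogeneous Dirichlet condition must be lifted before you run the energy estimate. The paper introduces the new unknown
\[
U := v_h^p - e^{(2-\lambda t)z^2}\,\cF_{k_h\to x_h}^{-1}\big(e^{-z|k_h|}\cF_{x_h}(\underline{v_h^{\INS}})(k_h)\big),
\]
i.e.\ $v_h^p$ minus an explicit Gaussian-weighted Poisson-type extension of the boundary data, and then performs the weighted energy estimate on $U$ (which now has homogeneous boundary condition at $\theta=0$). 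Without this, your step (ii) fails: integrating $\mu_1\p_\theta^2 v_h^p$ by parts against $e^{2(2-\lambda t)z^2}Z^{\alpha}v_h^p$ leaves a boundary term at $\theta=0$ proportional to $\underline{v_h^{\INS}}\cdot Z^{\alpha}\p_\theta v_h^p|_{\theta=0}$, and $\p_\theta v_h^p|_{\theta=0}$ is not prescribed, so the inequality does not close. The hypothesis $\underline{\pt v_h^{\INS}}\in L_{T_2}^{\infty}H^{s-5/2}$ is there precisely to control the source term generated when $\pt$ hits this lift.

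Two smaller corrections. First, the Dirichlet condition is on $v_h^p$ (that is, $v_h^{p,0}$), namely $v_h^p|_{\theta=0}=-\underline{v_h^{\INS}}$; your statement ``$v_3^{p,1}|_{\theta=0}=-\underline{v_h^{\INS}}$'' is a confusion between the horizontal and vertical components (the paper's $v_h^{p,1}$ in \eqref{prandtl-bo} is itself a typographical slip for $v_h^{p}$). Second, the good term from differentiating the weight in time is $-2\lambda\int z^2 e^{2(2-\lambda t)z^2}|\cdots|^2$, not $-2\lambda t\int\cdots$; the factor is $\lambda$, not $\lambda t$, and it is the \emph{largeness} of $\lambda$ (together with the restriction $t\le 1/\lambda$ keeping $2-\lambda t\ge 1$) that lets it dominate the $\theta$-growing transport coefficient. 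With the lift in place and these fixes, the rest of your sketch is in line with the paper's (omitted) computation.
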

The above Proposition is proved by establishing the a-priori estimates on $v_h^p$ which is achieved by 
performing energy estimate on the new unknown 
$$U:= v_h^p-e^{(2-\lambda t)z^2}\cF_{k_h\rightarrow x_h}\big(e^{-z|k_h|}\cF_{x_h}(\underline{v_h^{\INS}})(k_h)\big),$$
The computations are similar to what have been performed in
\cite{MR3843301}, which is the situation when $\underline{U_{osc}}=0,$ we thus omit the details.

\subsubsection{Well-posedness of the non-oscillating interior part $V^{I,1}$}
\begin{prop}\label{prop-well-vI1}
    Let $s\geq 11$ and $\underline{v_3^{p,1}}\in L_{T_4}^{2}H^{s-3/2}(\mathbb{T}_{a_1,a_2}^2)$
    and $U^{I,0}\in  L_{T_4}^{\infty}H^{s} .$ Then there exists $T_5\in (0,T_4]$ which is uniform in $\ep,\nu,$ such that 
    there is a unique solution of \eqref{eq-ve1} in $
    C([0,T_5], \oH_{co}^{s-5}).$  
\end{prop}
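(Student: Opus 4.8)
\textbf{Proof proposal for Proposition \ref{prop-well-vI1}.}

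The plan is to view \eqref{eq-ve1} as a linear symmetric hyperbolic-type system for $V^{I,1}=(\chi^{I,1},v^{I,1})$ with the singular skew-symmetric penalization $\ep^{-1}L$, lower-order coefficients depending on the already-constructed profile $U^{I,0}$ (which, by Theorem \ref{lem-UI0} and the standing hypothesis $s\geq 11+r_0$, belongs to $L^\infty_{T_4}H^{s}$), a forcing that is purely linear in $V^{I,1}$, and an inhomogeneous Dirichlet condition $v_3^{I,1}|_{\p\Omega}=-(v_3^{p,1}+\tilde v_3^{p,1})|_{\p\Omega}$. First I would homogenize the boundary condition: choose an explicit lift $\Phi(t,x)$ with $\Phi_3|_{\p\Omega}=-(v_3^{p,1}+\tilde v_3^{p,1})|_{\p\Omega}$, $\div\Phi=0$, built in exactly the same way as the correctors $\mathfrak R_{11},\mathfrak R_{12}$ in Subsection 2.5 (using the boundary data which lies in $L^2_{T_4}H^{s-3/2}(\mathbb T^2_{a_1,a_2})$, hence traces at $z=0,a_3$ of an $H^{s-1}(\Omega)$ function), so that $\widetilde V:=V^{I,1}-\Phi$ solves the same system with homogeneous Dirichlet condition $\widetilde v_3|_{\p\Omega}=0$ and a modified source $\mathcal F$ that is affine in $\widetilde V$ and controlled, in $L^2_{T_4}\oH^{s-5}_{co}$ norm say, by $\|U^{I,0}\|_{L^\infty H^s}$ and the boundary data. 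Because only two time derivatives of the data are needed for $\oH^{s-5}_{co}$ (recall \eqref{def-2td}), no extra compatibility conditions beyond those already available are required.

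Next I would run the standard conormal energy method for $\widetilde V$. Apply $Z^\alpha$ with $|\alpha|\leq s-5$, $\alpha_0\leq 2$, to the system; the key structural point is that $\ep\p_t$, $\p_{x_1}$, $\p_{x_2}$ and $Z_3=\phi(z)\p_z$ all commute with $L=-\begin{pmatrix}0&\div\\\na&0\end{pmatrix}$ up to harmless commutators (for $Z_3$ one uses $\phi|_{\p\Omega}=0$), so that testing the $Z^\alpha$-equation against $Z^\alpha\widetilde V$ the singular term $\ep^{-1}\langle L Z^\alpha\widetilde V,Z^\alpha\widetilde V\rangle$ vanishes by skew-symmetry, and the boundary term produced by integration by parts is $\int_{\p\Omega}(Z^\alpha\widetilde v_3)(Z^\alpha\widetilde\chi)=0$ since $\widetilde v_3|_{\p\Omega}=0$. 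The quadratic forms $Q(\widetilde V,U^{I,0})+Q(U^{I,0},\widetilde V)$ and the horizontal dissipation $-\mathrm{diag}(0,\mu_1\Delta_h)\widetilde V$ contribute, after the usual commutator estimates (Moser/Kato–Ponce, plus the $u_3\p_z=(u_3/\phi)Z_3$ trick with the Hardy inequality to handle normal derivatives), a bound of the form $\frac{d}{dt}\|\widetilde V\|_{\oH^{s-5}_{co}}^2\lesssim \Lambda(\|U^{I,0}\|_{L^\infty H^s})\big(\|\widetilde V\|_{\oH^{s-5}_{co}}^2+1\big)+\|\mathcal F\|_{\oH^{s-5}_{co}}^2$; the extra horizontal viscous term only helps. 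Grönwall then gives an a priori bound on $[0,T_5]$ for any $T_5\leq T_4$, uniform in $\ep,\nu$ because all $\ep$-dependence sits either in the antisymmetric term (which drops out) or in coefficients that are $O(1)$; existence and uniqueness in $C([0,T_5],\oH^{s-5}_{co})$ follow by a routine Friedrichs mollification / vanishing-viscosity (adding $\delta\Delta$) regularization, deriving $\delta$-uniform estimates and passing to the limit, then undoing the lift $\widetilde V\mapsto V^{I,1}=\widetilde V+\Phi$.

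The main obstacle I anticipate is not the singular term — that is killed cleanly by the commutation of $Z_0,Z_1,Z_2,Z_3$ with $L$ — but rather checking that the inhomogeneous Dirichlet datum $-(v_3^{p,1}+\tilde v_3^{p,1})|_{\p\Omega}$ is regular enough (in both space and the two available time derivatives) for the lift $\Phi$ to live in a conormal space of order $s-5$ with the claimed $\ep,\nu$-uniform bounds, and that the associated source $\mathcal F$ does not secretly contain a normal derivative of $\widetilde V$ of top order that cannot be absorbed. Here one uses that $v_3^{p,1}$ is itself determined from $v_h^{p,0}$ through the almost-divergence-free relation in \eqref{prandtl-bo}, so by Proposition \ref{prop-well-vhp} its trace has the regularity $L^2_{T_4}H^{s-3/2}(\mathbb T^2_{a_1,a_2})$ assumed in the statement, which is exactly what is needed; and the forcing in \eqref{eq-ve1} involves only $\na U^{I,0}$ times $\widetilde V$ and $U^{I,0}$ times $\na\widetilde V$ with no loss, since $U^{I,0}\in H^s$ with $s$ large. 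Once the lift and the source are under control, the energy estimate closes and the proposition follows.
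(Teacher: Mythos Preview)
Your overall strategy---homogenize the inhomogeneous boundary condition by a divergence-free velocity lift $(0,\Phi)$ so that $L\Phi=0$ and the antisymmetric singular structure $\ep^{-1}L$ survives untouched, then close by conormal energy estimates and Gr\"onwall---is exactly what the paper does; its one-line proof says precisely that one lifts by a divergence-free vector so as not to destroy the symmetric structure $(\div v^{I,1},\na\chi^{I,1})/\ep$, then runs the usual conormal energy estimates. You also correctly identify the real point to check, namely that the time derivative of the boundary datum $(v_3^{p,1}+\tilde v_3^{p,1})|_{\p\Omega}$ is uniformly bounded so that the lift $\Phi$ and the induced source sit in the right conormal space; the paper singles out the same issue.

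There is, however, one soft spot. You write that $Z_3=\phi\p_z$ commutes with $L$ ``up to harmless commutators (for $Z_3$ one uses $\phi|_{\p\Omega}=0$)'', which reads as if only a boundary term is at stake. In fact $[Z_3,L]\widetilde V=-(\phi'\p_z\widetilde v_3,\,0,\,0,\,\phi'\p_z\widetilde\sigma)^t$ is a \emph{bulk} commutator, and after the $\ep^{-1}$ it contributes $-2\ep^{-1}\int_\Omega \phi\phi'\,\p_z\widetilde v_3\,\p_z\widetilde\sigma$, which is not bounded by $\|\widetilde V\|_{\oH_{co}^m}^2$ uniformly in $\ep$ via any direct inequality. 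What ``usual conormal energy estimates'' means here (and what the paper carries out in detail for the harder remainder equation in Sections~4.1--4.5) is: do \emph{tangential} energy estimates first (where $[Z_{\mathrm{tan}},L]=0$), estimate the vorticity $\curl\widetilde v$ separately (its equation has no $\ep^{-1}$ term), and then \emph{recover} the $Z_3$-derivatives of $(\div\widetilde v,\na\widetilde\sigma)$ by reading the equations as $\p_z\widetilde\sigma=-\ep\pt\widetilde v_3+O(\ep)$, $\div\widetilde v=-\ep\pt\widetilde\sigma+O(\ep)$, trading normal for $\ep\pt$ derivatives. For the linear system \eqref{eq-ve1} with smooth coefficient $U^{I,0}$ this bookkeeping is light, but it is not literally the one-step ``apply $Z^\alpha$, commutators are harmless, Gr\"onwall'' that your second paragraph suggests.
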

Since  the time derivative of $ \underline{v_3^{I,1}}$ is uniformly bounded in $L_{T_1}^{\infty}H^{s-7/2}(\mathbb{T}_{a_1,a_2}^2)$,  one can always take a divergence-free vector to lift the boundary condition  up without destroying the symmetric structure $(\div v^{I,1}, \na\sigma^{I,1})/\ep.$ The a-priori estimates then follows from the usual conormal energy estimates. 

\section*{Acknowledgement}
N. Masmoudi is
supported by Tamkeen under the NYU Abu Dhabi Research Institute grant CG002.  
C. Sun is partially supported by ANR 
project ANR-24-CE40-3260. C. Wang is supported by NSF of China under Grant 12071008.
Z. Zhang is partially supported by NSF of China under Grant 12171010 and
12288101.

Part of this work was conducted during the second author's visits to NYU Abu Dhabi and Peking University. The warm hospitality and support of these institutions are sincerely acknowledged. The second author would also like to thank Fr\'ed\'eric Rousset for some useful discussions at the early stage of this work.


\bibliographystyle{plain}
\nocite{*}
\bibliography{ref}
\end{document}